\newtheorem{thm}{Theorem}[section]
\newtheorem{lem}{Lemma}[section]
\newtheorem{cor}{Corollary}[section]
\newtheorem{Prop}{Proposition}[section]
\newtheorem*{St*}{Statement}
\newtheorem*{theorem*}{Theorem}
\theoremstyle{remark}
\theoremstyle{definition}
\newtheorem{df}{Definition}[section]
\theoremstyle{remark}
\newtheorem{oss}{Remark}[section]
\newcommand{\be}{\begin{equation}}
\newcommand{\ee}{\end{equation}}
\newcommand{\R}{\mathbb{R}}
\newcommand{\N}{\mathbb{N}}
\newcommand{\G}{\Gamma}
\newcommand{\Greg}{\text{gen-reg}\,}
\newcommand{\Sing}{\text{sing}\,}
\newcommand{\SingT}{\text{sing}_T\,}
\newcommand{\Reg}{\text{reg}\,}
\newcommand{\spt}[1]{\text{spt}\,\|#1\|}
\newcommand\res{\mathop{\hbox{\vrule height 7pt width .5pt depth 0pt
\vrule height .5pt width 6pt depth 0pt}}\nolimits}
\def\eps{\mathop{\varepsilon}}
\def\Oc{\mathop{\mathcal{O}}}
\def\Hc{\mathop{\mathcal{H}}}
\def\m{\mu}
\def\s{\sigma}
\def\r{\rho}
\def\Om{\Omega}
\def\om{\omega}
\def\p{\partial}
\def\eps{\mathop{\varepsilon}}
\def\Om{\Omega}
\def\om{\omega}
\def\p{\partial}
\DeclareMathAlphabet{\mathscr}{OT1}{pzc}{m}{it} 
\newenvironment{thmbis}[1]
  {%
   \addtocounter{thm}{-1}%
   \begin{thm}}
  {\end{thm}}
\begin{document} 

\title{\textbf{Stable prescribed-mean-curvature integral varifolds of codimension $1$: regularity and compactness}}
\author{Costante Bellettini \\ University College London \and Neshan Wickramasekera\\ University of Cambridge}
\date{}

\maketitle

\begin{abstract}
In \cite{BW} we developed a regularity and compactness theory in Euclidean ambient spaces for codimension 1 weakly stable CMC integral varifolds satisfying two (necessary) structural conditions. Here we generalize this theory to the setting where  the mean curvature (of the regular part of the varifold) is prescribed by a given $C^{1, \alpha}$ ambient function $g$ and the ambient space is a general $(n+1)$-dimensional Riemannian manifold; we give  general conditions that imply that the support of the varifold, away from a possible singular set of dimension $\leq n-7,$ is the image of a $C^{2}$ immersion with continuous unit normal $\nu$ and mean curvature $g\nu$. These conditions also identify a compact class of varifolds subject to an additional uniform mass bound.  

If $g$ does not vanish anywhere, or more generally if the set $\{g = 0\}$ is sufficiently small (e.g.\ if ${\mathcal H}^{n} \, (\{g =0\}) = 0$), then the results (Theorems~\ref{thm:mainreg_gpos}, \ref{thm:compactness_gpos}, \ref{thm:mainreg_emptyinterior} and \ref{thm:compactness-smallnodalset} below) and their proofs are quite analogous to the CMC  case. When $g$ is arbitrary however, a number of additional considerations must be taken into account: first, the correct second variation assumption is that of finite Morse index (rather than weak stability); secondly, to have a geometrically useful theory with compactness conclusions, one of the two structural conditions on the varifold must be weakened; thirdly, in view of easy examples, this weakening of a structural hypothesis  necessitates additional hypotheses in order to reach the same conclusion. We identify two sets of additional assumptions under which this conclusion holds for general $g$: one in Theorems~\ref{thm:mainregstationarityimmersedparts}, \ref{thm:generalcompactness} (giving regularity and compactness) and the other in Theorems~\ref{thm:mainreg_g0_version1}, \ref{thm:non-orientable} (giving regularity only). We also provide corollaries for multiplicity $1$ varifolds associated to reduced boundaries of Caccioppoli sets. In these cases, some or all of the structural assumptions become redundant.

Finally,  as a direct corollary of the methods used in the proofs of the above theorems, we obtain an abstract varifold regularity theorem (Theorem~\ref{thm:abstract_sheeting_thm}) which does not require the varifold to be a critical point of a functional. This theorem plays a key role in the analysis of varifolds arising in certain phase transition problems, which in turn forms the basis of a PDE theoretic proof of the existence of prescribed mean curvature hypersurfaces in compact Riemannian manifolds (\cite{BW2}). 
\end{abstract}

\tableofcontents

\section{Introduction} 

In \cite{BW} the authors developed a sharp regularity and compactness theory in open subsets of the $(n+1)$-dimensional Euclidean space $(n \geq 2$) for a general class of integral $n$-varifolds having (generalised) mean curvature locally summable to an exponent $p>n$ and satisfying certain structural and variational hypotheses.  The structural conditions---of which there are two---are local conditions that only concern parts of the varifold that a priori have a ``$C^{1, \alpha}$ structure''; specifically, they require that:
\begin{itemize}
\item [(i)] the varifolds have no \emph{classical singularities} (see Definition~\ref{df:classicalsingularity} below) and
\item[(ii)] the \emph{coincidence set near every touching singularity} (Definition~\ref{df:touchingsingularity} below) is ${\mathcal H}^{n}$-null. 
\end{itemize}
The variational hypotheses require, roughly speaking,  that the orientable regular parts of the varifold (non-empty by Allard theory, but may a priori be small in measure) are stationary and stable with respect to the area functional for volume-preserving deformations. This stationarity condition is well-known to lead to a CMC condition (constant scalar mean curvature with respect to a continuous choice of unit normal) on the regular orientable parts. In fact it is equivalent to stationarity, for some constant $\lambda$,  with respect to the functional $$J = A + \lambda \, {\rm Vol}$$ for unconstrained compactly supported deformations, and moreover, an oriented immersion is stationary with respect to $J$ if and only if, with respect to some choice of unit normal, it has scalar mean curvature equal to $\lambda.$  Here $A,$ ${\rm Vol}$ denote the area functional and the enclosed volume functional respectively. The main regularity result of \cite{BW} states that a varifold satisfying the above structural and variational hypotheses  is  supported on the image of a smooth, proper CMC immersion away from a possible ``genuine'' singular set $\Sigma$ of codimension at least $7,$ and that its support away from $\Sigma$ is in fact quasi-embedded,  meaning that it may fail to be embedded only at points where (locally) the support consists of precisely two smooth embedded CMC disks intersecting tangentially and each lying on one side of the other. The associated compactness theorem of \cite{BW} says that any family of such varifolds that additionally satisfies locally uniform mass bounds and a uniform mean curvature bound is compact in the topology of varifold convergence. 

In the present paper we generalise the theory of \cite{BW} to the setting where the mean curvature is prescribed by a $C^{1, \alpha}$ function $g$ on the ambient space. This condition on the mean curvature has a variational formulation:  An orientable immersion has scalar mean curvature, with respect to a choice of orientation, equal to $g$ everywhere if and only if it is stationary with respect to the functional $$J_{g} = A + {\rm Vol}_{g},$$ where ${\rm Vol}_{g}$ is 
the \emph{relative enclosed $g$-volume} (see Definition~\ref{Dfi:rel_encl_gvolpos} below). Thus we here study codimension 1 integral $n$-varifolds with generalised mean curvature locally in $L^{p}$ for some $p >n$  and satisfying two structural conditions ((i) above and (ii) appropriately weakened in case $\{g = 0\}) \neq \emptyset$) whose orientable regular parts are stationary and stable in an appropriate sense with respect to $J_{g}.$ 
We shall, at the same time, treat here the case of a general $(n+1)$-dimensional Riemannian ambient space $N$. We achieve the latter generalisation by considering (cf.\ \cite{SS}, \cite{WicAnnals}), locally near any given point $X_{0} \in N$,  the pull back, via the exponential map, of the functional $J_{g}$  to the tangent space $T_{X_{0}} \, N$ (identified with ${\mathbb R}^{n+1}$). Since the pull back of ${\rm Vol}_{g}$  (on hypersurfaces in $N$) is equal to a functional of the same type, namely, to
${\rm Vol}_{\sqrt{|\mathscr{h}|} \, g \circ \exp_{X_{0}}}$ (on hypersurfaces in a ball in ${\mathbb R}^{n+1}$ taken with the Euclidean metric) where $\mathscr{h}$ is the Riemannian matric on $N$, it is convenient and efficient to handle, as we do here, both generalisations simultaneously.  

If $g$ is non-zero everywhere, these generalisations are straightforward, and the regularity and compactness theorems (Theorems~\ref{thm:mainreg_gpos} and \ref{thm:compactness_gpos} below) and their proofs are indeed completely analogous to those in the CMC case treated in \cite{BW};   slightly more generally, these results carry over to the case 
${\mathcal H}^{n}( \{g = 0\}) = 0$ just as easily (Theorems~\ref{thm:mainreg_emptyinterior} and \ref{thm:compactness-smallnodalset}) with one additional mild assumption (hypothesis (b$^{T}$) of Theorems~\ref{thm:mainreg_emptyinterior} and \ref{thm:compactness-smallnodalset}). When $g$ is an arbitrary $C^{1,\alpha}$ function on $N$ however, there are a number of subtleties that enter both the statements of the theorems and their proofs. For that reason, we shall discuss the results for arbitrary $g$  (Theorems~\ref{thm:mainregstationarityimmersedparts}, \ref{thm:generalcompactness} and ~\ref{thm:mainreg_g0_version1}) separately.   

The methods employed in the proofs of the above results lead very directly to a certain abstract regularity theorem (Theorem~\ref{thm:abstract_sheeting_thm}) for codimension 1 integral varifolds $V$ with no classical singularities in a Euclidean space. This theorem can be regarded as a higher-multiplicity version of the codimension 1 Allard regularity theorem. Like the Allard regularity theorem, it does not require $V$ to be a stationary point of a functional, but instead assumes a certain condition on the first variation of $V$ with respect to the area functional. When $V$ is the pull-back under the exponential map of (a portion of) a varifold $\widetilde{V}$ on an $(n+1)$-dimemsional Riemannian manifold $N$, this first variation condition is implied by summability to a power $p> n$ of the generalised mean curvature of $\widetilde{V}$ in $N$. In place of the second variation hypotheses imposed in the results proved elsewhere in the present work, this theorem makes a hypothesis of a topological nature. Because of the ``non-variational'' nature of the hypotheses, this theorem is of fundamental importance in our work \cite{BW2} that establishes regularity of limit varifolds  associated with sequences of Morse index bounded solutions to inhomogeneous Allen--Cahn equations. The study of these limit varifolds (without any second variation hypotheses) was initiated in \cite{RogTon}, \cite{Ton} and \cite{HutchTon}. These works left open the possibility that higher (even) multiplicity,  zero-mean-curvature portions may appear in the limit and that these pieces may merge with the phase boundary on a large set in an irregular fashion. As shown in \cite{BW2}, these extraneous minimal portions can be smoothly removed subject to a Morse index bound on the Allen--Cahn solutions, and Theorem~\ref{thm:abstract_sheeting_thm} is a key ingredient in that analysis.

The present article should be taken in conjunction with \cite{BW} as well as with \cite{WicAnnals}.  The latter treated the case $g=0$ of our results here for prescribed mean curvature varifolds. It established regularity and compactness theorems, similar to those here, for  area-stationary codimension 1 integral varifolds with no classical singularities and stable regular parts; in the case $g=0$, the maximum principle implies that there are no touching singularities, so the structural condition (ii) is automatically satisfied.  As regards work that concerns embeddedness criteria for mean curvature controlled hypersurfaces, the present work may be viewed a natural completion of a theory, following \cite{WicAnnals} and \cite{BW},  that unifies and extends  three earlier well-known theories  dating back to the period from early 1960's to early 1980's: the regularity theory for locally area minimizing hypersurfaces (due to the combined work of De~Giorgi (\cite{DG2}), Federer (\cite{Fed70}) and Simons (\cite{Simons})), the compactness theory for locally uniformly area bounded stable hypersurfaces with small singular sets (due to Schoen--Simon--Yau (\cite{SSY}) in low dimensions and 
Schoen--Simon (\cite{SS}) in general dimensions) and the regularity theory for boundaries of Caccioppoli sets minimizing area subject to fixed enclosed volume (due to Gonzalez--Massari--Tamanini (\cite{GonzMassTaman80}, \cite{GonzMassTaman})). The present work relies on the techniques developed in the first two of these earlier works as well as on those developed in \cite{WicAnnals} and \cite{BW}.

\medskip

\noindent \textbf{Acknowledgments}: C. B.\ is partially supported by the EPSRC grant EP/S005641/1. This work was completed while both authors were members of the Institute for Advanced Study, Princeton. The authors gratefully acknowledge the excellent research environment and the support provided by the Institute, and the support by the National Science Foundation under Grant No. DMS-1638352.

\medskip

\subsection{Relative enclosed $g$-volume and other definitions} 

Let $N$ be a smooth Riemannian manifold of dimension $n+1 \geq 3$ and let $g \, : \, N \to {\mathbb R}$ be a given function of class $C^{1, \alpha}$. In order to characterize variationally the condition that the mean curvature of an immersed hypersurface in $N$ is prescribed by $g$, we will need the following notion, which extends the definition of enclosed volume (\cite[Section 2]{BarbDoCEsch}): 

\begin{df} [\textit{relative enclosed $g$-volume}] 
\label{Dfi:rel_encl_gvolpos}
 Let $\iota:S \to N$ be a $C^1$ immersion of an $n$-dimensional manifold $S$ into $N$. Let $\Oc \subset \subset N$ be an orientable open set such that $S_{\Oc} = \iota^{-1}(\Oc)$ is orientable and has compact closure in $S$. When these conditions are met we always assume that the orientations of $S_{\Oc}$, $\Oc$ and the unit normal $\nu$ to $S_{\Oc}$ are chosen so that the frame $d \iota(\vec{S}_{\Oc}) \wedge \nu$ is positive with respect to the orientation of $\Oc$. For $\epsilon>0$, let $\psi \, : \, (-\epsilon, \epsilon) \times S \to N$ be a $C^{1}$ map such that $\psi_t( \cdot )=\psi(t, \cdot) \, : \, S \to N$ is an 
 immersion for each $t \in (-\epsilon,\epsilon)$, $\psi_0=\iota$, $\psi_t(x)=\iota(x)$ for all $(t, x) \in(-\epsilon, \epsilon) \times S \setminus S_{\Oc}$ and 
 $\psi_t(x)\in \Oc$ for all $(t, x)  \in(-\epsilon, \epsilon) \times  S_{\Oc})$. 
We define  \emph{relative enclosed $g$-volume} of $\psi_t(S)$ in the oriented manifold $\Oc$, denoted $\text{Vol}_g(t),$ by 
 
$$\text{Vol}_g(t) := \int_{[0,t] \times S_{\Oc}} \psi^*(g\, \chi_{\Oc} d\text{vol}_{N}),$$
where $\chi_{\Oc} d\text{vol}_{N}$ is the volume form on $\Oc \subset N$ induced by the Riemannian metric.
\end{df}

An important class of one-parameter family of immersions is given by pushforwards by ambient deformations induced by a tangential vector field on $N$, compactly supported in $\Oc$. When the hypersurface is embedded and agrees with the $C^1$ boundary of a Caccioppoli set $E$, and when $\Oc$ is bounded and $\psi_t$ is an ambient deformation, it can be checked, writing $E_t=\psi_t(E)$, that $\text{Vol}_g(t)=\int_{\Oc \cap E_t} g\,d\mathcal{H}^{n+1} - \int_{\Oc \cap E} g\,d\mathcal{H}^{n+1}$ (this justifies the terminology).

\medskip

With notation as in Definition \ref{Dfi:rel_encl_gvolpos}, denote by $A(t)$ the area of the immersed hypersurface $\psi_t(S)$ in $\Oc$, so 
$$A(t) = \int_{S \cap \iota^{-1}(\Oc)} dS_t,$$ 
where $dS_t$ is the $n$-volume induced on $S$ by $\psi_t$ and by the Riemannian metric on $N$; equivalently, denoting by $\psi_{t \, \#} |S|$ the integral varifold 
$V = (\psi_{t}(S), \theta_{\psi_{t}})$ where $\theta_{\psi_{t}}(\psi_{t}(x)) = \#\{y \in S \, : \, \psi_{t}(y) = \psi_{t}(x)\}$, we have that 
$$\|\psi_{t \, \#} |S|\|(\Oc) = \int \theta_{\psi_{t}}\, d{\Hc}^n \res (\psi_t(S) \cap \Oc) =A(t).$$ 
Let $$J_g(t)=A(t) + \text{Vol}_g(t).$$

\begin{df}[\textit{Stationarity}] \label{stationary-def}
Let $\iota \, : \, S \to N$ be as in Definition~\ref{Dfi:rel_encl_gvolpos} above. We say that $\iota$ is stationary with respect to $J_{g}$ if for any orientable open subset $\Oc \subset\subset N$  such that $S_{\Oc} = \iota^{-1}(\Oc)$ is orientable and has compact closure in $S$, there exist orientations on $\Oc$ and $S_{\Oc}$ such that for any deformation $\psi$ as in Definition~\ref{Dfi:rel_encl_gvolpos}, we have that 
$$J_{g}^{\prime}(0) = 0.$$
\end{df}

Note that  the condition that a two-sided immersion $\iota \, : \, S \to N$, with unit normal $\nu,$ has mean curvature $g \nu$ is equivalent  to requiring a stationarity  
condition on $\iota$ with respect to $J_{g}$ (see Section~\ref{stationaritydiscussion}).

\begin{oss}\label{vol-preserving-def}
This variational condition implies in particular stationarity with respect to the area functional for deformations that preserve $\text{Vol}_g$, i.e.\ deformations $\psi(t, \cdot)$ for which $\text{Vol}_{g}(t)$ is constant. The converse of this  is ``essentially true'' in the case $g>0$, on which we will focus in Section \ref{g_pos}, but not for arbitrary $g$, as we will discuss in later sections (see Remark \ref{oss:vol_preserv_notexist}).
\end{oss}

Throughout this paper,  we will use terminology (essentially) as in \cite[Definitions 2.1, 2.2, 2.3, 2.4, 2.5]{BW}; the only difference is that when $g$ is of class $C^{1, \alpha}$, the definitions of $\Reg{V}$ and $\Greg{V}$ here will require $C^2$-regularity instead of smoothness as in \cite[Definitions 2.1, 2.5]{BW}. This is because standard elliptic theory implies that the regularity to expect (for our varifolds) is $C^2$ (in fact $C^{2,\alpha}$) rather than $C^\infty$. Higher regularity can then be deduced from elliptic theory whenever higher regularity of $g$ is assumed. 

The precise terminology we shall use is as follows:

Let $V$ be an integral varifold of dimension $n$ on $N$,  and let $\|V\|$ denote the weight measure associated with $V$. 

\begin{df}[\textit{Regular set $\Reg \, {V}$ and singular set $\text{sing} \, V$}]
A point $X \in N$ is a regular point of $V$ if $X \in \spt{V}$ and if there exists $\sigma > 0$ such that $\spt{V} \cap B_{\sigma}(X)$ is an embedded $C^{2}$ hypersurface of $B_{\sigma}(X)$. The regular set of $V$, denoted ${\rm reg} \, V,$ is the set of all regular points of 
$V.$ The (interior) singular set of $V$, denoted ${\rm sing} \, V$, is $(\spt{V}\setminus \Reg{V}) \cap N$. By definition, ${\rm reg} \, V$ is relatively open in ${\rm spt} \, \|V\|$ and ${\rm sing} \, V$ is relatively closed in ${\mathcal N}$.  
\end{df}

\begin{df}[\textit{$C^1$-regular set $\text{reg}_1 V$}]
\label{df:C1embedded}
We define $\text{reg}_1 V$ to be the set of points $X \in \spt{V}$ with the property that there is $\sigma > 0$ such that $\spt{V} \cap B_{\sigma}(X)$ is an embedded hypersurface of $B_{\sigma}(X)$ of class $C^{1}$.
\end{df}
 
\begin{df}[\textit{Set of classical singularities $\text{sing}_C \, V$}]
\label{df:classicalsingularity}
A point $X \in \text{sing} \, V$ is a classical singularity of $V$ if there exists $\sigma >0$ such that, for some $\alpha \in (0, 1]$,  $\spt{V} \cap B_{\sigma}(X)$ is the union of three or more embedded $C^{1,\alpha}$ hypersurfaces-with-boundary meeting  pairwise only along their common $C^{1,\alpha}$ boundary $\gamma$ containing $X$ and such that at least one pair of the hypersurfaces-with-boundary meet transversely everywhere along $\gamma$.  

The set of all classical singularities of $V$ will be denoted by $\text{sing}_C \, V$.
\end{df}

\begin{df}[\textit{Set of touching singularities $\text{sing}_T \, V$ and the coincidence set}]
\label{df:touchingsingularity}
A point $X \in \text{sing}\,  V \setminus \text{reg}_1 V$ is a touching singularity of $V$ if $X \notin \text{sing}_C \, V$ and if there exists $\sigma>0$ such that 
$$\spt{V} \cap {\mathcal N}_\sigma(X) = M_{1} \cup M_{2}$$ 
where $M_{j}$ is an embedded $C^{1,\alpha}$-hypersurfaces  of ${\mathcal N}_{\sigma}(X)$ with $({\overline M_{j}} \setminus M_{j}) \cap {\mathcal N}_{\sigma}(X) = \emptyset$ for $j=1,2.$ 

The set of all touching singularities of $V$ will be denoted by $\text{sing}_T \, V$.

If $X \in  {\rm sing}_{T} \, V$ and $M_{1}, M_{2}, \sigma$ are as above, then for any $\sigma_{1} \in (0, \sigma]$, the \emph{coincidence set of $V$ in $B_{\sigma_{1}}(X)$} is the set 
$M_{1} \cap M_{2} \cap B_{\sigma_{1}}(X)$.  
\end{df}

\begin{oss}[\textit{Graph structure around a point $X\in \text{sing}_T \, V$}]
\label{oss:touchingsinggraphs}
If $X \in \text{sing}_T V$ then each of the two $C^{1,\alpha}$-hypersurfaces $M_{1}, M_{2}$ corresponding to $X$ (as in Definition~\ref{df:touchingsingularity}) contains $X$ and they are tangential to each other at $X;$ the former is implied by the fact that $X \in \text{sing} \, V \setminus \text{reg}_1 \, V$ and the latter by the fact that $X \notin \text{sing}_C \, V$. Let $L$ be the common tangent plane to $M_{1}, M_{2}$ at $X$. Identifying $T_{X} \, N$ with ${\mathbb R}^{n+1}$ so that 
$L$ is identified with ${\mathbb R}^{n} \times \{0\}$, we then have that for $\sigma$ sufficiently small, 
$$\exp_{X} ^{-1} \, \spt{V} \cap B_\sigma^{n+1}(0) = (\text{graph}\, u_1 \cup \text{graph}\, u_2) \cap B_\sigma^{n+1}(0)$$ for two functions $$u_{1}, u_{2} \, : \, B_\sigma^{n}(0)  \to {\mathbb R}$$ of class $C^{1,\alpha}$ such that $u_1(0)=u_2(0)$ and $Du_1(0)=Du_2(0)=0$. Note that $u_{1} \neq u_{2}$ since $X \in \text{sing} V\setminus \text{reg}_1 V$. Note also that we can always choose 
$u_{1}$, $u_{2}$ such that $u_{1} \leq u_{2}$.
\end{oss}

\begin{df}[\textit{Generalized regular set $\Greg{V}$}]
\label{df:regularpoints}
A point $X \in \text{spt}\,\|V\|$ is a generalized regular point if either (i) $X \in \Reg V$ or (ii) $X \in \text{sing}_T V$ and we may choose $C^{2}$ functions $u_1$ and $u_2$ corresponding to $X$ as in Remark~\ref{oss:touchingsinggraphs} such that $u_{1} \geq u_{2}$. The set of generalised regular points will be denoted by $\Greg{V}$. 
\end{df}

\subsection{Regularity and compactness for $g>0$}
\label{g_pos}

\begin{df}[\textit{Weak stability}]
Let $\iota \, : \, S \to N$ be an oriented proper $C^{2}$ immersion as in Definition~\ref{Dfi:rel_encl_gvolpos}, and suppose that $\iota$ is stationary with respect to $J_{g}$. We say that $\iota$ is \emph{weakly stable}\footnote{This terminology is customary in the literature at least when $g$ is a constant: we use it also for any $g>0$.} 
if $$J_{g}^{\prime\prime}(0) \geq 0$$
for all deformations $\psi$ as in Definition~\ref{Dfi:rel_encl_gvolpos} satisfying additionally that $\text{Vol}_{g}(\cdot)$ is constant.
\end{df} 

\begin{oss}[\textit{Stationarity and stability for $\text{Vol}_g$-preserving deformations}]
\label{oss:vol_preserv_pos}
For $g>0$, stationarity of an immersion with respect to the area functional $A(t)$ for $\text{Vol}_g$-preserving deformations is equivalent to the fact that the mean curvature of the immersion is given by $\lambda g$ for some constant $\lambda \in \R$ (see Section \ref{stationaritydiscussion}) which in turn is equivalent to stationarity with respect to $J_{\lambda g}$ for arbitrary deformations. As regards  second variation, however, stability with respect to $J_{\lambda g}$ for arbitrary deformations is in general a strictly more restrictive requirement than weak stability (the former notion is in fact referred to as strong stability).
\end{oss}

We begin by stating some natural generalizations of the results in \cite{BW} for the case in which we have strictly positive ambient functions $g$; these are special instances of the general theorems (Theorems~\ref{thm:mainregstationarityimmersedparts} and \ref{thm:generalcompactness} below) proved in this work.

\begin{thm}[\textbf{regularity: special case $g>0$}]
\label{thm:mainreg_gpos}
For $n \geq 2$ let $N$ be a Riemannian manifold of dimension $n+1$ and $g:N \to \R$, $g>0$ be a $C^{1,\alpha}$ function. Let $V$ be an integral $n$-varifold in $N$ such that
\begin{enumerate}
\item[(a1)] the first variation of $V$ is in $L^{p}_{\rm loc} \, (\|V\|)$ for some $p >n$;
\item[(a2)] ${\rm sing}_{C} \, V  = \emptyset$ (definition~\ref{df:classicalsingularity}); 
\item[(a3)] every touching singularity has a neighborhood in which the coincidence set of $V$ has zero ${\mathcal H}^{n}$ measure (definition~\ref{df:touchingsingularity});
\end{enumerate}
Suppose moreover that the following variational assumptions are satisfied: 
\begin{enumerate}
\item[(b)] the (embedded) $C^1$ hypersurface $S={\rm reg}_{1} \, V$ is stationary with respect to $J_{g}$ in the sense of Definition~\ref{stationary-def} 
taken with $\iota \, : \, S \to N$ equal to the inclusion map; 
\item[(c)] for each orientable open set ${\Oc} \subset N \setminus ({\rm spt} \, \|V\| \setminus {\Greg} \, V)$ such that $\Greg{V} \cap \Oc$ is the image of an orientable proper $C^2$ immersion 
$\iota_{\Oc} \, : \, S_{\Oc} \to \Oc$ that is stationary with respect to $J_{g}$, the immersion $\iota_{\Oc}$  is weakly stable. 
\end{enumerate}
Then there is a closed set 
 $\Sigma \subset {\rm spt} \, \|V\|$ with $\Sigma = \emptyset$ if $n \leq 6$, $\Sigma$ discrete if $n=7$ and ${\rm dim}_{\mathcal H} \, (\Sigma) \leq n-7$ if $n \geq 8$ such that: 
 \begin{enumerate}
\item [(i)] locally near each point  $p \in {\rm spt} \, \|V\| \setminus \Sigma$, either ${\rm spt} \, \|V\|$ is a single $C^2$ embedded disk or ${\rm spt} \, \|V\|$ is precisely two $C^2$ embedded disks with only tangential intersection; moreover the set where two such disks intersect is locally contained in a submanifold of dimension $n-1$;
\item[(ii)] ${\rm spt} \, \|V\| \setminus \Sigma$ ($=\Greg{V}$) is the image of a proper $C^2$ immersion $\iota \, : \, S_{V} \to  N$ and there is a continuous choice of unit normal $\nu$ to $\iota$ such that the mean curvature $H_{V}(x)$ of $S_{V}$ at any $x \in S_{V}$ is given by $H_{V}(x) =  g(\iota(x)) \nu(x)$.
 \end{enumerate}
\end{thm}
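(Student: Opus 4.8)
The strategy is to reduce to the Euclidean, bounded-mean-curvature setting and then run the blow-up regularity scheme of \cite{WicAnnals} and \cite{BW} with the prescribing function $g$ playing exactly the role played there by the constant mean curvature. First I would \emph{localize and pull back}: fixing $X_0\in\spt{V}$ and identifying a geodesic ball about $X_0$ with a Euclidean ball $B\subset\mathbb R^{n+1}$ via $\exp_{X_0}$, the functional $J_g$ on hypersurfaces in $N$ transforms into $A+\mathrm{Vol}_{\tilde g}$ on hypersurfaces in $B$ with the Euclidean metric, where $\tilde g=\sqrt{|\mathscr{h}|}\,g\circ\exp_{X_0}$ is again $C^{1,\alpha}$ and, after shrinking $B$, again bounded away from $0$. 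Hypotheses (a2), (a3) are diffeomorphism invariant, (a1) is preserved under a bi-Lipschitz change of variables, and stationarity and weak stability with respect to $J_g$ transform into the same notions for $J_{\tilde g}$, so it suffices to prove the theorem in $B\subset\mathbb R^{n+1}$ for a positive $C^{1,\alpha}$ function. A first-variation computation then forces the mean curvature of $\mathrm{reg}_1\,V$ to equal $g\nu$ for local choices of unit normal; since $g\in C^{1,\alpha}$, elliptic regularity upgrades the a priori $C^1$ (resp.\ $C^{1,\alpha}$) regularity of $\mathrm{reg}_1\,V$ and of the touching-singularity graphs $u_1,u_2$ to $C^{2,\alpha}$, so that $\Greg V$ is well defined and every $C^2$ immersed piece $\iota_{\Oc}$ appearing in (c) has mean curvature $g\nu$.

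Next I would \emph{put the second-variation hypothesis into usable form}. Weak stability only provides $J_g''(0)\ge 0$ for $\mathrm{Vol}_g$-preserving deformations, i.e.\ for normal variations $\varphi\nu$ with $\int_S g\varphi=0$. Fixing a test function $\eta$ supported where $\Greg V$ is a single $C^2$ disk and with $\int_S g\eta\ne 0$, and replacing $\varphi$ by $\varphi-\bigl(\int_S g\varphi/\int_S g\eta\bigr)\eta$, this becomes, for arbitrary compactly supported $\varphi$ on the immersed surface, an inequality of the shape $\int_S|A_S|^2\varphi^2\le\int_S|\nabla_S\varphi|^2+C\bigl(\int_S\varphi^2+(\int_S|\varphi|)^2\bigr)$, with $C$ depending on $\|g\|_{C^1}$, on $\eta$, and on the ambient geometry. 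This is exactly the structural stability inequality used in \cite{BW}; the prescribed-mean-curvature and ambient-curvature contributions are lower order relative to the $|A_S|^2$ term.

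The core of the argument then consists of the \emph{sheeting theorem} and the \emph{minimum-distance theorem}, imported essentially verbatim from \cite{WicAnnals} and \cite{BW}: if $V$ (subject to (a1)--(c)) is $L^2$-close in $B_1$ to an integer-multiplicity hyperplane with $\omega_n^{-1}\|V\|(B_1)$ below the next half-integer, then $\spt{V}\cap B_{1/2}$ is a union of $C^{1,\gamma}$ graphs --- hence, by the previous paragraphs, $C^{2,\alpha}$ --- that can meet only tangentially; and no tangent cone of $V$ is a union of three or more half-hyperplanes meeting along an $(n-1)$-plane with a transverse pair. Both are proved by the blow-up method: rescaling by $\rho\to 0$ annihilates the bounded mean-curvature term (just as the constant $\lambda$ scales away in \cite{BW}), the converted stability inequality prevents curvature concentration and yields the Lipschitz and $C^{1,\gamma}$ estimates on the sheets, (a2) forbids classical singularities from surviving in the limit while (a3) controls the touching set, so that the blow-up limits are stacks of single-valued harmonic functions and a contradiction is reached as in \cite{BW}. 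I expect this step to be the genuine obstacle; for $g>0$, however, it is essentially a verification that the arguments of \cite{BW} carry over with $g$ in place of $\lambda$.

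Finally I would \emph{assemble the conclusions}. A Federer--Almgren dimension-reduction argument, combined with the sheeting and minimum-distance theorems and the nonexistence of stable minimal hypercones with small singular set in dimensions below $7$, produces the singular set $\Sigma=\spt{V}\setminus\Greg V$ with $\Sigma=\emptyset$ for $n\le 6$, $\Sigma$ discrete for $n=7$, and $\dim_{\mathcal H}\Sigma\le n-7$ for $n\ge 8$. On $\spt{V}\setminus\Sigma=\Greg V$ the sheeting theorem gives conclusion (i) locally --- a single $C^2$ disk, or two tangentially touching $C^2$ disks --- and the coincidence set is locally contained in an $(n-1)$-dimensional submanifold: at a genuine touching singularity $w=u_1-u_2\ge 0$ solves a second-order elliptic equation with nonvanishing right-hand side (otherwise the strong maximum principle would force $u_1\equiv u_2$), so along $\{w=0\}$ one has $Dw=0$ and $D^2w$ positive semidefinite with a positive eigenvalue, which confines $\{w=0\}$ locally to an $(n-1)$-dimensional submanifold. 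Patching the local pieces and the induced orientations yields the global proper $C^2$ immersion $\iota\colon S_V\to N$ with continuous unit normal $\nu$ and $H_V=g\nu$ of conclusion (ii).
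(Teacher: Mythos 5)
Your overall architecture---localization via $\exp_{X_0}$, elliptic regularity for ${\rm reg}_1\,V$ and for the two ordered graphs at a touching singularity, a Schoen--Simon style sheeting theorem and a minimum distance theorem proved by induction on the density following \cite{BW} and \cite{WicAnnals}, dimension reduction for $\Sigma$, and the Hopf-lemma analysis of the coincidence set---is indeed the paper's route. Two points, however, need repair. First, the pull-back of the Riemannian area under $\exp_{X_0}$ is \emph{not} the Euclidean area on $B$: it is a parametric integrand $\mathcal{F}_{X_0}(M)=\int_M F(y,\nu(y))\,d\mathcal{H}^n$ of Schoen--Simon type, so the localized varifold is stationary and stable for $\mathcal{F}_{X_0}+\text{Vol}_{\tilde g}$, its $C^1$ regular part solves the quasilinear equation (\ref{eq:E-L_Riem}) rather than the Euclidean prescribed-mean-curvature equation, and the first variation is controlled only through the inequality (\ref{approx-firstvar}) (condition (a1$^{\prime}$)). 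Running the blow-up scheme for this perturbed functional, with the attendant $\mu,\mu_1$ error terms in the stability inequality (\ref{eq:SS_Strongstab_witherrors}), is one of the two generalizations beyond \cite{BW} that the proof must actually carry out; asserting that the metric ``becomes Euclidean'' erases it.

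Second---and this is the genuine gap---hypothesis (c) is conditional: it gives weak stability only on those ${\Oc}$ for which $\Greg{V}\cap{\Oc}$ \emph{is} the image of an orientable proper $J_g$-stationary $C^2$ immersion. Before any stability inequality can be written down one must therefore prove that such an immersion exists on the sets where it is needed (open sets from which the codimension-$7$ singular set has been deleted). For $g>0$ the paper does this by lifting $\Greg{V}$ to the unit sphere bundle via the orientation $\vec{H}/g$; that this lift is an embedded manifold (equivalently, that the two sheets at a touching singularity carry opposite normals $\vec{H}/g$) rests on the one-sided maximum principle (Proposition \ref{Prop:one-sided-max}), and the $J_g$-stationarity of the resulting immersion across the coincidence set uses (a3) and continuity of $\vec{H}$ (Proposition \ref{Prop:loc_station_greg}). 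Your proposal invokes weak stability ``on the immersed surface'' without this step, and uses the maximum principle only afterwards for the structure of the coincidence set. Relatedly, your conversion of weak stability into an unconstrained inequality by subtracting $\bigl(\int_S g\varphi/\int_S g\eta\bigr)\eta$ is a legitimate alternative device (it is the one used in \cite{BCW}), but it introduces the nonlocal term $\bigl(\int_S|\varphi|\bigr)^2$ together with cross terms in $\nabla\varphi$, all of which must then be threaded through the entire Schoen--Simon machinery; the paper instead observes that weak stability forces Morse index at most $1$, hence \emph{strong} stability on a small ball about every point, which is all that the local conclusion (i) requires and feeds directly into the existing estimates.
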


\begin{oss}
 \label{oss:immersionGreg_gpos}
Note that it follows from hypothesis (a1) and Allard's regularity theory that ${\rm reg}_{1} \, V$ is non-empty, and thus the stationarity hypothesis (b) is never vacuously true; moreover, 
since $g \in C^{1, \alpha}$, it follows from standard elliptic theory that ${\rm reg}_{1} \, V$ is of class $C^{2,\alpha}$. Also, we shall show  
(see Remark~\ref{oss:orient_gpos}) that whenever $\Oc \subset N$ is orientable,  $\Greg{V} \cap \Oc$ \emph{is} the image of an orientable proper $C^2$ immersion 
$\iota_{\Oc} \, : \, S_{\Oc} \to \Oc$ that is stationary with respect to $J_{g}$, so that hypothesis (c) indeed contains information beyond weak stability of ${\rm reg}_{1} \, V$.  
\end{oss}

\begin{oss}[\textit{A more geometric (smaller) class of varifolds}]
\label{oss:add(m)}
 As pointed out in the Euclidean CMC framework (see \cite[Remarks 2.16, 2.17]{BW}), we may consider the restricted class of varifolds satisfying the assumptions of Theorem \ref{thm:mainreg_gpos} and the following additional constraint:
\begin{enumerate}
\item[(\textbf{m})] for $p \in \SingT{V} \cap \Greg{V}$, let $G_{\ell} = \exp_{p} \left(\text{graph}(u_\ell)\right)$, for $\ell=1,2$, denote the two embedded $C^2$ hypersurfaces that are tangential at $p$ and whose union is $\spt{V}$ in a neighbourhood of $p$ (notation as in Remark~\ref{oss:touchingsinggraphs}). Then the density function $y \mapsto\Theta(\|V\|,y)$ is constant 
on $G_{\ell} \setminus \SingT{V}$ for each $\ell$. (In other words, the two hypersurfaces have separately constant integer multiplicity).
\end{enumerate}
Condition (\textbf{m}) rules out examples such as that in \cite[Figure 3]{BW}.  For the (smaller) family of varifolds that satisfy (\textbf{m}) in addition to the hypotheses of Theorem~\ref{thm:mainreg_gpos}, conclusion \textit{(ii)} of Theorem~\ref{thm:mainreg_gpos} can be strengthened to the following: \emph{there exists an oriented $n$-manifold $S_V$ and a $C^2$ immersion $\iota:S_V \to N$ that is stationary and weakly stable with respect to $J_g$ such that $V = \iota_\#(|S_V|)$, $\Greg{V} = \iota(S_{V})$ and the orientation of $S_V$ agrees with $\frac{\vec{H}}{g}$}. It is this formulation of the regularity result that we will generalize to arbitrary $g$ in Theorem \ref{thm:mainregstationarityimmersedparts}.
\end{oss}

Subject to locally uniform mass and mean curvature bounds, both the class of integral $n$-varifolds 
satisfying the hypotheses of Theorem~\ref{thm:mainreg_gpos} and the smaller class of 
integral $n$-varifolds identified in Remark \ref{oss:add(m)} are compact in the varifold topology.

\begin{thm}[\textbf{compactness: special case $g>0$}]
\label{thm:compactness_gpos}
Let $n \geq 2$ and $N$ be a Riemannian manifold of dimension $n+1$, let $g_j \, : \, N\to \R$ and $g_\infty \, : \, N\to \R$ be positive functions in $C^{1,\alpha}(N)$ such that 
$g_j \to g_\infty$ in $C^{1,\alpha}(K)$ for each compact $K \subset N$. Let $(V_j)$ be a sequence of integral $n$-varifolds that satisfy the assumptions of Theorem~\ref{thm:mainreg_gpos} with $g_{j}$ in place of $g$ and have locally uniformly bounded mass, i.e.\ $\sup_{j} \, \|V_{j}\| (K) < \infty$ for each compact $K \subset N$. Then: 
\begin{itemize}
\item[(a)] there is a subsequence of $(V_{j})$ that converges in the varifold topology to an integral $n$-varifold $V_\infty$ that satisfies the assumptions (and conclusions) of Theorem \ref{thm:mainreg_gpos} with $g_{\infty}$ in place of $g;$ 
\item[(b)] if additionally condition \emph{(\textbf{m})} holds with $V_{j}$ in place of $V$,  then  condition \emph{(\textbf{m})} also holds with $V_{\infty}$ in place of $V$. 
\end{itemize}
In either case (a) or (b), the possibility $g_\infty \equiv 0$ is allowed, in which case $\SingT{V_\infty}=\emptyset$.
\end{thm}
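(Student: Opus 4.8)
The plan is to reduce the compactness statement to a combination of (i) the regularity theorem (Theorem~\ref{thm:mainreg_gpos}), which gives us uniform structural control of each $V_j$ away from a small singular set, (ii) Allard-type and elliptic estimates on the smooth pieces, and (iii) the stability machinery that controls the singular sets in the limit. Since the whole analysis is local near a point $X_0\in N$, I would first pass (as in the excerpt's introduction) to the exponential chart at $X_0$, pulling back $V_j$ and replacing $J_{g_j}$ by a functional of the same type on a Euclidean ball, with coefficient $\sqrt{|\mathscr h|}\,g_j\circ\exp_{X_0}$ converging in $C^{1,\alpha}$; this way the problem becomes the Euclidean-type problem already handled in \cite{BW}, with the only new feature being that $g$ is a function rather than a constant. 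First I would extract, using the locally uniform mass bound and Allard's compactness theorem for varifolds with first variation in $L^p_{\rm loc}$ (hypothesis (a1), with the $L^p$ bound itself uniform — this needs to be checked, but it follows from the mean-curvature bound implicit in $g_j\to g_\infty$ in $C^{1,\alpha}$ together with the mass bound via the first-variation identity on $S_j=\mathrm{reg}_1 V_j$), a subsequence converging in the varifold topology to an integral $n$-varifold $V_\infty$ whose first variation is again in $L^p_{\rm loc}(\|V_\infty\|)$ — so (a1) passes to the limit.

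Next I would establish that the structural hypotheses (a2) and (a3) and the variational hypotheses (b), (c) pass to the limit. The key tool here is the regularity conclusion of Theorem~\ref{thm:mainreg_gpos} applied to each $V_j$: away from $\Sigma_j$ (empty/discrete/codimension $\ge 7$), $\mathrm{spt}\|V_j\|$ is locally one or two $C^2$ embedded disks that are graphs solving the prescribed-mean-curvature equation with right-hand side $g_j$. On any region where the sheeting/graphical description holds with uniformly bounded gradient and curvature (obtained from the sheeting theorem and elliptic estimates, using $g_j\to g_\infty$ in $C^{1,\alpha}$), Arzel\`a–Ascoli gives $C^2$ (indeed $C^{2,\beta}$) convergence of the sheets to limit sheets solving the equation with right-hand side $g_\infty$; this simultaneously shows that $\mathrm{reg}_1 V_\infty$ is $C^{2,\alpha}$ and stationary with respect to $J_{g_\infty}$ (hypothesis (b) for $V_\infty$), that no classical singularities can form in the limit (a sequence of smooth or quasi-embedded pieces cannot converge to a transverse triple junction without violating the graphical bounds — this is exactly the ``no classical singularities in the limit'' mechanism of \cite{BW}, \cite{WicAnnals}), giving (a2), and that the coincidence set of any touching singularity of $V_\infty$ is $\mathcal H^n$-null, since it is a limit of the (null) coincidence sets with the two sheets converging in $C^1$ and remaining distinct off an $\mathcal H^n$-null set, giving (a3). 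The limiting second-variation inequality (c) follows by taking a $J_{g_\infty}$-stationary immersion $\iota_{\mathcal O}$ of $\Greg{V_\infty}\cap\mathcal O$, approximating its $\mathrm{Vol}_{g_\infty}$-preserving test deformations by $\mathrm{Vol}_{g_j}$-preserving deformations of the nearby immersed pieces of $V_j$ (using $g_j\to g_\infty$ and the positivity of $g_\infty$, or — if $g_\infty\equiv 0$ — directly, in which case $\SingT{V_\infty}=\emptyset$ by the maximum principle and the statement reduces to the $g=0$ case of \cite{WicAnnals}), and passing to the limit in $J_{g_j}''(0)\ge 0$. Part (b) — preservation of the multiplicity-constancy condition (\textbf{m}) — follows from the $C^2$ convergence of sheets: the density of $V_\infty$ on each limit sheet $G_\ell\setminus\SingT{V_\infty}$ is the limit of the (constant, integer) densities of the approximating sheets of $V_j$, hence constant.

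The main obstacle I expect is the uniform control needed to upgrade varifold convergence to $C^2$ convergence of the graphical pieces \emph{uniformly up to (but not including) the singular set} — i.e.\ making precise that the sheeting theorem of \cite{BW} applies with constants uniform along the sequence, and that the singular sets $\Sigma_j$ do not ``accumulate'' to produce a larger-than-expected $\Sigma_\infty$. This is handled by the standard dimension-reduction / upper-semicontinuity argument for the singular set (as in Schoen–Simon and \cite{WicAnnals}, \cite{BW}): the key points are that densities are upper semicontinuous under varifold convergence, that tangent cones to $V_\infty$ at singular points are limits of rescalings and hence (by the uniform estimates) still satisfy the structural and stability hypotheses, and that the sheeting theorem forbids singularities of any tangent cone below dimension $7$; a covering argument then bounds $\dim_{\mathcal H}\Sigma_\infty\le n-7$. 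A secondary subtlety — genuinely new compared to the constant-$g$ case only in bookkeeping — is checking that $L^p$-summability of the first variation is \emph{uniformly} controlled along the sequence so that Allard compactness applies; this I would dispatch at the outset using the first-variation formula $\int_{\mathrm{reg}_1 V_j}\mathrm{div}_{S_j}X = -\int g_j\,\nu_j\cdot X$ valid by (b), together with $\sup_j\|g_j\|_{C^0(K)}<\infty$ and $\sup_j\|V_j\|(K)<\infty$.
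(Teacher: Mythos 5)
Your overall route is essentially the one the paper takes (Section \ref{proof_comp}, where Theorem \ref{thm:compactness_gpos} is obtained as a special case of Theorem \ref{thm:generalcompactness}): extract a varifold limit by Allard compactness using the uniform bound $|H_{V_j}|\le \sup_K|g_j|$ valid $\|V_j\|$-a.e., upgrade to local graphical $C^2$ convergence near points of $V_\infty$ with planar tangents via the sheeting and higher-regularity theorems, rule out classical singularities of $V_\infty$ by a rescaling argument combined with the minimum distance theorem, and pass the stationarity and (weak) stability hypotheses to the limit through the graphical convergence; preservation of (\textbf{m}) then follows from the constancy of the sheet multiplicities under $C^2$ convergence, as you say.

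There is, however, one step that fails as written: your justification of (a3) for $V_\infty$, namely that the coincidence set ``is a limit of the (null) coincidence sets'' of the $V_j$. A limit of $\mathcal{H}^n$-null sets need not be null --- this is exactly what goes wrong when $g_\infty$ is allowed to vanish on a large set (see Figure \ref{fig:Touchingdoubleminimal}), and it is the reason (a3) has to be weakened to (\textbf{T}) in the general theorems. The correct argument for $g_\infty>0$ must use positivity: at a touching singularity of $V_\infty$ the two $C^2$ sheets $u_1\le u_2$ obtained from the limit are \emph{separately} stationary with mean curvatures $-g_\infty\nu$ and $+g_\infty\nu$ respectively (one-sided maximum principle, Proposition \ref{Prop:loc_station_greg}), so at a coincidence point the difference $v=u_2-u_1$ satisfies $Dv=0$ and $\Delta v = 2g_\infty>0$ to leading order; hence some $D^2_{\ell\ell}v\neq 0$ and the implicit function theorem places the coincidence set inside an $(n-1)$-dimensional submanifold, which gives (a3). (When $g_\infty\equiv 0$ there are no touching singularities and (a3) is vacuous.) A smaller imprecision of the same kind: your stated mechanism for (a2) (``violating the graphical bounds'') is not the operative one, since near a putative classical singularity of $V_\infty$ the tangent is not a plane and no graphical bounds are available; what rules it out is blowing up at such a point and applying Theorem \ref{thm:minimum_dist} to the rescaled $V_j$, which you do cite, so there this is a matter of phrasing rather than substance.
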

 
\begin{oss}[\textit{varifold limit of embedded $J_g$-stationary weakly stable hypersurfaces}]
\label{oss:GHLM}
Theorem \ref{thm:compactness_gpos} answers in particular the natural question of characterising, for an orientable ambient space $N$, the varifold limit of a sequence of orientable properly embedded hypersurfaces $M_{j}$ with mean curvature $g_j\nu_{j}$ for some continuous unit normal $\nu_{j}$ (or equivalently, $J_{g_{j}}$-stationary properly embedded hypersurfaces $M_{j}$) that are weakly $J_{g_{j}}$-stable  (i.e.~stable for $\text{Vol}_{g_j}$-preserving deformations). 
Adding assumption (\textbf{m}) as in (b) identifies a \emph{smaller} compact class that contains embedded weakly stable hypersurfaces.
\end{oss}

\begin{oss}[\textit{Stability for ambient test functions}]
\label{oss:weakeningstabilityassumption_gpos}
We point out that for the proof of Theorem \ref{thm:mainreg_gpos} a weaker version of the stability hypothesis (c) suffices: namely, it is enough to assume stability for $\text{Vol}_g$-preserving deformations of $\Greg{V}$ (as an immersion) of a specific type, i.e.~those that are induced by any ambient function $\phi \in C^1_c(\Oc)$ with $\int_{\Greg{V}\cap \Oc} \phi g d{\Hc}^n =0$ (here $\Oc$ is as in the theorem). 
As we will discuss in Section \ref{stabilitydiscussion}, this integral constraint guarantees that there exists a $\text{Vol}_g$-preserving deformation, as an immersion, whose initial velocity is $\phi \nu$, where $\nu$ is the orientation of $\Greg{V}$ in $\Oc$ given by $\frac{\vec{H}}{g}$.

If (\textbf{m}) is additionally assumed in Theorem \ref{thm:mainreg_gpos}, then the weakening of (c) just discussed can be expressed more intrinsically for the varifold by requiring the constraint on $\phi$ to take the form $\int_{\Oc} \phi g d\|V\| =0$. 
\end{oss}

\begin{oss}
\label{oss:stationarityfolvolpres}
In view of Remark \ref{oss:vol_preserv_pos}, an alternative statement can be obtained by replacing (b) of Theorem \ref{thm:mainreg_gpos} with the requirement of stationarity with respect to the area functional for $\text{Vol}_g$-preserving deformations; the conclusion is the same except for the fact that the mean curvature of $\Greg{V}$ is $\lambda g$, for some $\lambda \in \R$ (c.f.\ \cite{BW}).
\end{oss}

\begin{oss}[\textit{finite Morse index}]
\label{oss:finiteindexg_pos}
Weak $g$-stability implies that the Morse index with respect to $J_g$ is at most $1$. In Section \ref{stabilitydiscussion} we point out the well-known fact that weak $g$-stability, or more generally finiteness of the Morse index with respect to $J_g$, implies very directly that about any point there exists a ball in which strong stability holds, i.e.\ the validity of the stability inequality for all test functions compactly supported in the ball. This local consequence is the only implication of weak $g$-stability we will need for the proof of the regularity conclusion in Theorem \ref{thm:mainreg_gpos}. (The same remark applies to the other regularity results that we will present).
\end{oss}

\begin{oss}[a more general version of Theorem \ref{thm:mainreg_gpos}]
\label{oss:emptyinterior}
It is immediate that the results of this section also hold if $g$ is strictly negative, rather than strictly positive. The non-vanishing of $g$ is not the optimal assumption in Theorem \ref{thm:mainreg_gpos}, which in fact holds true if $g\geq 0$ and $\Reg_1\,V \cap \{g=0\}$ has empty interior in $\Reg_1\,V$, see Theorem \ref{thm:mainreg_emptyinterior}.

\end{oss}

\subsection{Generalization to arbitrary $g \in C^{1,\alpha}$, part I}
\label{generalizationI}

\begin{oss}[\textit{$\text{Vol}_g$-preserving deformations, arbitrary $g$}]
 \label{oss:vol_preserv_notexist}
 We begin with the remark that, when  $g \in C^{1, \alpha}$ is arbitrary, $\text{Vol}_g$-preserving deformations might not exist at all. For example, in $\R^3$ with cordinates $(x,y,z)$, let $g<0$ for $x<0$, $g=0$ for $x=0$, $g>0$ for $x>0$ and consider the hypersurface given by the plane $\{x=0\}$ oriented by choosing the normal $(1,0,0)$: then every deformations strictly increases $\text{Vol}_g$, so stationarity for $\text{Vol}_g$-preserving deformations has no content. 
 
 Moreover, even if $\text{Vol}_g$-preserving deformations exist, stationarity with respect to area for $\text{Vol}_{g}$ preserving deformations does not imply, for any choice of constant $\lambda$, stationarity with respect to $J_{\lambda g}$. Consider the following example (where $g\geq 0$): let $g>0$ in $\{(x,y,z):x^2+y^2+z^2<1\}$ and $g=0$ on $\{(x,y):x^2+y^2+z^2\geq 1\}$ and consider the unit sphere $S$. Then any $\text{Vol}_g$-preserving deformation of $S$ must be one-sided, i.e.~parametrized on $t\in [0,\eps),$ with initial velocity pointing outward. (If we push $S$ to the interior of the unit ball in a neighborhood of any point, then the enclosed volume strictly decreases, and cannot be balanced by pushing $S$ into the exterior somewhere else.) In fact $S$ is a minimizer of area for $\text{Vol}_{g}$-preserving deformations. However, $S$ does not  satisfy $\left.\frac{d}{dt}\right|_{t=0} J_{\lambda g}(t) =0$ (for any $\lambda$) for if it did, then the mean curvature must be equal to $\lambda g$ which vanishes on $S$.
Thus weak $g$-stability (i.e.\ stability for ${\rm Vol}_{g}$ preserving deformations) is not the correct notion of stability to consider for general $g$. Recalling Remark~\ref{oss:finiteindexg_pos} (which says that in case $g >0$ weak $g$-stability implies Morse index 1 with respect to $J_{g}$), we shall consider, for general $g$, the class of 
stationary points of $J_{g}$ with Morse index 1, or more generally, bounded Morse index.
\end{oss}

\begin{df}[\textit{Finite Morse index relative to ambient functions}]
\label{df:index}
Let $s$ be a non-negative integer, $\Oc \subset N$ be an orientable open set and let $\iota \, : \, S \to \Oc$ be an oriented, proper $C^{2}$ immersion, with unit normal $\nu,$ that is stationary with respect to $J_{g}.$ 
We say that $\iota$ has \emph{Morse index $s$ in $\Oc$ relative to ambient functions} if $s$ is the dimension of the largest subspace $F$ of $C^1_c(\Oc)$ such that for each $\phi \in F,$ 
$$J_g^{\prime\prime}(0) <0$$ for (normal) deformations $\psi$ given by $\psi(t, x) = \exp_{\iota(x)}\left(\iota(x) +  t (\phi \circ \iota(x))(\nu_{\iota(x)})\right)$ where $x \in S$ and $t\in (-\eps, \eps)$ for some $\eps>0$.  

\end{df}

Concerning the structural hypotheses, we shall continue to assume $(a2)$ of Theorem~\ref{thm:mainreg_gpos}, i.e.\ that ${\rm sing}_{C} \, V = \emptyset$, and this is of course necessary. As regards the structural hypothesis $(a3)$ of Theorem~\ref{thm:mainreg_gpos}, note that no class of hypersurfaces satisfying this condition literally can be compact: consider 
$g$ a smooth non-negative function that vanishes in the closure of a non-empty open set $U,$ and a sequence of embedded hypersurfaces with mean curvature prescribed by $g$, that are stable with respect to $J_g$ and have uniformly bounded area. 
As illustrated in Figure \ref{fig:Touchingdoubleminimal}, it is easy to produce an example in which they converge to a limiting varifold that has a multiplicity 2 minimal hypersurface in $U$, with touching singularities on $\p U$ (picture on the right of Figure \ref{fig:Touchingdoubleminimal}). Here $(a3)$ of Theorem~\ref{thm:mainreg_gpos} clearly fails on the limit varifold.

\begin{figure}[h]
\centering
 \includegraphics[width=4.5cm]{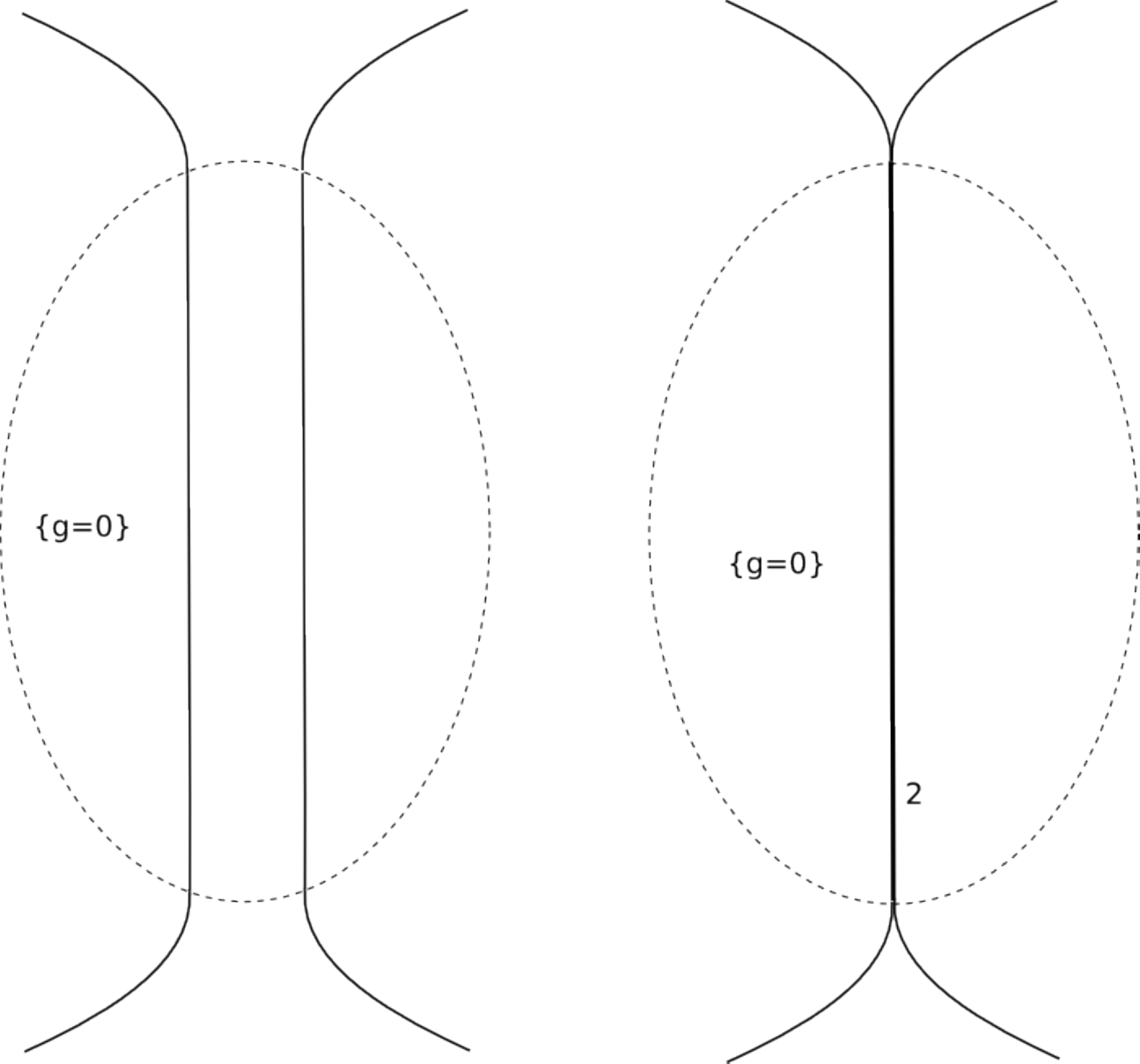} 
\caption{\small The embedded hypersurface depicted on the left belongs to a sequence $V_j$ converging in varifold sense to the integral varifold $V$ on the right, that has a minimal portion with multiplicity $2$. Condition $(a3)$ of Theorem~\ref{thm:mainreg_gpos}  fails for $V$ on $\p \{g=0\}$.}
 \label{fig:Touchingdoubleminimal}
\end{figure}

A similar example can be constructed in the case that $\{g=0\} = C \times \R$, where $C$ is a Cantor set with positive measure in $\R$. For the limit varifold we may have that $C \times \{0\}$ is in the support with multiplicity $2$, and that every point in $C \times \{0\}$ is a touching singularity for which $(a3)$ of Theorem~\ref{thm:mainreg_gpos} fails. Note that in this example the set $\{g=0\} \cap \Reg_1\,V$ has no interior in $\Reg_1\,V$.

As it turn out, the remedy for this issue is the following weakening of $(a3)$ of Theorem~\ref{thm:mainreg_gpos} which suffices for regularity and compactness theorems in the case of general $g$: 
\begin{enumerate}
\item[(\textbf{T})] 
Every touching singularity $p \in {\rm sing}_{T} \, V$ has a neighborhood $U_{p}$ such that the  coincidence set $C_{p}$ of $V$ in $U_{p}$ satisfies ${\mathcal H}^{n} \, (C_{p} \cap \{g \neq 0\}) = 0$
(see Definition~\ref{df:touchingsingularity});
\end{enumerate}

In particular, if $g>0$ as considered in Theorem \ref{thm:mainreg_gpos}, then the two conditions are the same \footnote{To avoid confusion, we remark that in the interior of $\{g=0\}$ there are no touching singularities by the maximum principle for minimal hypersurfaces (and therefore (\textbf{T}) is redundant there), so it is really on the boundary of $\{g=0\}$ that $(a3)$ of Theorem~\ref{thm:mainreg_gpos} has to be replaced by (\textbf{T}).}.

The following regularity and compactness results for arbitrary $g$ are proved in the present work and provide a general framework to answer the compactness question raised earlier. 

\begin{thm}[\textbf{regularity of finite index $J_{g}$-stationary hypersurfaces, I}]
\label{thm:mainregstationarityimmersedparts}
For $n \geq 2$ let $N$ be a Riemannian manifold of dimension $n+1$ and $g:N \to \R$ be a $C^{1,\alpha}$ function. Let $V$ be an integral $n$-varifold in $N$ such that

\begin{enumerate}
\item[(a1)] the first variation of $V$ in $L^{p}_{\rm loc} \, (\|V\|)$ for some $p >n$;
\item[(a2)] no point of ${\rm spt} \, \|V\|$ is a  classical singularity of ${\rm spt} \, \|V\|$; 
\item[(a3)] $V$ satisfies (\textbf{T}).
\end{enumerate}
Suppose moreover that: 
\begin{enumerate}
\item[(b)] the (embedded) $C^1$ hypersurface $S={\rm reg}_{1} \, V$ is stationary with respect to $J_{g}$ in the sense of Definition~\ref{stationary-def} 
taken with $\iota \, : \, S \to N$ equal to the inclusion map; 
\item[(b$^T$)] \emph{(redundant if $g\geq 0$)} each touching singularity $y \in {\rm sing}_{T} \, V$ has a neighborhood $\Oc$ such that writing 
$\exp_{y}^{-1} (\spt{V}\cap \Oc) = {\rm graph} \, u_{1} \cup {\rm graph} \, u_{2}$ for $C^{1,\alpha}$ functions $u_1 \leq u_2,$\footnote{Such $u_{1}, u_{2}$ always exist by the definition of touching singularity; see remark~\ref{oss:touchingsinggraphs}.} we have that for $j = 1,2$, the stationarity of 
$S_{j} = \Reg_1 \,V \cap \exp_{y} \, \text{graph} (u_j)$ (which follows from (b)) holds for the orientation that agrees with one of the two possible orientations of $\exp_{y} \, \text{graph} (u_j)$;
\item[(b$^{\ast}$)] \emph{(implied by (\textbf{m}) if $g> 0$)} for each orientable open set ${\Oc} \subset N \setminus ({\rm spt} \, \|V\| \setminus {\Greg} \, V)$ there exist an oriented $n$-manifold $S_{\Oc}$ and a proper $C^2$ immersion $\iota_{\Oc}:S_{\Oc} \to \Oc$ with $V\res \Oc = (\iota_{\Oc})_\#(|S_{\Oc}|)$ such that $\iota_{\Oc}$ is stationary with respect to $J_{g}$; 
\item[(c)] for each orientable open set $\widetilde{\Oc} \subset\subset  N,$ letting ${\Oc} = \widetilde{\Oc} \setminus ({\rm spt} \, \|V\| \setminus {\Greg} \, V)$ and $\iota_{\Oc}:S_{\Oc} \to N$ be as in (b*), $\iota_{\Oc}$ has finite Morse index in $\Oc$ relative to ambient functions (Definition~\ref{df:index}). 
\end{enumerate}
Then there is a closed set 
 $\Sigma \subset {\rm spt} \, \|V\|$ with $\Sigma = \emptyset$ if $n \leq 6$, $\Sigma$ discrete if $n=7$ and ${\rm dim}_{\mathcal H} \, (\Sigma) \leq n-7$ if $n \geq 8$ such that: 
 \begin{enumerate}
\item [(i)] locally near each point  $p \in {\rm spt} \, \|V\| \setminus \Sigma$, either ${\rm spt} \, \|V\|$ is a single smoothly embedded disk or ${\rm spt} \, \|V\|$ is precisely two smoothly embedded disks with only tangential intersection; if we are in the second alternative and if $g(p)\neq 0$, then locally around $p$ the intersection of the two disks is contained in an $(n-1)$-dimensional submanifold;
\item[(ii)] if $N$ is orientable then there exist an oriented $n$-manifold $S_V$, a proper $C^2$ immersion $\iota:S_V \to N$ and a global choice of unit normal $\nu$ on $S_V$ such that $V=\iota_\#(|S_V|),$ ${\rm spt} \, \|V\| \setminus \Sigma = \iota(S_V)$ and the mean curvature $H_{V}(x)$ of the immersion at $x\in S_V$ is given by $H_{V}(x) =  g(\iota(x))  \nu(x)$;
for arbitrary $N$, this conclusion applies to $V \res \left(N\setminus \{g=0\}\right)$.
 \end{enumerate}
\end{thm}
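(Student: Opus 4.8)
The plan is to reduce the assertion to a local problem in a Euclidean ball and then run the blow-up scheme of \cite{WicAnnals} and \cite{BW}, whose two engines are a \emph{sheeting theorem} (a higher-multiplicity, codimension-$1$ Allard-type regularity theorem: if $V$ is close in $B_1$ to a hyperplane with multiplicity $q$ and has small excess, then in $B_{1/2}$ it decomposes into $q$ ordered graphs) and a \emph{minimum distance theorem} (a varifold obeying the structural hypotheses cannot lie too close in $B_1$ to a stationary hypercone with a classical singularity, e.g.\ a union of three or more half-hyperplanes meeting along a common $(n-1)$-subspace). First I would localise near an arbitrary $X_0 \in N$: pulling $V$ back by $\exp_{X_0}$ yields a varifold in a ball of $\R^{n+1}$ with the Euclidean metric for which, as recorded in the introduction, $J_g$ becomes $J_{\tilde g}$ with $\tilde g = \sqrt{|\mathscr{h}|}\, g\circ\exp_{X_0} \in C^{1,\alpha}$, so it suffices to treat $N$ a Euclidean ball. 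Next, by Remark~\ref{oss:finiteindexg_pos} together with hypothesis (c), every point has a neighbourhood on which the immersion $\iota_{\Oc}$ of $\Greg V$ is \emph{strongly} $J_g$-stable, i.e.\ $J_g''(0) \geq 0$ for all normal deformations induced by $C^1_c$ test functions supported there; thus one may work in the strongly stable regime throughout. The varifold first variation is controlled (an $L^p$ bound, $p>n$) by (a1), while (b) and (b$^{\ast}$) give the scalar-mean-curvature relation $H = g\nu$ on $\text{reg}_1 V$ and on $\Greg V$ respectively.

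The core is an induction on the dimension $n$ and the multiplicity $q$: assuming $V$ strongly $J_g$-stable in $B_1$, satisfying (a2), (\textbf{T}), (b$^{T}$), (b$^{\ast}$), with $\|V\|(B_1) \leq (q + \tfrac12)\omega_n$ and small cylindrical excess relative to some hyperplane, one shows $V \cap B_{1/2}$ is a union of $q$ ordered $C^{1,\alpha}$ graphs over that hyperplane with scale-invariant $C^{1,\alpha}$ estimates — whence $C^{2,\alpha}$ estimates by elliptic theory for the quasilinear prescribed-mean-curvature equation with $C^{1,\alpha}$ coefficient $g$ (and more regularity if $g$ is smoother). The base case $q=1$ is codimension-$1$ Allard regularity, available by (a1). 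The inductive step is a blow-up: given $V_i$ converging to a multiplicity-$q$ hyperplane, rescale the graphical pieces of $\text{reg}_1 V_i$ by their $L^2$ heights; the blow-up $v$ is a $q$-valued function whose branches are harmonic, because the $g$-contribution to the first variation is of lower order and vanishes in the limit (the heights tend to $0$ while $g$ stays bounded), so the linearised equation is $\Delta v_a = 0$. Hypothesis (a2) passes to the blow-up and rules out classical singularities of $v$; the stability inequality passes to the limit; together with the induction hypothesis (minimum distance at lower multiplicity) this forces $v$ to split into $q$ single-valued harmonic (hence smooth) functions, and a standard iteration promotes $C^{0,\mu}$-smallness of $v$ to the graphical decomposition of $V$. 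The minimum distance theorem is proved simultaneously, by the parallel blow-up off a half-hyperplane cone, again using (a2) to exclude classical singularities and stability to preclude branching.

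Granting the two estimates, the conclusions follow along standard lines. Applying the sheeting theorem at all scales and points, together with the minimum distance theorem, classifies tangent cones to $V$, and Federer's dimension-reduction then produces a closed set $\Sigma \subset \spt V$ with $\Sigma = \emptyset$ for $n \leq 6$, $\Sigma$ discrete for $n = 7$, and ${\rm dim}_{\mathcal H}\, \Sigma \leq n-7$ for $n \geq 8$. Off $\Sigma$, where a tangent cone is a hyperplane, $\spt V$ is locally a union of $q$ ordered $C^{2,\alpha}$ graphs $u_1 \leq \cdots \leq u_q$; two consecutive graphs that touch either coincide identically — so the local density is constant and $X \in \Reg V$, or $X \in \SingT V$ with $g(X) = 0$ — or are tangential with coincidence set, where $g \neq 0$, contained in an $(n-1)$-dimensional submanifold by (\textbf{T}) and a Hopf-lemma argument applied to $u_{k+1} - u_k$ (which solves a linear elliptic equation). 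This is conclusion (i). Finally, (b$^{T}$) and (b$^{\ast}$) allow the local immersions of $\Greg V = \spt V \setminus \Sigma$ to be patched into a global proper $C^2$ immersion $\iota : S_V \to N$ with $V = \iota_\# |S_V|$ and a continuous choice of unit normal $\nu$ — globally when $N$ is orientable, and on $N \setminus \{g=0\}$ for general $N$; the identity $H_V(x) = g(\iota(x))\nu(x)$ is the Euler--Lagrange equation of $J_g$ (Section~\ref{stationaritydiscussion}), holding by (b) on the embedded part and by (b$^{\ast}$) on the immersed part, which is conclusion (ii).

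The step I expect to be the main obstacle is the sheeting theorem on and near $\{g=0\}$, where (\textbf{T}) controls the coincidence set only where $g \neq 0$. One must argue separately that the interior of $\{g=0\}$ carries no touching singularity (maximum principle for minimal hypersurfaces) and treat $\partial\{g=0\}$ with care, since there touching singularities can genuinely accumulate and a multiplicity-$2$ minimal sheet can appear in a varifold limit (cf.\ the Cantor-set example above). Pushing the blow-up analysis through this degenerate regime — in which the CMC constant $\lambda$ of \cite{BW} is replaced by a function that may vanish and change sign — while still extracting a $C^{1,\alpha}$ (hence $C^{2,\alpha}$) graphical structure is the crux, and is precisely where the orientation hypothesis (b$^{T}$) is needed.
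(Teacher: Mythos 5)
Your proposal is correct and follows essentially the same route as the paper: localisation via $\exp_{X_0}$ to a Euclidean ball with a Schoen--Simon-type integrand, reduction to the strongly stable case via the finite-index-implies-local-stability argument, and a simultaneous induction on the multiplicity $q$ proving a sheeting theorem and a minimum distance theorem (plus a higher-regularity step) by the blow-up scheme of \cite{WicAnnals} and \cite{BW}, with the one-sided maximum principle/Hopf lemma handling the touching structure and (b$^{T}$), (b$^{\ast}$) supplying the oriented immersion needed for the stability inequality and for conclusion (ii). The only cosmetic difference is that the paper's induction is on $q$ alone (with the three theorems proved in the order sheeting, minimum distance at densities $q+1/2$ and $q+1$, then higher regularity), rather than on $n$ and $q$ jointly.
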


\begin{thm}[\textbf{compactness}]
\label{thm:generalcompactness}
Let $n \geq 2$ and $N$ be a Riemannian manifold of dimension $n+1$, let $g_j:N\to \R$ and $g_\infty:N\to \R$ be $C^{1,\alpha}$ functions such that $g_j \to g_\infty$ in $C_{\rm loc}^{1,\alpha}$. Let $V_j$ be a sequence of integral $n$-varifolds such that the hypotheses of Theorem~\ref{thm:mainregstationarityimmersedparts} are satisfied with $V_{j}$ in pace of $V$ and $g_{j}$ in place of $g.$  Suppose also that we have 
$\limsup_{j \to \infty} \, \|V_{j}\| (K) < \infty$ for each compact set $K \subset N$ and that $\limsup_{j \to \infty} \, \text{Morse Index}\, (\iota_{\Oc_{j}}) < \infty$ for each orientable open set $\widetilde{\Oc} \subset\subset N$,  where $\Oc_{j} = \widetilde{\Oc} \setminus ({\rm spt} \, \|V_{j}\| \setminus \Greg{V_{j}})$ and $\iota_{\Oc_{j}}$ is as in hypothesis (c) of Theorem~\ref{thm:mainregstationarityimmersedparts}. 
Then there is a subsequence $(V_{j^{\prime}})$ of $(V_{j})$ that converges in the varifold topology to a varifold $V$ that satisfies the assumptions (and conclusions) of Theorem \ref{thm:mainregstationarityimmersedparts} taken with $g_\infty$ in place of $g$. Furthermore, we have that for each orientable open $\widetilde{\Oc} \subset\subset N$, 
$\text{Morse Index} \, (\iota_{\Oc}) \leq \liminf_{j^{\prime} \to \infty} \, \text{Morse Index} \, (\iota_{{\Oc}_{j^{\prime}}})$ where $\Oc = \widetilde{\Oc} \setminus ({\rm spt} \, \|V\| \setminus \Greg{V})$ 
and $\iota_{\Oc}$ is as in  hypothesis (c) of Theorem~\ref{thm:mainregstationarityimmersedparts}.
\end{thm}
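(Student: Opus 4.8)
The plan is to extract a varifold-subsequential limit, transfer to it the structural and variational hypotheses of Theorem~\ref{thm:mainregstationarityimmersedparts} so that the conclusions of that theorem apply, and then prove lower semicontinuity of the Morse index by a cutoff-and-transplantation argument in the spirit of \cite{SS}, \cite{WicAnnals}, \cite{BW}. First I would apply Theorem~\ref{thm:mainregstationarityimmersedparts} to each $V_j$: away from a closed set $\Sigma_j$ of Hausdorff dimension $\le n-7$, $V_j$ is carried by a proper $C^2$ immersion with mean curvature $g_j\nu_j$, so $|\vec H_{V_j}| \le \sup_K|g_j|$ holds $\|V_j\|$-a.e.\ on each compact $K$, a bound uniform in $j$ since $g_j \to g_\infty$ in $C^{1,\alpha}_{\rm loc}$. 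With the uniform local mass bound, Allard's compactness theorem for integral varifolds yields a subsequence (not relabelled) converging in the varifold topology to an integral $n$-varifold $V$; the uniform first-variation bound passes to the limit, giving (a1) for $V$ (indeed $\vec H_V \in L^\infty_{\rm loc}$). The density lower bounds implied by (a1) also give local Hausdorff-distance convergence $\spt{V_j}\to\spt{V}$ and, together with the structure theory, that the $\Sigma_j$ do not accumulate away from the limiting singular set.

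The heart of the argument is the structure of the limit, and this is the place where one must re-invoke the quantitative estimates behind Theorem~\ref{thm:mainregstationarityimmersedparts} --- the sheeting/graphical-decomposition theorem and the no-classical-singularity (``minimum distance'') estimates, cf.\ Theorem~\ref{thm:abstract_sheeting_thm} --- with constants uniform over the class, rather than merely applying Theorem~\ref{thm:mainregstationarityimmersedparts} term by term. At any $p\in\spt{V}$ with $\Theta(\|V\|,p)$ close to an integer $q$, I would apply the uniform sheeting to $V_j$ near $p$ to produce $q$ ordered $C^{1,\alpha}$ sheets --- $C^2$ by elliptic regularity and $C^{1,\alpha}$-convergence of $g_j$ --- converging in $C^2$ on compact subsets to $q$ ordered $C^2$ sheets decomposing $V$ near $p$. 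This shows that $\Greg{V}$ has the required local structure (a single $C^2$ disk, or exactly two meeting tangentially with coincidence set contained in an $(n-1)$-submanifold wherever $g_\infty\ne0$) and that no point of $\spt{V}$ is a classical singularity, so (a2) holds. For (\textbf{T}): at a touching singularity $p$ with $g_\infty(p)\ne0$ the two limit sheets solve the $g_\infty$-prescribed-mean-curvature equation and the strong maximum principle forces their coincidence set to be $\mathcal H^n$-null near $p$, exactly as in the case $g>0$ of \cite{BW}; touching singularities lying on $\partial\{g_\infty=0\}$ are treated sheet-wise on the connected components of $\{g_\infty\ne0\}$, giving $\mathcal H^n(C_p\cap\{g_\infty\ne0\})=0$, so $V$ satisfies (\textbf{T}).

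On ${\rm reg}_1\, V$ the $C^2$ convergence above together with $g_j\to g_\infty$ lets the prescribed-mean-curvature equation pass to the limit, so ${\rm reg}_1\, V$ is $J_{g_\infty}$-stationary: (b). Using the local structure of $\Greg{V}$ and the oriented immersions $\iota_{\Oc_j}$ furnished by (b$^*$) for $V_j$, I would ``unfold'' $\Greg{V}\cap\Oc$ (as in \cite{BW}) into an abstract oriented $n$-manifold $S_\Oc$ carrying a proper $C^2$ immersion $\iota_\Oc:S_\Oc\to\Oc$ with $V\res\Oc=(\iota_\Oc)_\#|S_\Oc|$, stationary with respect to $J_{g_\infty}$ --- over a doubled minimal portion inside $\{g_\infty=0\}$ one simply takes two oppositely oriented copies, which is area-stationary, hence $J_{g_\infty}$-stationary there since $\mathrm{Vol}_{g_\infty}$ is constant --- which gives (b$^*$); the orientation-compatibility (b$^T$) is inherited from the $V_j$ by $C^2$-convergence of the sheets where $g_\infty\ne0$ and is built into the unfolding where $g_\infty=0$.

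Finally, fix an orientable $\widetilde{\Oc}\subset\subset N$, set $\Oc=\widetilde{\Oc}\setminus(\spt{V}\setminus\Greg{V})$, $\Oc_j=\widetilde{\Oc}\setminus(\spt{V_j}\setminus\Greg{V_j})$, and suppose for contradiction that $\text{Morse Index}(\iota_\Oc)\ge s+1$ for some integer $s<\liminf_j\text{Morse Index}(\iota_{\Oc_j})$. Then there is an $(s+1)$-dimensional $F\subset C^1_c(\Oc)$ on which $\phi\mapsto J_{g_\infty}^{\prime\prime}(0)$ (for the normal deformations of Definition~\ref{df:index}) is uniformly negative, say $\le-\delta<0$ on the unit sphere of $F$. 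I would transplant a basis of $F$ to $\iota_{\Oc_j}$ using the $C^2$-convergence of the immersions away from $\Sigma$, cutting off in a shrinking neighbourhood of $\Sigma_j$: since $\dim_{\mathcal H}\Sigma\le n-7<n-2$, the cutoff region has vanishing $W^{1,2}$-capacity and contributes negligibly to the second-variation form (a standard Schoen--Simon-type estimate), while $g_j\to g_\infty$ in $C^{1,\alpha}_{\rm loc}$ makes the quadratic forms converge; hence for all large $j$ the transplanted $(s+1)$-dimensional space is negative definite for $J_{g_j}^{\prime\prime}(0)$, contradicting $\text{Morse Index}(\iota_{\Oc_j})\le s$ along a subsequence. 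Thus $\text{Morse Index}(\iota_\Oc)\le\liminf_j\text{Morse Index}(\iota_{\Oc_j})<\infty$, which is both (c) for $V$ and the asserted index inequality. With (a1)--(a3), (b), (b$^T$), (b$^*$), (c) verified for $V$ relative to $g_\infty$, Theorem~\ref{thm:mainregstationarityimmersedparts} then supplies the remaining conclusions. The main obstacle is the structure step: obtaining, with estimates uniform along the sequence, the sheeting of the limit and the exclusion of classical singularities (and of large coincidence sets where $g_\infty\ne0$); this cannot be a soft consequence of applying Theorem~\ref{thm:mainregstationarityimmersedparts} to the $V_j$ individually, and requires re-running the regularity machinery, which is precisely why the regularity and compactness results are intertwined.
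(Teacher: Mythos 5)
Your overall strategy is the one the paper follows: Allard compactness for the limit, transfer of the structural and stationarity hypotheses via the sheeting and minimum--distance theorems applied along the sequence, and semicontinuity of the index. But there is a genuine gap at the step you yourself flag as the heart of the matter. The sheeting theorem (Theorem~\ref{thm:sheeting}, via condition (II) of Definition~\ref{class_of_varifolds}) requires \emph{stability} of the $V_j$ in the ball where it is applied, not merely finite Morse index. Finite index does give each individual $V_j$ stability on some ball about each point, but the radius of that ball can shrink to zero as $j\to\infty$, so you cannot simply ``apply the uniform sheeting to $V_j$ near $p$.'' The paper's proof begins with a selection argument (its Claim~1): using the uniform bound $s$ on the index, it extracts a subsequence and a set $X$ of at most $s$ points such that every $x\notin X$ has a \emph{$j$-independent} ball on which a further subsequence of the $V_j$ is stable; only then are the sheeting and minimum--distance theorems invoked, and the exceptional points are afterwards absorbed using $n\ge 2$ (classical singularities propagate along $(n-1)$-dimensional sets, (\textbf{T}) is an $\mathcal{H}^n$-null condition, and point singularities are removable for stationarity). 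Without this localization your structure step does not get off the ground, and the same omission affects your transplantation argument: the $C^2$ graphical convergence of the immersions $\iota_{\Oc_j}$ fails not only near $\Sigma$ but also at the finitely many ``neck'' points where index concentrates, so those points must be cut off as well (harmless, since finitely many points have vanishing capacity for $n\ge 2$, but it must be said).

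Two smaller points of comparison. For the index inequality the paper does not argue by contradiction from the limit back to the sequence; it tracks the neck points directly (its Claim~3), showing that away from $m\le s$ shrinking balls the $V_{j_k}$ have index $\le s-m$, whence the limit immersion has index $\le s-m$ off those points and a capacity argument finishes; this yields the finer conclusion of Remark~\ref{oss:index_and_necks} that each neck reduces the index by one. Your transplantation argument, once the cutoff set is corrected as above, proves the stated inequality but not this refinement. For (b$^*$) of the limit, the paper obtains the oriented immersion directly as the $C^2$ limit of the immersions $\iota_{\Oc_{j_k}}$ furnished by (b$^*$) for the $V_j$ (again via Claim~3 and letting $R\to 0$), rather than by re-running the unfolding/orientability construction of Section~\ref{orientability}; the latter, as stated in Theorem~\ref{thm:orientabilityGreg}, requires hypothesis (a4), which is not among the hypotheses available here, so if you take the unfolding route you must justify orientability without (a4) --- inheriting it from the $\iota_{\Oc_j}$, as you partly indicate, is the cleaner path.
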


\begin{oss}[\textit{varifold limit of embedded finite-index $J_{g}$-stationary hypersurfaces}]
\label{oss:compactnessforembedded} 
Let $g_j  \to g$ in $C_{\rm loc}^{1,\alpha}(N)$, where $N$ is an oriented $(n+1)$-manifold, and let $(M_j)$ be a sequence of $C^2$ oriented embedded hypersurfaces in $N$ such that $\overline{M_j}=M_j$ and $M_{j}$ is stationary with respect to $J_{g_j}.$ Assume that the Morse index  of $M_{j}$ with respect to $J_{g_{j}}$ is locally bounded uniformly in $j.$ 
Then for any integral $n$-varifold $V$ arising as the varifold limit of a subsequence of the sequence $(|M_{j}|)$ (at least one such $V$ exists if $|M_{j}|$ have locally uniformly bounded mass in $N$), we have by Theorem~\ref{thm:generalcompactness} that $V$ is a $C^2$ immersion, i.e.~$V=\iota_\# |S|$, where $S$ is an oriented $n$-dimensional manifold (possibly with many connected components), $\iota:S \to N$ is a $C^2$ immersion that is stationary and has locally bounded Morse index  with respect to $J_g$. Moreover $\text{dim}_{\Hc}\left(\overline{\iota(S)} \setminus \iota(S)\right) \leq n-7$ and the lack of embeddedness of $\iota$ can only arise in the manner described in (i) of Theorem \ref{thm:mainregstationarityimmersedparts}.
(Within the proof of Theorem \ref{thm:generalcompactness}, see Remark \ref{oss:index_and_necks}, we will also obtain finer information on the convergence, in particular we will see that it is locally graphical and $C^2$, possibly with multiplicity, except for a finite set of points in $\iota(S)$.)

Therefore Theorem \ref{thm:generalcompactness} characterizes (analogously to what was observed in Remark \ref{oss:GHLM}) the varifold limit of embedded hypersurfaces that are stationary and stable with respect to the functional $J_g$ under a uniform area bound (a mean curvature bound is implicilty assumed by requiring $g_j  \to g$ in $C^{1,\alpha}(N)$). 
\end{oss}

\begin{oss}[\textit{Non-orientable ambient spaces}] 
\label{rem:non-orientable}
Consider $N$ that is not necessarily orientable, and let $g_{j}$, $g$ be as in the preceding remark. Let $M_{j}$ be a sequence of two-sided, embedded hypersurfaces of $N$ with $\overline{M_{j}} = M_{j}$ such that the mean curvature of $M_{j}$ is $g_{j}(x) \nu_{j}(x)$  for every  $x \in M_{j}$ where $\nu_{j}$ is a continuous unit normal to $M_{j}.$ Then the relative enclosed volume $\text{Vol}_{g_{j}}$ need not be well-defined for arbitrary compactly supported deformations of $M_{j}$ so we cannot speak of stationarity of $M_{j}$ in $N$ with respect to $J_{g_{j}}$. For deformations supported in any small geodesic ball however $J_{g_{j}}$ is well-defined and with respect to such deformations stationarity of $M_{j}$ holds. If additionally the Morse index of $M_{j}$ in any given small geodesic ball is uniformly bounded independently of $j$, and if $V$ is an integral varifold arising as the varifold limit of a subsequence of $(|M_{j}|)$,  it follows from Theorem~\ref{thm:generalcompactness} that $V$ satisfies all of the properties described in the preceding remark except that in place of orientability of $S$ we have 
that there exists a continuous (on $S$) unit normal to $\iota.$

\end{oss}

\begin{oss}[\textit{On the stationarity assumptions}]
\label{oss:why_b*}
Assumptions (b*) and (b$^T$) in Theorem \ref{thm:mainregstationarityimmersedparts} were not required in Theorem \ref{thm:mainreg_gpos}. The reason for (b$^T$) in the case when $g$ can take both positive and negative values is the fact that the one-sided maximum principle does not hold with the same strength as in the case $g\geq 0$; we will discuss this further in Sections \ref{furtherconsequencesoftheassumptions} and Appendix \ref{gchangessign}. 

The fact that hypothesis (b$^{\ast}$) is implied by hypothesis (\textbf{m}) when $g>0$ will be discussed in Remark \ref{oss:orient_gpos}. 
Let us now describe the reasons for assuming (b*). To simplify the discussion, let $g\geq 0$ and consider the varifold in Figure \ref{fig:immersion_high_mult} (we may take a trivial product with $\R$ to make $n\geq 2$ as in our theorems), with the multiplicities indicated: here (a1), (a2), (a3) are satisfied and there is no ``genuine'' singular set, i.e.~$\spt{V}=\Greg{V}$. Moreover, the varifold satisfies the stationarity assumption (b) with respect to $J_g$ on $\Reg_1\,V$ (for an appropriate choice of $g \geq 0$).

\begin{figure}[h]
\centering
 \includegraphics[width=6cm]{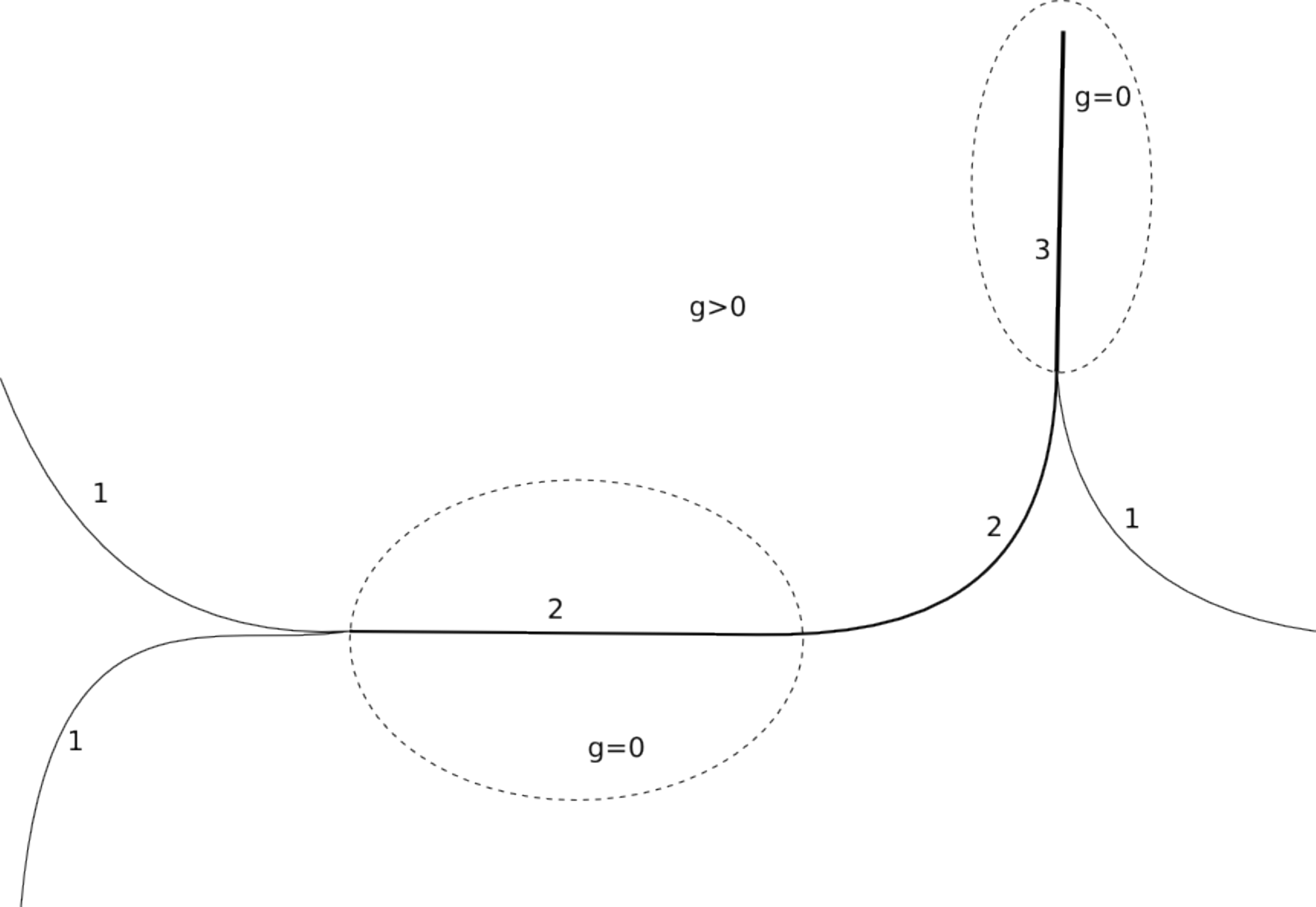} 
\caption{\small For this varifold, with the multiplicities as indicated, (\textbf{T}) is satisfied but (b$^*$) is not. Indeed, writing the varifold as the pushforward via a $C^2$ immersion, or writing its support as the image of a $C^2$ immersion, one of the connected components of the immersion is not stationary for $J_g$; it is orientable but no orientation agrees with $\frac{\vec{H}}{g}$ on $\{g \neq 0\}$.}
 \label{fig:immersion_high_mult}
\end{figure}

Note that, around every point, there is a neighbourhood in which $\spt{V}$ is the image of a $C^2$ immersion that is stationary with respect to $J_g$ (in fact, for $g\geq 0$ and assuming (\textbf{T}), it follows from the one-sided maximum principle in Section \ref{furtherconsequencesoftheassumptions} that stationarity of $\Reg_1\,V$ implies that locally around any point $\Greg{V}$ agrees with the image of a $J_g$-stationary immersion whose domain is the union of two disks). Also, locally in a neighborhood of every point, $V$ is realized as the push-forward of an oriented hypersurface by a $C^2$ immersion that is $J_g$-stationary; in other words, (b*) is valid locally around any point. However, the only way to write $\Greg{V}$ \textit{globally} as the image of a $C^2$ immersion is to use an immersion with $3$ connected components with one of them (namely the $C^2$ curve starting at the bottom-left point and ending at the top-right point) failing to be stationary with respect to $J_g$. (The same immersion also realizes $V$ as a pushforward.) This varifold is therefore undesirable from the point of view of a regularity result whose aim is to conclude that the varifold is globally a classical $J_g$-stationary immersion (as in conclusion (ii) of Theorem \ref{thm:mainregstationarityimmersedparts}). Moreover, this varifold is not a  limit of embedded stable $J_g$-stationary oriented hypersurfaces (as such, undesirable to answer the compactness question in Remark~\ref{oss:compactnessforembedded}). 

The varifold in Figure \ref{fig:immersion_high_mult} cannot be ruled out by imposing local structural assumptions on the support (assumption (\textbf{T}) on touching singularities was introduced precisely to allow the support to have local structures such as those in the right picture in Figure \ref{fig:Touchingdoubleminimal} for $\SingT{V}$): one solution to rule it out is to impose (b*). A different one will be examined in Section \ref{generalizationII}.
\end{oss}

\begin{oss}[\textit{On the stability assumption}]
\label{oss:weakeningstabilityassumption}
Assumption (c) in Theorem \ref{thm:mainregstationarityimmersedparts} requires the finiteness of the Morse index  (of the generalized-regular part of $V$) with respect to $J_g$; rather than requiring this for all deformations as an immersion, it does so only for those deformations that are induced by an ambient compactly supported test function. (Compare with Remarks \ref{oss:weakeningstabilityassumption_gpos} and \ref{oss:finiteindexg_pos} the case $g>0$.) The finiteness of the index implies (see Section \ref{stabilitydiscussion}) that around every point there is a ball in which stability holds for deformations that are induced by all ambient test function (supported in the ball), therefore, for the local conclusion \textit{(i)}, we may reduce to the case $s=0$.
\end{oss}

\begin{oss}[\textit{The open set $\Oc$ in (b*) and (c)}]
 \label{oss:openset_assumptions}
For conclusion \textit{(i)} of Theorem \ref{thm:mainregstationarityimmersedparts} it suffices to assume the validity of (b*) and (c) for open sets $\Oc$ of specific type, namely for $\Oc=C \setminus Z$, where $C$ is (in local exponential coordinates) an open cylinder or an open ball and $Z$ is a closed set with $\text{dim}_{\Hc}(Z)\leq n-7$ (see also Remark \ref{oss:forlaterref}). The fact that this weaker assumption suffices for \textit{(i)} will be clear within the proof in Section \ref{proof_reg} (in fact, we will reduce Theorem \ref{thm:mainregstationarityimmersedparts} to Theorem \ref{thm:regularity_stronglystable}). With this weaker assumption, conclusion \textit{(ii)} of Theorem \ref{thm:mainregstationarityimmersedparts} is a priori valid only in any ambient open ball or open cylinder.
\end{oss}

\subsection{Generalization to arbitrary $g \in C^{1,\alpha}$, part II}
\label{generalizationII}

The varifolds that satisfy the hypotheses of Theorem~\ref{thm:mainregstationarityimmersedparts} are regular and, by Theorem~\ref{thm:generalcompactness}, form a closed set in the varifold topology. For certain applications however, one is interested not so much on conditions guaranteeing both regularity and closure as conditions implying regularity alone that are easily checked. In this respect hypothesis (b*) of Theorem \ref{thm:mainregstationarityimmersedparts} may be unsatisfactory. This is the case, for instance, 
in the important example of the Allen--Cahn approximation scheme studied in \cite{RogTon} (see also \cite{HutchTon}). In that setting, $J_{g}$ stationary integral varifolds arise as  limits of ``weighted'' level sets of $W^{1,2}$ functions that solve certain PDEs. While hypotheses (b) and (b$^T$) of Theorem~\ref{thm:mainregstationarityimmersedparts} can easily be checked in such a situation, hypothesis (b$^{\ast}$) may be difficult to verify or may even be invalid. As pointed out in Remark \ref{oss:why_b*}, the difficulty is that local considerations are not sufficient to check (b$^{\ast}$); that is to say, while (b*) may be valid in a small ball around every point of $\Greg{V}$, it may fail in large open sets that only intersect $\Greg{V}$ (as is the case in the varifold in Figure \ref{fig:immersion_high_mult}). 

The result below (Theorem~\ref{thm:mainreg_g0_version1}) is a regularity theorem which, at the expense of not having an associated compactness result, allows to draw the same conclusions as in Theorem \ref{thm:mainregstationarityimmersedparts} but with hypothesis (b*) replaced by a condition of a different nature (hypothesis (a4)). This condition is compatible with the conclusions of \cite{RogTon} and thus provides a regularity theorem that can play a role in such a context.\footnote{We point out that the integral varifolds obtained by the limiting process analysed in  \cite{RogTon} may be as those described in the discussion that precedes (\textbf{T}), such as the varifold on the right in Figure \ref{fig:Touchingdoubleminimal}: therefore also in this context it is important to weaken the assumption on the coincidence set to (\textbf{T}).}

\begin{thm}[\textbf{regularity, II (multiplicity assumption)}]
\label{thm:mainreg_g0_version1}
For $n \geq 2$ let $N$ be a Riemannian manifold of dimension $n+1$ and $g:N \to \R$ be a $C^{1,\alpha}$ function. Let $V$ be an integral $n$-varifold in $N$ such that

\begin{enumerate}
\item[(a1)] the first variation of $V$ in $L^{p}_{\rm loc} \, (\|V\|)$ for some $p >n$;
\item[(a2)] no point of ${\rm spt} \, \|V\|$ is a  classical singularity of ${\rm spt} \, \|V\|$; 
\item[(a3)] $V$ satisfies (\textbf{T}); 
\item[(a4)] the multiplicity of $V$ is $1$ on $\Reg_1\,V \cap \{g\neq 0\}$.
\end{enumerate}
Suppose moreover that: 
\begin{enumerate}
\item[(b)]  the (embedded) $C^1$ hypersurface $S={\rm reg}_{1} \, V$ is stationary with respect to $J_{g}$ in the sense of Definition~\ref{stationary-def} 
taken with $\iota \, : \, S \to N$ equal to the inclusion map;

\item[(b$^T$)] \emph{(redundant if $g\geq 0$)} each touching singularity $y \in {\rm sing}_{T} \, V$ has a neighborhood $\Oc$ such that writing 
$\exp_{y}^{-1} (\spt{V}\cap \Oc) = {\rm graph} \, u_{1} \cup {\rm graph} \, u_{2}$ for $C^{1,\alpha}$ functions $u_1 \leq u_2,$\footnote{Such $u_{1}, u_{2}$ always exist by the definition of touching singularity; see remark~\ref{oss:touchingsinggraphs}.} we have that for $j = 1,2$, the stationarity of 
$S_{j} = \Reg_1 \,V \cap \exp_{y} \, \text{graph} (u_j)$ (which follows from (b)) holds for the orientation that agrees with one of the two possible orientations of $\exp_{y} \, \text{graph} (u_j)$;
\item[(c)] for each orientable open set ${\Oc} \subset N \setminus ({\rm spt} \, \|V\| \setminus {\Greg} \, V)$ with ${\Oc} \subset \subset N$, such that $V \res \Oc = \iota_{\Oc \, \#} \, (|S_{\Oc}|)$ where $S_{\Oc}$ is an oriented  $n$-manifold and $\iota \, : \, S_{\Oc} \to {\Oc}$ is  a proper oriented $J_{g}$-stationary $C^{2}$ immersion, $\iota_{\Oc}$ has finite Morse index relative to ambient functions (Definition~\ref{df:index}). 
\end{enumerate}
Then there is a closed set 
 $\Sigma \subset {\rm spt} \, \|V\|$ with $\Sigma = \emptyset$ if $n \leq 6$, $\Sigma$ discrete if $n=7$ and ${\rm dim}_{\mathcal H} \, (\Sigma) \leq n-7$ if $n \geq 8$ such that: 
 \begin{enumerate}
\item [(i)] locally near each point  $p \in {\rm spt} \, \|V\| \setminus \Sigma$, either ${\rm spt} \, \|V\|$ is a single smoothly embedded disk or ${\rm spt} \, \|V\|$ is precisely two smoothly embedded disks with only tangential intersection; if we are in the second alternative and if $g(p)\neq 0$, then locally around $p$ the intersection of the two disks is contained in an $(n-1)$-dimensional submanifold;
\item[(ii)] If $N$ is orientable, then ${\rm spt} \, \|V\| \setminus \Sigma$ is the image of a proper $C^2$ immersion $\iota:S_V \to N$ such that $V=\iota_\#(|S_V|)$. In this case, letting $M_j$ denote the minimal embedded hypersurface in $N$ (possibly with many connected components) such that $M_j \subset \Greg{V}$ and $\Theta(\|V\|, y) =j$ for $y \in M_j$, there is a continuous choice of unit normal $\nu$ on $S_V \setminus \iota^{-1}(\cup_{j \text{odd}}M_j)$ such that the mean curvature $H_{V}(x)$ of $S_V \setminus \iota^{-1}(\cup_{j \text{odd}}M_j)$ at $x$ is given by $H_{V}(x) =  g (\iota(x))  \nu(x)$; moreover, if each $M_j$ is orientable for $j$ odd, then $\nu$ extends continuously to all of $S_V$. Each $M_j$ for $j \geq 3$ is such that $\overline{M_j}\setminus M_j$ has Hausdorff dimension $\leq n-7$. 
 \end{enumerate}
\end{thm}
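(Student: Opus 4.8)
The plan is to run the proof of Theorem~\ref{thm:mainregstationarityimmersedparts} essentially verbatim, departing from it only at the point where hypothesis (b*) was used; there the multiplicity hypothesis (a4), together with the one-sided maximum principle, will take over. First I would record the standard reductions: by (a1) and Allard's theorem $\Reg_1\,V$ is non-empty and, since $g\in C^{1,\alpha}$, of class $C^{2,\alpha}$, with mean curvature $g\nu$ for a local unit normal $\nu$ by (b). By Remark~\ref{oss:finiteindexg_pos} and hypothesis (c), finiteness of the Morse index relative to ambient functions implies that every point has a neighbourhood in which the relevant $J_g$-stationary immersion is strongly stable; hence for the local conclusion \emph{(i)} it suffices to work in a small geodesic ball under a strong-stability hypothesis. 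Pulling back by $\exp$, the functional becomes $A+\text{Vol}_{\tilde g}$ with $\tilde g=\sqrt{|\mathscr h|}\,g\circ\exp$, so we are in the Euclidean prescribed-mean-curvature setting and may invoke the strong-stability regularity theorem (the PMC analogue of \cite{BW}, \cite{WicAnnals}, cf.~Theorem~\ref{thm:regularity_stronglystable} and Theorem~\ref{thm:abstract_sheeting_thm}).

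The second step is the local structure and the singular set, which does not use (a4) or (b*) and is identical to the corresponding step in Theorem~\ref{thm:mainregstationarityimmersedparts}. At any $p\in\spt{V}$ and on a small scale the sheeting theorem presents the varifold as a finite union of $C^2$ sheets ordered by height; by (a2) (no classical singularities) and (\textbf{T}) (together with (b$^T$) to handle the sign of $g$) adjacent sheets are either disjoint or tangential. When $g(p)\neq 0$, the strong maximum principle for hypersurfaces with non-vanishing prescribed mean curvature forces a pair of touching sheets to coincide on at most an $(n-1)$-dimensional set, yielding \emph{(i)} with the refinement on $\{g\neq0\}$. The set $\Sigma$ where sheeting fails is closed, and a Federer-type dimension-reduction argument --- tangent cones are stable stationary cones with no classical singularities, hence by the Wickramasekera classification are multiplicity-weighted hyperplanes up to dimension $n-7$, isolated lines in dimension $7$, and void below --- gives $\dim_{\Hc}\Sigma\le n-7$, $\Sigma$ discrete for $n=7$, $\Sigma=\emptyset$ for $n\le6$.

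The third step is conclusion \emph{(ii)}, and here (a4) replaces (b*). Work on $\Greg{V}=\spt{V}\setminus\Sigma$ and decompose by the value of $g$. On $\{g\neq0\}$, hypothesis (a4) gives multiplicity $1$, so by \emph{(i)} the only non-embedded behaviour is a tangential touching of two multiplicity-$1$ disks along a codimension-$1$ set: thus $\Greg{V}\cap\{g\neq0\}$ is quasi-embedded with density $\le 2$. On the interior of $\{g=0\}$, $V$ restricted there is stationary and stable with respect to area alone (Remark~\ref{vol-preserving-def}) with no classical singularities, hence by \cite{WicAnnals} a stable minimal immersion away from a set of dimension $\le n-7$; since the regular part of a stable minimal varifold carries locally constant integer multiplicity on each connected component, $\Greg{V}\cap\{g=0\}^\circ=\bigcup_j M_j^\circ$, where $M_j^\circ$ is the relatively open set of density-$j$ points, a smooth embedded minimal hypersurface. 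Letting $M_j$ be its closure in $\Greg{V}$, the maximum principle with (\textbf{T}) and (a2) shows that across $\partial\{g=0\}$ the density-$j$ minimal pieces attach only tangentially to neighbouring pieces with total multiplicity locally constant, so each $M_j$ is a $C^2$ embedded hypersurface with $\dim_{\Hc}(\overline{M_j}\setminus M_j)\le n-7$. Now I would build $S_V$ by gluing: over $\Greg{V}\cap\{g\neq0\}$ take the canonical cover of the quasi-embedded locus (one sheet off the touching set, two along it), over each $M_j$ take $j$ disjoint copies, and patch these across $\partial\{g=0\}$ by matching sheets --- unambiguous because near each point \emph{(i)} already exhibits $V$ locally as the pushforward of an immersion of a union of disks, and hypothesis (c), applied to the orientable open sets arising from this construction, supplies the stationarity and Morse-index input to run the global argument --- obtaining a $C^2$ manifold $S_V$, a proper $C^2$ immersion $\iota:S_V\to N$ with $V=\iota_\#(|S_V|)$. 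For the normal, on the complement of $\iota^{-1}(\bigcup_{j\text{ odd}}M_j)$ orient $S_V$ by $\vec H/g$ on the $g\neq0$ part and, along an even-multiplicity minimal piece, pair its $j$ sheets into $j/2$ oppositely oriented pairs: this is continuous and consistent and gives mean curvature $g\nu$; along an odd-multiplicity $M_j$ one sheet is left unpaired and cannot be coherently oriented unless $M_j$ is itself an orientable hypersurface, in which case $\nu$ extends across it.

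The main obstacle is the gluing in the third step --- constructing the global $C^2$ immersion $\iota:S_V\to N$ and verifying $V=\iota_\#|S_V|$ across the free boundary $\partial\{g=0\}$, where the quasi-embedded regime coming from (a4) on $\{g\neq0\}$ meets the possibly higher-multiplicity minimal regime on $\{g=0\}$. One must control how minimal sheets emerge from the $g\neq0$ region (governed by the one-sided maximum principle, which is precisely why (b$^T$) is needed when $g$ changes sign), show the local immersed pieces from \emph{(i)} patch consistently across the interface, and then track orientations finely enough to pin down exactly when $\nu$ extends (the odd/even dichotomy and the orientability of $M_j$). The heavy sheeting and dimension-reduction machinery of the second step is imported wholesale from \cite{BW}, \cite{WicAnnals} and Theorem~\ref{thm:abstract_sheeting_thm}; what is genuinely new here, relative to Theorem~\ref{thm:mainregstationarityimmersedparts}, is this global topological bookkeeping made possible by (a4) in lieu of (b*) --- which is exactly what makes the statement applicable to the Allen--Cahn limit varifolds of \cite{RogTon} and \cite{HutchTon}.
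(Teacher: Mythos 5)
Your Step~2 cannot be run ``identically to the corresponding step in Theorem~\ref{thm:mainregstationarityimmersedparts}'', and this is where the proposal has a genuine gap. In Theorem~\ref{thm:mainregstationarityimmersedparts} the sheeting theorem and the dimension-reduction argument require the stability inequality (Schoen's inequality (\ref{eqSchoenlemma})) on ambient balls $B$ in which $\spt{V}\setminus\Greg{V}$ is known (inductively) to have dimension $\leq n-7$; hypothesis (c) delivers that inequality only \emph{after} one knows that $V\res B$ is the pushforward of an \emph{oriented, $J_g$-stationary} $C^2$ immersion, and in that theorem this antecedent is exactly what (b*) supplies. Hypothesis (c) of Theorem~\ref{thm:mainreg_g0_version1} is conditional in the same way, so with (b*) dropped the stability input for your Step~2 is simply not available until one has constructed the oriented stationary immersion of $\Greg{V}\cap B$ on such balls. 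That construction is the actual role of (a4) in the paper (Section~\ref{orientability}: Lemma~\ref{lem:mult12}, Lemmas~\ref{lem:immersion1} and \ref{lem:orientability}, Proposition~\ref{Prop:orientabilityimmersion}, culminating in Theorem~\ref{thm:orientabilityGreg}), and it must be interleaved with the induction on density: at each stage the inductive hypotheses give the codimension-$7$ bound on the singular set below density $q$ (Proposition~\ref{Prop:elementaryconsequence}), Theorem~\ref{thm:orientabilityGreg} then produces the oriented immersion on those balls, which activates (c) and hence condition (II) of Definition~\ref{class_of_varifolds}, which in turn drives the sheeting and minimum distance theorems at level $q$. By postponing (a4) entirely to the global bookkeeping of your Step~3 you leave Step~2 circular: you invoke a stability inequality whose hypothesis you have not verified.

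The construction itself is also more delicate than the gluing you sketch. The difficulty is precisely the set $M_2=\stackrel{\circ}{(\{g=0\}\cap\Reg_1\,V)}\cap\{\Theta=2\}$, which must be covered twice by the immersion while carrying no canonical normal (since $\vec{H}=0$ there), and the orientation on the $\{g\neq0\}$ part (given by $\vec{H}/g$ and extended by continuity using the one-sided maximum principle and Remark~\ref{minimal-touching}) has to be matched coherently across $\partial M_1\cap\partial M_2$; the paper does this via two separate lifts to the sphere bundle glued along their overlap, a Hopf-lemma argument showing that touching disks always carry nearby points with $g\neq0$, and a Samelson-type intersection-theory argument for the orientability of the multiplicity-one minimal pieces in a simply connected ball. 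Your odd/even pairing of sheets over $M_j$ is in the right spirit for conclusion (ii), but without first establishing Theorem~\ref{thm:orientabilityGreg} the proof of conclusion (i) does not close.
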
 

\begin{oss}
If $N$ is simply connected (thus orientable), then it follows from the claim in Step 4 of the proof of Proposition~\ref{Prop:orientabilityimmersion} and Reamrk~\ref{discard} that $M_{j}$ as above for $j$ odd are orientable, and hence $S_{V}$ is orientable. Thus in this case ${\rm spt} \, \|V\| \setminus \Sigma$ is the image of a proper, oriented $C^2$ immersion $\iota:S_V \to N$ such that $V=\iota_\#(|S_V|)$.
\end{oss} 

\begin{oss}
The orientability aspects of conclusion (ii) will be discussed in Remark \ref{oss:O_not_simply_conn}.
\end{oss} 

In case $N$ is non-orientable, for an arbitrary embedded two-sided hypersurfaces $M$ with a continuous unit normal $\nu$, the condition that the mean curvature is $g \nu$ cannot be variationally characterised as stationarity with respect to $J_{g}$ because relative enclosed $g$-volume $\text{Vol}_{g}$ is defined only when $M$ is oriented. (cf. Remark~\ref{rem:non-orientable}). However, 
our proof of Theorem~\ref{thm:mainreg_g0_version1} gives the following result, in which hypothesis (b) of Theorem~\ref{thm:mainreg_g0_version1}  is replaced with a non-variational mean curvature assumption, and a conclusion analogous to conclusion (ii) of Theorem~\ref{thm:mainreg_g0_version1} is obtained.

\begin{thmbis}{thm:mainreg_g0_version1}[\textbf{Non-orientable $N$}]
\label{thm:non-orientable}
Suppose all hypotheses of Theorem~\ref{thm:mainreg_g0_version1} hold except (b). Suppose that  for each open set ${\Oc} \subset\subset N \setminus ({\rm spt} \, \|V\| \setminus {\rm reg}_{1} \, V)$ such that ${\rm reg}_{1} \, V \cap {\Oc}$ is two-sided, there exists a continuous unit normal $\nu$ such that the mean curvature $H_{V}$ on the (embedded) $C^1$ hypersurface 
${\rm reg}_{1} \, V \cap {\Oc}$ is given by $H_{V} = g \nu$. Then conclusion (i) of Theorem~\ref{thm:mainreg_g0_version1} holds. Furthermore, if for all odd $j$, the hypersurfaces $M_j$ as in conclusion (ii) of Theorem~\ref{thm:mainreg_g0_version1} are two-sided, then there is an $n$-manifold $S_{V}$ and a  proper $C^2$ immersion $\iota:S_V \to N$ such that $V=\iota_\#(|S_V|),$  and there is a continuous choice of unit normal $\nu$ on $S_V$ such that the mean curvature $H_{V}(x)$ of $S_V$ at $x$ is given by $H_{V}(x) =  g (\iota(x))  \nu(x)$. 
\end{thmbis}

\begin{oss}
The two-sidedness of $M_j$ for $j$ odd is true for the varifolds obtained in \cite{RogTon}. 
\end{oss}

\subsection{Remarks on the regularity theorems}

\begin{oss}
 \label{oss:smallportions} 
A key common feature of the variational assumptions in Theorems \ref{thm:mainreg_gpos}, \ref{thm:mainregstationarityimmersedparts}, \ref{thm:mainreg_g0_version1} is the fact that they are required \emph{only on orientable portions of the regular set}; here regular means $C^1$ for stationarity and and $C^2$ for stability, i.e.~respectively the regularity conditions that allow to state stationarity and stability for immersions in a classical way. A priori these portions could be very small in measure, since we assume no size control on the genuine singular set $\spt{V} \setminus \Greg{V}$. The only global assumption on $V$ is the $L^p_{\text{loc}}$-condition on the first variation, which, by Allard's compactness theorem, can be verified in many situations of interest. We also stress that the only control required on the singular set by assumption involves just two specific types of singularities that can be described by means of \emph{regular pieces that come together in a regular fashion}: this mild hypothesis favours checkability in the potential applications of the theory.
\end{oss}

\begin{oss}
 \label{oss:globalconclusion} 
The regularity theorems \ref{thm:mainreg_gpos}, \ref{thm:mainregstationarityimmersedparts} and \ref{thm:mainreg_g0_version1} have a twofold scope (in the case of Theorem \ref{thm:mainreg_gpos} we refer here to the version of the result that was described in Remark \ref{oss:add(m)}, i.e.~assuming (\textbf{m})). The three theorems give:

\noindent (i) the \textit{local} regularity of $\spt{V}$, i.e.~the desired conclusion from the point of view of varifold theory (in our context, a point is considered regular if it is in $\Greg{V}$);

\noindent (ii) a \textit{global} geometric characterization, namely the fact that $V$
is a classical immersion that is $C^2$ oriented and $J_g$-stationary. 
\end{oss}

\begin{oss}
\label{oss:orient_gpos}
We digress on the variational assumptions in Theorem \ref{thm:mainreg_gpos}. It can be checked that condition (c) is automatically satisfied if $\Oc$ is a small enough ambient ball $B$, since in this case $\Greg{V} \cap B$ is, by definition, either a graph or the union of two graphs (thus an oriented immersed hypersurface, stationary thanks to the one-sided maximum principle --- moreover the graph structure gives stability). The key observation that gives power to (c) in Theorem \ref{thm:mainreg_gpos} is the fact that \textit{whenever} ${\Oc} \subset N \setminus ({\rm spt} \, \|V\| \setminus {\Greg} \, V)$, then $\Greg{V} \cap \Oc$ \textit{is} an oriented $C^2$ immersion that is stationary with respect to $J_g$. This will be proved later in a more general context. However, we point out that, when $g>0$, it suffices to observe that the local immersion that describes $\Greg{V}$ in a neighbourhood of any point is naturally oriented by $\frac{\vec{H}}{g}$; considering the unit sphere bundle over $\Oc$ and taking the lift of $\Greg{V}$ according to the orientation $\frac{\vec{H}}{g}$ (so that points in $\Greg{V} \setminus \Reg_1\,V$ are lifted twice, since by the one-sided maximum principle, $\vec{H}$ points in opposite directions for the two graphs) we construct an embedded oriented manifold (the lift) and a $C^2$ immersion (the projection) that describes $\Greg{V} \cap \Oc$, as desired. (Also note that if (\textbf{m}) is additionally assumed, then the local lift can be done with multiplicity and we obtain, in the bundle, an embedded manifold with an integer multiplicity that is constant on each connected component, so the abstract manifold $S_{\Oc}$ can be constructed by creating as many copies of each connected component as the multiplicity requires.) These observations also show  that hypothesis (b$^{\ast}$) in Theorem \ref{thm:mainregstationarityimmersedparts} is implied by hypothesis (\textbf{m}) when $g>0$.
\end{oss}
  
\begin{oss}
\label{oss:forlaterref}
We stress, for later reference, that within the proof of Theorems \ref{thm:mainreg_gpos}, \ref{thm:mainregstationarityimmersedparts} and \ref{thm:mainreg_g0_version1} the stability condition (c) must be used (in its analytic formulation, the so-called stability inequality) on ambient balls of fixed size in which $\spt{V} \setminus \Greg{V}$ is known to be suitably small ($n-7$-dimensional at the most), as pointed out also in Remark \ref{oss:openset_assumptions}. This requires to known that $\Greg{V}$ is, in such a ball  and away from the small set, a $C^2$-immersed oriented manifold. As explained in Remark \ref{oss:orient_gpos}, this is true in a rather straightforward way and for the whole of $\Greg{V}$ for $g>0$, i.e.~in Theorem \ref{thm:mainreg_gpos}. For Theorem \ref{thm:mainregstationarityimmersedparts} this is just (b*). However, it becomes much more delicate in Theorem \ref{thm:mainreg_g0_version1}, when (b*) is not assumed and we drop the assumptions $g>0$ (even just for $g\geq 0$). More precisely, the major difficulty arises when $\{g=0\} \cap \Reg_1\,V$ has non-empty interior in $\Reg_1\,V$ (compare with Theorem~\ref{thm:mainreg_emptyinterior}): the mean curvature vanishes on such a set, preventing an immediate orientation, moreover this set may have to be covered with multiplicity by the immersion with image $\Greg{V} \cap \Oc$ (see the picture on the right in Figure \ref{fig:Touchingdoubleminimal}). The immersion itself has to be defined differently than in Remark \ref{oss:orient_gpos} and its orientability is not straightforward; this is discussed in Section \ref{orientability}.
\end{oss}

\begin{oss}
The properness of the immersion with image $\Greg{V}$ in the stability hypothesis (c) of our theorems is needed so that we can speak of deformations as an immersion that are ``compactly supported in $\Oc$'' (it guarantees that they are compactly supported in $S_{\Oc}$ too), and we will indeed need to use deformations as an immersion that are induced by test functions compactly supported in $\Oc$. 
\end{oss}

\begin{oss}
A global orientation of $\Greg{V}$ could be assumed if $V$ is the multiplicity-$1$ varifold associated to the boundary of a Caccioppoli set. Whilst too restrictive for the general theory, as explained in Sections \ref{generalizationI} (see e.g.~Fig. \ref{fig:Touchingdoubleminimal}) and Section \ref{generalizationII} (see e.g.~\cite{BW2}, \cite{RogTon}), the assumption that $V$ is the multiplicity-$1$ varifold associated to the boundary of a Caccioppoli set is fulfilled in certain interesting instances. Moreover, in this case, assumption $(\mathbf{T})$ is redundant and the functional $\text{Vol}_g$ has a very natural meaning, as pointed out after Definition \ref{Dfi:rel_encl_gvolpos}. We thus provide, in the next section, corollaries of Theorem \ref{thm:mainreg_g0_version1} for this special class of integral varifolds.
\end{oss}

\subsection{Corollaries for Caccioppoli sets}
\label{Caccioppoli}

\begin{oss}[\textit{redundancy of (a3), (a4)}]
\label{oss:Caccioppoliredundant1}
 Theorems~\ref{thm:mainreg_g0_version1} and \ref{thm:non-orientable} can of course be applied to the case in which $V$ is the multiplicity-$1$ varifold associated to the reduced boundary $\p^* E$ of a Caccioppoli set $E\subset N$. In this case De Giorgi's rectifiability theorem implies that (a3) and (a4) of Theorems~\ref{thm:mainreg_g0_version1} and \ref{thm:non-orientable} are automatically satisfied (see \cite[Remark 2.20]{BW} for (a3), while the validity of (a4) follows from the stronger fact that $\Reg_1\,V$ has multiplicity $1$). 
\end{oss}

Throughout this section, we let $N$ be a Riemannian manifold of dimension $n+1$ with $n\geq 2$ and $g:N\to \R$ be a $C^{1,\alpha}$ function. Whenever $C \subset N$ is a Caccioppoli set and $U \subset \subset N$ is an open subset, denote by $J_{g,U}$  the functional
$$J_{g,U}(C) = \|\p^*C\|(U) + \int_{C\cap U} g \, d{\Hc}^{n+1}.$$

As pointed out after Definition \ref{Dfi:rel_encl_gvolpos}, in the case in which $\p^*C \cap U$ is $C^1$ embedded, for any one-parameter family of deformations $\psi_t$ (as in Definition \ref{Dfi:rel_encl_gvolpos}) that fix $N\setminus U$ we have $\text{Vol}_g(t) = \int_{\psi_t(C) \cap U} g d{\Hc}^{n+1} - \int_{\psi_t(C) \cap U} g d{\Hc}^{n+1}$. The functional $J_{g,U}$ provides an ``absolute'' notion of $g$-enclosed volume, as opposed to the relative one in Definition \ref{Dfi:rel_encl_gvolpos}. Caccioppoli sets provide a natural setting to pose a stationarity condition with respect to $J_{g,U}$ that is stronger than the one required in Theorems~\ref{thm:mainreg_g0_version1} and \ref{thm:non-orientable}.

\begin{oss}
\label{oss:Caccioppoliredundant2}[\textit{redundancy of (b$^T$)}]
We note that the requirement of stationarity with respect to $J_{g,U}$ made on $\Reg_1\,|\p^*E|$ gives already more information than the one with respect to $J_g$: in fact (we will discuss this below in more generality) the stationarity condition with respect to $J_{g,U}$ for $U$ such that $\p^*E \cap U$ is $C^1$-embedded implies that the generalized mean curvature of $|\p^* E|$ in $U$ is given by $g \nu$, where $\nu$ is the \textit{exterior} unit normal to $E$. This additional information, that was not be present in the general framework (where we had neither a notion of interior nor a global well-defined normal to $V$), together with the fact that multiplicity $\geq 2$ points form a set of $n$-dimensional measure zero (by the De Giorgi rectifiability theorem), implies also that (b$^T$) of Theorems~\ref{thm:mainreg_g0_version1} and \ref{thm:non-orientable} is automatically satisfied when we require stationarity of $\Reg_1\,|\p^* E|$ with respect to $J_{g,U}$.
\end{oss}

In this section we provide two different formulations of the regularity theorem for Caccioppoli sets (Corollaries \ref{cor:Caccioppoli1} and \ref{cor:Caccioppoli2} below), in which we remove further structural assumptions from Theorem~\ref{thm:non-orientable} (in addition to (a3), (a4), (b$^T$) that have been removed in Remarks \ref{oss:Caccioppoliredundant1} and \ref{oss:Caccioppoliredundant2} above), by requiring a stronger stationarity condition in two different ways. At the end of the section, we also provide a result that characterizes varifold limits of Caccioppoli sets that satisfy the assumptions of Theorem \ref{thm:non-orientable}. 

 \medskip

Corollary \ref{cor:Caccioppoli1} below requires stationarity of the reduced boundary of a Caccioppoli set $E$ with respect to $J_{g,U}$ under all ambient deformations, i.e.~under any one-parameter family of diffeomorphisms of the ambient manifold that are the identity outside a compact set. The reason why this is stronger than (b) of Theorem \ref{thm:non-orientable} is that we impose the stationarity condition on the whole of $\p^*E$, rather than only on its $C^1$-embedded regular part. By strengthening the stationarity requirement in this fashion, condition (a1) of Theorem \ref{thm:non-orientable} is automatically satisfied (see \cite[Remark 2.19]{BW} or \cite[Ch. 17]{Maggi}) and moreover we obtain (following the argument in \cite[Remark 2.19]{BW}) that the generalized mean curvature of $\p^*E$ is given by $g \nu$, where $\nu$ is the outward unit normal. We thus have:

\begin{cor}
\label{cor:Caccioppoli1}
Let $E \subset N$ be a Caccioppoli set and $U\subset \subset N$ be open. Assume that, for every one-parameter family $\psi(t,x)$ of diffeomorphisms of $N$ that fix $N$ outside a compact set $K\subset U$ (i.e.~, for $\eps>0$, $\psi:(-\eps,\eps) \times N \to N$ is of class $C^1$, for every $t\in(-\eps,\eps)$ the map $\psi_t(\cdot)=\psi(t,\cdot):N\to N$ is a diffeomorphisms with $\psi_t(\cdot)=Id$ on $N\setminus K$, $\psi_0(\cdot)=Id$ on $N$) the following stationarity condition holds:
$$\left.\frac{d}{dt}\right|_{t=0} J_{g,U}(E_t) =0,$$
where $E_t$ is the Caccioppoli set $(\psi_t)(E)$. Assume moreover that  

\noindent (a2) of Theorem \ref{thm:mainreg_g0_version1} holds with $|\p^* E|$ in place of $V$ (i.e.~$|\p^* E|$ has no classical singularities in $U$);
 
\noindent (c) of Theorem \ref{thm:mainreg_g0_version1} holds with $|\p^* E|$ in place of $V$, $U$ in place of $N$. 

Then there is a closed set $\Sigma \subset \overline{\p^{*} E} \cap U$ with $\Sigma = \emptyset$ if $n \leq 6$, $\Sigma$ discrete if $n=7$ and ${\rm dim}_{\mathcal H} \, (\Sigma) \leq n-7$ if $n \geq 8$ such that locally near each point  $p \in  (\overline{\p^{*} E} \cap U)\setminus \Sigma$, either $\overline{\p^{*} E}$ is a single smoothly embedded disk or $\overline{\p^{*} E}$ is precisely two smoothly embedded disks with only tangential intersection.

If the assumptions hold true for all $U\subset \subset N$ then there exist an $n$-dimensional manifold $S$ and a $C^2$ immersion $\iota:S \to N$ such that $\overline{\p^{*} E} \setminus \Sigma = \iota(S)$ and $|\p^* E| = \iota_{\#}(|S|)$; moreover, there exists a continuous choice of unit normal $\nu$ on $S$ such that $\nu$ coincides with the exterior (measure-theoretical) unit normal on $\p^* E$ and $g \nu$ is the mean curvature of $\iota$. 
\end{cor}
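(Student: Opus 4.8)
The plan is to deduce Corollary~\ref{cor:Caccioppoli1} directly from Theorem~\ref{thm:non-orientable} (equivalently, Theorem~\ref{thm:mainreg_g0_version1} with hypothesis (b) replaced by the mean-curvature formulation) by checking that, for $V = |\p^* E|$, all the structural and variational hypotheses of that theorem either hold or are automatically satisfied. First I would record the standard facts: since $E$ is a Caccioppoli set, De~Giorgi's rectifiability theorem gives that $\p^* E$ is (countably) $n$-rectifiable with multiplicity $1$, $\overline{\p^* E} = \spt\|V\|$ in $U$ up to the ${\mathcal H}^n$-null essential-boundary discrepancy, and the density is $1$ ${\mathcal H}^n$-a.e.; this is exactly the content of Remark~\ref{oss:Caccioppoliredundant1}, which yields (a3) (condition (\textbf{T})) and (a4) for free, and it also gives that the multiplicity-$\geq 2$ set is ${\mathcal H}^n$-null, which together with the strong (ambient-deformation) stationarity gives (b$^T$) as in Remark~\ref{oss:Caccioppoliredundant2}. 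Next, from the assumed stationarity of $J_{g,U}$ under \emph{all} ambient deformations fixing $N\setminus K$, the first-variation formula for sets of finite perimeter (as in \cite[Remark 2.19]{BW} or \cite[Ch.~17]{Maggi}) shows that the generalised mean curvature of $V$ in $U$ is $H_V = g\,\nu$ where $\nu$ is the measure-theoretic exterior unit normal; in particular the generalised first variation of $V$ is represented by $-g\nu$, which is bounded (since $g\in C^{1,\alpha}$ is continuous hence locally bounded), so (a1) holds with any $p$ (indeed the first variation is in $L^\infty_{\mathrm{loc}}$). This also supplies the hypothesis of Theorem~\ref{thm:non-orientable} replacing (b): on any open set $\Oc$ where $\Reg_1 V \cap \Oc$ is two-sided, the continuous unit normal is just (a smooth extension of) $\nu$, and $H_V = g\nu$ there by elliptic regularity (the $C^{1,\alpha}$ regularity of $g$ upgrades $\Reg_1 V$ from $C^1$ to $C^{2,\alpha}$, matching the regularity built into the statement). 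The hypotheses (a2) and (c) are assumed verbatim.

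Once the hypothesis check is complete, the first (local) conclusion of Corollary~\ref{cor:Caccioppoli1}—existence of the closed set $\Sigma$ with the stated dimension bound and the local picture of $\overline{\p^* E}\cap U$ being one or two smoothly embedded disks meeting tangentially—is literally conclusion (i) of Theorem~\ref{thm:mainreg_g0_version1}/\ref{thm:non-orientable} applied on $U$ (with $U$ playing the role of $N$), using that $\spt\|V\| = \overline{\p^* E}$ inside $U$. No extra work is needed here beyond translating $\Greg{V}$ and $\Sigma$ back to the Caccioppoli-set language.

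For the global conclusion (assuming the hypotheses for all $U\subset\subset N$), I would invoke the conclusion (ii) of Theorem~\ref{thm:non-orientable} to get an $n$-manifold $S$ and a proper $C^2$ immersion $\iota:S\to N$ with $V=\iota_\#(|S|)$ and $\overline{\p^* E}\setminus\Sigma = \iota(S)$. The point that must still be argued is that $S$ is \emph{orientable} with a globally continuous $\nu$ that agrees with the exterior normal on $\p^* E$—a priori Theorem~\ref{thm:non-orientable} only gives continuity of $\nu$ off $\iota^{-1}(\cup_{j\text{ odd}} M_j)$ (the odd-multiplicity minimal pieces), and only extends $\nu$ globally when those $M_j$ are two-sided. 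The resolution: for $V=|\p^* E|$ the multiplicity is $1$ ${\mathcal H}^n$-a.e.\ and, in fact, $\Greg{V}$ carries multiplicity $1$ on all of $\Reg_1 V$ by (a4), so the only way higher-multiplicity pieces $M_j$ ($j\geq 2$) could occur is as ``doubled'' touching configurations; but by the local structure in (i) together with the fact that $\p^* E$ is the reduced boundary of a set (hence locally separates $U$ into an inside and an outside), the two sheets at a touching singularity are oppositely oriented by the exterior normal and there is no stray even/odd-multiplicity minimal sheet — so effectively $j=1$ throughout and $M_j=\emptyset$ for $j\geq 2$. Hence the exceptional set $\iota^{-1}(\cup_{j\text{ odd}}M_j)$ is empty and $\nu$, which is forced to be the exterior normal on the dense open set $\p^* E$ and extends continuously across touching singularities by the $C^2$ immersion structure, is globally continuous on $S$; and $H_V = g\nu$ by the first-variation computation above. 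I expect the main obstacle to be precisely this last step: carefully ruling out higher-multiplicity (in particular odd-multiplicity minimal) portions for a reduced-boundary varifold and thereby justifying the global continuous orientation — one has to use that $E$ locally lies on one side of $\overline{\p^* E}$ together with the constancy of density on connected components of $\Greg{V}$ (from the structure theory in \cite{BW}) and the fact that a reduced boundary cannot be ``doubled'' without violating the $\mathrm{BV}$ structure of $\chi_E$. The remaining assertions (properness of $\iota$, the matching of $\nu$ with the measure-theoretic normal ${\mathcal H}^n$-a.e.\ and hence everywhere by continuity, $g\nu$ being the classical mean curvature of $\iota$) then follow immediately.
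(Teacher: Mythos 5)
Your proposal is correct and follows essentially the same route as the paper: Corollary~\ref{cor:Caccioppoli1} is obtained by verifying the hypotheses of Theorem~\ref{thm:non-orientable} for $V=|\p^*E|$, with (a3), (a4) and (b$^T$) supplied by De~Giorgi's theorem and the strengthened stationarity (Remarks~\ref{oss:Caccioppoliredundant1} and \ref{oss:Caccioppoliredundant2}), and (a1) together with $H_V=g\nu$ coming from the first variation of $J_{g,U}$ under ambient deformations. Your extra care with the global orientation step is consistent with the paper's treatment: since $\Reg_1\,|\p^*E|$ has multiplicity $1$ everywhere, the pieces $M_j$ for $j\geq 2$ are empty and the exterior normal of $E$ two-sides $M_1$, so conclusion (ii) of Theorem~\ref{thm:non-orientable} applies globally.
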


The second way in which we can strengthen the stationarity assumption (b) is by considering the $L^1_{\text{loc}}$-topology on Caccioppoli sets and working with continuous one-sided deformations of a given Caccioppoli set $E$. By doing so, we are able to remove (a2) as well as (a1), (a3), (a4) and (b$^{T}$) . We refer to \cite[Definition 2.6]{BW} for the definition on one-sided one-parameter family of deformations with respect to the $L^1_{\text{loc}}$-topology and to \cite[Section 9]{BW} for the argument that rules out classical singularities under this stronger stationarity notion.

\begin{cor}
\label{cor:Caccioppoli2}
Let $E \subset N$ be a Caccioppoli set and $U\subset \subset N$ be open. Assume that, for every one-sided one-parameter family $\{E_t\}_{t\in[0,\eps)}$ of deformations of $E$ in $U$ (continuous with respect to the $L^1_{\text{loc}}$-topology, see \cite[Definition 2.6, 2.7]{BW}) the following stationarity condition holds:
$$\left.\frac{d}{dt}\right|_{t=0}J_{g,U}(E_t) \geq 0.$$
Assume moreover the following stability condition: (c) of Theorem \ref{thm:mainreg_g0_version1} holds with $|\p^* E|$ in place of $V$, $U$ in place of $N$. Then the conclusions of Corollary \ref{cor:Caccioppoli1} hold.
\end{cor}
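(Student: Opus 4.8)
\textbf{Proof proposal for Corollary \ref{cor:Caccioppoli2}.}

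The plan is to deduce Corollary \ref{cor:Caccioppoli2} from Corollary \ref{cor:Caccioppoli1} (equivalently, from Theorem~\ref{thm:non-orientable}) by showing that the stronger one-sided $L^1_{\mathrm{loc}}$-stationarity hypothesis implies all the remaining structural assumptions, namely (a1), (a2), (a3), (a4) and (b$^{T}$), as well as the variational hypothesis (b) of Theorem~\ref{thm:non-orientable} (phrased for $|\partial^*E|$). Redundancy of (a3) and (a4) is already recorded in Remark~\ref{oss:Caccioppoliredundant1}: it is immediate from De~Giorgi's rectifiability theorem, since $\mathrm{reg}_1\,|\partial^*E|$ carries multiplicity $1$ and $\Theta(\|V\|,\cdot)\geq 2$ on an $\mathcal H^n$-null set, so in particular there are no touching singularities with positive-measure coincidence set. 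Redundancy of (b$^{T}$) follows from Remark~\ref{oss:Caccioppoliredundant2}, once we know the generalised mean curvature of $|\partial^*E|$ in $U$ is $g\nu$ with $\nu$ the outer unit normal: indeed $\partial^*E\cap U$ is then two-sided and the multiplicity is $1$ away from an $\mathcal H^n$-null set, so the stationarity of each sheet $S_j$ holds for an orientation compatible with $\nu$.

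Next I would extract (a1), (a2) and (b) from the one-sided stationarity hypothesis. For (b): restricting to genuinely two-sided (i.e.\ volume-preserving or unconstrained) smooth ambient deformations supported in any open set $\widetilde{\Oc}\subset\subset U$ with $\partial^*E\cap\widetilde{\Oc}$ being $C^1$-embedded, the inequality $\left.\frac{d}{dt}\right|_{t=0}J_{g,U}(E_t)\geq 0$ must in fact be an equality (apply it to the deformation and to its time-reversal, both of which are admissible one-sided families on $[0,\eps)$). This gives first-variation stationarity of $J_{g,U}$, hence of $J_g$, on $\mathrm{reg}_1\,|\partial^*E|$, which is exactly (b); the standard computation (as after Definition~\ref{Dfi:rel_encl_gvolpos}, cf.\ \cite[Remark 2.19]{BW}) then identifies the generalised mean curvature on $\mathrm{reg}_1\,|\partial^*E|$ as $g\nu$, $\nu$ the exterior normal, which gives (a1) (first variation in $L^p_{\mathrm{loc}}$ with $p=\infty$ locally, since $g\in C^{1,\alpha}$ and $\|V\|$ is locally finite by the uniform mass bound implicit in Caccioppoli) and feeds into the argument for (b$^T$) above. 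For (a2), i.e.\ absence of classical singularities, I would invoke the argument of \cite[Section 9]{BW}: under one-sided $L^1_{\mathrm{loc}}$-stationarity, a classical singularity of $\overline{\partial^*E}$ would provide a competitor (pushing one of the $\geq 3$ sheets across the others, strictly decreasing $J_{g,U}$ to first order, as in the CMC case) contradicting the stationarity inequality; the presence of the $g$-volume term only shifts the first variation by a bounded amount that is dominated by the strictly-negative area term coming from the transverse sheets at the triple junction.

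With (a1), (a2), (a3), (a4), (b), (b$^T$) established, and (c) assumed, all hypotheses of Theorem~\ref{thm:non-orientable} hold with $V=|\partial^*E|$, $U$ in place of $N$; applying it yields the local structure conclusion (the closed set $\Sigma$ with the stated dimension bound and the two-disks-with-tangential-intersection dichotomy), which is the first assertion of Corollary~\ref{cor:Caccioppoli1}. For the global assertion, if the hypotheses hold for all $U\subset\subset N$, Theorem~\ref{thm:non-orientable} (together with the fact, from Remark~\ref{oss:Caccioppoliredundant2}, that the odd-multiplicity minimal pieces $M_j$ are two-sided here, since $|\partial^*E|$ has multiplicity $1$ on $\mathrm{reg}_1$ and the only way higher multiplicity arises is through the immersion folding, which for a reduced boundary forces the folded-onto sheet to be two-sided with the ambient $\nu$) gives the immersion $\iota:S\to N$ with $|\partial^*E|=\iota_\#(|S|)$ and a continuous unit normal $\nu$ on $S$ with mean curvature $g\nu$; the identification of $\nu$ on $\partial^*E$ with the measure-theoretic exterior normal is exactly the mean-curvature computation above. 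The main obstacle I anticipate is rigorously running the classical-singularity exclusion of \cite[Section 9]{BW} in the inhomogeneous setting under merely one-sided $L^1_{\mathrm{loc}}$-deformations: one must check that the competitor constructions there remain admissible one-sided families, and that the extra $\mathrm{Vol}_g$ term does not interfere with the strict first-variation decrease; this is where the bulk of the (referenced, not reproduced) work lies.
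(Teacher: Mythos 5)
Your proposal is correct and follows essentially the same route the paper takes: it removes (a3), (a4) via De Giorgi rectifiability (Remark~\ref{oss:Caccioppoliredundant1}), (b$^T$) via the exterior-normal mean curvature identification (Remark~\ref{oss:Caccioppoliredundant2}), obtains (a1) and (b) by applying the one-sided inequality to a deformation and its time-reversal, excludes classical singularities by the one-sided competitor argument of \cite[Section 9]{BW}, and then invokes Theorem~\ref{thm:non-orientable}. The one point you flag as the "main obstacle" — checking that the $\mathrm{Vol}_g$ term does not spoil the strict first-order area decrease at a transverse junction — is indeed where the substance lies, and it works because the swept $g$-volume is second order in the deformation parameter while the area gain from the transverse sheets is first order; the paper likewise defers this to the cited reference.
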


\begin{oss}[\textit{analytic data}]
\label{oss:analytic}
If in Corollary \ref{cor:Caccioppoli2} we further assume that the Riemannian manifold $N$ and the function $g$ are analytic, then we can weaken the stability assumption to the following requirement:

\medskip

\noindent (c$^{\prime}$) for each orientable open set ${\Oc} \subset U \setminus ({\rm spt} \, \|D\chi_E\| \setminus {\Reg} \, |\p^*E|)$, $E$ has finite Morse index with respect to the functional $J_{g,\Oc}$ for ambient deformations that fix the complement of $\Oc$. 

\medskip

Note that this is a condition on $\Reg\, |\p^*E|$ \textit{only} (rather than on $\Greg{|\p^*E|}$). In Section \ref{Caccioppoli} we show why this weaker assumption implies the stability condition in Corollary \ref{cor:Caccioppoli2}. In this analytic framework, Corollary \ref{cor:Caccioppoli2} generalizes the regularity results that are known for minimizers of $J_{g,U}$ (see \cite{GonzMassTaman}, \cite{Morgan}). We conjecture that Corollary \ref{cor:Caccioppoli2} with the stability assumption (c$^{\prime}$) holds true for arbitrary smooth manifolds and for $g\in C^{1,\alpha}$.
\end{oss}

We conclude the discussion on Caccioppoli sets by addressing a  \textit{compactness} question. Figure \ref{fig:Touchingdoubleminimal} shows that in the class of Caccioppoli sets (i.e.\ with respect to $BV_{\rm loc}$ topology) we cannot expect a compactness result, analogous to Theorem \ref{thm:generalcompactness}, for boundaries of Caccioppoli sets that are stationary with respect to $J_{g,U}$ and satisfy locally uniform mass and Morse index bounds. Indeed, assuming that in Figure \ref{fig:Touchingdoubleminimal} the exterior of $E_j$, for $j\in \N$, is given by the portion between the two curves, the multiplicity 2 minimal interface that appears in the varifold limit is not present in the $BV_{\text{loc}}$-limit, causing the reduced boundary of the limiting Caccioppoli set $E$ to develop ``varifold boundary'' (at the cusps), violating the condition (a1) that the first variation be in $L^p(\|\p^*E\|)$. It is interesting to investigate and characterize, for the class of Caccioppoli sets whose reduced boundaries satisfy the assumptions of Theorem \ref{thm:non-orientable} (these assumption are implied by the hypotheses in either of the Corollaries in this section), the possible varifold limit, under the natural mass and mean curvature bounds. We have the following optimal characterization for such varifold limits. The proof of the result is (see Section \ref{proof_Caccioppoli}) an immediate consequence of Theorems \ref{thm:generalcompactness},\ref{thm:mainreg_g0_version1}, \ref{thm:non-orientable}.

\begin{cor}[varifold limits of finite index Caccioppoli sets]
\label{cor:Caccioppolicompactness} 
For $j\in \N$, let $g, g_j:N\to\R$ be $C^{1,\alpha}$ functions such that $g_j \to g$ in $C^{1,\alpha}_{\text{loc}}(N)$. Let $\{E_j\}_{j\in \N}$ be a sequence of Caccioppoli sets in $N$ such that, for every $j$, assumptions (a1), (a2) of Theorem~\ref{thm:mainreg_g0_version1} are valid with $|\p^*E_j|$ in place of $V$.
Assume that for every open set $\Oc \subset N \setminus (\spt{D\chi_{E_j}} \setminus \Reg_1\,|\p^*E_j|)$ the generalized mean curvaure of $\p^*E_j \cap \Oc$ is given by $g_j \nu_j$, where $\nu_j$ denotes the exterior unit normal to $E_j$. 

For every open set $\tilde{\Oc} \subset \subset N$ such that $|\p^*E_j|\res \Oc_j = \iota_{\Oc_j\,\#}|S_{\Oc_j}|$, where $\Oc_j = \tilde{\Oc}\setminus (\spt{D\chi_{E_j}} \setminus \Greg{|\p^*E_j|})$ and $\iota_{\Oc_j}:S_{\Oc_j} \to \Oc_j$ is a $C^2$ immersion of an $n$-manifold $S_{\Oc_j}$ whose mean curvature is given by $g_j n_j$ with $n_j$ a global normal on $S_{\Oc_j}$ that agrees with $\nu_j$ on $\iota_{\Oc_j}^{-1}(\p^* E \cap \Oc_j)$, assume that the Morse index relative to ambient functions (see Definition~\ref{df:index}) of $\iota_{\Oc_j}$ with respect to $J_{g_j}$ is finite.

Assume moreover that $\limsup_{j\to \infty} \|D\chi_{E_j}\|(K) <\infty$ for every compact $K \subset \subset N$ and that $\limsup_{j\to \infty} 
\text{Morse index } (\iota_{\Oc_j}) <\infty$ for every $\tilde{\Oc} \subset \subset N$, where $\Oc_j = \tilde{\Oc}\setminus (\spt{D\chi_{E_j}} \setminus \Greg{|\p^*E_j|})$ and $\iota_{\Oc_j}$ is as above.

Then there is a subsequence $\{j^{\prime}\}$ and an integral $n$-varifold $V$ of $N$ such that $|\p^* E_{j^{\prime}}| \to V$ as varifolds, and $V$ satisfies the hypotheses of Theorem \ref{thm:non-orientable}, and the local regularity conclusion (i) of Theorem \ref{thm:mainreg_g0_version1} holds. Moreover, $V=\iota_{\#}|S|$, where $\iota:S \to N$ is a $C^2$ immersion with image $\Greg{V}$ and there is a global normal $\hat{n}$ on $S$ such that the mean curvature of $\iota$ is $(g\circ \iota) \hat{n}$.
\end{cor}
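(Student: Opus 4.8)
The plan is to derive $V$ and all of its regularity directly from the compactness theorem, Theorem~\ref{thm:generalcompactness}, applied to $V_j := |\p^* E_j|$, and then to promote the conclusion to the asserted global immersion-with-normal statement via Theorems~\ref{thm:mainreg_g0_version1} and~\ref{thm:non-orientable}. First I would verify that each $V_j$ satisfies the hypotheses of Theorem~\ref{thm:mainregstationarityimmersedparts} with $g_j$ in place of $g$, exploiting the Caccioppoli-set simplifications. Hypotheses (a1) and (a2) are assumed. Hypothesis (a3), i.e.\ condition~(\textbf{T}), holds automatically: by De~Giorgi's rectifiability theorem the coincidence set of $V_j$ near a touching singularity is $\mathcal H^n$-null (Remark~\ref{oss:Caccioppoliredundant1}). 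Hypothesis (b) holds because, by the variational characterisation recalled in Section~\ref{stationaritydiscussion}, the assumed identity $\vec H = g_j\nu_j$ on $\p^* E_j\cap\Oc$ (with $\nu_j$ the exterior unit normal of $E_j$) is equivalent to $J_{g_j}$-stationarity of the two-sided embedded hypersurface $\Reg_1\,V_j$. Hypothesis (b$^T$) is redundant for Caccioppoli sets (Remark~\ref{oss:Caccioppoliredundant2}), using the globally defined exterior normal and the $\mathcal H^n$-nullity of the multiplicity-$\geq 2$ set of $V_j$.

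The only hypothesis requiring a genuine argument is (b$^\ast$): for every orientable open $\Oc\subset N\setminus(\spt{V_j}\setminus\Greg{V_j})$ one must produce an oriented $n$-manifold $S_\Oc$ and a proper, $J_{g_j}$-stationary $C^2$ immersion $\iota_\Oc\colon S_\Oc\to\Oc$ with $\iota_{\Oc\,\#}|S_\Oc| = V_j\res\Oc$. Here the Caccioppoli structure supplies exactly the global ingredient, as in Remark~\ref{oss:orient_gpos}: the exterior unit normal $\nu_j$ of $E_j$ orients $\Greg{V_j}$ coherently---near a point of $\Reg_1\,V_j$ there is a single $C^2$ disk carrying $\nu_j$, near a point of $\SingT{V_j}\cap\Greg{V_j}$ there are two $C^2$ disks tangential at the point, again each carrying $\nu_j$ and with opposite induced normal directions, since the two disks lie on opposite sides of each other and locally bound $E_j$---so the local oriented pieces patch to a proper $C^2$ immersion of an oriented $S_\Oc$ with image $\Greg{V_j}\cap\Oc$, whose $J_{g_j}$-stationarity is just the identity $\vec H = g_j\nu_j$ read on it. (This is the global structure lacking in the pathological varifold of Figure~\ref{fig:immersion_high_mult}; note also that the immersion $\iota_{\Oc_j}$ in the statement is this (b$^\ast$)-immersion, so hypothesis (c)---finite Morse index relative to ambient functions---is literally what the statement assumes.) Since the local mass and Morse-index bounds are assumed as well, Theorem~\ref{thm:generalcompactness} then yields a subsequence $\{j'\}$ with $V_{j'}\to V$ in the varifold topology, where $V$ satisfies every hypothesis and conclusion of Theorem~\ref{thm:mainregstationarityimmersedparts} with $g$ in place of $g_\infty$; in particular one obtains the closed set $\Sigma$ with ${\rm dim}_{\mathcal H}\,\Sigma\leq n-7$, the local structure~(i), and the representation $V = \iota_\#|S|$ by a $C^2$ immersion $\iota$ of image $\Greg{V}$.

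It then remains to upgrade to the global normal. The limit $V$ satisfies the hypotheses of Theorem~\ref{thm:mainreg_g0_version1} (equivalently, of Theorem~\ref{thm:non-orientable} when $N$ is non-orientable): (a1)--(a3) and (c) are inherited from the conclusions just obtained, (b$^T$) is again redundant, the stationarity/mean-curvature condition on $\Reg_1\,V$ follows from conclusion~(ii) of Theorem~\ref{thm:mainregstationarityimmersedparts}, and for (a4)---multiplicity $1$ on $\Reg_1\,V\cap\{g\neq 0\}$---I would argue that an embedded $C^1$ piece $P\subset\Reg_1\,V$ with $g\neq 0$ on $P$ and multiplicity $\geq 2$ would, since $V$ is the varifold limit of the multiplicity-one reduced boundaries $\p^* E_{j'}$, be approximated by consecutive nearby sheets of $\p^* E_{j'}$ whose exterior normals alternate in direction, hence point to opposite sides of $P$ for two of them; passing to the limit, the mean curvature vector $g\nu$ of $P$ would then equal its own opposite, so $g\equiv 0$ on $P$, a contradiction. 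Then Theorem~\ref{thm:mainreg_g0_version1} (resp.\ Theorem~\ref{thm:non-orientable}) provides a continuous unit normal $\nu$ on $S\setminus\iota^{-1}(\cup_{j\text{ odd}}M_j)$ with $\vec H = (g\circ\iota)\nu$, which extends continuously to all of $S$ as soon as each odd-multiplicity minimal piece $M_j$ is two-sided; to get the latter I would pass in addition to a $BV_{\rm loc}$-convergent subsequence $\chi_{E_{j'}}\to\chi_{E_\infty}$, so that away from a set of Hausdorff dimension $\leq n-7$ the union of the odd-multiplicity pieces of $V$ coincides with $\overline{\p^* E_\infty}$, which carries the exterior unit normal of $E_\infty$ and is hence two-sided. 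This yields the global normal $\hat n$ on $S$ with $\vec H = (g\circ\iota)\,\hat n$, which is the assertion.

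The step I expect to be the main obstacle is the verification of (b$^\ast$) together with the two-sidedness of the odd-multiplicity sheets in the limit: both genuinely use the \emph{global} geometric content of the Caccioppoli structure---an exterior normal coherent across touching singularities and stable under the $BV_{\rm loc}$ limit---rather than purely local varifold data, precisely because, as stressed in Remark~\ref{oss:why_b*}, (b$^\ast$)-type conditions cannot be checked locally. The remaining steps amount to a bookkeeping translation through Theorems~\ref{thm:generalcompactness}, \ref{thm:mainreg_g0_version1} and~\ref{thm:non-orientable}.
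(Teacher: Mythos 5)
Your proposal is correct and follows essentially the same route as the paper's proof: verify (b*) and the remaining hypotheses of Theorem~\ref{thm:generalcompactness} for $|\partial^* E_j|$ via the globally defined exterior normal (the paper does this by first applying Theorem~\ref{thm:mainreg_g0_version1} to each $|\partial^* E_j|$ and then invoking Section~\ref{orientability}), pass to the varifold limit, and then use the alternating exterior normals of consecutive collapsing sheets both to verify (a4) for the limit and to orient the odd-multiplicity minimal pieces. Your phrasing of this last step via the $BV_{\rm loc}$ limit $E_\infty$ is a harmless repackaging of the paper's argument that, of the $m$ collapsing graphs bounding $E_j$, a majority of $(m+1)/2$ carry the same normal direction, which therefore survives in the limit.
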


\section{Preliminaries}
\label{preliminaries}

Let $g$ be a $C^{1,\alpha}$ function on $N$ (without any sign assumption). In Sections \ref{stationaritydiscussion} and \ref{stabilitydiscussion} we recall some well-known facts on hypersurfaces regarding stationarity and stability with respect to $J_g$ (and with respect to the area functional for deformations that preserve $\text{Vol}_g$). 

 \subsection{Stationarity: classical results} 
\label{stationaritydiscussion}
 
With notation and orientation as in Definition \ref{Dfi:rel_encl_gvolpos}, we consider a proper $C^1$ immersion $\iota: S \to \Oc$, for $\Oc \subset N$ open and oriented and $S$ an orientable $n$-dimensional manifold; let $S$ be oriented by a choice $\nu$ of unit normal so that the frame $d \iota(\vec{S}) \wedge \nu$ is positive with respect to the orientation of $\Oc$. Let $dS$ denote the $n$-volume induced on $S$ by $\iota$ and by the Riemannian metric of $N$. 

Let $\psi$ be a deformation of $\iota$ as in Definition~\ref{Dfi:rel_encl_gvolpos}, with $\psi_t(x)=\psi(t,x)$ for each $t \in(-\eps,\eps)$ and $x \in S$. Thus $\psi$ is differentiable on $(-\eps,\eps) \times S$, $\psi_0=\iota$ and $\psi_t(x)=\iota(x)$ for all $t\in(-\eps, \eps)$ and $x \in S \setminus \iota^{-1}(\Oc)$. Recall that stationarity of $\iota$ with respect to the functional $J_g(t)= A(t) + \text{Vol}_{g}(t) (=\|(\psi_t)_\# (|S|)\res \Oc\| + \text{Vol}_g(t))$ is the requirement
\begin{equation}
 \label{eq:stationarityS}
 \left.\frac{d J_g(t)}{dt}\right|_{t=0} = 0 \text{ for any $\Oc$ and } \psi \text{ as above.}
\end{equation}

As we are about to show, the condition just stated is equivalent to the requirement that $\iota$ is a $C^2$ immersion whose mean curvature vector $\vec{H}$ is pointwise dictated by the value attained by $g$, i.e.~$\vec{H}=g \nu$. Moreover, if $\iota$ is a $C^1$ embedding, then the stationarity with respect to $J_g(t)$ for ambient deformations suffices to draw the same conclusion.
Indeed, for $\iota$, $\Oc$ and $\psi$ as above, one can compute that

$$\left.\frac{d \text{Vol}_g(t)}{dt}\right|_{t=0} = \int_S (g \circ \iota) \left.\frac{d \psi}{dt}\right|_{t=0} \cdot \nu \,dS,$$
where $\nu$ is the unit normal to the immersed hypersurface in $\Oc$. (The proof of this fact follows \cite[Lemma 2.1 (ii)]{BarbDoCEsch} with the due changes.) In particular, we point out that the normal velocity $\zeta=\left.\frac{d \psi}{dt}\right|_{t=0} \cdot \nu$ of a $\text{Vol}_g$-preserving deformation satisfies 

\begin{equation}
\label{eq:zeroaverageg}
\int \zeta (g\circ \iota) \, dS=0.
\end{equation}
Recall the first variation formula (e.g. \cite[Lemma 2.1 (i)]{BarbDoCEsch} for $C^2$ immersions and \cite{SimonNotes} in general) 
$$\left.\frac{d A(t)}{dt}\right|_{t=0} = -\int_S \left.\frac{d \psi}{dt}\right|_{t=0} \cdot \vec{H} \, dS,$$
where $\vec{H}=H\nu$ is the generalized mean curvature vector, with the convention that $H$ stands for the sum of principal curvatures of $\iota$ (rather than the average as in \cite{BarbDoCEsch}). The claim on the generalized mean curvature $\vec{H} = (g\circ \iota) \nu$ follows and implies that $S$ is $C^2$ by standard elliptic theory (see Sections \ref{exponentialchart} and \ref{furtherconsequencesoftheassumptions} for details). To avoid confusion, note that any connected orientable $S$ that is stationary for the functional $\|\psi_{t \, \#} |S|\res \Oc\| + \text{Vol}_g(t)$ can only be so for one of the two possible orientations (for the other orientation, it is stationary for the functional $\|(\psi_{t\, \#} |S|\res \Oc\| - \text{Vol}_g(t)$ instead). 

\medskip
 
\begin{oss}
\label{oss:impliesVol_pres}
Condition (\ref{eq:stationarityS}) implies that $S$ is a stationary hypersurface with respect to the standard area functional for $\text{Vol}_g$-preserving (ambient) deformations (since for such deformations $J_g$ is just the area up to a constant).
\end{oss}

In the case $g>0$ we have, conversely to (\ref{eq:zeroaverageg}): whenever $\Oc$ is orientable, $\iota$ is smooth and $\zeta \in C^1_c(\iota^{-1}(\Oc))$ is such that $\int \zeta \,(g \circ \iota) \,dS =0$, then there exists a deformation $\psi:(-\eps,\eps) \times S \to N$ such that $\text{Vol}_g(t) =0$ for all $t\in (-\eps, \eps)$ and $\left.\frac{d \psi}{dt}\right|_{t=0} = \zeta \nu$, i.e.~$\psi$ is a $\text{Vol}_g(\cdot)$-preserving normal deformation (as an immersion) of $\iota$, with initial velocity $\zeta \nu$. For the proof it suffices to suitably modify \cite[Lemma 2.4]{BarbDoCarmo} or \cite[Lemma 2.2]{BarbDoCEsch}: we note that the proof would fail if $g$ vanished identically on an open set contained in the hypersurface, more precisely, following  \cite[Lemma 2.4]{BarbDoCarmo}, we would not be able to choose the auxiliary function (that is denoted by $g$ in  \cite[Lemma 2.4]{BarbDoCarmo}), or, following  \cite[Lemma 2.2]{BarbDoCEsch}, we would not be able to solve the initial value problem (because the volume form denoted there by $d\overline{M}$ has to be multiplied by our function $g$, and so the left hand side could vanish identically). The examples in Section \ref{oss:vol_preserv_notexist} show that this failure is not a technical problem of the proof, but the deformation might fail to exist. A more general condition than $g>0$ for the validity of the property just expressed, is the fact that $\{g=0\}$ has non-empty interior in the hypersurface.

\begin{oss}
\label{oss:lambda_g_nu}
If $g>0$, then the condition of stationarity with respect to the standard area functional for $\text{Vol}_g$-preserving (ambient) deformations (see Remark \ref{oss:impliesVol_pres}) turns out to be ``essentially'' equivalent to (\ref{eq:stationarityS}), in the following sense. Let $\iota$ be an oriented immersion of $S$ (an analogous statement holds for an embedding) into an oriented manifold, assume that $\iota$ is stationary with respect to the hypersurface area for $\text{Vol}_g$-preserving deformations as an immersion (ambient deformations if $\iota$ is an embedding). Then there exists $\lambda \in \R$ such that the mean curvature $\vec{H}$ of $\iota$ is given by $\lambda g \nu$ and $\iota$ satisfies the stationarity condition 
$\left.\frac{d}{dt}\right|_{t=0}\left(\|\psi_{t \, \#}(|S|)\res \Oc\| + \lambda \text{Vol}_g(t)\right)=0$ for arbitrary deformations as an immersion (ambient deformations if $\iota$ is an embedding). The proof follows \cite[Prop. 2.3]{BarbDoCEsch} and \cite[Prop. 2.7]{BarbDoCarmo}. The real number $\lambda$ becomes a Lagrange multiplier and the $\text{Vol}_g$-preserving constraint gets encoded in the functional, thus allowing unconstrained deformations. We then conclude that, upon replacing $g$ with $\lambda g$, $\iota$ is stationary in the sense of (\ref{eq:stationarityS}).
\end{oss}
 
\subsection{Stability: classical results}
\label{stabilitydiscussion}

Keeping notations as in Section \ref{stationaritydiscussion}, assuming that the immersion $\iota$ is stationary for $J_g$, the second variation 
$$\left.\frac{d^2}{dt^2 }\right|_{t=0} J_g(t)=\left.\frac{d^2}{dt^2 }\right|_{t=0}\left(\|\psi_{t \, \#} |S|\res \Oc\| + \text{Vol}_g(t)\right)$$
along a normal deformation $\psi_t$ with initial (normal) velocity $\zeta=\left.\frac{d \psi}{dt}\right|_{t=0}$ can be expressed by

$$J_g^{\prime \prime} (0) =\int_{S} |\nabla \zeta|^2-(\text{Ric}(\nu, \nu)+|A|^2 - D_{\nu}g)\zeta^2,$$ 
where $A$ is the second fundamental form of the immersed hypersurface and $\nabla$ denotes the gradient induced by the metric $dS$. This can be proved following the arguments in \cite[(2.4)-(2.5)]{BarbDoCEsch} and \cite[Appendix 4]{BarbDoCarmo}. Stability of $\iota$ with respect to $J_g$ (as an immersion) implies therefore that $\iota$ satisfies the strong stability inequality
\begin{equation}
\label{eq:strongstabilityinequalityimmersion}
\int_{S} (\text{Ric}(\nu, \nu)+|A|^2- D_{\nu}g)\zeta^2 \, dS \leq \int_{S} |\nabla \zeta|^2 \, dS
\end{equation}
for any $\zeta \in C^1_c(S)$. 

\medskip

In our regularity results Theorems \ref{thm:mainregstationarityimmersedparts}, \ref{thm:mainreg_g0_version1} we impose a condition on $J_g^{\prime \prime}$ for $V\res \Oc$ (whenever $V\res \Oc$ \emph{is} an oriented, $J_{g}$-stationary $C^2$ immersion) by employing normal deformations (as an immersion) induced by $\phi \in C^1_c(\Oc)$. If the Morse index in (c) of Theorems \ref{thm:mainregstationarityimmersedparts}, \ref{thm:mainreg_g0_version1} is zero, then the assumption amounts to requiring the validity of (\ref{eq:strongstabilityinequalityimmersion}) for all $\zeta$ of the type $\phi \circ \iota$. More intrinsically, we can write the inequality as 
$$\int_{\Oc} (\text{Ric}(\nu, \nu)+|A|^2- D_{\nu}g)\phi^2 \, d\|V\| \leq \int_{\Oc} |\nabla \phi|^2 \, d\|V\|$$
for all $\phi \in C^1_c(\Oc)$, where $\nabla$ is the ambient gradient projected onto the tangent plane to $V$, which is well-defined everywhere on $\Greg{V}$ (there are no classical singularities).

\medskip

\textit{Finite index implies strong stability locally}. If the Morse index $s\neq 0$ then the same inequality holds for all $\phi \circ \iota$ such that $\phi$ is $L^2$-orthogonal to an $s$-dimensional linear subspace of $C^1_c(\Oc)$. We will now show that this condition implies that for every point we can find a ball around it such that (c) is valid in the ball with Morse index $0$.

Assume condition (c) is valid in $\Oc$ with Morse index $s$. The key observation is that, given any two disjoint open sets contained in $\Oc$, then in at least one of them the Morse index   is $\leq s-1$. If that were not the case, then we would find that the total index in $\Oc$ is $\geq 2s$. Consider now an arbitrary point $x$ in $\Oc$ and a fixed ball $B_R(x) \subset \Oc$. Choose $0<r<R$ and consider the disjoint open sets $B_r(x)$ and $B_R(x) \setminus \overline{B}_r(x)$. By the previous observation, in at least one of the two we have index $\leq s-1$ and this can be done for every $r$ in $(0,R)$. Therefore, either we find $r$ such that the index in $B_r(x)$ is $\leq s-1$, or the index is $\leq s-1$ in $B_R(x) \setminus \{x\}$. The latter means that for every $\phi \in C^1_c(B_R(x) \setminus \{x\})$ and orthogonal to a $(s-1)$-dimensional subspace, the inequality holds. Using $n\geq 2$ and the fact that the density $\Theta(\|V\|,x)$ is finite (by (a1), via the almost monotonicity formula), a standard capacity argument shows that for every $\phi \in C^1_c(B_R(x))$ orthogonal to a $(s-1)$-dimensional subspace the inequality holds. In other words, both alternative lead to the existence of a ball around $x$ in which the index is $\leq s-1$. The argument can now be iterated finitely many times to reach the conclusion.

\medskip

\textit{Weak stability, $g>0$}. For Theorem \ref{thm:mainreg_gpos}, with (c) replaced by the weaker assumption in Remark \ref{oss:weakeningstabilityassumption_gpos}, we note that the argument just described applies as in the case $s=1$. In fact, one can show that weak stability condition assumed there implies that the Morse index  is $\leq 1$; alternatively, one can directly prove that whenever two disjoint open sets $A_1$ and $A_2$ in $\Oc$ are considered, then in at least one of them we have strong stability. (To see this, argue by contradiction and assume that stability fails both for a function $\phi_1$ supported in $A_1$ and for a function $\phi_2$ supported in $A_2$. Take a linear combination $\phi$ of them such that $\phi$ verifies the integral constraint that is needed for the validity of the weak stability. Then adding up the two reversed inequalities for $\phi_1$ and $\phi_2$, we contadict the weak stability inequality for $\phi$. Also see \cite{BW} for a detailed proof.)

\begin{oss}[\textit{$\text{Vol}_g$-preserving deformations, $g>0$}]
Let $g>0$. The stability of $\iota$ with respect to the area functional for $\text{Vol}_g$-preserving normal deformations (as an immersion) implies that $\iota$ satisfies the weak stability inequality
\begin{equation}
\label{eq:weakstabilityinequalityimmersion}
\int_{S} (\text{Ric}(\nu, \nu)+|A|^2- D_{\nu}g)\zeta^2 \, dS \leq \int_{S} |\nabla \zeta|^2 \, dS
\end{equation}
for any $\zeta \in C^1_c(S)$ such that $\int_{S}\zeta (g \circ \iota) \,dS =0$. (To see this, given such a $\zeta$ we can produce, as said earlier, a $\text{Vol}_g(\cdot)$-preserving normal deformation (as an immersion) of $\iota$, with initial velocity $\zeta$. Along this deformation the functional $J_g$ agrees with the area functional, so the immersion is stable for $J_g$ under this deformation, i.e.~the inequality is valid.)

Note that the validity of the weak stability inequality (\ref{eq:weakstabilityinequalityimmersion}) for $\iota$ is in fact equivalent to the variational characterization just discussed (i.e.~stability with respect to the area functional for $\text{Vol}_g$-preserving normal deformations). More precisely, assume that for any $\zeta \in C^1_c(S)$ such that $\int_{S}\zeta (g \circ \iota) \,dS =0$  we have $\int_{S}(\text{Ric}(\nu, \nu)+|A|^2- D_{\nu}g)\zeta^2  \,dS\leq \int_{S} |\nabla \zeta|^2 \,dS$. Let $\psi$ be any $\text{Vol}_g$-preserving normal deformation (as an immersion) of $S$ in $\Oc$. Then the initial normal velocity $\zeta$ of $\psi$ satisfies the constraint for the validity of the weak stability inequality. By the expression of the second variation $J_g^{\prime \prime} (0)$ we conclude that $S$ in $\Oc$ is stable with respect to $J_g$ for $\text{Vol}_g$-preserving deformations (as an immersion), and therefore $S$ in $\Oc$ is stable with respect to the area functional for $\text{Vol}_g$-preserving normal deformations (as an immersion).
\end{oss}

\section{Local consequences of stationarity}

\subsection{Localization to a chart}
\label{exponentialchart}
  
All our regularity conclusions in this work are local results. For the local study of a varifold as in Theorems \ref{thm:mainreg_gpos}, \ref{thm:mainregstationarityimmersedparts}, \ref{thm:mainreg_g0_version1} around an arbitrary point $X\in N$ it is convenient to use a local chart given by the exponential map $\text{exp}_X$ at $X$. Let $R_X$ denote the injectivity radius at $X$ and $\mathcal{N}_{\rho}(X)$ the normal coordinate ball of radius $\rho < R_X$ around $X$. 

If $\widetilde{V}$ is an integral $n$-varifold in $N$, then for $X_0 \in \spt{\widetilde{V}}$ and $\rho_0 < R_{X_0}$ we consider the integral $n$-varifold $V$ in $B^{n+1}_{\rho_0}(0)$ defined by
$$V={\text{exp}_{X_0}^{-1}}_\# \left(\widetilde{V} \res \mathcal{N}_{\rho_0}(X_0)\right).$$ In the following discussion, assuming that $\tilde{V}$ satisfies the assumptions of Theorem \ref{thm:mainreg_gpos} we derive the hypotheses that we need to impose on $V$ in order to rephrase Theorem \ref{thm:mainreg_gpos}  in $B^{n+1}_{\rho_0}(0)\subset \R^{n+1}$.

$\bullet$ The structural conditions (a2) and (a3) are not affected by the exponential map, so they hold in the same formulation for $V$.

$\bullet$ Assume stationarity of $\Reg\,\widetilde{V}$ with respect to the functional in Theorem \ref{thm:mainreg_gpos} (b): this becomes a stationarity condition for $\Reg\, V$ with respect to a new functional, as we now describe. 

Define, for $M$ a $C^1$ hypersurface in $B^{n+1}_{\rho_0}(0)$,

$$\mathcal{F}_{X_0}(M)=\int_{M} |\Lambda_n (D\text{exp}_{X_0}(y)) \circ T_{y} M| d{\Hc}^n(y) = \int_M F(y,\nu(y)) d{\Hc}^n(y)$$
where  $\nu$ is a continuous unit normal to $M$ and for $(y, p) \in B_{\r_{0}}^{n+1}(0)\times {\mathbb R}^{n+1} \setminus \{0\}$, $F(y,p) = |\Lambda_n (D\text{exp}_{X_0}(y)) \circ p^{\perp}|$. The functional $\mathcal{F}_{X_{0}}$ in $B^{n+1}_{\rho_0}(0)$  encodes, via the area formula, the $n$-dimensional area functional in $\mathcal{N}_{\rho_0}(X_0)$ (see \cite[Remark 1]{SS}). The integrand $F:B^{n+1}_{\rho_0}(0) \times \R^{n+1} \setminus \{0\}  \to [0,\infty)$ is smooth, depends only on the geometric data and satisfies the homogeneity condition $F(y,\lambda p)=\lambda F(y,p)$ for $\lambda>0$ (\cite[(1.2)]{SS}). Moreover, the bounds \cite[(1.3)-(1.4)]{SS} hold for some constants $\mu$, $\mu_{1}$ depending only on the Riemannian manifold $N$. Since $D\text{exp}_{X_0}$ is the identity at $0$ we have $F(0,p)=|p|$ for every $p\in \R^{n+1} \setminus \{0\}$ as required by \cite[(1.5)]{SS} (i.e.~$\mathcal{F}$ agrees with the Euclidean area at the origin). 

The relative enclosed $g$-volume in $N$ changes to $\text{Vol}_{(g\circ \text{exp}_{X_0})\sqrt{|\mathscr{h}|}}$ in $B^{n+1}_{\rho_0}(0)$, where $\mathscr{h}$ is the Riemannian metric on $B^{n+1}_{\rho_0}(0)$ obtained by pulling back the metric on $N$ by the exponential map. We thus conclude that $\Reg\,V$ is stationary in $B^{n+1}_{\rho_0}(0)$ with respect to the functional $\mathcal{F}_{X_0} + \text{Vol}_{(g\circ \text{exp}_{X_0})\sqrt{|\mathscr{h}|}}$. Since $\sqrt{|\mathscr{h}|}>0$ and smooth, it is readily seen that $\text{Vol}_{(g\circ \text{exp}_{X_0})\sqrt{|\mathscr{h}|}}$ is a functional of the same type as the relative enclosed $g$ volume on $N$ (which also preserves any sign assumption on $g$), so we will write the stationarity assumption on $V$ with respect to a functional $\mathcal{F}_{X_{0}}  + \text{Vol}_{g}$ on $B^{n+1}_{\rho_{0}}(0)$, with $F$ satisfying the conditions above.

\medskip

We postpone the discussion of the stability condition for $\mathcal{F}_{X_{0}}  + \text{Vol}_{g}$ to Section \ref{Schoenslemma}.

\medskip

$\bullet$ We finally consider condition (a1) of Theorem \ref{thm:mainregstationarityimmersedparts}, i.e.~the first variation of $\widetilde{V}$ in $N$ with respect to the (Riemannian) area functional is representable as $\delta \, \widetilde{V} = -H_{\widetilde V} \|\widetilde{V}\|,$ with $H_{\widetilde V} \in L^p(\|\widetilde{V}\|)$, $p>n$. Recall that this means
$$\delta \, \widetilde{V}(\widetilde{\psi}) = -\int_{N}\langle H_{\widetilde V} , \widetilde\psi \rangle d\|\widetilde{V}\|$$ 
for each compactly supported vector field $\widetilde\psi \in \chi_{c}(N)$. (The vector-valued function $H_{\widetilde V} \in L^{p}_{\rm loc}(\|\widetilde{V}\|)$ is referred to as the generalised mean curvature of $\widetilde{V}$.)
Pulling back via $\text{exp}_{X_0}$ as above, we obtain
$$\delta_{{\mathcal F}_{X_{0}}}V(\psi) = -\int_{B_{\r_{0}}^{n+1}(0)} \langle H_{\widetilde{V}}\circ\text{exp}_{X_{0}},\, \psi \rangle \, d\|V\|$$
for all $\psi \in C_{c}(B_{\r_{0}}^{n+1}(0); {\mathbb R}^{n+1}).$
By a direct calculation, using the fact that the first variation of $V$ with respect to the (Euclidean) area functional is 
given by $$\delta V(\psi) = \int_{B_{\r_{0}}^{n+1}(0) \times G_{n}} {\rm div}_{S} \, \psi(X) \, dV(X, S),$$ 
it then follows (as in \cite[Section 5]{SS}) that 

\begin{eqnarray}\label{approx-firstvar}
\left|\int_{B_{\r_{0}}^{n+1}(0) \times G_{n}} {\rm div}_{S} \, \psi(X) \, dV(X, S) \right| &\leq & \m_{1}\int_{B_{\r_{0}}^{n+1}(0)} \left(|\psi(X)| + |X||\nabla\psi(X)|\right) \, d\|V\|(X)\nonumber\\ 
&& + \mu \int_{B_{\r_{0}}^{n+1}(0)} {\hat H}_{V}(X)|\psi(X)| \, d\|V\|(X)
\end{eqnarray}
\noindent
for all $\psi \in C_{c}^{1}(B_{\r_{0}}^{n+1}(0); {\mathbb R}^{n+1}),$ where ${\hat H}_{V} = |H_{\widetilde{V}} \circ {\rm exp}_{X_{0}}|$ and $\mu$, $\m_{1}$ are fixed positive  constants depending only on the Riemannian manifold $N$ and $| \cdot |$ denotes 
the Euclidean length.   

For any $Y \in B_{\r_{0}/2}^{n+1}(0)$, let $\varphi_{Y} = {\rm exp}_{{\rm exp}_{X_{0}}(Y)}^{-1} \circ {\rm exp}_{X_{0}}$ in a neighbourhood of $Y$ and extend $\varphi_{Y}$ to be a diffeomorphism of $B_{\r_{0}}^{n+1}(0)$ onto itself. 
Then $\varphi_{Y}(Y) = 0$ and inequality (\ref{approx-firstvar}) holds with constants $\mu, \mu_{1}$ depending only on (fixed) bounds on the metric and its derivatives in $\text{exp}_{X_0}^{-1}\left({\mathcal N}_{\r_{0}}(X_{0})\right)$ and with 
the varifold $\varphi_{Y \, \#} V$ in place of $V$ and with ${\hat H}_{\varphi_{Y \, \#} V} = {\hat H}_{V} \circ \varphi_{Y}^{-1}.$

In summary, if (a1) of Theorem \ref{thm:mainregstationarityimmersedparts} holds for $\widetilde{V}$ in $\mathcal{N}_{\rho_0}^{n+1}(X_0)$, then the varifold $V={\text{exp}_{X_0}^{-1}}_\# \left(\widetilde{V} \res \mathcal{N}_{\rho_0}(X_0)\right)$ satisfies the following condition:

(a1$^{\prime}$) There exists a non-negative function ${\hat H}_{V} \in L^{p}(\|V\|)$ and positive constants $\mu_{1}$, $\mu$ with the property that for any $Y \in B_{\r_{0}/2}^{n+1}(0)$ there exists a diffeomorphism $\varphi_Y$ of $B_{\rho_0}^{n+1}(0)$ onto itself with $\varphi_Y(Y)=0,$ $\varphi_{0} = {\rm identity}$ and with $\||D\varphi_{Y}\|_{C^{0}(B_{\rho_{0}}^{n+1}(0))} +
 \|D\varphi_{Y}^{-1}\|_{C^{0}(B_{\rho_{0}}^{n+1}(0))}  \leq \mu_{1}$ such that inequality (\ref{approx-firstvar}) holds with $\varphi_{Y \, \#} V$ in place of $V$ and with 
${\hat H}_{\varphi_{Y \, \#} V} = {\hat H}_{V} \circ \varphi_{Y}^{-1}$ in place of ${\hat H}_V.$

\begin{oss}
Suppose (\ref{approx-firstvar}) holds for a varifold $V$ in $B_{\r_{0}}^{n+1}(0).$ Then for any $\sigma \in (0, 1)$, (\ref{approx-firstvar})
holds with $\eta_{0, \sigma \, \#} \, V$ in place of $V$, $B_{{\r_{0}}/\sigma}^{n+1}(0)$ in place of $B_{\r_{0}}^{n+1}(0)$ and with $\mu_{1}\sigma$ in place of $\mu_{1}$ and 
${\hat H}_{\eta_{0, \sigma \, \#} \, V} = \sigma {\hat H}_{V}$. Also, for any orthogonal rotation $\G \, : \, {\mathbb R}^{n+1} \to {\mathbb R}^{n+1}$, (\ref{approx-firstvar})
holds with $\G_{\#} \, V$ in place of $V$ and with ${\hat H}_{\G_{\#} \, V} = {\hat H}_{V} \circ \G^{-1}$.
\end{oss}

\subsection{PDEs and the one-sided maximum principle}
\label{furtherconsequencesoftheassumptions}

Allard's theorem \cite{Allard} and \cite[Lemma A1]{BW} guarantee that $\Reg_1\,V$ is an open dense set under assumption (a1), and moreover (by standard PDE theory) that $\Reg_1\,V$ is of class $C^{1,\alpha} \cap W^{2, p}$. 
The variational assumption (b) in Theorems \ref{thm:mainreg_gpos}, \ref{thm:mainregstationarityimmersedparts}, \ref{thm:mainreg_g0_version1} implies higher regularity of $\Reg_1\,V$:

\begin{Prop}[\textbf{$\Reg_1\,V$ is $C^2$}]
\label{Prop:reg1}
For the varifolds considered in Theorems \ref{thm:mainreg_gpos}, \ref{thm:mainregstationarityimmersedparts}, \ref{thm:mainreg_g0_version1} we have $\Reg_1\,V \subset \Reg{V}$, in particular $\Reg_1\,V \subset \Greg{V}$ .
\end{Prop}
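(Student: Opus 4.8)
The plan is to show that $\Reg_1\,V\subset\Reg V$; since $\Reg V\subset\Greg V$ by Definition~\ref{df:regularpoints}, this yields both assertions at once. Recall from the discussion just above that assumption (a1), together with Allard's theorem \cite{Allard} and standard $L^p$ elliptic theory, already guarantees that $\Reg_1\,V$ is an embedded hypersurface of class $C^{1,\alpha}\cap W^{2,p}$; what remains is to upgrade this to $C^2$ regularity, and for this I would use the stationarity hypothesis (b), which is common to Theorems~\ref{thm:mainreg_gpos}, \ref{thm:mainregstationarityimmersedparts} and \ref{thm:mainreg_g0_version1}.

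First I would fix $X\in\Reg_1\,V$ and pick $\sigma>0$ so small that $M:=\spt{V}\cap\Nc_\sigma(X)$ is a connected, orientable, embedded $C^{1,\alpha}\cap W^{2,p}$ disk, and write $M$ in the exponential chart $\exp_X$ as the graph of a function $u\in C^{1,\alpha}\cap W^{2,p}$ over a ball in $T_XM\cong\R^n$. Applying hypothesis (b) in the sense of Definition~\ref{stationary-def}, with the orientable open set $\Nc_\sigma(X)$ and $\iota$ the inclusion $M\hookrightarrow N$, produces a choice of unit normal $\nu$ on $M$ for which $J_g'(0)=0$ along every normal deformation of $M$ supported compactly in $\Nc_\sigma(X)$.

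Combining this with the first variation formulas recalled in Section~\ref{stationaritydiscussion} (that is, $A'(0)=-\int_M\langle\zeta\nu,\vec H\rangle\,dM$ and $\text{Vol}_g'(0)=\int_M(g\circ\iota)\,\zeta\,dM$ for the normal velocity $\zeta$ of the deformation), I would conclude that the generalized mean curvature vector of $M$ equals $(g\circ\iota)\,\nu$ in the distributional sense, up to the inessential sign ambiguity pointed out in Section~\ref{stationaritydiscussion}. In the chart this means $u$ is a weak solution of a quasilinear uniformly elliptic equation whose leading coefficients are smooth functions of $(y,u,Du)$ (the minimal surface operator in the Euclidean case, the Euler--Lagrange operator of the functional $\mathcal F_{X}$ of Section~\ref{exponentialchart} in general, which has a smooth integrand) and whose right-hand side is the value of $g$ along the graph of $u$ times a smooth function of $(y,u,Du)$. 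Since $u\in W^{2,p}$ with $p>n$ the equation holds pointwise a.e., and since $u\in C^{1,1-n/p}$ (Morrey embedding) and $g\in C^{1,\alpha}$, both the frozen-coefficient operator and the right-hand side are Hölder continuous; a standard Schauder bootstrap (freeze $Du$, apply interior linear Schauder estimates, iterate, as in \cite{SS}, \cite{WicAnnals}) upgrades $u$ to $C^{2,\alpha'}$ for some $\alpha'>0$. Hence $\spt{V}$ is $C^2$-embedded near $X$, i.e.\ $X\in\Reg V$, completing the proof.

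The step I expect to be the main nuisance is not conceptual but technical: one has to verify that the a priori $W^{2,p}$ regularity of $M$ suffices to pass from the weak stationarity identity (a first-variation equality tested against $C^1$ vector fields supported in a chart) to the prescribed-mean-curvature PDE written pointwise a.e.\ in a form from which Schauder theory can be started, and to carry the argument through for the general ambient integrand $F$ of Section~\ref{exponentialchart} and modulo the orientation sign. No ideas beyond those already present in \cite{SS} and \cite{WicAnnals} are required.
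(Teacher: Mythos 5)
Your argument is correct and is essentially the paper's own proof: write $\Reg_1\,V$ locally as a $C^1$ graph, use stationarity (b) to derive the weak form of the prescribed-mean-curvature equation (the minimal-surface-type operator in the Euclidean case, the Euler--Lagrange equation of $\mathcal F_{X_0}+\text{Vol}_g$ from Section~\ref{exponentialchart} in general, up to the sign of the orientation), and conclude $u\in C^{2,\alpha}$ by elliptic regularity. The only difference is that you spell out the bootstrap ($W^{2,p}$ a.e.\ formulation followed by Schauder), which the paper compresses into ``by elliptic theory''.
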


\begin{proof}
Whenever $B_\sigma^{n+1}(p) \cap \spt{V} \subset \Reg_1\,V$ then, upon possibly reducing $\sigma>0$ we can locally write $\Reg_1\,V$ around $p$ as the graph (with respect to the tangent plane at $p$) of a $C^1$ function $u$. If $N$ is an open set in $\R^{n+1}$, the stationarity condition with respect to $J_g$ for one of the two possible orientations of $\text{graph}(u)$ implies the validity of one of the two following PDEs in their weak form: 
$$\text{div}\left( \frac{Du}{\sqrt{1+|Du|^2}}\right) = g(\cdot, u) \, \text{ or } \,\text{div}\left( \frac{Du}{\sqrt{1+|Du|^2}}\right) = -g(\cdot, u).$$
By elliptic theory we get that $u \in C^{2,\alpha}$, therefore $\Reg_1\,V \subset \Reg{V}$. For arbitrary $N$, we write the stationarity condition with respect to $\mathcal{F}  + \text{Vol}_{g}$ as described in Section \ref{exponentialchart} and we get the weak form of one of the following (two) PDEs (note that the following Euler-Lagrange equation is written by turning the functional $\mathcal{F}$ into a non-parametric one with $y=(x,u(x))$ and using the area formula, as in \cite[16.8]{GT}):
\begin{equation}
 \label{eq:E-L_Riem}
\sum_{j=1}^n D_j\left( D_{p_j}F((x,u),Du,-1)\right) + D_{n+1}F((x,u),Du,-1) = \pm g(x,u),
\end{equation}
where the second order terms 
$$D_{p_i p_j}F((x,u),Du) D^2_{ij} u$$
guarantee the ellipticity of the PDE thanks to the fact that $F$ satisfies conditions \cite[(1.2)-(1.5)]{SS} (see \cite[16.8]{GT}, \cite[Remark 1]{SS}). The regularity conclusion therefore holds.
\end{proof}

\begin{oss}
Proposition \ref{Prop:reg1} applied in a small enough neighbourhood of $p \in \SingT{V}$ under assumptions (a1)-(a2)-(b) implies that $\spt{V}$ is, in the chosen neighbourhood, the union of two $C^{1,\alpha}$ graphs that are $C^2$ away from the closed set on which they coincide.  
\end{oss}

We pass to a further important consequence of the stationarity condition.

\begin{Prop}[\textbf{one-sided maximum principle}]
\label{Prop:one-sided-max}
Let $V$ satisfy assumptions (a1)-(a2)-(a3)-(b)-(b$^T$) of Theorem \ref{thm:mainregstationarityimmersedparts}. Let $p\in \SingT{V}$ and let $u_1 \leq u_2$ be the $C^{1,\alpha}$ functions whose (ordered) graphs describe $\spt{V}$ in a cylindrical neighbourhood $\mathcal{C}=B^n_\sigma \times (-\sigma, \sigma)$ of $p$, in a reference frame chosen with respect to the tangent plane at $p$. Denote by $A$ the open complement of the set $\{x\in B_\sigma^n:u_1=u_2\}$. Then $u_1$ and $u_2$ satisfy (classically) on $A$ one of the two PDEs (\ref{eq:PDEsfortwographs}) below, and they do not satisfy the same one (i.e.~they solve with distinct signs on the right-hand-side). If $g \geq 0$ then $u_1$ solves (classically) on $A$ the PDE with the minus sign and $u_2$ solves (classically) on $A$ the PDE with the plus sign.
\end{Prop}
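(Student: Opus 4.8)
The plan is to combine the regularity from Proposition~\ref{Prop:reg1} with the one-sided maximum principle for (weak) solutions of the relevant quasilinear elliptic equation. First I would record that, by Proposition~\ref{Prop:reg1} and the hypothesis $p \in \SingT{V}$ (which by Remark~\ref{oss:touchingsinggraphs} gives the ordered graphical decomposition $\spt{V} \cap \mathcal{C} = \text{graph}\,u_1 \cup \text{graph}\,u_2$ with $u_1 \le u_2$, $u_1(0)=u_2(0)$, $Du_1(0)=Du_2(0)=0$), each $u_j$ is $C^1$ on $B_\sigma^n$ and $C^2$ (in fact $C^{2,\alpha}$) on the open set $A = B_\sigma^n \setminus \{u_1 = u_2\}$, since on $A$ the two sheets are disjoint smoothly embedded pieces of $\Reg_1\,V$. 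By hypothesis (b), each of the hypersurfaces $S_j = \Reg_1 V \cap \exp_p\text{graph}(u_j)$ is stationary with respect to $J_g$, and by (b$^T$) this stationarity holds for a genuine (globally consistent) choice of orientation of $\exp_p\text{graph}(u_j)$, not merely locally. Writing out the Euler--Lagrange equation as in the proof of Proposition~\ref{Prop:reg1} (equation~\eqref{eq:E-L_Riem} in the Riemannian case, or the mean-curvature equation $\text{div}(Du_j/\sqrt{1+|Du_j|^2}) = \pm g(\cdot,u_j)$ in the Euclidean case), each $u_j$ satisfies \emph{one} of the two sign choices on all of $A$; denote these two quasilinear operators abstractly by $\mathcal{M}u = \pm g(x,u)$, where $\mathcal{M}$ is uniformly elliptic on bounded $C^1$ data thanks to the structure conditions on $F$.

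Next I would argue that $u_1$ and $u_2$ cannot satisfy the \emph{same} sign. Suppose both solved $\mathcal{M}u_j = +g(x,u_j)$ (say) on $A$; since they agree to first order at points of $\partial A \cap B_\sigma^n$ and on $\{u_1=u_2\}$, and the difference $w = u_2 - u_1 \ge 0$ satisfies a linear uniformly elliptic equation $Lw = 0$ on $A$ (obtained by subtracting the two equations and using the mean-value form of the differences of the coefficients and of $g(x,u_2)-g(x,u_1)$, absorbing the zeroth-order term into $L$), the strong maximum principle / Hopf lemma would force $w \equiv 0$ on any connected component of $A$ whose closure meets $\{u_1=u_2\}$, contradicting $u_1 \ne u_2$ (note every component of $A$ has boundary touching $\{u_1=u_2\}$ inside $B^n_\sigma$, since $u_1(0)=u_2(0)$ and the touching set is closed and nonempty). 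Hence $u_1$ and $u_2$ solve with opposite signs on the right-hand side. (Strictly, one must be a little careful that the "mean value" linearization of $g(x,u_2)-g(x,u_1)$ is only $C^\alpha$ in $x$, but $C^{2,\alpha}$ interior regularity of $w$ and the standard strong maximum principle for operators with bounded measurable lower-order coefficients both still apply.)

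Finally, for the sign assignment when $g \ge 0$: the key is the geometric meaning of the two orientations. With the convention that $H$ is the sum of principal curvatures and $\vec H = H\nu$, the sheet $\text{graph}\,u_1$ lies \emph{below} $\text{graph}\,u_2$. Choose on each sheet the orientation for which the Euler--Lagrange equation reads $\mathcal{M}u_j = g(x,u_j)$; in the model (Euclidean) case this is the orientation with $\nu_j$ pointing \emph{upward}, i.e. $\vec H_j = g\nu_j$ has nonnegative vertical component since $g\ge 0$. I would then show that $u_1$ is forced to be the "minus" solution and $u_2$ the "plus" solution by a comparison argument at (or near) a touching point: if instead $u_1$ solved the "plus" equation and $u_2$ the "minus" equation, then near a point $x_0 \in \{u_1 = u_2\}$ (which exists, e.g.\ $x_0 = 0$) we would have, after subtracting, that $w = u_2 - u_1 \ge 0$ satisfies $Lw = g(x,u_2) + g(x,u_1) \ge 0$ — wait, the signs must be tracked carefully: with $u_1$ solving $\mathcal{M}u_1 = +g$ and $u_2$ solving $\mathcal{M}u_2 = -g$, subtraction gives $Lw = -g(x,u_2) - g(x,u_1) \le 0$, so $w$ is a nonnegative supersolution vanishing at an interior point $x_0$, whence by the strong maximum principle $w \equiv 0$ near $x_0$, again contradicting $u_1 \ne u_2$. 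Therefore the only consistent possibility is that $u_1$ solves the equation with the minus sign and $u_2$ with the plus sign, which is the assertion. The main obstacle I anticipate is bookkeeping the sign conventions correctly — relating the choice of orientation/unit normal, the sign in the Euler--Lagrange equation, and the "above/below" ordering of the graphs — together with making sure the strong maximum principle is applied on components of $A$ that genuinely have boundary contact with the coincidence set; the elliptic PDE input itself is entirely standard given Proposition~\ref{Prop:reg1}.
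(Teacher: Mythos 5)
Your overall strategy is exactly the paper's: derive the Euler--Lagrange equation for each sheet on $A$ via Proposition~\ref{Prop:reg1}, use (b$^T$) to fix a single sign for each $u_j$ on all of $A$, rule out equal signs by a Hopf-type argument on the difference, and then pin down the sign assignment when $g\geq 0$ by a second comparison. The first two steps and the exclusion of equal signs are fine (your linearization $Lw=0$ with a zeroth-order term of arbitrary sign, combined with the Hopf boundary point lemma and the fact that $Dw=0$ on the coincidence set by (a2), is precisely equation~(\ref{eq:differencemaxprinc}) and the surrounding argument in the paper).

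There is, however, one step that fails as written: in the final sign-assignment argument you take $x_0\in\{u_1=u_2\}$ and invoke ``the strong maximum principle'' for the nonnegative supersolution $w$ ``vanishing at an interior point $x_0$.'' But $x_0$ lies in the coincidence set, i.e.\ \emph{outside} $A$, and the inequality $Lw\leq 0$ is only known to hold classically on $A$, where both $u_j$ are $C^2$; on the coincidence set you only have $C^{1,\alpha}$ regularity and no equation. So $x_0$ is not an interior point of the domain on which $L$ acts, and the strong maximum principle does not apply there. The fix is the same device the paper uses (and that you already use, at least nominally, in the equal-signs step): pick $y$ in a component of $A$, enlarge a ball $B_r(y)\subset A$ until $\partial B_r(y)$ first touches $\{u_1=u_2\}$ at some $z$, and apply the Hopf boundary point lemma for supersolutions without zeroth-order term on $B_r(y)$: since $w>0$ in $B_r(y)$, $w(z)=0$ and $Dw(z)=0$ (the latter by absence of classical singularities), you get the desired contradiction. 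With that substitution your proof matches the paper's.
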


On the connected components of $A$ that are in the interior of $\{u_1=u_2\}$ the function $u_1=u_2$ solves the PDE with $g=0$ thanks to (\textbf{T}), so there is nothing to prove in this case. 

\begin{proof}
Let $p=0$. The Euler-Lagrange equation in Proposition \ref{Prop:reg1} tell us that on any connected component of $A$ each function $u_1$ and $u_2$ (with $u_1 \leq u_2$, $u_1(0)=u_2(0)$ and $Du_1(0)=Du_2(0)=0$ --- on each connected component of $A$ each graph is embedded and admits only two orientations) is $C^2$ and solves one of the following PDEs (to simplify the discussion we consider the case that $N$ is Euclidean, the same proof adapts to the general case by considering the PDE (\ref{eq:E-L_Riem}))
\begin{equation}
 \label{eq:PDEsfortwographs}
 \text{div}\left( \frac{Du}{\sqrt{1+|Du|^2}}\right) =+  g(\cdot, u)  \,\,\, \text{ or }\,\,\,\text{div}\left( \frac{Du}{\sqrt{1+|Du|^2}}\right) =-  g(\cdot, u).
\end{equation} 
We analyse, case by case, according to the four possibilities for the choices of the signs on the right-hand side of the equations, the PDE for the difference $v=u_1-u_2$ on any other connected component of $A$: we conclude that, if the right-hand-sides have the same sign, then by the mean value theorem $v$ solves a PDE of the type

\begin{equation}
 \label{eq:differencemaxprinc}
 D_i\left(\left(\delta_{ij}+b_{ij}(Du_1, Du_2)\right) D_j v\right)=c v,
\end{equation}
where $b_{ij}$ is smooth and $0$ at $0$ and $c$ is smooth (more precisely, either $c=D_{n+1} g (\xi)$ or $c=-D_{n+1} g (\xi)$, with $\xi$ smooth). For $y$ in the chosen connected component of $A$, consider a ball $B_r(y)$ contained in the chosen connected component and such that there is a point $z$ on the boundary of the ball that belongs to $\{u_1=u_2\}$ (this can be done by choosing any ball and enlarging the radius until the boundary touches $\{u_1=u_2\}$). Observe that $v< 0$ on $B_r(y)$ and it extends to $z$ with $v(z)=0$ and $Dv(z)=0$ (because $u_1(z)=u_2(z)$ and $Du_1(z) = Du_2(z)$ by the structural assumptions): this contradicts Hopf boundary point lemma (in its version that holds regardless of the sign of the $0$-th order term, see the comments that follow \cite[Theorem 3.5]{GT}). Therefore the right-hand-sides in (\ref{eq:PDEsfortwographs}) have opposite signs for $u_1$ and $u_2$: however, for the moment, which function carries which sign might depend on the connected component of $A$. 

If $g$ takes both negative and positive values then assumption (b$^T$) forces the conclusion (we comment further on this in Appendix \ref{gchangessign}). We therefore assume for the remainder of the proof that $g\geq 0$.

We continue the previous analysis and we will now rule out the possibility that the right-hand-side is $g(\cdot, u_1)$ for $u_1$ and $-g(\cdot, u_2)$ for $u_2$. If that were the case we would get, again writing the PDE for $v$, 
\begin{equation}
 \label{eq:differencemaxprinc2}
 D_i\left(\left(\delta_{ij}+b_{ij}(Du_1, Du_2)\right) D_j v\right)\geq 0,
\end{equation}
where we used $g\geq 0$; working on $B_r(y)$ as above, the conditions $v<0$, $v(z)=0$ and $Dv(z)=0$ contradict Hopf boundary point lemma for subsolutions of elliptic PDEs without $0$-th order term, see \cite[Theorem 3.5]{GT}.

The only possibility that is allowed is therefore that the right-hand-sides in (\ref{eq:PDEsfortwographs}) are $-g(\cdot, u_1)$ for $u_1$ and $g(\cdot, u_2)$ for $u_2$, which geometrically means that the mean curvature vector points downwards for the bottom graph and upward for the top graph wherever it is non-zero.
\end{proof}

We continue the analysis of Proposition \ref{Prop:one-sided-max} at a touching singularity $p\in \SingT{V} \cap \Greg{V}$.

\begin{Prop}[\textbf{\textbf{local stationarity of $\Greg{V}$}}]
\label{Prop:loc_station_greg}
Under the assumptions of Proposition \ref{Prop:one-sided-max} and with the further constraint that $\spt{V} \cap \mathcal{C} \subset \Greg{V}$, upon possibly making $\mathcal{C}$ smaller we have that the two $C^2$ graphs of $u_1$ and $u_2$ are separately $J_g$-stationary; in the case $g\geq 0$, we moreover have that $u_1$ has mean curvature $-g \nu$ and $u_2$ has mean curvature $g \nu$, where $\nu$ denotes the upward orientation of each graph.
\end{Prop}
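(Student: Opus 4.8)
The plan is to upgrade the pointwise Euler--Lagrange information furnished by Proposition~\ref{Prop:one-sided-max} on the open set where the two sheets are either strictly separated or identically coincident to an identity valid on the \emph{entire} domain of the graphs, via a density-plus-continuity argument; the hypothesis $p\in\Greg{V}$ (together with $\spt{V}\cap\mathcal C\subset\Greg{V}$) is exactly what makes the two ordered graphs of class $C^{2}$, so that both sides of the relevant equation are continuous.

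\textbf{Set-up.} Since $p\in\SingT{V}\cap\Greg{V}$, Definition~\ref{df:regularpoints} and Remark~\ref{oss:touchingsinggraphs} let us shrink $\mathcal C=B^{n}_{\sigma}\times(-\sigma,\sigma)$ so that $\spt{V}\cap\mathcal C=\mathrm{graph}\,u_{1}\cup\mathrm{graph}\,u_{2}$ with $u_{1}\le u_{2}$ on $B^{n}_{\sigma}$ and $u_{1},u_{2}\in C^{2}(B^{n}_{\sigma})$; the ordering is global because $u_{1}=\min(v_{1},v_{2})$, $u_{2}=\max(v_{1},v_{2})$ for the two globally ordered $C^{2}$ sheets $v_{1},v_{2}$ provided by generalized-regularity at $p$. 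Set $Z=\{x\in B^{n}_{\sigma}:u_{1}(x)=u_{2}(x)\}$, a closed set, and let $D=Z\setminus\mathrm{int}\,Z$ be its topological boundary in $B^{n}_{\sigma}$. Then $D$ is closed with empty interior, so $A:=B^{n}_{\sigma}\setminus D=\{u_{1}<u_{2}\}\cup\mathrm{int}\,Z$ is open and dense in $B^{n}_{\sigma}$.

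\textbf{Equation on $A$.} On the part of $A$ lying in $\mathrm{int}\,Z$ we have $u_{1}\equiv u_{2}$ (so this part is contained in $\Reg_{1}V$), and $(\textbf{T})$ forces $\mathcal H^{n}(Z\cap\{g\neq0\})=0$; since $x\mapsto g(x,u_{1}(x))$ is continuous and vanishes a.e.\ on the open set $\mathrm{int}\,Z$, it vanishes identically there, so on that part $u_{1}=u_{2}$ solves the minimal surface equation. On the components of $\{u_{1}<u_{2}\}$, Proposition~\ref{Prop:one-sided-max} says each $u_{j}$ solves classically one of the two equations, and with opposite signs for $u_{1}$ and $u_{2}$; in the case $g\ge0$ (and, for general $g$, using hypothesis (b$^{T}$)) the sign assignment is moreover \emph{uniform} over all of $A$. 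Thus there are signs $\varepsilon_{1}=-\varepsilon_{2}\in\{+1,-1\}$, with $\varepsilon_{1}=-1$ if $g\ge0$, such that on $A$
$$\mathrm{div}\!\left(\frac{Du_{j}}{\sqrt{1+|Du_{j}|^{2}}}\right)=\varepsilon_{j}\,g(\cdot,u_{j}),\qquad j=1,2,$$
with this Euclidean equation replaced, when $N$ is not flat, by the corresponding divergence-form equation $(\ref{eq:E-L_Riem})$ for $\mathcal F_{X_{0}}+\mathrm{Vol}_{g}$ of Section~\ref{exponentialchart}.

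\textbf{Propagation and conclusion.} Fix $j$. The map $x\mapsto\mathrm{div}(Du_{j}/\sqrt{1+|Du_{j}|^{2}})(x)$ is continuous on $B^{n}_{\sigma}$ because $u_{j}\in C^{2}$, and $x\mapsto g(x,u_{j}(x))$ is continuous because $g\in C^{1,\alpha}$ and $u_{j}\in C^{0}$. By the previous step the displayed identity holds on the dense open set $A$, hence on $\overline A=B^{n}_{\sigma}$. By the discussion in Section~\ref{stationaritydiscussion} this says precisely that each of the two $C^{2}$ graphs $\mathrm{graph}\,u_{1},\mathrm{graph}\,u_{2}$, viewed as an oriented immersion, is stationary with respect to $J_{g}$; and when $g\ge0$, taking $\nu$ to be the upward unit normal, the mean curvature is $-g\nu$ on $\mathrm{graph}\,u_{1}$ and $g\nu$ on $\mathrm{graph}\,u_{2}$. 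The only steps requiring care are that generalized-regularity of $p$ genuinely produces globally ordered $C^{2}$ (not merely $C^{1,\alpha}\cap W^{2,p}$) graphs after shrinking $\mathcal C$, so that the divergence operator is literally continuous, and that the sign $\varepsilon_{j}$ is constant across all components of $A$; the latter is the only place where the full strength of Proposition~\ref{Prop:one-sided-max} (via (b$^{T}$), resp.\ via the Hopf-lemma argument for $g\ge0$) is used. There is no further analytic obstacle: the statement is essentially a clean-up of Proposition~\ref{Prop:one-sided-max}.
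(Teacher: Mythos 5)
Your proof is correct and follows essentially the same route as the paper's: establish the Euler--Lagrange equation with a uniform sign on the dense open set consisting of $\{u_1<u_2\}$ together with the interior of the coincidence set (where (\textbf{T}) forces $g=0$), and then extend to all of $B^n_\sigma$ using continuity of both sides, which is exactly what the $C^2$ regularity of the two ordered graphs provides. The paper merely phrases this as a case split on $g(p)>0$ versus $g(p)=0$ with the equation "extending by continuity," whereas you package it as a single density-plus-continuity argument; the substance is identical.
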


\begin{proof}
Les us consider first the case $g(p)>0$, which we can assume to be true in the neighbourhood $\mathcal{C}$. Then $\{u_1=u_2\}$ has $0$-measure by (a3), in particular empty interior. The $C^2$ regularity of $u_j$ implies that the mean curvature of $\text{graph}(u_j)$ is continuous for $j=1,2$; moreover $g$ is continuous on each graph. By Proposition \ref{Prop:one-sided-max} the mean curvature is $g\nu$ for $\text{graph}(u_2) \cap \left(\{u_1\neq u_2\}\times (-\sigma, \sigma)\right)$ and $-g \nu$ for $\text{graph}(u_1) \cap \left(\{u_1\neq u_2\}\times (-\sigma, \sigma)\right)$, so the mean curvature of each graph extends by continuity to the whole graph. In other words the two graphs are separately stationary.

Let us consider now the case $g\geq 0$ in $\mathcal{C}$ and $p\in \SingT{V} \cap \Greg{V}$ with $g(p)=0$ and with (a3) valid on $\mathcal{C}$. This time $\{u_1=u_2\}$ may have non-empty interior $I$ and by (a3) $\text{graph}(\left.u_j\right|_I)  \subset \{g=0\}$. Away from $I$, the previous argument extends to give that the mean curvature is $g\nu$ for $\text{graph}(u_2) \cap \left(\overline{\{u_1\neq u_2\}}\times (-\sigma, \sigma)\right)$ and $-g \nu$ for $\text{graph}(u_1) \cap \left(\overline{\{u_1\neq u_2\}}\times (-\sigma, \sigma)\right)$. Since on $\text{graph}(\left.u_j\right|_I)$ we have $g=0$ the continuity of $g$ and of the mean curvature of each graph gives the conclusion: the top graph with the upward orientation $\nu$ has mean curvature $\vec{H}=g \nu$ \textit{everywhere} and similarly for the bottom graph.
Equivalently, each disk is stationary with respect to $J_g$ for the choice of orientation just discribed. A local structure like the one in Figure 
\ref{fig:touching_ruledout} is ruled out by assumption (\textbf{T}).

\begin{figure}[h]
\centering
 \includegraphics[width=4cm]{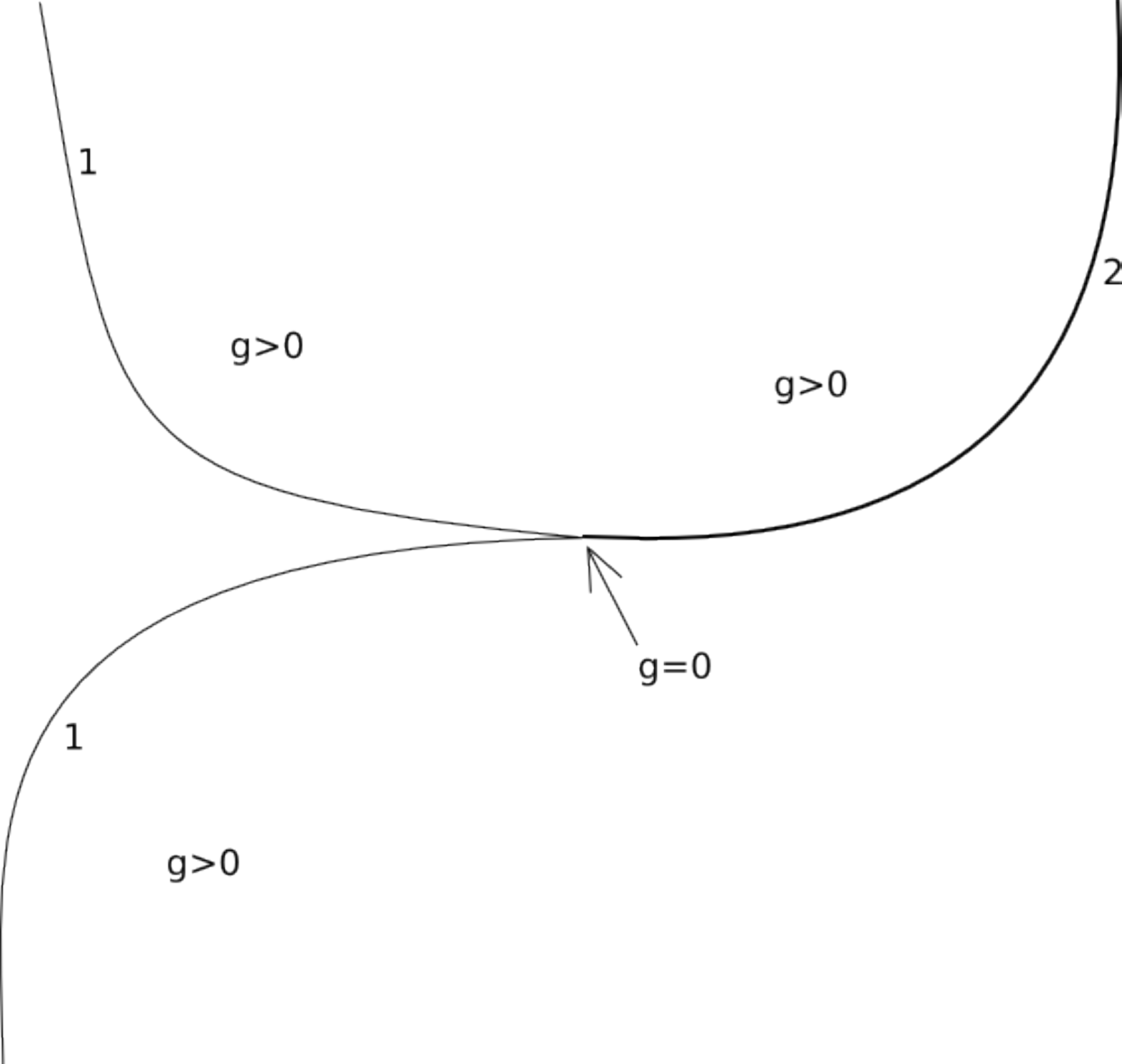} 
\caption{\small Assumption (\textbf{T}) rules out this local structure for $\spt{V}$ (in the picture $g$ has an isolated $0$). The two graphs that describe $\spt{V}$ in a neighbourhood of a touching singularity are allowed to coincide on a set of positive measure only if that portion is minimal ($g=0$).}
 \label{fig:touching_ruledout}
\end{figure}
The case in which $g$ can attain both positive and negative values follows immediately from (b$^T$).

\end{proof}

\begin{oss}
The result in Proposition \ref{Prop:loc_station_greg} is only valid locally. Note, to this end, that we are not assuming the validity of (b*) in Proposition \ref{Prop:loc_station_greg}, so we are allowing the varifold in Figure \ref{fig:immersion_high_mult} for which the conclusion of Proposition \ref{Prop:loc_station_greg} is valid locally but fails in larger open sets (consider for example the varifold obtained by removing the segment with multiplicity $3$ and the curve with multiplicity $1$ at the right hand of the picture: assumption (\textbf{T}) fails on such a large open set and so does the conclusion of Proposition \ref{Prop:loc_station_greg}).
\end{oss}

\begin{oss}
For $p\in\SingT{V} \cap \Greg{V}$ with $g(p)>0$, analysing the Hessian of $v=u_1-u_2$ in the above proof, there must exist an index $\ell \in \{1, ..., n\}$ such that $D^2_{\ell \ell}v(0) \neq 0$ (because of the non-vanishing of the mean curvature, $g(0)\neq 0$). The implicit function theorem then gives that the set $\{D_\ell v =0\}$ is, locally around $0$, an $(n-1)$-dimensional submanifold. The set of points $\{u_1 = u_2\}$ is contained in the set $\{Du_1 = Du_2\}$ (since there are no classical singularities by (a2)) and therefore $\{u_1 =u_2\} \subset \{Dv =0\} \subset \{D_\ell v =0\}$, a submanifold of dimension $(n-1)$. This implies in particular the the set $(\Greg{V} \setminus \Reg{V})$ has locally finite $\mathcal{H}^{n-1}$-dimensional measure.
\end{oss}

\begin{oss} 
Note that the proofs above also imply that $\ell$-fold touching singularities with $\ell \geq 3$  are not possible when the $\ell$ ordered graphs are $C^2$. Indeed,if that were the case, we could apply the previous reasoning to each possible couple of graphs that describe $\spt{V}$ in a neighbourhood of $p$ and we would find a contradiction for at least one couple. In particular, the smoothly immersed part of $\text{spt}\,V$ under assumptions (a), (a'), (b) agrees with $\Greg{V}$. We will draw an even stronger conclusion within the proof of the regularity theorems, ruling out $\ell$-fold touching singularities for $\spt{V}$ altogether.
\end{oss}

\begin{oss}\label{minimal-touching}
If $p \in \Greg{V} \setminus \Reg{V}$, then by definition of touching singularity  there is a neighbourhood of $p$ in which $\spt{V}$ consists of two $C^{2}$ disks $D_{1}$, $D_{2}$ intersecting only tangentially and with $p \in D_{1} \cap D_{2}$. Then for each $j$, the set $\widetilde{D}_{j} = \{g \neq 0\} \cap D_{j}$ (open in $D_{j}$) is non-empty and $p \in \partial \, \widetilde{D}_{j}$. To see this, choose local coordinates centered at $p$ with $T_{p} \, V = {\mathbb R}^{n} \times \{0\},$ and write $D_{j} = {\rm graph} \, u_{j}$ with $u_{j}  \in C^{2}(\Omega)$ for some open set $\Omega \subset {\mathbb R}^{n}$.  By Proposition~\ref{Prop:loc_station_greg}, each $u_{j}$ solves one of the two PDE's in (\ref{eq:PDEsfortwographs}) and not both solve the same one. Suppose contrary to the claim that $g = 0$ in a neighbourhood of $p$ in one of the disks, say $D_{1}.$ Then in a neighbourhood of the origin $u_{1}$, we may take either $g(x, u_{1}(x))$ or $-g(x, u_{1}(x))$ on the right hand side of the PDE that $u_{1}$ solves (trivially, since $g(x, u_{1}(x))=0$ near the origin). The claim then follows from the argument of Proposition~\ref{Prop:one-sided-max}.     
\end{oss}

\section{$\Greg{V}$ is an immersed oriented hypersurface}
\label{orientability}

The second variation assumption (c) implies, as seen in Section \ref{stabilitydiscussion}, that for any point $p \in {\rm spt} \, \|V\|,$ there is a small ambient neighborhood $U_{p}$ containing $p$ such that any orientable $C^2$-immersed portion of $\Greg{V} \cap U_{p}$ satisfies the strong stability inequality. What is needed in the proof of our regularity results---which proceeds by induction on the multiplicity---is the use of the strong stability inequality on $\Greg{V} \cap B$ for any ball $B$ with $\overline{B} \subset U_{p}$ whenever  
$\text{dim}_{\Hc}\left((\Sing{V} \setminus \Greg{V}) \cap B\right) \leq n-7$. Thus one must know that for a fixed small ball $B$, $\Greg{V} \cap B$ is an immersed orientable hypersurface whenever $\text{dim}_{\Hc}\left((\Sing{V} \setminus \Greg{V}) \cap B\right) \leq n-7$. This is quite straightforward, as shown in Remark~\ref{oss:orient_gpos}, when $g>0$, i.e.~in Theorem~\ref{thm:mainreg_gpos}, and it is immediate by (b*) in Theorem~\ref{thm:mainregstationarityimmersedparts}. (In these two cases we may in fact take any open set $\subset\subset U_{p}$ in place of $B$). In Theorem~\ref{thm:mainreg_g0_version1} however it is a delicate question. We address this in the present section. 

\begin{thm}[$\Greg{V}$ is an oriented immersion]
\label{thm:orientabilityGreg}
Let $V$ be an integral $n$-varifold in a simply connected open set $B$ such that assumptions (a1)-(a2)-(a3)-(a4)-(b)-(b$^T$) of Theorem \ref{thm:mainreg_g0_version1} are satisfied; assume moreover that $\text{dim}\left((\spt{V} \setminus \Greg{V}) \cap B \right) \leq n-7$. Then there exists an oriented $n$-manifold $S_B$ and a $C^2$ immersion 
$\iota_B: S_B \to B$ such that $\iota_{B}(S_{B}) \subset B \setminus (\spt{V} \setminus \Greg{V});$ 
moreover, $(\iota_B)_\sharp (|S_B|) = V \res B$.
\end{thm}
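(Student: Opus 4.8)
The plan is to split the argument into a largely canonical construction that realizes $V\res B$ as a $C^2$ immersed varifold, followed by a genuinely topological step that makes the domain of the immersion orientable; only the second step uses that $B$ is simply connected.

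\textbf{Construction of the immersion.} First I would record the local picture of $\Greg V$ supplied by Section~\ref{furtherconsequencesoftheassumptions}: by Proposition~\ref{Prop:reg1}, $\Reg_1 V$ is an embedded $C^2$ hypersurface; by Propositions~\ref{Prop:one-sided-max}, \ref{Prop:loc_station_greg} and Remark~\ref{minimal-touching} (together with the remark excluding $\ell$-fold touching singularities with $\ell\ge 3$), every point of $\SingT V\cap\Greg V$ has a neighbourhood in which $\spt V$ is the union of exactly two embedded $C^2$ disks meeting tangentially, each $J_g$-stationary for a suitable orientation; and by (\textbf{T}), wherever these two disks coincide on a set of positive $\mathcal H^n$-measure that set is minimal and contained in $\{g=0\}$. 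Using this, I would build the geometric normalization $\widetilde{\mathcal G}$ of $\Greg V$ --- the $C^2$ $n$-manifold obtained by separating, as abstract manifolds, the two $C^2$ sheets at each touching singularity --- together with the projection $\widetilde\pi:\widetilde{\mathcal G}\to B$, a $C^2$ immersion with $\widetilde\pi(\widetilde{\mathcal G})=\Greg V$ that is an embedding off $\widetilde\pi^{-1}(\SingT V)$. The density of $V$ along a sheet defines a function $\widetilde{\mathcal G}\to\mathbb Z_{>0}$ which (by constancy of the density on the regular part of an integral varifold with $L^p_{\rm loc}$ first variation, together with the merging of densities at touching singularities) is locally constant; by hypothesis (a4) it is identically $1$ over $\{g\ne 0\}$, so every component of $\widetilde{\mathcal G}$ of multiplicity $\ge 2$ maps into $\{g=0\}$ and is minimal by Proposition~\ref{Prop:loc_station_greg}. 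Taking over each component a covering with that many sheets and composing with $\widetilde\pi$ produces a $C^2$ immersion $\iota_B:S_B\to B$ with $\iota_B(S_B)=\Greg V\subset B\setminus(\spt V\setminus\Greg V)$ and, since $\|V\|(\spt V\setminus\Greg V)=0$, $(\iota_B)_\#|S_B|=V\res B$. The only freedom left in this construction is the choice of the coverings $S_B\to\widetilde{\mathcal G}$ and of an orientation of $S_B$.

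\textbf{Orientability (the main point).} Since $B$ is an orientable chart, orienting $S_B$ amounts to producing a continuous unit normal $\nu$ along $\iota_B$. On $\iota_B^{-1}(\{g\ne 0\})$, where the multiplicity is $1$, there is a canonical choice: set $\nu:=\vec H_V/g$ (the sign-tracking quotient), which by Proposition~\ref{Prop:reg1} and hypothesis (b) is the unique orientation making each sheet $J_g$-stationary, agrees with the stationarity orientation $\nu_0$ of the sheet, and extends continuously to the portion of that sheet lying in $\{g=0\}$; by Proposition~\ref{Prop:one-sided-max} the two sheets at a touching singularity in $\{g\ne 0\}$ receive opposite orientations, so $\nu$ is well defined near all of $\{g\ne 0\}\cap\Greg V$, and across $\partial\{g=0\}$ it matches the structure given by Propositions~\ref{Prop:one-sided-max}--\ref{Prop:loc_station_greg} (and, where $g$ changes sign, by hypothesis (b$^T$)). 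The remaining locus --- the union of the minimal components of $\widetilde{\mathcal G}$ contained in $\{g=0\}$, together with their multiplicity covers --- admits no canonical orientation, and here I would invoke that $B\setminus\Sigma$ is simply connected (valid since $\dim_{\mathcal H}\Sigma\le n-7$, so $\Sigma$ has codimension $\ge 8$ in $B$): this is what allows one to show each such minimal component is two-sided in $B$, to choose for each component of multiplicity $\ge 2$ a multiplicity cover that is orientable (in particular ruling out an odd-multiplicity, one-sided piece), and to propagate the orientation consistently from the region near $\{g\ne 0\}$ over all of $S_B$. The outcome is a global continuous unit normal along $\iota_B$, hence an orientation of $S_B$, completing the proof.

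\textbf{Expected main obstacle.} This last step is the crux. Unlike the case $g>0$ (Remark~\ref{oss:orient_gpos}), where $\vec H/g$ already furnishes a global co-orientation, when $g$ vanishes somewhere the mean curvature degenerates on $\{g=0\}$ and one is forced to make a genuine choice of orientation on the (possibly higher-multiplicity) minimal portions of $\Greg V$ and glue it to the canonical choice elsewhere; controlling this global gluing --- and especially excluding an odd-multiplicity, one-sided minimal component --- is precisely where the hypothesis that $B$ be simply connected is essential, and is the delicate part of the argument referred to in Remark~\ref{oss:forlaterref}.
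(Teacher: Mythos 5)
Your overall architecture coincides with the paper's: it too builds the immersion by separating the two sheets at touching singularities (via a metric completion of $\Reg_1\,V$ and a lift to the unit sphere bundle, Lemma~\ref{lem:immersion1} and Step 1 of Proposition~\ref{Prop:orientabilityimmersion}), takes multiplicity covers over the minimal components separated from $\Greg{V}\setminus\Reg_1\,V$ (Remark~\ref{discard}), orients by the continuous extension of $\frac{\vec{H}}{g}$ (Lemma~\ref{lem:orientability}), and uses simple connectedness of $B$ through a mod-2 intersection (Samelson-type) argument. You also locate the crux correctly.

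There is, however, a genuine gap in how you close the argument. Simple connectedness of $B\setminus\Sigma$ only yields \emph{two-sidedness} of the minimal components; it does not by itself ``propagate the orientation consistently from the region near $\{g\neq 0\}$.'' A two-sided minimal component $M$ admits two orientations, and nothing in your outline selects the one compatible with the canonical normal $\frac{\vec{H}}{g}$ on the several a priori independent pieces of $\overline{\Reg_1\,V\cap\{g\neq 0\}}$ that $\overline{M}$ may meet, nor shows that a single choice works for all of them simultaneously. The paper needs two further inputs here (Steps 3--4 of Proposition~\ref{Prop:orientabilityimmersion}): first, a Hopf-boundary-point-lemma argument showing that every point of $\partial M$ --- including the touching singularities in $\partial M_1\cap\partial M_2$, where one must also reconcile the disk orientations chosen in different overlapping balls --- is a limit, along the relevant sheet, of points of $\Reg_1\,V\cap\{g\neq 0\}$, so that the canonical orientation actually reaches all of $\partial M$; and second, the observation that hypothesis (b) forces a \emph{unique} orientation on the connected component $\widetilde{M}\supset M$ of $\Reg_1\,V$ making it $J_g$-stationary, which must then agree with $\frac{\vec{H}}{g}$ wherever $g\neq 0$ and hence glues coherently. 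A smaller related omission: the local constancy of your sheet-multiplicity function on $\widetilde{\mathcal G}$ requires showing that each of the two disks at a touching singularity separately carries multiplicity $1$ (Lemma~\ref{lem:mult12}, via (a4) together with Remark~\ref{minimal-touching}); this is what forces multiplicity exactly $2$ on the coincidence set and makes the covering construction well defined there.
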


We begin by showing that multiplicity higher than $2$ on $\Greg{V}$ can only happen on  ``removable'' minimal hypersurfaces.

\begin{lem} 
\label{lem:mult12}
Let $V$ be an integral varifold, in an open set $B$, that satisfies assumptions (a1)-(a2)-(a3)-(a4)-(b)-(b$^T$) of Theorem \ref{thm:mainreg_g0_version1}; assume moreover that the local regularity statement (i) is true in $B$. Then we have the following conclusions on the multiplicity:

\noindent $\bullet$ if $x\in \Reg_1\,V$ then $x$ has multiplicity $1$ or $2$, unless the connected component of $\Reg_1\,V$ that contains $x$ is a minimal hypersurface whose closure (in $B$) is disjoint from $\Greg{V} \setminus \Reg_1\,V$ (in which case the multiplicity of this minimal hypersurface can be an arbitrary integer);

\noindent $\bullet$ $\Greg{V} \setminus \Reg_1\,V$ has multiplicity $2$.

\end{lem}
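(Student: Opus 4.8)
The plan is to derive the two bullets from the constancy theorem (controlling the multiplicity on the embedded part $\Reg_1\,V$) and from the local analysis at touching singularities in Section~3, combined with upper semicontinuity of density. First I would record a preliminary step: under (a1), Allard's theorem and standard elliptic regularity (Section~\ref{furtherconsequencesoftheassumptions}) make $\Reg_1\,V$ an open dense $C^{1,\alpha}\cap W^{2,p}$ subset of $\spt{V}$, and by Proposition~\ref{Prop:reg1} it is in fact $C^2$ with $\Reg_1\,V\subset\Reg{V}\subset\Greg{V}$. On $\Reg_1\,V$ the generalized mean curvature of $V$ is normal to its (embedded, $C^2$) support, so the constancy theorem for varifolds with locally bounded first variation gives that $\Theta(\|V\|,\cdot)$ is constant on each connected component $\Gamma$ of $\Reg_1\,V$ (finite, by (a1) and the almost-monotonicity formula); call it the multiplicity of $\Gamma$. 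If that multiplicity is $\geq 2$, then (a4) forces $\Gamma\subset\{g=0\}$, and since $\Gamma$ is $J_g$-stationary by (b) and $C^2$ its mean curvature is $g\nu\equiv 0$, i.e.\ $\Gamma$ is minimal. Call this observation $(\star)$.

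Next I would prove the second bullet, that $\Theta(\|V\|,x_0)=2$ for every $x_0\in\Greg{V}\setminus\Reg_1\,V$. Such an $x_0$ is a touching singularity, so by Definition~\ref{df:touchingsingularity} and Propositions~\ref{Prop:reg1},~\ref{Prop:loc_station_greg}, in a cylindrical neighbourhood $\mathcal{C}$ of $x_0$ one has $\spt{V}\cap\mathcal{C}=D_1\cup D_2$ with $D_j=\text{graph}\,u_j$ of class $C^2$, $u_1\leq u_2$, each $D_j$ separately $J_g$-stationary, and $D_1,D_2$ tangent at $x_0$. By Proposition~\ref{Prop:one-sided-max}, on $\mathrm{int}\{u_1=u_2\}$ the common function $w=u_1=u_2$ solves \emph{both} PDEs in (\ref{eq:PDEsfortwographs}), hence $g(\cdot,w)\equiv 0$ there and, by continuity, $g\equiv 0$ on all of $D_1\cap D_2$ near $x_0$. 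By Remark~\ref{minimal-touching}, $\widetilde{D}_j:=\{g\neq 0\}\cap D_j$ is non-empty with $x_0\in\partial\widetilde{D}_j$, and by the previous sentence $\widetilde{D}_j\subset D_j\setminus D_{3-j}$; thus every neighbourhood of $x_0$ meets $(D_j\setminus D_{3-j})\cap\{g\neq 0\}\subset\Reg_1\,V$, which has multiplicity $1$ by (a4). I would then establish the claim that, after shrinking $\mathcal{C}$, $V$ has multiplicity $1$ on $D_1\setminus D_2$ and on $D_2\setminus D_1$, and multiplicity $2$ on $D_1\cap D_2$. Granting it, since $D_1,D_2$ are $C^2$ and tangent to $T=T_{x_0}D_1=T_{x_0}D_2$ at $x_0$ one has $\mathcal{H}^n(D_j\cap B_r(x_0))=\omega_n r^n(1+o(1))$, so $\|V\|(B_r(x_0))=\mathcal{H}^n((D_1\setminus D_2)\cap B_r)+\mathcal{H}^n((D_2\setminus D_1)\cap B_r)+2\,\mathcal{H}^n((D_1\cap D_2)\cap B_r)=2\omega_n r^n(1+o(1))$, i.e.\ $\Theta(\|V\|,x_0)=2$.

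To prove the claim I would argue as follows: by the constancy theorem (as above) the multiplicity of $V$ is locally constant on each connected component of $D_1\setminus D_2$, of $D_2\setminus D_1$, and of $\mathrm{int}(D_1\cap D_2)$; and since $\delta V$ has no singular part (hypothesis (a1)), these multiplicities are compatible across the relative boundary of the coincidence set, so on each component of $\mathrm{int}(D_1\cap D_2)$ the multiplicity equals the sum of the multiplicities of the two sheets $D_1$, $D_2$ on the adjacent region of $(D_1\setminus D_2)\cup(D_2\setminus D_1)$ — in particular it is $\geq 2$. By $(\star)$, any part of $D_1\setminus D_2$ or $D_2\setminus D_1$ of multiplicity $\geq 2$ is minimal, hence contained in $\{g=0\}$; since $x_0\in\partial\widetilde{D}_j$, no neighbourhood of $x_0$ in $D_j$ lies in $\{g=0\}$, and combining this with the matching and constancy just described forces the multiplicity of $V$ to be $1$ on $D_j\setminus D_{3-j}$, and then $2$ on the neighbouring coincidence components, near $x_0$.

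Finally I would deduce the first bullet from the second and from upper semicontinuity of density. Let $\Gamma$ be a connected component of $\Reg_1\,V$ of multiplicity $m$; if $m\in\{1,2\}$ there is nothing to prove, so assume $m\geq 3$. By $(\star)$, $\Gamma$ is minimal. If there were a point $x_0\in\overline{\Gamma}\cap(\Greg{V}\setminus\Reg_1\,V)$, then $\Gamma$ would accumulate at $x_0$ with density $\equiv m$, so $\limsup_{y\to x_0}\Theta(\|V\|,y)\geq m\geq 3$, contradicting upper semicontinuity of the density together with $\Theta(\|V\|,x_0)=2$. Hence $\overline{\Gamma}$ is disjoint from $\Greg{V}\setminus\Reg_1\,V$, which is exactly the exceptional alternative in the statement. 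The hardest part will be the claim inside the second bullet — the multiplicity bookkeeping near a touching singularity when $g(x_0)=0$ and the coincidence set may be ``fat'': the crux is excluding ``minimal multiplicity-$\geq 2$ bubbles'' of the sheets $D_j$ accumulating at $x_0$, which I expect to require the careful interplay of the absence of a singular part in $\delta V$, the constancy theorem, observation $(\star)$, and Remark~\ref{minimal-touching}.
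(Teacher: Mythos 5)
Your overall strategy is the same as the paper's: pin the multiplicity of each of the two sheets at a touching singularity to $1$ by combining (a4) with the fact (from the Hopf-lemma analysis, Remark~\ref{minimal-touching}) that each disk $D_j$ carries points with $g\neq 0$ accumulating at $x_0$, deduce multiplicity $2$ on the coincidence set, and then get the first bullet from constancy on components of $\Reg_1\,V$ plus upper semicontinuity of the density at a touching singularity. The first and third paragraphs of your proposal are essentially the paper's argument.

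The one place you have not closed the argument is exactly the place you flag as ``the hardest part'': excluding multiplicity-$\geq 2$ minimal components of $D_j\setminus D_{3-j}$ accumulating at $x_0$. Your componentwise constancy plus ``matching across the boundary of the coincidence set'' does not by itself propagate the value $1$ from one component of $D_j\setminus D_{3-j}$ to another, since on a coincidence component you only see the \emph{sum} of the two sheet multiplicities. The paper resolves this more cheaply than you anticipate: near a touching singularity the varifold decomposes as $\theta_1\,|D_1|+\theta_2\,|D_2|$ with each $\theta_j$ a \emph{constant} on the whole (connected) disk $D_j$ --- this is the content of \cite[Lemmas A.1, A.2]{BW}, which the paper invokes when it asserts that the multiplicity is constant on each $A_j=D_j\setminus(D_1\cap D_2)$. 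Once $\theta_j$ is a single constant on $D_j$, the existence of one point of $D_j$ with $g\neq 0$ and multiplicity $1$ (hypothesis (a4)) gives $\theta_j=1$ everywhere, and the ``minimal bubble'' scenario never needs to be analysed. If you replace your matching argument by this global sheet decomposition (or prove the locally-constant sheet multiplicities and then use connectedness of $D_j$), your proof is complete and coincides with the paper's.
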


\noindent
{\it Notation:} In the proof of this lemma and subsequently, the notation $\stackrel{\circ}{(\{g=0\} \cap \Reg_1\,V)}$ indicates  interior taken relative to the manifold ${\rm reg}_{1} \, V$. 
\begin{proof}
 
Recall that the multiplicity is an integer on $\Greg{V}$, see \cite[Lemma A.2]{BW}, and the multiplicity is a constant integer on any connected component of $\Reg_1\,V$, see \cite[Lemma A.1]{BW}. Let $p \in \Greg{V} \setminus \Reg_1\,V$: we will prove that $\Theta(\|V\|,p)=2$. By definition, there esist two $C^2$ disks $D_1$ and $D_2$ that describe $\spt{V}$ in an open neighbourhood $B_r(p) \subset B$, $D_1$ and $D_2$ can only intersect tangentially and $D_1\neq D_2$. Consider the two open sets $A_j = D_{j} \setminus D_{1} \cap D_{2}$, $j=1,2$; then $A_1$ and $A_2$ are $C^2$ embedded portions of $V$ and the multiplicity is constant on each of them, we denote it by $Q_j \in \N$. Note that it is not possible that on one of the two disks $g$ vanishes identically (otherwise we would contradict that one-sided maximum principle, since if $g$ vanishes on $D_j$ we can write the PDE for the function defining $D_j$ both with $g$ or $-g$ on the r-h-s). Both $D_1$ and $D_2$ contain therefore an open set on which $g\neq 0$ and this open set must intersect the respective $A_j$ non trivially, since the set $\{D_1=D_2\} \cap \{g\neq 0\}$ has empty interior (it has $0$ measure); we denote by $\tilde{A}_j=A_j \cap \{g\neq 0\}$ and note that $\tilde{A}_j$, for $j=1,2$, is an embedded portion of $V$, with the multiplicity equal on one hand equal to $Q_j$, on the other hand equal to $1$ by assumption. We therefore conclude that $A_j$ have multiplicity $1$ as well for $j=1,2$. This forces the multiplicity to be $2$ on the set $\{D_1=D_2\}$ (e.g.~by \cite[Lemma A.2]{BW}), in particular it is $2$ on $p$.

For the case $x\in \Reg_1\,V$, in a neighbourhood of $x$ we have that $V$ is embedded, with constant multiplicity: note that if $x \in \Reg_1\,V  \setminus \stackrel{\circ}{(\{g=0\} \cap \Reg_1\,V)}$ then in any neighbourhood of $x$ there are points in $\Reg_1\,V$ with $g\neq 0$, where the multiplicity is $1$ by assumption, therefore the multiplicity is also $1$ at $x$. The only case to analyse is therefore $x \in \stackrel{\circ}{(\{g=0\} \cap \Reg_1\,V)}$. Consider  the connected component $Z$ of $\Reg_1\,V$ that contains $x$ and denote with $Q_x\in \N$ the constant multiplicity on $Z$. If there exists a point in $Z$ where $g\neq 0$ then $Q_x=1$, so assume this is not the case, and assume further that $Z$ is disconnected from $\Greg{V} \setminus \Reg_1\,V$. Then whenever $p_j \to p \in B$, $p_j \in Z$, $p \notin Z$, we must have that $p \in \Sing{V} \setminus \Greg{V}$, which is a set of codimension $7$ by assumption, i.e.~$Z$ is a minimal hypersurface in $B$ whose closure in $B$ satisfies $\text{dim}_{\Hc}\left(\overline{Z}\setminus Z\right)\leq n-7$, in other words $Z$ is a minimal hypersurface in $B$ with possibly a codimension-$7$ singular set. This is the only case, allowed in the lemma, in which the multiplicity can be an arbitrary integer.

The only case left to analyse is the one in which there exists a sequence $p_j \to p \in B$, $p_j \in Z$, $p \in \Greg{V} \setminus \Reg_1\,V$. We thus have, for the touching singularity $p$, two $C^2$ disks $D_1$ and $D_2$ that describe $\spt{V}$ in a neighbourhood $B_r(p) \subset B$, $D_1$ and $D_2$ can only intersect tangentially and $D_1\neq D_2$. The same argument used above, implies that the multiplicity is $1$ on $\{D_1\neq D_2\}$ and $2$ on $\{D_1=D_2\}$, in particular $2$ at $p$. By upper-semicontinuity, the multiplicity of $p_j$ (and of $x$) is either $1$ or $2$.
\end{proof}

\begin{oss}\label{discard}
Our final goal is to show that $\Greg{V} \cap B$, under the assumptions of Theorem~\ref{thm:orientabilityGreg} and when $B$ is simply connected (a ball being the only case that we will need), is an oriented immersed hypersurface (of class $C^{2}$). For this purpose, any connected component $W$ of $\Reg_1{V} \cap B$ with $\overline{W} \cap (\Greg{V}
\setminus \Reg_1\,V)=\emptyset$ can be discarded. To see this, note first that 
$\overline{W} \setminus W \subset {\rm spt} \, \|V\| \setminus \Greg V$, and since 
${\rm dim}_{\mathcal H} \, ({\rm spt} \, \|V\| \setminus \Greg V) \leq n-7$, it follows that 
the multiplicity 1 varifold associated with $W$ is stationary with respect to $J_{g}.$ Furthermore, $W$ is clearly the image of an immersion and it is orientable by \cite{Samelson} (a codimension-$7$ singular set $\overline{W} \setminus W$ in $B$ does not affect the argument). Thus, until the conclusion of the proof of Proposition~\ref{Prop:orientabilityimmersion}, we assume that such connected 
components of $\Reg_{1} V$ have been removed, and hence, in view of  Lemma~\ref{lem:mult12}, that the following holds: 
\end{oss}

\noindent{\bf Working assumption:} \textit{$\Greg{V}$ has multiplicity $1$ or $2$ everywhere in $B.$} 

\medskip

\noindent
{\it Notation:} In the remainder of this section, we shall use the notation 
$$M_{2} = \stackrel{\circ}{(\{g=0\} \cap \Reg_1\,V)} \cap \{\Theta=2\};$$
  $$M_{1} = \stackrel{\circ}{(\{g=0\} \cap \Reg_1\,V)} \cap \{\Theta=1\};$$
$$U = \{g \neq 0\};$$
$${\Oc}_{1}=(B \setminus \overline{M_1}) \setminus (\spt{V} \setminus \Greg{V}) \;\; {\rm and}$$
$${\Oc}_{2}=(B \setminus \overline{M_2}) \setminus (\spt{V} \setminus \Greg{V}).$$

\begin{lem}
 \label{lem:immersion1} Let $V$ be an integral $n$-varifold in an open set $B$ that satisfies assumptions (a1)-(a2)-(a3)-(a4)-(b)-(b$^T$) of Theorem~\ref{thm:mainreg_g0_version1} and suppose that the working assumption above holds. Then there exists an $n$-dimensional manifold $S_{{\Oc}_{2}}$ and a $C^{2}$ immersion $\iota_{{\Oc}_{2}} \, : \, S_{{\Oc}_{2}} \to {\Oc}_{2}$ with $\iota_{{\Oc}_{2}}(S_{{\Oc}_{2}}) = \Greg{V} \setminus \overline{M_2}.$ 
 \end{lem}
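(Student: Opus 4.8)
The plan is to build the immersion by gluing together the local immersion pieces that already exist around every point of $\Greg{V} \setminus \overline{M_2}$, and to arrange the gluing so that the total multiplicity of the resulting immersed varifold matches $V$. First I would observe that ${\mathcal O}_2 = \Greg{V} \setminus \overline{M_2}$ decomposes into the points where $\spt{V}$ is a single embedded $C^2$ disk and the touching singularities where $\spt{V}$ is a union of two $C^2$ disks meeting tangentially (local regularity statement (i), which is available in $B$ by the hypotheses and the results of Section~\ref{furtherconsequencesoftheassumptions}; recall that (\textbf{T}) and Proposition~\ref{Prop:loc_station_greg} forbid $\ell$-fold touching with $\ell\geq 3$). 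Away from the touching set, $\Reg_1 V \cap {\mathcal O}_2$ is an embedded $C^2$ hypersurface; by the working assumption its multiplicity is $1$ or $2$, and on $M_1$ (the multiplicity-$1$ part of the interior of $\{g=0\}\cap\Reg_1 V$ that has \emph{not} been discarded per Remark~\ref{discard}, i.e.\ with $\overline{M_1}$ meeting $\Greg V\setminus\Reg_1 V$) and on the multiplicity-$2$ embedded pieces we will correspondingly put, respectively, one or two sheets of $S_{{\mathcal O}_2}$ over each point.

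The key step is the local model near a touching singularity $p$: by Remark~\ref{oss:touchingsinggraphs} and Proposition~\ref{Prop:loc_station_greg} there is a ball $B_r(p)$ in which $\exp_p^{-1}(\spt V) = \text{graph}\,u_1 \cup \text{graph}\,u_2$ with $u_1\leq u_2$ of class $C^2$, and by Lemma~\ref{lem:mult12} the multiplicity is $1$ on $\{u_1\neq u_2\}$ and $2$ on $\{u_1=u_2\}$. Hence $|V|\res B_r(p)$ is exactly the pushforward of $|\text{graph}\,u_1| + |\text{graph}\,u_2|$, i.e.\ of the immersion of the disjoint union of two $n$-disks; in particular every touching singularity $p\in{\mathcal O}_2$ is a generalized regular point where conclusion~(ii) of Definition~\ref{df:regularpoints} holds, which is consistent with $p\in\Greg V$. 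I would take these local two-disk immersions as charts, and over each connected component of $\Reg_1 V\cap{\mathcal O}_2$ that carries multiplicity $2$ I would take two parallel copies. The transition maps between overlapping charts are $C^2$ diffeomorphisms (two embedded $C^2$ sheets agreeing set-theoretically must agree as parametrized sheets up to reparametrization), so the abstract manifold $S_{{\mathcal O}_2}$ obtained by this gluing is a well-defined $C^2$ $n$-manifold, and the maps patch to a $C^2$ map $\iota_{{\mathcal O}_2}\colon S_{{\mathcal O}_2}\to{\mathcal O}_2$. This map is an immersion because it is a local embedding in each chart; it is proper onto ${\mathcal O}_2$ because $\spt V\setminus\Greg V$ and $\overline{M_2}$ have been removed and $\Theta(\|V\|,\cdot)$ is locally bounded (by (a1) and the monotonicity formula); its image is $\Greg V\setminus\overline{M_2}$ by construction; and $(\iota_{{\mathcal O}_2})_\#(|S_{{\mathcal O}_2}|) = V\res{\mathcal O}_2$ since the number of sheets over each point equals $\Theta(\|V\|,\cdot)$ by Lemma~\ref{lem:mult12} and the working assumption. (Note I am only asserting the manifold and immersion here; orientability of $S_{{\mathcal O}_2}$ is deferred to the later propositions, consistently with the statement, which does not claim $S_{{\mathcal O}_2}$ is oriented.)

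The main obstacle is the bookkeeping at the boundary between the embedded part and the touching set: one must check that a multiplicity-$2$ embedded region of $\Reg_1 V$ whose closure meets a touching singularity $p$ glues correctly to the two-disk chart at $p$ — i.e.\ that the two parallel sheets over the embedded region continue into the two graphs $\text{graph}\,u_1,\text{graph}\,u_2$ without any monodromy that would force the two sheets to be interchanged around a loop. This is where $B$ being merely open (not simply connected) in the hypothesis of this \emph{lemma} matters: in general there could be such monodromy, so the honest statement is that the gluing produces a well-defined $C^2$ manifold $S_{{\mathcal O}_2}$ and immersion regardless (the monodromy just affects which manifold one gets and, later, its orientability), and the multiplicity identity $(\iota_{{\mathcal O}_2})_\#|S_{{\mathcal O}_2}| = V\res{\mathcal O}_2$ holds because it is a local, chart-by-chart statement. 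I would therefore carry out the argument as: (1) cover ${\mathcal O}_2$ by embedded-disk charts and two-disk charts using (i) and Proposition~\ref{Prop:loc_station_greg}; (2) on each chart, specify the number of sheets using Lemma~\ref{lem:mult12} and the working assumption; (3) verify transition maps are $C^2$ diffeomorphisms and assemble $S_{{\mathcal O}_2}$, $\iota_{{\mathcal O}_2}$ via a standard gluing; (4) check properness, image, and the pushforward-multiplicity identity locally. Higher regularity issues do not arise since we only claim $C^2$, which is exactly what Proposition~\ref{Prop:reg1} and elliptic theory provide.
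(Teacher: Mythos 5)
Your construction is correct in outline but takes a genuinely different route from the paper. You build $S_{{\mathcal O}_2}$ by covering $\Greg{V}\setminus\overline{M_2}$ with local one-disk and two-disk charts and gluing; the paper instead takes the abstract manifold $S=\Reg_1 V\cap{\mathcal O}_2$ with the inclusion map, puts on it the path metric $d$ computed over paths in $\Greg V$ that are constrained to stay within a single disk near each touching singularity, and then takes the metric completion relative to ${\mathcal O}_2$. The completed points are exactly the touching singularities, each acquiring two preimages (one per disk), and the completion is shown to be a manifold because near a touching singularity any $d$-Cauchy sequence is eventually trapped in one disk ($d$ is bounded below between the two sheets). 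What the metric-completion route buys is that Hausdorffness, the consistency of the sheet identifications, and properness (via total boundedness plus completeness of preimages of compacta) come out of a single clean mechanism; in your Čech-style gluing these are precisely the points carrying the mathematical content of the lemma, and they are only asserted as ``standard gluing'' — you would need to verify that the overlap identifications generate a well-defined equivalence relation and that the quotient is $T_2$ (this is exactly the kind of check the paper performs explicitly in Step 2 of Proposition~\ref{Prop:orientabilityimmersion} for its later gluing). Two smaller points: first, within ${\mathcal O}_2$ there are no multiplicity-$2$ embedded pieces at all — by the proof of Lemma~\ref{lem:mult12} every multiplicity-$2$ point of $\Reg_1 V$ lies in $M_2$ (or in a component discarded by Remark~\ref{discard}), and $\overline{M_2}$ has been excised precisely so that $\Reg_1 V\cap{\mathcal O}_2$ has multiplicity $1$ and only the touching singularities carry multiplicity $2$; so your ``two parallel copies over multiplicity-$2$ embedded regions'' and the attendant monodromy discussion are vacuous here (they become relevant only for the later lift of $M_2$ to the sphere bundle). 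Second, you prove more than the lemma states (the pushforward identity $(\iota_{{\mathcal O}_2})_\#|S_{{\mathcal O}_2}|=V\res{\mathcal O}_2$); this is true and harmless given the first point, but the lemma only claims the image.
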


\begin{proof}

Consider the (abstract) manifold $S:=\Reg_1\,{V} \cap \Oc$, and denote with $\iota_S:S\to \Reg_1\,{V} \cap \Oc$ the immersion (embedding at this stage) into $\Oc$ given by the inclusion map. We denote with $\Gamma$ the following class of paths:
\begin{eqnarray*}
\Gamma:=\left\{\gamma:[0,1]\to \Greg{V} \cap ((B \setminus M_{2}) \setminus ({\rm Sing \, V} \setminus {\Greg \, V}))),\; \gamma \in C^0([0,1]),\right.&&\\ 
&&\hspace{-5in}\left. \;\gamma(0), \;\gamma(1) \in \Reg_1\,{V} \cap \Oc \text{ and if}\;\; \gamma(s)\in \Greg{V} \setminus \Reg_1\,V\right. \;\; \mbox{then}\\
&&\hspace{-5in}\mbox{there exists} \;\; \delta>0 \;\; \mbox{s.t.}\;\; \gamma((s-\delta, s+\delta)) \;\; \mbox{is  contained in one of the}\\
&&\hspace{-5in}\mbox{two disks that describe}\;\; \Greg{V}\;\; \mbox{in a neighbourhood of} \;\;\left.\gamma(s) \right\}.
\end{eqnarray*}

We define the following function $d:S \times S \to [0,\infty]$:

$$d(x,y)=\text{inf}\left\{\text{length}(\gamma): \gamma \in \Gamma,  \gamma(0)=\iota_S(x), \gamma(1)=\iota_S(y) \right\}.$$

The function $d$ is a metric on $S$. (If $x=y$ then clearly $d(x,y)=0$; if $d(x,y)=0$ then, since the ambient distance $|\iota_S(x)-\iota_S(x)|$ is always $\leq d(x,y)$  we get $\iota_S(x)=\iota_S(y)$ and since $\iota_S$ is an embedding of $S$ we also get $x=y$. The definition of $d$ is symmetric in $x$ and $y$, so $d(x,y)=d(y,x)$. For $x,y,z \in S$, given any path in $\Gamma$ joining $\iota_S(x)$ to $\iota_S(y)$ and any path in $\Gamma$ joining $\iota_S(y)$ to $\iota_S(z)$, the composition path joins $\iota_S(x)$ to $\iota_S(z)$ and is still in $\Gamma$, so the triangular inequality is satisfied.)

The metric space $(S, d)$ will now be ``completed with respect to $\Oc$'' to yield a metric space $(S_{\Oc}, d)$. This means that we consider all Cauchy sequences $(p_j)$ in $(S, d)$ such that $\lim_{j \to \infty} \, \iota_S(p_j) \in \Oc$ (note that it is automatic that $\lim_{j \to \infty} \, \iota_{S}(p_{j}) \in B$ since $\iota_{S}$ is distance decreasing), and let $S_{\Oc}$ be the collection of equivalence classes of such Cauchy sequences under the relation $(p_j) \sim (q_j)$ if  $d(p_j, q_j)\to 0,$ equipped with the metric $d((p_{j}), (q_{j})) = \lim_{j \to \infty} \, d(p_{j}, q_{j})$ (this is the usual completion for metric spaces but we are not including points whose image lie in the boundary of $\Oc$).

We will show that $S_{\Oc}$ is a manifold. For any point that was already in $S$, then there is a neighbourhood in which $S$ and $S_{\Oc}$ are isometrically identified and said neighbourhood is a disk, so we only need to consider the points that have been added in the completion. Let $p$ be any such point, and let $p_j$ be a Cauchy sequence representing it. Being a Cauchy sequence for $d$ implies also that $\iota_S(p_j)$ is a Cauchy sequence for the ambient distance and therefore $\iota_S(p_j) \to p_a$ for the ambient distance for some point $p_a$. Since $\Greg{V}$ is closed in $\Oc$ by assumption, $p_a \in \Greg{V} \cap \cap \Oc$ and $p_a$ cannot be in $\Reg_1\,V$ (since we are assuming that $p$ is a point that has been added), so $p_a \in \Greg{V} \setminus \Reg_1\,V$. For any two points $a,b \in \Reg_1\,V$ that are close to a touching singularity but on distinct disks, we have $d(a,b)>c>0$ for some $c$, therefore the Cauchy sequence $p_j$ is all contained in one of the two disks (say $D$) that describe $\Greg{V} \cap B_r(p_a)$. A neighbourhood of $p$ in the completion is given by all possible limits of Cauchy sequences $x_j$ such that, for $j$ large enough, $d(x_j,p_j)<c/2$. In particular $x_j$ is in the same disk as $p_j$. A neighbourhood of $p$ in the completion is therefore $D$ (because on $D$ the distance $d$ is identified with the intrinsic induced distance and $D \setminus \Reg_1\,V$ has empty interior for it).

The proof also shows that $\iota_S:S \to \Oc$ extends by continuity (since $D \setminus \Reg_1\,V$ has empty interior) to give an immersion with image $\Greg{V} \cap \Oc$, that is injective except for the set that maps to $\Greg{V} \setminus \Reg_1\,V$, which is covered twice (once for each disk). The map is actually $C^2$, because by definition $\Greg{V}$ is $C^{2}$.

To see that the immersion just constructed is proper, let $K$ be a compact set in $\Oc$ and note that, by the choice of $\Oc$, $\Greg{V} \cap K$ is compact in $B$. By continuity of the norm of the second fundamental form of the immersion, there exists $\sigma_K>0$ such that, whenever $x \in\Greg{V} \cap K$ and $r\leq \sigma_K$, $B_{r}^{n+1}(x) \cap \spt{V}$ is either an embedded disk or the union of two embedded disks intersecting tangentially; in particular, the inverse image of $B_{r}^{n+1}(x)$ via the immersion is made up of either one (topological) disk or two disjoint (topological) disks. The $C^2$ regularity further implies that the diameter of each such disk with respect to the metric $d$ is at most $C_K r$, where $C_K$ only depends on $K$ and on the bound for the $C^2$ norm of the second fundamental form of $\Greg{V} \cap K$. Given $\eps>0$ such that $\eps/{C_K}<\sigma_K$, by choosing a finite collection of balls of radius $\eps/{C_K}$  that covers $\Greg{V} \cap K$, their inverse images provide a finite collection of open sets that covers the inverse image of $K$, each open set having diameter smaller than $\eps$. This proves that the inverse image of $K$ is totally bounded. The fact that it is complete follows easily by construction. This establishes properness.
\end{proof}

\begin{oss}
 \label{oss:Gregimmersedemptyinterior}
 The previous proof shows an alternative way to write $\Greg{V}$ as a $C^2$ immersion under the assumptions of Theorem \ref{thm:mainreg_gpos} (where $M_2$ is empty), without exploiting the stationarity condition (which was done, on the other hand, in Remarks \ref{oss:immersionGreg_gpos}, \ref{oss:orient_gpos}).
\end{oss}

\begin{oss}
\label{oss:metric_clos_bdry}
In Lemma \ref{lem:immersion1} we defined the metric completion of $\Reg_1\,V$ with respect to the open set $(B \setminus \overline{M_2}) \setminus (\Sing{V} \setminus \Greg{V})$: this meant that we only considered Cauchy sequences (with respect to $d$) with image (via the inclusion map) converging to a point in $(B \setminus \overline{M_2}) \setminus (\Sing{V} \setminus \Greg{V})$ with respect to the ambient distance. (Note that the inclusion map is distance descreasing). The same argument can be used to define the metric completion of $\Reg_1\,V$ with respect to the larger set $(B \setminus M_{2})  \setminus (\Sing{V} \setminus \Greg{V})$; this means that we consider Cauchy sequences (with respect to $d$) with image (under $\iota_{S}$) converging to a point in $(B \setminus M_{2})  \setminus (\Sing{V} \setminus \Greg{V})$. Let $Y$ be this metric space. 
$Y$ need not be a manifold, nor a manifold-with-boundary (because we do not have any regularity assumption for the boundary of the open set $\{g\neq 0\}$, nor transversality of this boundary to $V$). The immersion defined in Lemma \ref{lem:immersion1} extends to $Y$ by continuity; we denote by $\iota_Y$ this extension. 
\end{oss}

Recall that $\Greg{V} \cap U$ is easily seen to be an \textit{oriented} $C^2$-immersed hypersurface by considering the unit vector $\frac{\vec{H}}{g}$ (see Remark \ref{oss:orient_gpos}). We have:

\begin{lem}[Orientability of $\Greg{V}$ in the complement of $\stackrel{\circ}{(\Reg_1\,V \cap \{g=0\})}$]
\label{lem:orientability}
Let $V$ be an integral varifold in an open set $B$ that satisfies assumptions (a1)-(a2)-(a3)-(a4)-(b)-(b$^T$) of Theorem \ref{thm:mainreg_g0_version1} and  $Y$ be the metric space defined in Remark \ref{oss:metric_clos_bdry}. Let $S_{U} = \iota^{-1}_{{\Oc}_{2}}(U)$ and $\iota_{U} = 
\iota_{{\Oc}_{2}} \res S_{U}.$ Then the manifold $S_U$ is oriented by $\nu=\frac{\vec{H}}{g}$ 
and $\iota_{U}(S_{U}) = \Greg{V} \cap U$. (Note that $S_U$ is an open set in the $n$-manifold $S_{{\Oc}_{2}}$, which in turn is a subset of $Y$.) Moreover, $\nu$ extends continuously to $\overline{S_{U}}$, where the closure is taken in $Y$. \end{lem}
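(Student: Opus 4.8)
The plan is to establish the three assertions in order: the identification $\iota_U(S_U)=\Greg{V}\cap U$, the construction of the orienting unit normal $\nu=\vec H/(g\circ\iota)$ on $S_U$, and its continuous extension to $\overline{S_U}\subset Y$.

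For the image I would combine Lemma~\ref{lem:immersion1}, which gives $\iota_{\Oc_2}(S_{\Oc_2})=\Greg{V}\setminus\overline{M_2}$, with the elementary observation that $M_2\subset\{g=0\}$ and $\{g=0\}$ is closed, so $\overline{M_2}$ is disjoint from $U=\{g\neq0\}$; then $\iota_U(S_U)=\iota_{\Oc_2}(\iota_{\Oc_2}^{-1}(U))=(\Greg{V}\setminus\overline{M_2})\cap U=\Greg{V}\cap U$. For the orienting normal the key input is that around \emph{every} point of $\Greg{V}$ the set $\Greg{V}$ agrees with a union of at most two $C^2$ embedded disks, each of which is $J_g$-stationary for one of its two orientations: for points of ${\rm reg}_{1}\,V$ this is hypothesis (b) together with Proposition~\ref{Prop:reg1}, and for touching singularities it is Proposition~\ref{Prop:loc_station_greg} (whose hypotheses are all in force, and applicable since a sufficiently small ambient ball meets $\spt{V}$ only inside $\Greg{V}$, $\ell$-fold touching singularities with $\ell\ge3$ being impossible for $C^2$ graphs). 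For a $J_g$-stationary $C^2$ immersion one has $\vec H=g\,\nu_0$ with $\nu_0$ the corresponding unit normal (Section~\ref{stationaritydiscussion}); in particular $|\vec H|=|g\circ\iota|$ along $\iota_{\Oc_2}$, so on $S_U$ (where $g\circ\iota\neq0$) the vector $\nu:=\vec H/(g\circ\iota)$ is a genuine unit normal along $\iota_U$, continuous since $\iota_{\Oc_2}$ is $C^2$ (so $\vec H$ is continuous) and $g\circ\iota$ is continuous and nonvanishing there. This $\nu$ trivializes the normal bundle of $\iota_U$, so — the ambient $B$ being orientable, indeed simply connected in the application to Theorem~\ref{thm:orientabilityGreg} — it induces an orientation of $S_U$. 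I would record here the observation used repeatedly below: on any connected $C^2$ disk $D\subset\Greg{V}$ that is $J_g$-stationary with orientation $\nu_D$, the identity $\vec H=g\,\nu_D$ holds on \emph{all} of $D$ (Section~\ref{stationaritydiscussion}), so $\nu$ equals the pullback of $\nu_D$ on the part of $S_U$ lying over $D$, with no possibility of a sign flip within a disk.

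The continuous extension to $\overline{S_U}$ is the heart of the proof. Recall from Remark~\ref{oss:metric_clos_bdry} that $Y$ is the metric completion of ${\rm reg}_{1}\,V$ relative to $(B\setminus M_2)\setminus(\Sing{V}\setminus\Greg{V})$, that $\iota_Y\colon Y\to B$ is distance-decreasing, and (an easy consequence of this together with openness of $\Oc_2$) that $S_{\Oc_2}=\iota_Y^{-1}(\Oc_2)$ is open in $Y$. Given $q\in\overline{S_U}\setminus S_U$, put $p_a:=\iota_Y(q)$, which lies in $\Greg{V}$; I would first show $g(p_a)=0$, since $g(p_a)\neq0$ would give $p_a\in\Greg{V}\cap U\subset\Oc_2$, hence $q\in S_{\Oc_2}$, and then $\iota_{\Oc_2}(q)=p_a\in U$ forces $q\in\iota_{\Oc_2}^{-1}(U)=S_U$. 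I would then use the local structure of $\Greg{V}$ near $p_a$: if $p_a\in{\rm reg}_{1}\,V$ then near $p_a$ the set $\spt{V}$ is a single $C^2$ disk $D$, $J_g$-stationary by (b) with orientation $\nu_D$ continuous on $D$; if $p_a\in\Greg{V}\setminus{\rm reg}_{1}\,V$ is a touching singularity, then near $p_a$ one has $\spt{V}=D_1\cup D_2$ with the $D_i$ two tangentially meeting $C^2$ disks, each $J_g$-stationary (Proposition~\ref{Prop:loc_station_greg}) with orientation continuous on the respective disk. For any $x_k\in S_U$ with $x_k\to q$ in $Y$ we have $\iota_Y(x_k)\to p_a$; in the one-disk case $\iota_Y(x_k)\in D$ for large $k$, and in the two-disk case the two sheets are uniformly separated in the metric $d$ — exactly as in the proof of Lemma~\ref{lem:immersion1}, where $d(a,b)\ge c>0$ for $a,b\in{\rm reg}_{1}\,V$ lying on distinct disks near a touching singularity — so the $d$-Cauchy sequence representing $q$, hence $x_k$ for large $k$, lies in a single disk, say $D_1$. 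By the ``no sign flip'' observation, $\nu(x_k)$ is the pullback of $\nu_D$ (resp.\ $\nu_{D_1}$) at $\iota_Y(x_k)$, which converges to $\nu_D(p_a)$ (resp.\ $\nu_{D_1}(p_a)$), independently of the sequence; thus $\hat\nu(q):=\lim_{x\to q}\nu(x)$ exists and, since near $q$ it coincides with the pullback of a normal field continuous on the whole disk, $\hat\nu$ defines a continuous unit normal on $\overline{S_U}$. The remaining case $q\in S_{\Oc_2}\setminus S_U$, where $p_a\in\Greg{V}\cap\{g=0\}\setminus\overline{M_2}$, is covered by the same two alternatives.

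The step I expect to be the main obstacle is this last one: $Y$ need not be a manifold, or even a manifold-with-boundary, at its added ``boundary'' points (Remark~\ref{oss:metric_clos_bdry}), so one has no tangent plane of $Y$ there to orient, and the whole argument must instead be run through the ambient $C^2$-graph structure of $\Greg{V}$ near $p_a$ and the sheet-separation in the metric $d$. The supporting claims requiring care are the local description of $\Greg{V}$ near points of $\overline{M_2}\setminus M_2$ (where one verifies that a small ambient ball meets $\spt{V}$ only in $\Greg{V}$, using (a1), (a2), (b), Proposition~\ref{Prop:reg1} and the impossibility of $C^2$ $\ell$-fold touching singularities with $\ell\ge3$) and the fact that $\vec H=g\,\nu_D$ holds on all of each relevant disk, not merely where $g\neq0$ (Section~\ref{stationaritydiscussion}, Proposition~\ref{Prop:loc_station_greg}).
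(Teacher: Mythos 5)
Your proposal is correct and follows essentially the same route as the paper: define $\nu=\vec{H}/(g\circ\iota)$ on $S_U$ via the local $J_g$-stationarity of the (one or two) $C^2$ disks describing $\Greg{V}$, then extend continuously by splitting into the cases $\iota_Y(p)\in\Reg_1\,V$ and $\iota_Y(p)$ a touching singularity, using that a point of $\overline{S_U}\subset Y$ has a neighbourhood mapped into precisely one of the two disks (sheet separation in $d$) and that $\vec{H}=g\,\tilde\nu_j$ holds with continuous $\tilde\nu_j$ on all of each disk. The paper phrases the touching-singularity case via Remark~\ref{minimal-touching} (each disk contains a nonempty open set with $g\neq 0$ accumulating at the point), which is the same input as your "no sign flip" observation combined with Proposition~\ref{Prop:loc_station_greg}.
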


\begin{proof}
Let $p \in \overline{S_{U}}$. We wish to show that there  exists a unit vector $\nu(p)$ in $T_{\iota_Y(p)} N$ such that $\nu(p_\ell) \to \nu(p)$ whenever $p_\ell \to p$ with $p_\ell \in S_U$.
Note that it is then automatic that $\nu(p)$ is orthogonal to $T_{\iota_{Y}(p)} V.$
We distinguish two cases: 

(i) $\iota_Y(p) \in \Reg_1\,{V}$; 

(ii) $\iota_Y(p) \in (\Greg{V} \setminus \Reg_1\,{V})$.

(i) In this case $p$ is actually a point in the manifold defined in Lemma \ref{lem:immersion1}. Since $\iota_Y(p)$ is an embedded point, we can choose $\sigma$ small such that $\spt{V} \cap B^{n+1}_\sigma(\iota_Y(p))=\Reg_1\, V \cap B^{n+1}_\sigma(\iota_Y(p))$ is embedded and orientable and denote this portion by $R$; choose the orientation $\tilde{\nu}$ of $R$ for which the stationarity condition $\vec{H}=g \tilde{\nu}$ holds (this exists by assumption (b)). Then $\tilde{\nu}$ agrees with the orientation $\nu=\frac{\vec{H}}{g}$ on $S_U$ and moreover, $p$ has a neighborhood in $Y$ that is mapped by $\iota_{Y}$ into $R.$ Therefore $\nu$ is extendable by continuity to $p$ (a varifold as the one in Figure \ref{fig:Hchangingsign} below is ruled out by hypothesis (b)).

(ii) If $\iota_Y(p)$ is a touching singularity, denote by $D_1$ and $D_2$ the ordered $C^2$ disks that describe $\Greg{V}$ in a neighbourhood of $\iota_Y(p)$ in a reference frame with respect to $T_{\iota_{Y}(p)} V$. By Remark~\ref{minimal-touching}, for each $j=1, 2$, there is a non-empty open set $A_j \subset D_j$ on which $g\neq 0$ and $\iota_Y(p) \in \overline{A_j}$, where the closure is taken in $D_j$. By the stationarity assumptions (b) and (b$^T$), on each $D_j$ the mean curvature is given by $\vec{H}=g \tilde{\nu}_{j}$, where $\tilde{\nu}_{j}$ is a choice of orientation of $D_j$. The orientation $\nu$ agrees with $\tilde{\nu}_{j}$ on $\iota_Y^{-1}(A_j)$ for $j=1, 2$. Moreover, by construction of $Y$ and $\iota_{Y}$,  $p$ has a neighborhood in $Y$ that is mapped by $\iota_{Y}$ into $D_{j}$ for precisely one $j \in \{1, 2\}$. Therefore 
$\tilde{\nu}_{j}$ provides the continuous extension of $\nu$ at $p$. 
\end{proof}

\begin{figure}[h]
\centering
 \includegraphics[width=5cm]{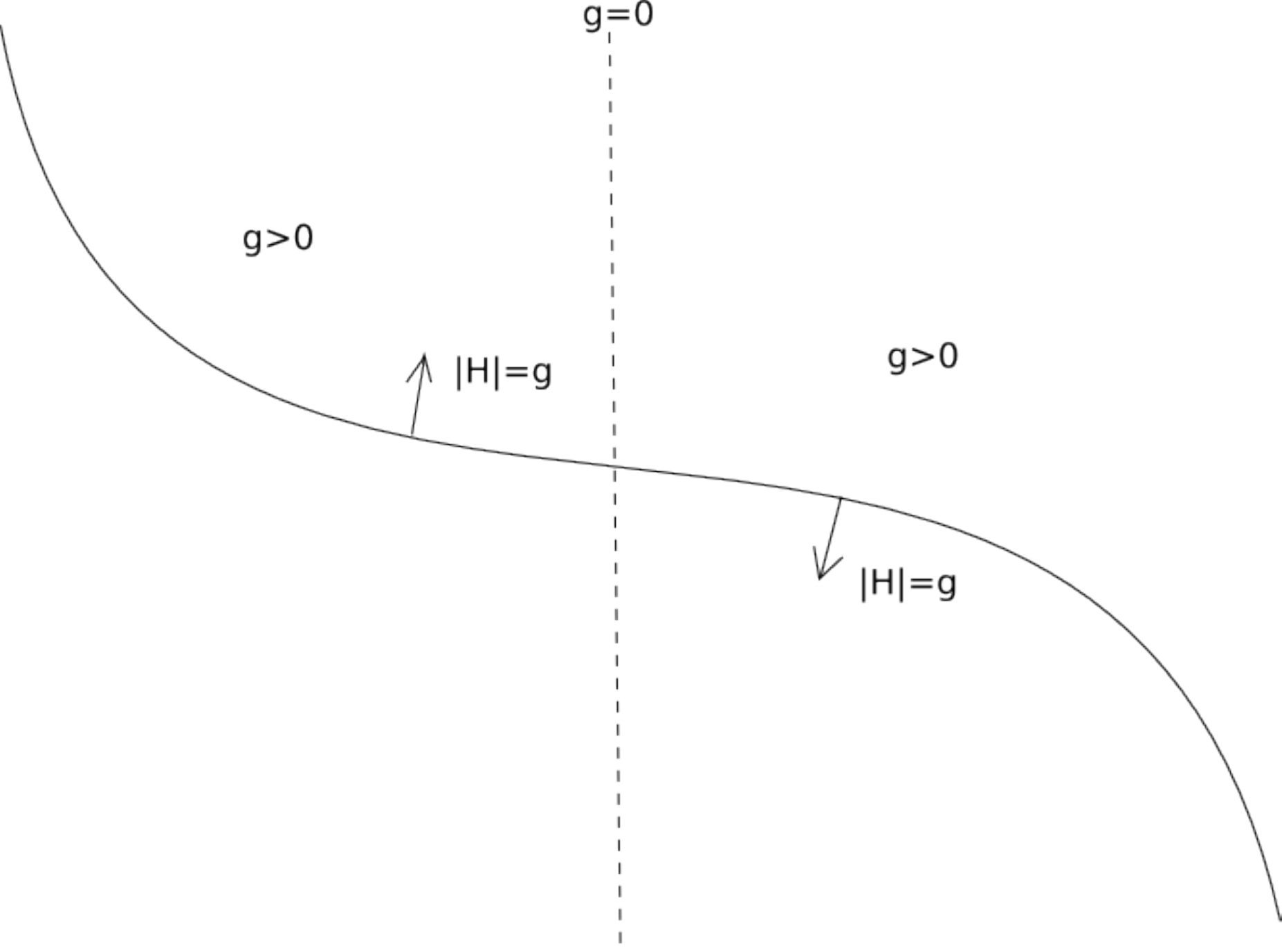} 
\caption{\small In the picture we take $g\geq 0$. The orientable embedded hypersurface depicted fulfils the condition that $|\vec{H}|=g$ but fails to satisfy the stationarity condition with respect to $J_g$. As shown in this section, $\Reg_1\,V$ (actually all of $\Greg{V}$) can be oriented away from $\stackrel{\circ}{(\Reg_1\,V \cap \{g=0\})}$ by extending $\frac{\vec{H}}{g}$ by continuity.}
\label{fig:Hchangingsign}
\end{figure}

\begin{oss}
Note that, in case (ii), the fact that $D_1$ and $D_2$ both contain a non-empty open set $A_j \subset D_j$ ($j=1,2$) on which $g\neq 0$ (this is true in any small neighbourhood of $\iota_Y(p)$) implies that there are two distinct points in $Y$ that are mapped to the same point $\iota_Y(p)$.
\end{oss}

\begin{oss}
From Lemma \ref{lem:orientability} we conclude, in particular, that when $\{g=0\} \cap \Greg{V}$ has empty interior then $\Greg{V}$ is the image of an orientable immersion and we can choose an orientation that agrees with $\frac{\vec{H}}{g}$ on $\{g\neq 0\}$. This property allows us to state Theorem \ref{thm:mainreg_emptyinterior} below, where we drop (a4) (comparing with Theorem \ref{thm:mainreg_g0_version1}).
\end{oss}
     
\begin{Prop}[Orientability of $\Greg{V}$ globally in a ball, assuming smallness of the ``pure'' singular set]
\label{Prop:orientabilityimmersion}
Let $V$ be an integral $n$-varifold in a simply connected open set $B$ such that assumptions (a1)-(a2)-(a3)-(a4)-(b)-(b$^T$) of Theorem \ref{thm:mainreg_g0_version1} are satisfied; assume moreover that $\text{dim}\left((\spt{V} \setminus \Greg{V}) \cap B \right) \leq n-7$. 
Then there exists a manifold $S_B$ and an immersion $\iota \, : \, S_{B} \to B$ such that $\iota_{\#} \, (|S_{B}|) = V \res B$ and $\iota(S_{B}) = \Greg{V} \cap B$; moreover $S_B$ is orientable and the orientation can be chosen so that it agrees with $\frac{\vec{H}}{g}$ on $\{g\neq 0\}$.
\end{Prop}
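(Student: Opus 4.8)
\textbf{Proof proposal for Proposition~\ref{Prop:orientabilityimmersion}.}

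The plan is to assemble the local pieces produced by Lemmas~\ref{lem:mult12}, \ref{lem:immersion1} and \ref{lem:orientability} into a single immersed manifold with image $\Greg{V} \cap B$, and then to deduce global orientability from simple connectedness of $B$. First I would recall that by Remark~\ref{discard} we may discard all connected components $W$ of $\Reg_1\,V$ with $\overline{W} \cap (\Greg{V}\setminus \Reg_1\,V) = \emptyset$; such components carry a multiplicity-$1$ stationary varifold, are (orientable) immersed by \cite{Samelson}, and can be handled separately. After this reduction the working assumption holds: $\Greg{V}$ has multiplicity $1$ or $2$ everywhere in $B$. With the notation $M_1, M_2$ as above, Lemma~\ref{lem:immersion1} already gives a proper $C^2$ immersion $\iota_{\Oc_2}: S_{\Oc_2} \to \Oc_2$ with image $\Greg{V}\setminus \overline{M_2}$, and by the symmetric construction there is a proper $C^2$ immersion $\iota_{\Oc_1}:S_{\Oc_1}\to \Oc_1$ with image $\Greg{V}\setminus \overline{M_1}$. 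The overlap $\Oc_1 \cap \Oc_2 = (B \setminus (\overline{M_1}\cup \overline{M_2}))\setminus(\Sing{V}\setminus \Greg{V})$ contains all of $U = \{g\neq 0\}$ and all embedded points; on this overlap the two immersions agree (each is just the metric-completion construction applied to $\Reg_1\,V$), so they can be glued. The remaining regions to incorporate are $M_1$ (multiplicity $1$, interior of the nodal set) and $M_2$ (multiplicity $2$, interior of the nodal set): near $M_1$ we simply add a single $C^2$ sheet (the manifold $\Reg_1\,V$ itself, embedded), and near $M_2$ we add \emph{two} disjoint $C^2$ sheets, each a copy of the embedded hypersurface, consistently with the multiplicity $2$ there. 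One must check that these added sheets attach to $S_{\Oc_2}$ (resp.\ $S_{\Oc_1}$) continuously and that the result is a Hausdorff $C^2$ manifold; the relevant local model is either one embedded disk or two tangentially intersecting embedded disks (conclusion (i), which we may assume by the reductions noted in Section~\ref{stabilitydiscussion} and the local structure theorem invoked in Lemma~\ref{lem:mult12}), so the gluing is forced and unambiguous. Call the resulting $n$-manifold $S_B$ and the immersion $\iota:S_B\to B$; then $\iota(S_B) = \Greg{V}\cap B$ and $\iota_\#(|S_B|) = V\res B$ by construction (the multiplicities match by Lemma~\ref{lem:mult12}).

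Next I would prove orientability. On $S_U := \iota^{-1}(U)$ the normal $\nu = \vec{H}/g$ is a well-defined continuous unit normal (Remark~\ref{oss:orient_gpos}), so $S_U$ is oriented; by Lemma~\ref{lem:orientability} this $\nu$ extends continuously to the closure of $S_U$ taken in the metric space $Y$ of Remark~\ref{oss:metric_clos_bdry}, i.e.\ across all of $\Reg_1\,V \setminus M_1$ and across every touching singularity. Thus the only sheets of $S_B$ not yet oriented are the ones lying over $M_1$ and the two sheets over $M_2$, which are embedded $C^2$ minimal hypersurfaces (away from a set of dimension $\leq n-7$) whose closures meet $\overline{S_U}$; each such sheet is separately orientable provided it is a two-sided hypersurface, and the extension of $\nu$ from $\overline{S_U}$ chooses the orientation coherently wherever the sheet touches $\overline{S_U}$. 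The potential obstruction is a sheet over $M_1$ or $M_2$ which is \emph{entirely} contained in $\{g = 0\}$ and does \emph{not} touch $\overline{S_U}$ — but after the Remark~\ref{discard} reduction every remaining component of $\Reg_1\,V$ has closure meeting $\Greg{V}\setminus \Reg_1\,V$, hence (via the local two-disk structure at such a touching point, where by Remark~\ref{minimal-touching} both disks carry open sets with $g\neq 0$) it does touch $\overline{S_U}$; so each sheet receives an orientation, continuous on that sheet. Finally I would use that $B$ is simply connected: $S_B$ is an immersed (not embedded) hypersurface in $B$, but the argument of \cite{Samelson} — a codimension-$1$ immersed connected manifold in a simply connected ambient manifold is two-sided, hence orientable once the ambient manifold is orientable — applies component by component, with the codimension-$7$ singular set $\spt{V}\setminus\Greg{V}$ not affecting the argument (it is too small to carry topology). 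This gives a global continuous unit normal on $S_B$ agreeing with $\vec{H}/g$ on $\{g\neq 0\}$, completing the proof.

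The main obstacle I expect is not the orientability per se but the \emph{consistency of the gluing across $\partial\{g=0\}$}, i.e.\ verifying that the locally-defined immersions near $M_1$, near $M_2$, and on $\Oc_1\cap\Oc_2$ fit together into a single Hausdorff manifold $S_B$ with $\iota_\#(|S_B|)=V\res B$. The subtlety is that $\partial M_2$ (and $\partial M_1$) need not be a nice hypersurface — no regularity or transversality of $\{g=0\}$ to $V$ is assumed — so one cannot simply say ``$S_B$ is $S_{\Oc_2}$ with a manifold-with-boundary glued on''. The correct viewpoint is the metric-completion one of Lemma~\ref{lem:immersion1} and Remark~\ref{oss:metric_clos_bdry}: work inside the metric space $Y$ (completion of $\Reg_1\,V$ with respect to $(B\setminus M_2)\setminus(\Sing{V}\setminus\Greg{V})$), observe that $Y$ is itself already almost what we want except that over $M_2$ points of $\Reg_1\,V$ with multiplicity $2$ have been identified to one point, then ``split'' those points by passing to the appropriate double cover over a neighborhood of $M_2$ dictated by the multiplicity-$2$ structure and by Lemma~\ref{lem:mult12}; simple connectedness of $B$ is what guarantees this splitting (a choice of the two sheets) is globally consistent. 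Once $S_B$ is constructed correctly as a manifold, properness of $\iota$ follows exactly as in the proof of Lemma~\ref{lem:immersion1} (finite second fundamental form on compact subsets, a covering-by-small-balls argument), and the pushforward identity $\iota_\#(|S_B|) = V\res B$ is immediate from matching multiplicities.
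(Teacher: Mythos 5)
Your overall architecture matches the paper's (discard separated components to reach the multiplicity $1$-or-$2$ working assumption; build $S_{\Oc_2}$ by the metric completion of Lemma~\ref{lem:immersion1}; build a second immersion covering the multiplicity-$2$ nodal region with a double sheet; glue; orient by extending $\vec{H}/g$ and use simple connectedness for the pieces inside $\{g=0\}$). But there is a genuine gap: the union of your two immersions covers only $\Greg{V}\setminus(\p M_1\cap \p M_2)$, since points of $\p M_1\cap\p M_2$ lie in neither $\Oc_1$ nor $\Oc_2$, and you never address this set. The paper devotes an entire step to it: such points are touching singularities, and one must choose orientations on the two local disks at \emph{each} such point and then verify that these choices are \emph{coherent across overlapping neighbourhoods} — i.e.\ when disks $D_z$ and $D_w$ attached to two different points of $\p M_1\cap\p M_2$ overlap in a multiplicity-$1$ region $\widetilde{T}$, one needs $\nu_z=\nu_w$ on $\widetilde{T}$. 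Because $\{g=0\}$ has no regularity or transversality to $V$, this is not automatic; the paper proves it by locating, via the Hopf boundary point lemma and Remark~\ref{minimal-touching}, points with $g\neq 0$ in suitable connected subsets of each disk, and then invoking hypothesis (b) on the connected open set $\widetilde{T}\cup L_z\cup L_w$. Nothing in your proposal substitutes for this argument, and without it the glued object need not carry a well-defined orientation (nor even an unambiguous lift) over $\p M_1\cap\p M_2$.

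Two secondary points. First, you attribute the global consistency of the "two sheets" over $M_2$ to simple connectedness of $B$; that is the wrong mechanism. The paper's $S_{\Oc_1}$ is the unit-normal-bundle lift $\{(x,n(x)),(x,-n(x))\}$ into the sphere bundle, which is a manifold (and automatically two-sided) with no topological hypothesis; the only thing to check is that at a touching singularity the two lifted disks are disjoint, which follows from $\nu_1=-\nu_2$ on $D_1\cap D_2$ (Proposition~\ref{Prop:loc_station_greg}). In particular one cannot in general "add two disjoint copies of $M_2$": if $M_2$ is one-sided the correct double cover is connected. Simple connectedness is used only for orientability of the embedded multiplicity-$1$ minimal pieces (the Samelson-type mod-$2$ intersection argument), not for the $M_2$ lift. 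Second, for a connected component $M$ of $M_1$, saying that the extension of $\nu$ from $\overline{S_U}$ "chooses the orientation coherently wherever the sheet touches $\overline{S_U}$" presumes what must be proved: $M$ may meet $\overline{S_U}$ along a disconnected set, and one needs the orientations forced at different contact points to agree. The paper gets this by passing to the largest connected component $\widetilde{M}$ of $\Reg_1\,V$ containing $M$, showing $\p M\subset\overline{\widetilde{M}\cap\{g\neq 0\}}$ (again a Hopf-lemma argument at touching singularities, where $\{g\neq 0\}$ need not satisfy an exterior ball condition), and then using that hypothesis (b) forces a \emph{unique} orientation on the connected set $\widetilde{M}$ agreeing with $\vec{H}/g$ on $\widetilde{M}\cap\{g\neq 0\}$. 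Your proposal records the right ingredient (Remark~\ref{minimal-touching}) for nonemptiness of the contact set but omits the uniqueness/coherence step.
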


\begin{proof}

\textit{Step 0: reminders and notation}. Recall that $M_1 = \stackrel{\circ}{(\{g=0\}\cap \Reg_1\,V)} \cap \{\Theta=1\}$ and $M_2 = \stackrel{\circ}{(\{g=0\} \cap \Reg_1\,V)} \cap \{\Theta=2\}$.  Let 
$E_{2} = \iota_{{\Oc}_{2}}^{-1} (B \setminus \overline{M}_{1} \setminus \overline{M}_{2} \setminus ({\rm spt} \, \|V\| \setminus {\Greg} V)).$ Then $E_{2}$ is an open subset of the manifold $S_{{\Oc}_{2}}$ with $\iota_{{\Oc}_{2}}(E_{2}) = \Greg V \setminus \overline{M_{1}} \setminus \overline{M_{2}}.$ Clearly $S_{U} \subset E_{2}$, and it follows from Remark~\ref{minimal-touching} that 
$E_{2} \subset \overline{S}_{U}$ (closure taken in $Y$). Hence by Lemma~\ref{lem:orientability} $E_{2}$ is oriented (by the orientation on $\overline{S}_{U}$) and the orientation extends continuously to $\overline{E_{2}}.$  

\textit{Step 1: existence of an oriented immersion with image $\Greg V \setminus \overline{M_{1}}$}. Next, we wish to show that $G=\Greg V \setminus \overline{M_{1}}$ is the image of an immersion of an oriented $n$-dimensional oriented submanifold $S_{{\Oc}_{1}}$ of the unit sphere bundle over $B$. The working assumption implies $G= G_1 \cup G_2$, where $G_1=\{x \in G: \Theta(\|V\|, x)=1\}$ and $G_2=\{x \in G: \Theta(\|V\|, x)=2\}$. The set $G_1$ is open in $G$ by upper-semi-continuity of the density and it is an embedded manifold oriented by $\nu$, because $G_1 \subset \iota_{{\Oc}_{2}}(E)$. The set $G_2$ contains $M_2$ and $G_2 \setminus M_2 \subset \Greg{V} \setminus \Reg_1\,V$; for any $x\in G_2$ we denote by $\pm n(x)$ the two unit normals at $x$ to $T_x V$. We define $S_{{\Oc}_{1}} \subset {\Oc}_{1} \times S^n$ by
$$S_{{\Oc}_{1}}=\{(x,\nu(x)): x \in G_1\} \cup \{(x,n(x)), (x, -n(x)):x \in G_2\}.$$
In order to show the manifold structure of $S_{{\Oc}_{1}}$ we need to prove that any point in $S_{{\Oc}_{1}}$ admits a neighbourhood in ${\Oc}_{1} \times S^n$ whose intersection with $S_{{\Oc}_{1}}$ is an $n$-dimensional disk (the validity of the separation axiom and the countable basis axiom are immediate because they hold in the ambient sphere bundle, that induces the topology on $S_{{\Oc}_{1}}$). This property will follow upon proving that for every $y \in G$ there exists a ball $B^{n+1}_r(y)$ such that $S_{{\Oc}_{1}} \cap \left(B^{n+1}_r(y) \times S^{n}\right)$ is either a single embedded disk or the union of two \textit{disjoint} embedded disks. If $y \in G_1$ then we choose $B^{n+1}_r(y)$ such that $G \cap B^{n+1}_r(y) \subset G_1$ and is a disk; then $S_{{\Oc}_{1}} \cap \left(B^{n+1}_r(y) \times S^{n}\right)$ is a single embedded disk. If $y \in M_2$ then we choose $B^{n+1}_r(y)$ such that $G \cap B^{n+1}_r(y) \subset M_2$ and is a disk; then $S_{{\Oc}_{1}} \cap \left(B^{n+1}_r(y) \times S^{n}\right)$ is the union of two disjoint embedded disk. In the remaning case $y\in G_2 \setminus M_2 \subset \Greg{V} \setminus \Reg_1\,V$ the point $y$ is a touching singularity ad we choose $B^{n+1}_r(y)$ so that $\spt{V} \cap B^{n+1}_r(y)$ is the union of two embedded disks $D_1, D_2$ that intersect only tangentially, with $y\in D_1 \cap D_2$, with $D_1 \setminus D_2, D_2 \setminus D_1 \subset G_1$ non-empty and orientations $\nu_j$ on $D_j$ such that $\vec{H} = g \nu_j$ on $D_j$ (by Proposition~\ref{Prop:loc_station_greg}); on $G_{1} \cap D_{j}$, the orientation $\nu_j$ agrees with the orientation $\nu$ defined on $G_1$ (by Remark \ref{minimal-touching} there exist open sets $A_j \subset D_j$ with $y \in \overline{A}_j$, for $j=1,2$, in which $g\neq 0$ and $\nu$ was defined by continuously extending $\frac{\vec{H}}{g}$). Moreover, $\nu_1 = -\nu_2$ on $D_1\cap D_2$ (again by Proposition~\ref{Prop:loc_station_greg}). This means that $S_{{\Oc}_{1}} \cap \left(B^{n+1}_r(y) \times S^{n}\right)$ agrees with the union of the two sets 
$$\{(x,v):x \in D_1, v= \nu_1(x)\} \cup \{(x,v):x \in D_2, v= -\nu_1(x)=\nu_2(x)\},$$
which is the union two disjoint embedded disks (respectively the oriented lifts of $D_1$ and $D_2$). This establishes that $S_{{\Oc}_{1}}$ is an $n$-dimensional submanifold of the unit sphere bundle over $B$. The orientability of $S_{{\Oc}_{1}}$ follows by considering the projection onto the second factor $S^n.$ The immersion $\iota_{{\Oc}_{1}}:S_{{\Oc}_{1}} \to {\Oc}_{1}$ with image $\Greg V \setminus \overline{M_{1}}$ is defined by restricting to $S_{{\Oc}_{1}}$ the projection onto ${\Oc}_{1}$.

\textit{Step 2: existence of an immersion with image $\Greg V \setminus (\p M_1 \cap \p M_2)$}. We have shown, in particular, that $\Greg{V} \setminus \overline{M_1}$ is an immersed hypersurface; we proved in Lemma \ref{lem:immersion1} that $\Greg{V} \setminus \overline{M_2}$ is an immersed hypersurface. The open sets $E_{1} = \iota_{{\Oc}_{1}}^{-1}\left({\Oc}_{1} \setminus \overline{M}_2\right)$ and $E_{2} \subset S_{{\Oc}_{2}}$ are mapped respectively by $\iota_{{\Oc}_{1}}$ and $\iota_{{\Oc}_{2}}$ onto the same set. Moreover, for each $j \in \{1, 2\}$, the immersion $\iota_{{\Oc}_{j}}$ is injective on $\Omega_{j} = E_{j} \setminus F_{j},$ where $F_{j}  =\iota_{\Oc}^{-1}(K_{j})$ (a closed subset of $E_{j}$ with empty interior) with $K_{j} \subset \Greg{V} \setminus \Reg_1\,V$. Then $\iota_{{\Oc}_{2}}^{-1}\circ \iota_{{\Oc}_{1}} \, : \, \Omega_{1} \to \Omega_{2}$ is a bijection, which extends to a diffeomorphism $\psi \, : \, E_{1} \to E_{2}$. (For each $x \in E_{1}$, there is a small ball $B_{r}(x) \subset E_{1}$ and an open set $U_{x} \subset E_{2}$ such that $\left.\iota_{{\Oc}_{1}}\right|_{B_{r}(x)}$ and $\left.\iota_{{\Oc}_{2}}\right|_{U_{x}}$ are embeddings with the same image. Define $\Psi \, : \, E_{1} \to E_{2}$ by setting $\left.\Psi\right|_{B_{r}(x)}  = \left.\iota_{{\Oc}_{2}}\right|_{U_{x}}^{-1} \circ \left.\iota_{{\Oc}_{1}}\right|_{B_{r}(x)}.$ On $B_{r}(x) \setminus F_{1}$, $\Psi$ agrees with the bijection $\iota_{{\Oc}_{2}}^{-1} \circ \iota_{{\Oc}_{1}}$, so $\Psi$ is well defined on $E_{1}$ and is the unique continuous extension of $\iota_{{\Oc}_{2}}^{-1} \circ \iota_{{\Oc}_{1}}.$ It is clear directly from the definition that $\Psi$ is a local diffeomorphism and it is is injective on $E_{1}.$) We get a new manifold $S_{{\Oc}_{1}} \cup_\psi S_{{\Oc}_{2}}$ defined by the gluing of $S_{{\Oc}_{1}}$ and $S_{{\Oc}_{2}}$ along the open sets $E_1$ and $E_{2}$ using the identification $\psi$. In order to prove that $S_{{\Oc}_{1}} \cup_\psi S_{{\Oc}_{2}}$ is a manifold we need to check the topological $T_2$-axiom (separation). 

To check the $T_2$ axiom, we only need to consider points on the boundary of $E_{1}$ and $E_{2}$ (the glued open sets), so let $p_1 \in S_{{\Oc}_{1}}$ be in $\p E_1$ and $p_2 \in S_{{\Oc}_{2}}$ be in $\p E_{2}$. Note that $\p E_1$ and $\p E_{2}$ (respectively in $ S_{{\Oc}_{1}}$ and $ S_{{\Oc}_{2}}$) are mapped (respectively by $\iota_{{\Oc}_{1}}$ and $\iota_{{\Oc}_{2}}$) to $\overline{M}_2 \setminus \overline{M}_{1}$ and $\overline{M}_1 \setminus \overline{M}_{2}$ (by definition of $E_1$ and $E_{2}$), and therefore the images (respectively under $\iota_{{\Oc}_{1}}$ and $\iota_{{\Oc}_{2}}$) of $p_1$ and $p_2$ are distinct points in $B \setminus (\spt{V} \setminus \Greg{V})$. We can then select disjoint open balls $B_1, B_2 \subset B \setminus (\spt{V} \setminus \Greg{V})$ centred at the two images; the open sets $\iota_{{\Oc}_{1}}^{-1}(B_1)$ and $\iota_{{\Oc}_{2}}^{-1}(B_2)$ have the property that $\psi(\iota_{{\Oc}_{1}}^{-1}(B_1) \cap E_{1})$ is disjoint from $\iota_{{\Oc}_{2}}^{-1}(B_2) \cap E_{2}$. This guarantees the validity of the $T_2$-axiom. 

We now have a $C^2$ immersion $\iota_{{\Oc}_{1} \cup {\Oc}_{2}}$ on $S_{{\Oc}_{1}} \cup_\psi S_{{\Oc}_{2}}$ (by gluing $\iota_{{\Oc}_{1}}$ and $\iota_{{\Oc}_{2}}$ along $E_{2}$ and $E_1$ via $\psi$) whose image is $\Greg V \setminus (\p M_1 \cap \p M_2)$. We need to extend this immersion so that it also covers $\p M_1 \cap \p M_2$ (step 3 below) and then, finally, we need to check (step 4 below) the orientability of this immersion; this will complete the proof of the proposition.

\textit{Step 3: existence of an immersion with image $\Greg V$}. We will ``complete'' the manifold $S_{{\Oc}_{1}} \cup_\psi S_{{\Oc}_{2}}$ to include points that map onto $\p M_1 \cap \p M_2 \setminus (\spt{V} \setminus \Greg{V})$. Let $y \in \p M_1 \cap \p M_2 \setminus (\spt{V} \setminus \Greg{V}).$ Then $y$ is a touching singularity and there exists an ambient ball $B(y)$, centred at $y$, in which $\spt{V}$ agrees with the union of two distinct embedded $C^2$ disks $D_1$ and $D_2$ that intersect only tangentially, with each lying  on one side of the other and with $y \in D_1 \cap D_2$.  By Remark \ref{minimal-touching}, $y$ is in the closure of $\Reg_1\,V \cap \{g \neq 0\} \cap D_{j}$ for each $j$, and therefore we can choose orientations $\nu_j$ of $D_j$, for $j=1,2$, so that $\nu_j$ agrees with $\nu=\frac{\vec{H}}{g}$ on $\Reg_1\,V \cap \{g \neq 0\} \cap D_j.$ Moreover, $D_j$ with the orientation $\nu_j$ is stationary with respect to $J_g$. Let $A = \cup_y B(y) \subset B \setminus (\spt{V} \setminus \Greg{V})$; we claim that $A \cap \Greg{V}$ is an oriented immersed manifold. 

To achieve this, we will show that the orientations chosen in each ball for the two disks are coherent, i.e.~whenever disks from two distinct balls overlap, the orientations match on the overlap. Upon choosing a small enough ball around each point, we can ensure that each disk is geodesically convex, and also that each disk is a graph over a subset of the tangent plane at any of its points. 

Let $z, w \in \p M_1 \cap \p M_2 \setminus (\spt{V} \setminus \Greg{V})$ be distinct points, and let $D_z$ (resp.\ $D_{w}$) be one of the two disks corresponding to $z$ (resp.\ $w$) with $D_z \cap D_w \neq \emptyset$. Note that each point in  
$D_{z} \cap D_{w} \cap \{\Theta =2\}$  lifts to two distinct points in the sphere bundle. So we only have to show that at every point in $T \equiv D_{z} \cap D_{w} \cap \{\Theta =1\}$, the two orientations on $D_{z}$ and $D_{w}$ agree (so that that point lifts unambiguously to a point in the sphere bundle). 

So assume $T \neq \emptyset$, and note that $T$ is an open subset of both $D_{z}$ and $D_{w}$.  Let $\widetilde{T}$ be a connected component of $T$. Since $\widetilde{T} \neq D_{z}$ (else 
$z \in T$ contrary to $\Theta \, (\|V\|, z) = 2$), we have that $\partial \, \widetilde{T} \cap D_{z} \neq \emptyset$. Similarly, $\partial \, \widetilde{T} \cap D_{w} \neq \emptyset$. We consider the two cases: 
\begin{itemize}
\item[(i)] $\Theta \, (\|V\|, y) = 1$ for some point $y \in \partial \, \widetilde{T} \cap D_{z}$ and $\Theta \, (\|V\|, y') = 1$ for some point $y' \in \partial \, \widetilde{T} \cap D_{w};$ 
\item[(ii)] $\Theta \, (\|V\|, y) = 2$ for every point $y \in \partial \, \widetilde{T} \cap D_{z}$ or $\Theta \, (\|V\|, y) = 2$ for every point $y \in \partial \, \widetilde{T} \cap D_{w}$.
\end{itemize}  
If case (i) holds, let $\gamma_{yz}(t)$ be the geodesic segment connecting $y$ to $z$ parameterized by $t \in [0, 1]$ with $\gamma_{yz}(0) = y$ and $\gamma_{yz}(1) = z,$ and let $t_{1}$ be the smallest value of $t \in [0, 1]$ with $\Theta (\|V\|, \gamma_{yz}(t_{1})) = 2$.   Let $\delta = \min_{t \in [0, 1]} {\rm dist} \, (\gamma_{yz}(t), \partial \, D_{z})$. Choose $t_{2} \in [0, t_{1})$ such that $d(\gamma_{yz}(t_{1}), \gamma_{yz}(t_{2}))< \delta/2$. Then there is a connected $C^{2}$ open subset $L_{z} \subset\subset D_{z}$ containing $\gamma_{yz}([0, t_{2}])$ such that $\Theta(\|V\|, x) = 1$ for each $x \in L_{z}$ and $\Theta(\|V\|, x_{z}) = 2$ for some point $x_{z} \in \partial \, L_{z}$. If $g = 0$ on $L_{z}$, this contradicts the Hopf boundary point lemma (by the argument of Proposition~\ref{Prop:loc_station_greg} and Remark~\ref{minimal-touching}), so there must exist a point $q_{z} \in L_{z}$ with $g(q_{z}) \neq 0.$   
By the same argument, there is a connected $C^{2}$ open subset $L_{w} \subset\subset D_{w}$ such that $g(q_{w}) \neq 0$ for some $q_{w} \in L_{w}.$  Let $\widetilde{L} = \widetilde{T} \cup L_{z} \cup L_{w}$. Since $\widetilde{L}$ is an open subset of $D_{z} \cup D_{w}$, it is a graph over the tangent plane to $V$ at any point in $T$ (by the way we chose the disks, $D_{z} \cup D_{w}$ is a graph over the tangent plane at any point in $D_{z} \cap D_{w}$), and thus it is orientable. So since $\widetilde{L} \subset {\rm reg}_{1} \, V$, we have by hypothesis that there exists a continuous unit normal $\vec{n}$ on $\widetilde{L}$ such that $\vec{H} = g \vec{n}$ on $\widetilde{L}$. Since we have an orientation $\nu_{z}$ on $D_{z}$ such that $\vec{H} = g \nu_{z}$ on $D_{z}$, it follows that $\vec{H}(q_{z}) = g(q_{z})\nu_{z}(q_{z}) = g(z) \vec{n}(q_{z})$ so $\vec{n}(q_{z}) = \nu_{z}(q_{z})$. Similarly, $\vec{n}(q_{w}) = \nu_{w}(q_{w}).$ 
Consequently, since both $\vec{n}$ and $\nu_{z}$ are continuous on the connected open set $L_{z} \cup \widetilde{T}$, we see that $\nu_{z} = \vec{n}$ on $L_{z} \cup \widetilde{T}$. 
Similarly, $\nu_{w} = \vec{n}$ on $L_{w} \cup \widetilde{T}$. So in particular $\nu_{z} = \nu_{w}$ on $\widetilde{T}.$ 

If case (ii) holds, pick a point $x_{0} \in \widetilde{T}.$ Then the small geodesic ball $B^{\prime} = B_{d(x_{0}, \partial \, \widetilde{T})}(x_{0})$ is $C^{2},$ 
$\Theta \, (\|V\|, x) = 1$ for every $x \in B^{\prime}$ and there is a touching singularity $x_{1} \in \partial \, B^{\prime}.$ Arguing again as in the preceding paragraph (using the Hopf boundary point lemma), we find a point $q \in B^{\prime}$ such that $g(q) \neq 0.$ Since $\vec{H} = g \nu_{z}$ on $D_{z}$ and  $\vec{H} = g \nu_{w}$ on $D_{w},$ evaluating at $q$ we conclude that 
$\nu_{z}(q) = \nu_{w}(q)$, and hence, since $\widetilde{T}$ is connected, $\nu_{z} = \nu_{w}$ on $\widetilde{T}.$ 

Since $\widetilde{T}$ is an arbitrary connected component of $T$, we have shown that $\nu_{z} = \nu_{w}$ on $T$. We thus have a well-defined orientation of $\Reg_1\,V$ in $A$ and we can argue as done in step 1, lifting $\Greg{V} \cap A$ to the sphere bundle over $A$ to get an embedded oriented $n$-submanifold $S_A$, whose projection $\iota_{A} \, : \, S_{A} \to A$  is a $C^2$ immersion with image $\Greg{V} \cap A$. We can then glue $S_A$ to the manifold $S_{{\Oc}_{1}} \cup_\psi S_{{\Oc}_{2}}$ along respectively the open sets $S_A \setminus \iota_A^{-1}(\p M_1 \cap \p M_2)$ and $\iota_{{\Oc}_{1} \cup {\Oc}_{2}}^{-1}(A)$ using an argument as the one in step 2. This produces an $n$-manifold, that we denote by $S_B$, and a $C^2$-immersion $\iota_B:S_B \to B \setminus (\spt{V} \setminus \Greg{V})$, whose image is $\Greg{V} \cap B$.

\textit{Step 4: orientability of the immersion}. 
To begin with we show, by a slight modification of an argument developed in \cite{Samelson}, the following:\\
\noindent
{\it Claim:} Whenever $A \subset B \setminus (\spt{V} \setminus \Reg_1\,V)$ is open and $\Reg_1\,V$ has multiplicity $1$ in $A$, then $\Reg_1\,V \cap A$ is orientable. 

If orientability fails, then we can find a closed curve $\gamma$ in $A$ that intersects the hypersurface $\Reg_1\,V \cap A$ at exactly one point, as explained in \cite{Samelson}. (Take an embedded closed curve $\sigma \, : \, [0, 1] \to \Reg{V} \cap A$ such that any continuous unit normal $\vec{n}(t),$ $t \in (0, 1]$, 
on $\sigma(0, 1]$ has no continuous extension to $0$. Fix such $\vec{n}$, and let $\widetilde{\gamma}(t) = \sigma(t) + \epsilon \vec{n}(t)$ for $t \in (0, 1]$ and suitably small constant $\epsilon >0$. 
Then $\widetilde{\gamma}(0,1]$ is disjoint from $\Greg{V}$. Complete $\widetilde{\gamma}$ to a continuous closed curve $\gamma$ by connecting the end points 
$\lim_{t \to 0^{+}} \widetilde{\gamma}(t)$ and  $\widetilde{\gamma}(1)$ with a geodesic segment.) Although $\gamma$ is not necessarily contractible in $A$, it is so in $B$, so there exists a homotopy of $\gamma$ to a point: we denote by $f:S^1 \times [0,1] \to B$ this homotopy;
without loss of generality we may make $f$ transverse to $\iota_B$ and with image disjoint from the codimension-$7$ singular set (by slightly perturbing it). By intersection theory mod 2 (or by analogous transversality arguments)
we find a contradiction, since we saw before that $\gamma \cap \Greg{V}$ ($= \gamma \cap {\rm reg}_{1} \, V$) consists of exactly one point  (in the present context, we are counting multiplicities, i.e.~we count intersections with the immersion $\iota_B$) while $f_1$, that maps to a point, can be made disjoint from $\Greg{V}$. So the claim is proved. 

What is left to prove is that $S_B$ is orientable and the orientation can be chosen so that it matches $\frac{\vec{H}}{g}$ on $\iota_B^{-1}(\{g\neq 0\})$. We know that $\iota_B^{-1}(\Greg{V}\setminus M_1)$ satisfies this orientability condition (by step 1 and by  Lemma~\ref{lem:orientability}). We will show that we can choose an orientation for $\iota_B^{-1}(M_1)$ (which is orientable by the claim above) that matches continuously the one on $\iota_B^{-1}(\Greg{V}\setminus M_1)$. 

Let $M$ be a connected component of $M_1$. We will use the notation $\p M = \overline{M} \setminus (\spt{V} \setminus \Greg{V}) \setminus M$. We only have to address the case when $\p M \cap \Greg{V} \neq \emptyset$. Let $\widetilde{M}$ be the largest connected component of ${\rm reg}_{1} \, V$ containing $M$. Since each $x \in \partial \, M \cap {\rm reg}_{1} \, V$ has a neighborhood in which ${\rm spt} \, \|V\|$ is $C^{1}$ embedded, it follows that $\partial \, M \cap \Reg_{1} \, {V} \subset \widetilde{M}.$ By the constancy lemma (\cite[Lemma A1]{BW}), $\Theta(\|V\|, \cdot) = 1$ on $\widetilde{M}$, and moreover $\widetilde{M}$ is orientable by the above claim. 

Next we show that $ \p M \cap \Reg_1\,{V}\neq \emptyset$ and that $\partial \, M$ is contained in the closure of  $\widetilde{M} \cap \{g \neq 0\}.$  
Let $y \in \partial \, M$. If $y \in \p M \cap \Reg_{1} \, V$, then it is clear that any neighborhood of $y$ in $\Reg_{1} \, V$ contains a point where $g \neq 0$, so $y$ belongs to the closure of 
$\widetilde{M} \cap \{g \neq 0\}$. If on the other hand $y \in \p M \cap (\Greg{V} \setminus \Reg_1\,V)$, i.e.~if $y$ is a touching singularity,\footnote{This would be prevented by Hopf lemma if the open set $\{g\neq 0\} \subset B$ satisfied an exterior ball condition at $y$, but this is not necessarily true.}  
choose a reference frame with respect to $T_y V$ and denote by $\pi$ the projection onto $T_y V$. In a cylindrical neighbourhood of $y$ of the form $U_y = B^n_y \times (-\sigma, \sigma)$ (where $B_y^n$ is a ball in $T_y V$ centred at $y$), $\spt{V}$ is the union of two $C^2$ disks $\underline{D}_y$ and $\overline{D}_y$ that intersect tangentially. Consider the projection $K$ of the touching set onto $B_y^n$. For any point $a\in B_y^n \cap (\pi(M) \setminus K)$ sufficiently close to $\pi(y)$ there is an open ball $B(a) \subset \subset B_{y}^{n}$ centred at $a$ that is disjoint from $K$ and such that there is a point $z \in \partial \, B(a) \cap K$. Then there is a touching singularity at $(z, z') \in U_{y}$ and we must have that both $\underline{D}_y \cap (B(a) \times (-\sigma,\sigma))$ and $\overline{D}_y  \cap (B(a) \times (-\sigma,\sigma))$ contain non-empty open sets where $g \neq 0$, otherwise we would contradict Hopf boundary point lemma (see the argument of Proposition~\ref{Prop:one-sided-max} and Remark~\ref{minimal-touching}). 
At least one of the two disks $\underline{D}_y \cap (B(a) \times (-\sigma,\sigma))$ and $\overline{D}_y  \cap (B(a) \times (-\sigma,\sigma))$ must intersects $M$, by the choice of $a$; denote this disk by $Z_{a}$.

At this stage we have established that $Z_a \cap \p M \neq \emptyset$, because $Z_a \cap M \neq \emptyset$ and $Z_a$ contains points where $g\neq 0$ ($g=0$ on $M$); moreover, $Z_a \subset \Reg_1\,V$ by construction. So the claim that $\p M \cap \Reg_1\,{V}\neq \emptyset$ is proved (and $Z_a \subset \widetilde{M}$). Note that $Z_a$ is as close to $y$ as we wish, which implies that $y$ is in the closure of $\widetilde{M} \cap \{g \neq 0\}$. 

By hypothesis (b), there is a choice of orientation that makes $\widetilde{M}$ (and hence $M$) stationary with respect to $J_g$, and this choice is unique since it has to agree with $\frac{\vec{H}}{g}$ on the non-empty set $\widetilde{M} \cap \{g \neq 0\}.$ 

It remains to check that the uniquely determined orientations on $\iota^{-1}_{B} (\widetilde{M})$ and $S_{B} \setminus \iota_{B}^{-1}(M_{1})$ agree on their intersection $\iota^{-1}_{B}(\widetilde{M} \setminus M_{1}).$ But this is clear now, since $\widetilde{M} \setminus M_{1}$ is in the closure of the set ${\rm reg}_{1} \, V \cap \{g \neq 0\},$ and by Lemma~\ref{lem:orientability}, the  orientation on 
$S_{B} \setminus \iota_{B}^{-1}(M_{1})$ is determined by the orientation on ${\rm reg}_{1} \, V \cap \{g \neq 0\}.$ Since $M$ was an arbitrary connected component of $M_{1}$, this completes the proof.
\end{proof}

\begin{proof}[Proof of Theorem~\ref{thm:orientabilityGreg}]
This is now immediate from Proposition~\ref{Prop:orientabilityimmersion} and Remark~\ref{discard}.  
\end{proof}

\begin{oss}(\textit{what if $B$ is not simply connected in Theorem~\ref{thm:orientabilityGreg}?})
\label{oss:O_not_simply_conn}
We have shown that, if $B$ is a simply connected open set and $\text{dim}_{\Hc}\left(\left(\overline{\Greg{V}} \setminus \Greg{V}\right) \cap B\right) \leq n-7$, then $\Greg{V} \cap B$ is an oriented immersion (and the orientation agrees with $\frac{\vec{H}}{g}$ wherever $g\neq 0$). We note that these two assumptions were used only in two places: in the claim in Step 4 of Proposition~\ref{Prop:orientabilityimmersion}, and in Remark \ref{discard}. In the first case it was used to deduce the orientability of  $M_{1},$ and in the second to deduce orientability of   any minimal hypersurface separated from $\Greg{V} \setminus \Reg_1\,V.$  

Consider now $\Oc=N \setminus (\Sing{V} \setminus \Greg{V})$ (where $N$ is the ambient manifold) and denote by $S_j$ any (embedded) minimal hypersurface that is separated from $\Greg{V} \setminus \Reg_1\,V$ and has constant multiplicity $j$, and by $M_1$ the multiplicity-$1$ minimal hypersurface not separated from $\Greg{V} \setminus \Reg_1\,V$ and by $M_2$ the multiplicity-$2$ minimal hypersurface not separated from $\Greg{V} \setminus \Reg_1\,V$. Then the arguments used in this section prove that $\Greg{V} \setminus (M_1 \cup_{j \text{odd}} S_j)$ is the image of a two-sided immersion. (As before, $M_2$ and any minimal hypersurfaces $S_j$ with $j$ even can be lifted to the sphere bundle as its double cover.) If we know a priori the two-sidedness of each $S_j$ for $j$ odd, and of $M_1$  (for example, this is the case in the situation addressed in \cite{BW2} by the results in \cite{RogTon}) then we can conclude, using the same arguments exploited in this section, that $\Greg{V}$ is globally the image of a two-sided immersion in $N$ (conclusion (ii) of Theorem \ref{thm:mainreg_g0_version1}) and the proofs given in this section can be shortened.
\end{oss}

\section{Local consequences of stability}
\label{stability-section}

\begin{oss}
\label{oss:assume_strongstab_withambient}
Let $\widetilde{V}$ be an integral $n$-varifold in $N$  such that the hypotheses of Theorem~\ref{thm:mainregstationarityimmersedparts} hold with $\widetilde{V}$ in place of $V$. Let 
$X_{0} \in {\rm spt} \, \|\widetilde{V}\|$, $\rho_{0} \in (0, R_{X_{0}})$  where $R_{X_{0}}>0$ is the injectivity radius at $X_{0}$, and let  $V={\text{exp}_{X_0}^{-1}}_\# \left(\tilde{V} \res \mathcal{N}_{\rho_{0}}(X_0)\right).$  Let $\Oc  = {\mathcal N}_{\rho_{0}}(X_{0}) \setminus \left(\spt{\widetilde{V}} \setminus \Greg{\widetilde{V}}\right)$. Then, in accordance with hypothesis (b*), 
$\widetilde{V} \res \Oc = \widetilde{\iota}_{\Oc \, \#} \, (|S_{\Oc}|)$  where $S_{\Oc}$ is an oriented $n$-manifold and $\widetilde{\iota}_{\Oc} \, : \, S_{\Oc} \to \Oc$ is a $J_{g}$-stationary $C^{2}$ immersion. In view of the fact that the regularity conclusion (i) of our theorems is of local nature and the fact that finite Morse index implies local stability, by taking $\rho_{0}$ sufficiently small we may, and we will, assume that $\widetilde{V} \res {\Oc}$  is stable with respect to $J_g$ for all normal deformations induced by an ambient test function $\phi \in C^{1}_{c}(\Oc)$ (i.e.\ deformations as in Defintion~\ref{df:index}), or equivalently (see Section~\ref{stabilitydiscussion}) that:  
$$\int \left(|A|^2 +\text{Ric}(\nu,\nu)-D_\nu g\right)\phi^2 d\|\widetilde{V}\| \leq \int |\nabla \phi|^2 d\|\widetilde{V}\|$$ 
for all $\phi \in C^{1}_{c}(\Oc)$,
where $A$ is the second fundamental form of the immersion $\iota_{\Oc}$ in the Riemannian ambient manifold, norms are computed with respect to the Riemannian metric and the gradient is the intrinsic one (on the hypersurface). 
\end{oss}

\subsection{Localization to a chart; Schoen's inequality}
\label{Schoenslemma}

Let $\widetilde{V}$, $V$, $X_{0} \in {\rm spt} \, \|\widetilde{V}\|$, $\Oc = {\mathcal N}_{\rho_{0}}(X_{0}) \setminus \left(\spt{\widetilde{V}} \setminus \Greg{\widetilde{V}}\right)$ and $\widetilde{\iota}_{\Oc}$ be as above. Let $\iota_{\Oc} \, : \, S_{\Oc} \to \exp_{X_{0}}^{-1}(\Oc) \subset B_{\rho_{0}}^{n+1}(0)$ be the immersion 
$\iota_{\Oc} = \exp_{X_{0}}^{-1} \circ \widetilde{\iota}_{\Oc}$.  Relabeling $\sqrt{|\mathscr{h}|}g \circ \exp_{X_{0}}$ as $g$ (as explained in Section~\ref{exponentialchart}), we then have that 
the $J_{g}$-stationarity of $\widetilde{\iota}_{\Oc}$ (as above) is equivalent to stationarity of $\iota_{\Oc}$ with respect to  
$$\mathcal{F}(t) =
\int_{\exp_{X_{0}}^{-1}(\Oc)} F(x,\nu(x)) \, d\|\psi_{t \, \#} \, |S_{\Oc}|\|+\text{Vol}_g(t),$$ 
(i.e.\ to the condition  ${\mathcal F}^{\prime}(0) = 0$) for all $C^{2}$ deformations $\psi \, : \, (-\epsilon, \epsilon) \times S_{\Oc} \to \exp^{-1}_{X_{0}}(\Oc)$ with $\psi(0, x) = \iota_{\Oc}(x)$ for $x \in S_{\Oc}$ and $\psi(t, x) = \iota_{\Oc}(x)$ for $(t, x) \in (-\epsilon, \epsilon) \times S_{\Oc} \setminus K$ for some compact $K \subset S_{\Oc}$.  Here $\psi_{t}(\cdot)  = \psi(t, \cdot)$ and $F$ is as in Section~\ref{exponentialchart}. We  note that 
$$\text{Vol}_g(t) = \int \vec{f}(x) \cdot \nu(x) \, d\|\psi_{t \, \#} \, |S_{\Oc}|\| - \int \vec{f}(x) \cdot \nu(x) \, d\|\iota_{\Oc \,  \#} \, |S_{\Oc}|\|$$  
for any $C^{2}$ vector field $\vec{f}$ satisfying $\text{div} \vec{f} = g$ in $B_{\rho_{0}}^{n+1}(0)$, and we can find such $\vec{f}$ by setting $\vec{f} = \nabla u$ where $u$ is the unique solution to    
$\Delta u= g$ in $B_{\rho_{0}}^{n+1}(0)$, $u= 0$ on $\partial \, B_{\rho_{0}}^{n+1}(0)$. Since $g \in C^{1, \alpha}(\overline{B_{\rho_{0}}^{n+1}(0))}$, it follows by standard elliptic theory that $\vec{f} \in C^{2, \alpha}(\overline{B_{\rho_{0}}^{n+1}(0))}$, with $\sup_{B_{\rho_{0}}^{n+1}(0)} \, (|\vec{f}| + \rho_{0}|D\vec{f}|+ \rho_{0}^{2}|D^{2} \, \vec{f}|) + \rho_{0}^{2 + \alpha}\sup_{x_{1}, x_{2} \in B_{\rho_{0}}^{n+1}(0), x_{1} \neq x_{2}} \, \frac{|D^{2} \, \vec{f}(x_{1}) - D^{2} \, \vec{f}(x_{2})|}{|x_{1} - x_{2}|^{\alpha}} \leq C \left(\rho_{0}\sup_{B_{\rho_{0}}^{n+1}(0)} |g| +  
\rho_{0}^{1+ \alpha}\sup_{x_{1}, x_{2} \in B_{\rho_{0}}^{n+1}(0), x_{1} \neq x_{2}} \, \frac{|g(x_{1}) - g(x_{2})|}{|x_{1} - x_{2}|^{\alpha}}\right)$ where $C = C(n)$. Fix this $\vec{f}$ and for any constant vector $\vec{c}$, set 
\begin{equation}\label{new-integrand}
\widetilde{F}_{\vec{c}}(x, p) = F(x, p) + (\vec{f}(x) + \vec{c})\cdot p.
\end{equation}
It follows then that 
\begin{eqnarray}\label{euclidean-stationarity}
\mbox{$\widetilde{\iota}_{\Oc}$ is $J_{g}$-stationary} \iff \mbox{for any constant vector $\vec{c}$, the immersion $\iota_{\Oc}$ is stationary}&& \nonumber\\ 
&&\hspace{-4in}\mbox{with respect to} \nonumber\\
&&\hspace{-5in}{\mathcal F}_{\vec{c}}(\iota_{\Oc}) = \int_{\exp_{X_{0}}^{-1}(\Oc)} \widetilde{F}_{\vec{c}}(x,\nu(x)) \, d\|\iota_{\Oc \, \#} \, |S_{\Oc}|\| \;\; \mbox{(i.e.\ $\left.\frac{d}{dt}\right|_{t=0} \, {\mathcal F}_{\vec{c}}(\psi_{t}) = 0.$)}  
\end{eqnarray}

Now by the same reasoning as in \cite{SS}, the integrand $F$ satisfies conditions \cite[(1.2)-(1.6)]{SS} for some constants $\overline{\mu}$, $\overline{\mu}_{1}$ (say) in place of $\mu$, $\mu_{1}$. In particular (by \cite[(1.6)]{SS}), for each point $y \in B_{\rho_{0}/2}^{n+1}(0)$, there is 
 a diffeomorphism $\eta_{y} \, : \, B_{\rho_{0}}^{n+1}(0) \to B_{\rho_{0}}^{n+1}(0)$ which takes $0$ to $y$ and transforms $F$ to $\eta_{y}^{\#}F$ (notation as in \cite{SS}) such that $\eta_{y}^{\#}F$ satisfies 
 conditions \cite[(1.2)-(1.5)]{SS} with $\overline{\mu}$, $\overline{\mu}_{1}$ (independent of $y$) in place of $\mu,$ $\mu_{1}$.\footnote{This map $\eta_{y}$ is called $\psi_{y}$ in \cite{SS}.} We may, and we shall, take $\eta_{0} = identity.$ From this it follows that for each $y \in B_{\rho_{0}/2}^{n+1}(0)$, there is a vector $\vec{c}(y)$ (namely, $\vec{c}(y) = -\vec{f}(y)$) such that 
 $\eta_{y}^{\#} \widetilde{F}_{\vec{c}(y)}$ satisfies \cite[(1.2)-(1.5)]{SS} taken with $\eta_{y}^{\#} \widetilde{F}_{\vec{c}(y)}$ in place of $F$, again with a fixed choice of $\mu$, $\mu_{1}$ independent of $y$ (but depending on $\overline{\mu}$, $\overline{\mu}_{1}$, $g$, $\rho_{0}$ and $\alpha$).\footnote{Note that $\widetilde{F}_{\vec{c}}$ is in $C^{2, \alpha}$ in the $x$ variables and is $C^{3}$ in the $p$ variables; although it is assumed in \cite{SS} that the integrand $\widetilde{F}_{\vec{c}}$  is of class $C^{3}$, the proofs therein require less regularity. What is needed in fact are \cite[(1.3)]{SS} and \cite[(1.4)]{SS}, each of which requires at least one derivative of $\widetilde{F}_{\vec{c}}$ in the $\nu$ variable, and hence at most 2 derivatives of $f$; thus our assumption that $f \in C^{2, \alpha}$ suffices to satisfy the regularity conditions on $\widetilde{F}_{\vec{c}}$ needed in the proofs of \cite{SS}.}

Similar reasoning applies with regard to the stability hypothesis. Thus, the $J_{g}$-stability of $\widetilde{\iota}_{\Oc}$ is equivalent to stability of $\iota_{\Oc}$ with respect to ${\mathcal F}_{\vec{c}}(\iota_{\Oc})$ for any $\vec{c}$, i.e.\ to the condition 
\begin{equation}\label{euclidean-stability}
\left.\frac{d^{2}}{dt^{2}}\right|_{t=0}{\mathcal F}_{\vec{c}}(\psi_{t}) \geq 0
\end{equation}
for any $\vec{c}$ and all deformations $$\psi(t, x) = \exp_{X_{0}}^{-1}\circ \exp_{\widetilde{\iota}(x)} \circ\left(\widetilde{\iota}_{\Oc}(x) +  t (\phi \circ \exp_{X_{0}}^{-1} \circ \widetilde{\iota}_{\Oc}(x))\nu_{\widetilde{\iota}_{\Oc}(x)}\right), \;\; (x, t) \in S_{\Oc} \times (-\epsilon, \epsilon)$$ 
with $\phi \in C^1_c(\text{exp}_{X_{0}}^{-1}(\Oc))$. Computing this second derivative  and following the arguments in \cite{SS} leads to the validity of the inequality \cite[(1.17)]{SS}:

\begin{equation}
 \label{eq:SS_Strongstab_witherrors}
\int  |A|^2  \phi^2 d\|V\| \leq \int |\nabla \phi|^2 d\|V\| + 
\end{equation}
$$+c_5 \mu_1 \int (\mu_1 \phi^2 + \phi |\nabla \phi| + \phi |A| + |x| |\nabla \phi|^2 + |x|\phi^2 |A|^2 + \mu_1 \phi^2 |x|^2 |A|^2) d\|V\|$$ 
 for all $\phi \in C^1_c(\text{exp}^{-1}(\Oc)),$ 
where $A$ is the second fundamental form of the immersion $\iota_{\Oc}$, norms are Euclidean, $\nabla$  is the intrinsic gradient of $\iota_{\Oc}$, $c_5$ is a dimensional constant and $\mu_1$ is the constant fixed as in the preceding discussion. 

\medskip
 
\textit{Adaptation of Schoen's lemma.} The validity of (\ref{eq:SS_Strongstab_witherrors})  implies the following: there exists $c_1, \epsilon_{0}>0$ (depending only on $n$, $\mu$, $\mu_{1}\rho_{0}$) such that for any $\rho$ with $\mu_{1}\rho \leq \epsilon_{0}$ and 
$\varphi \in C^1_c(B_{\rho}^{n+1}(0) \setminus (\spt{V} \setminus \Greg{V})),$ 
\begin{equation}
 \label{eqSchoenlemma}
 \int_M |A|^2 \varphi^2 d \|V\| \leq c_1 \int_M (1-(\nu \cdot e^{n+1})^2) |\nabla^M \varphi|^2+ c_1 \mu_1^2 \int_{M}  \varphi^2 d  \|V\|,
\end{equation}
where the quantities are relative to an immersion in $\Oc$ and $\nabla^M$ is the intrinsic gradient. 
The proof of (\ref{eqSchoenlemma}) is by the argument of \cite[Lemma 1]{SS} incorporating the modifications described in \cite[proof of Theorem 3.4]{BW}. 

Within the proof of our regularity theorem we will need to use (\ref{eqSchoenlemma}) on the hypersurface $\Greg{V}$ in open balls in which the singular set $\spt{V} \setminus \Greg{V}$ is known to have codimension $7$ or higher.

\subsection{Strong stability inequality for graphs and a preliminary sheeting theorem}
The stability hypothesis enters the proof of our regularity results primarily via applications of the preliminary sheeting theorem (Theorem \ref{thm:SSsheeting_second}) below. This result is very much in the spirit of \cite[Theorem 1]{SS}, in that it assumes a priori smallness of the (genuine)  singular set (hypothesis (iii)).

We point out that Theorem \ref{thm:SSsheeting_second} has non-variational hypotheses, i.e.~the varifold is not assumed to be a critical point of a geometric functional. For its application in the present work (namely, within the proof of Theorem \ref{thm:mainregstationarityimmersedparts}, or more specifically within the proof of Theorem \ref{thm:sheeting} below), it would suffice to prove a ``variational counterpart'' of Theorem \ref{thm:SSsheeting_second} (in the style of \cite{SS}, or of the analogous result \cite[Theorem 3.4]{BW}). 
More precisely, for the purposes of the present paper it would suffice to prove Theorem \ref{thm:SSsheeting_second} under the additional variational assumptions on $\Greg{V}$ that are available in Theorem \ref{thm:sheeting} (namely, conditions (I), (II) in Definition \ref{class_of_varifolds}).
However, we observe that the arguments in \cite{SS} (that need to be followed closely to prove Theorem \ref{thm:SSsheeting_second}, and to which we will refer below) only make use of the variational hypotheses insofar as they imply hypotheses (v) and (vi) of Theorem \ref{thm:SSsheeting_second}. We therefore drop, in Theorem \ref{thm:SSsheeting_second}, any stationarity and stability requirements on $\Greg{V}$ in favour of (the weaker) hypotheses (v) and (vi).
While not needed in this generality for the present work, Theorem \ref{thm:SSsheeting_second} does not require any additional effort to prove compared to its variational counterpart. More importantly, Theorem \ref{thm:SSsheeting_second} is applicable to non-variational settings as well, 
such as the one arising in our work \cite{BW2} addressing the question of existence of prescribed mean curvature hypersurfaces in compact Riemannian manifolds. Eventhough such a hypersurface has the variational characterisation as a critical point of $J_{g}$ where $g$ is the prescribing function for the mean curvature, the 
Allen--Cahn approximation scheme employed in that work produces, in the first instance, a limit varifold $V$ which may develop stationary (i.e.\ zero generalised mean curvature) portions in addition to mean curvature $g$ parts; this may happen, as far as anyone knows, even when $g$ is a non-zero constant. If this happens then clearly $V$ will not be variationally characterrised. 
The approach taken in \cite{BW2} is to prove regularity of $V$ nonetheless, and then argue that the mean curvature $g$ portion can be extracted from $V$ by virtue of regularity of $V$.

We shall comment further on the hypotheses of  Theorem \ref{thm:SSsheeting_second}, and the applicability of Theorem~\ref{thm:SSsheeting_second} within the proof of Theorem \ref{thm:sheeting}, in Remarks \ref{oss:SS_1}, \ref{oss:SS_1'} and \ref{oss:SS_2} below.

\begin{thm}[Sheeting Theorem \`{a} la Schoen--Simon]
\label{thm:SSsheeting_second}
 Given $q$ a positive integer, $p > n$, and numbers $\mu$, $\mu_{1}$, $c_3$, $\rho_{0}$, there exists $\eps = \eps(n, q, \mu, \mu_{1}\rho_{0}, p, c_3) \in (0, 1/2)$ such that the following holds:  
 Let $V$ be an integral $n$-varifold in $B_{\rho_{0}}^{n+1}(0)$ such that:

\noindent (i) there is an oriented $n$-manifold $S_{V}$ and a $C^2$ immersion $\iota_{V} \, : \, S_{V} \to B_{1}^{n+1}(0)$ such that $\iota_{V}(S_{V}) = \Greg{V}$ and $\iota_{V \, \#} \, (|S_{V}|) = V \res \left(B_{\rho_{0}}^{n+1}(0)\setminus \left(\spt{V}\setminus \Greg{V}\right)\right);$ 

\noindent (ii) for each $y \in \Greg{V} \cap \text{sing}_T\, V$ and any neighborhood $\Oc$ of $y$ with $\spt{V}\cap \Oc = {\rm graph} \, u_{1} \cup {\rm graph} \, u_{2}$ for $C^{2}$ functions $u_1 \leq u_2$ and with $\iota_{V}^{-1}(\Oc)$ equal to a finite number of (topological) disks\footnote{The existence of such a neighbourhood $\Oc$ follows from the definitions of $\Greg{V}$ and $\text{sing}_T\, V$}, we have that the image under $\iota_V$ of each of these disks is equal to either ${\rm graph} \, u_{1}$ or ${\rm graph} \, u_{2}$;

\noindent (iii) $\text{sing} \, V \setminus \text{sing}_T\,  V  = \emptyset$ in case $n < 6$, $\text{sing} V \setminus \text{sing}_T \, V$ is discrete in case $n=7$ or 
${\rm dim}_{\mathcal H} \, (\text{sing} V \setminus \text{sing}_T V) \leq n-7$ in case $n\geq 8$; moreover, $\text{sing}_T\,  V \subset \Greg{V}$;

\noindent (iv) condition (a1$^{\prime}$) of Section \ref{exponentialchart} holds;

\noindent (v) the scalar mean curvature $h$ of the immersion $\iota_{V}$ satisfies 
$$|h(x)|\leq c_3(\mu_1 |\iota_V(x)| |A(x)| + \mu_1)$$
for $x\in S_V$, where $A(x)$ is the second fundamental form of $\iota_{V}$ at $x$ and $|\iota_V(x)|$ denotes the distance to the origin;

\noindent (vi) for $\mu_{1}\rho < \varepsilon$, inequality (\ref{eqSchoenlemma}) holds for all $\phi \in C^1_c(B^{n+1}_{\rho}(0) \setminus (\text{sing} V \setminus \text{sing}_T \, V))$, $\phi \geq 0$, with the ambient (i.e.\ ${\mathbb R}^{n+1}$) gradient $\nabla$ in place of $\nabla^{M}$;  

\noindent (vii)  

$$ E_{\rho} \equiv \rho^{-n-2}\int_{B_{\rho}^{n}(0) \times {\R} } |x^{n+1}|^{2} \, d\|V\|(X) + \left(\rho^{p-n}\int_{B_{\rho}^{n}(0) \times {\R} } |H_V|^{p} \, d\|V\|(X) \right)^{\frac{1}{p}}+\mu_1\rho < \eps$$
(where $H_V$ is the function as in condition (a1$^{\prime}$) for $Y=0$).

Then 
$$V \res \left(B_{\rho/2}^{n}(0) \times {\R} \right) = \sum_{j=1}^{q} |{\rm graph} \, u_{j}|,$$
where $u_{j} \in C^{2, \alpha} \, (B_{\rho/2}^{n}(0); {\mathbb R})$, $u_{1} \leq u_{2} \leq \ldots \leq u_{q}$ and for each $j$, 
\begin{eqnarray*}
&&\sup_{B_{\rho/2}^{n}(0)} \left(\rho^{-2}|u_{j}|^{2} + |Du_{j}|^2 + \rho^2|D^2 u_{j}|^2\right)\nonumber\\ 
&&\hspace{1.75in}+ \; \rho^{2+2\alpha}\sup_{x, y \in B^{n}_{\rho/2}(0), \, x \neq y} \frac{|D^2u_{j}(x) - D^2u_{j}(y)|^{2}}{|x - y|^{2\alpha}} \leq  C E_{\rho}
\end{eqnarray*} 
for some fixed constants $\alpha  = \alpha(n,p, q, \mu, \mu_{1}\rho_{0}) \in (0, 1/2),$ $C = C(n, p, q, \mu, \mu_{1}\rho_{0}) \in (0, \infty).$   
\end{thm}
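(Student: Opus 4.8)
The plan is to follow closely the strategy of \cite[Theorem~1]{SS} (in the form adapted in \cite[Theorem~3.4]{BW}), modified so as to replace the variational hypotheses by the analytic hypotheses (v), (vi) and to allow the immersed touching-singularity structure recorded in (ii). The result is a genuine ``sheeting'' (higher-multiplicity graphical) theorem: under a smallness hypothesis (vii) on the tilt-excess plus mean-curvature plus scale, together with codimension-$7$ control (iii) on the genuine singular set and the Schoen-type stability inequality (vi), the varifold decomposes over $B_{\rho/2}^n(0)$ into $q$ ordered $C^{2,\alpha}$ graphs. The main point is that the hypotheses (v)--(vii) are exactly what is needed to run the Schoen--Simon iteration verbatim; the variational origin of those hypotheses plays no further role once they are assumed.

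First I would set up the standard blow-up/iteration scheme. After rescaling so that $\mu_1\rho_0$ is small we may work in $B_1^{n+1}(0)$, and by (vii) the $L^2$-height excess and the $L^p$-norm of $H_V$ are small. Using (iv) (the almost-monotonicity and Allard-type first variation inequality of Section~\ref{exponentialchart}) together with (vii) one obtains, as in \cite[Sections~2--5]{SS}, a mass bound $\|V\|(B_\sigma^n(0)\times\mathbb{R}) \le (q+1/2)\omega_n\sigma^n$ on a slightly smaller ball, so $V$ is ``$q$-valued'' up to small error. The Schoen--Simon Caccioppoli/reverse-Poincar\'e inequalities and the tilt-excess decay are then derived exactly as in \cite{SS}, the only inputs being: the codimension-$7$ bound (iii) on $\Sing V\setminus\SingT V$, which lets one choose cut-off functions $\phi$ in (vi) that are $\equiv 1$ away from an arbitrarily small neighbourhood of that set while keeping $\int|\nabla\phi|^2$ controlled (the standard capacity argument, valid since $n-7\le n-2$); the curvature estimate (v), which is precisely the pointwise bound $|h|\le c_3(\mu_1|x||A|+\mu_1)$ used in \cite[(1.17)ff.]{SS} to absorb the mean-curvature terms; and (vi) itself, which is the rescaled stability inequality \eqref{eqSchoenlemma} with ambient gradient. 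One runs the dichotomy of \cite{SS}: either the ``excess is concentrated'' and one blows up to get a harmonic (in fact $F$-stationary linearised) $q$-valued function with the desired decay, or one is in the ``single-sheet'' regime and Allard's theorem (codimension $1$, via (iv)) applies on each sheet.

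Second, the touching-singularity set must be handled. Here hypothesis (ii) is essential: it guarantees that near each $y\in\SingT V$ the two $C^2$ sheets of $\spt V$ are covered by the immersion $\iota_V$ in the ``graph-respecting'' way, so that the already-established graphical structure of $V\setminus\SingT V$ extends across $\SingT V$ without creating extra sheets or reordering them. Concretely, once one knows $V\res(B_\sigma^n(0)\times\mathbb{R}\setminus\pi^{-1}(Z))=\sum_{j=1}^q|\mathrm{graph}\,u_j|$ for $Z$ a small neighbourhood of the projection of $\Sing V\setminus\SingT V$, the functions $u_j$ are $C^{1,\alpha}$ (by Allard on the embedded part and by Remark~\ref{oss:touchingsinggraphs} and Proposition~\ref{Prop:reg1} on $\SingT V$) and solve, on the open set where they are pairwise distinct, the relevant Euler--Lagrange equation \eqref{eq:E-L_Riem}; but in fact (v) together with the a priori $C^{1,\alpha}$ bound and elliptic regularity upgrades each $u_j$ to $C^{2,\alpha}$ on all of $B_{\rho/2}^n(0)$, because the estimate (v) shows that where $|A|$ is bounded the equation is uniformly elliptic with bounded inhomogeneity. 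The codimension-$7$ set $\Sing V\setminus\SingT V$ is then removable for the $C^{2,\alpha}$ graphs by the usual capacity/removable-singularity argument (again $n-7\le n-2$), yielding the full decomposition on $B_{\rho/2}^n(0)$ with the stated Schauder-type estimate in terms of $E_\rho$.

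I expect the main obstacle to be the bookkeeping around hypothesis (ii) and the ordering of sheets across $\SingT V$: in the purely variational setting of \cite{SS}/\cite{BW} one can use the one-sided maximum principle (Proposition~\ref{Prop:one-sided-max}) to control how sheets touch, whereas here, having dropped stationarity on $\Greg V$, one must instead argue entirely from the immersion hypothesis (i)--(ii) that the limiting $q$-valued graph produced by the blow-up has no ``crossing'' sheets and that the $C^{2,\alpha}$ closure across the touching set matches the immersion $\iota_V$. This is where one must verify that replacing the variational input by (v), (vi) really does not cost anything — i.e.\ that every place in \cite{SS} where stationarity or stability of the regular part is invoked, only the consequences (v) and (vi) are actually used. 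The remaining steps (mass bounds, Caccioppoli inequalities, tilt-excess decay, the harmonic blow-up, Allard on single sheets, and the final Schauder estimate) are routine adaptations and I would not reproduce them in detail.
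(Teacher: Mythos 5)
Your overall strategy (run the Schoen--Simon iteration from \cite{SS}, with the modifications of \cite[Theorem 3.4]{BW}, using (iii) for the capacity argument, (iv) for the first variation inequality, and (vi) for stability) is the same route the paper takes, but there is a genuine gap at the heart of it: you assert that hypotheses (v)--(vii) ``are exactly what is needed to run the Schoen--Simon iteration verbatim,'' with (v) serving only ``to absorb the mean-curvature terms'' and (vi) supplying the stability inequality. This does not close the argument. The iteration in \cite{SS} invokes their Lemma 1 (the curvature estimate / stability inequality) in two distinct ways: with \emph{ambient} test functions, which hypothesis (vi) covers, and with \emph{intrinsic} test functions supported on a \emph{single sheet} of the partial graph decomposition, which hypothesis (vi) does not cover at all --- an ambient cutoff cannot isolate one sheet from the others lying above or below it. Since you have deliberately dropped all stationarity and stability assumptions on $\Greg{V}$, you have no variational source for a per-sheet stability inequality, and as written your proof has no replacement for it.

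The paper's resolution is Lemma~\ref{graph-stability}: for any $C^{2}$ graph $M$ whose mean curvature satisfies the pointwise bound in (v), the inequality \cite[(1.17)]{SS} holds \emph{automatically}, with no stability hypothesis. The proof is a calibration/Stokes argument: extending the unit normal of $M = {\rm graph}\, f$ vertically and writing $\omega = \iota_{\nu}\, d{\rm vol}^{n+1}$, one gets ${\mathcal H}^{n}(M) \leq {\mathcal H}^{n}(N_{t}) + t\int_{\Omega}\phi\, H$ for the competitor graphs $N_{t} = {\rm graph}(f + t\phi)$, so $t \mapsto {\mathcal H}^{n}(N_{t}) + t\int\phi H$ has a minimum at $t=0$ and hence $\left.\tfrac{d^{2}}{dt^{2}}\right|_{t=0}{\mathcal H}^{n}(N_{t}) \geq 0$; combined with (v) this yields \cite[(1.17)]{SS} with $c^{2}/2$ in place of $c_{5}\mu$, and then the choice $\phi = \varphi(1 - (\nu\cdot\nu_{0})^{2})^{1/2}$ gives the Schoen-type inequality (\ref{eq:Schoenslemmagraph}) on each sheet. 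This is the real content of hypothesis (v) --- it is not merely an error term to be absorbed, it is the substitute for per-sheet stability --- and it is the step your proposal is missing. (A secondary, smaller issue: your claim that (v) plus elliptic regularity upgrades the $u_{j}$ to $C^{2,\alpha}$ is too quick, since (v) only bounds $|h|$ and does not give H\"older continuity of the inhomogeneity; the paper instead obtains the $C^{2,\alpha}$ conclusion from the analogues of \cite[(4.33), (4.36)]{SS}.)
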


\begin{oss}
\label{oss:SS_1}
The relevant case of Theorem \ref{thm:SSsheeting_second} in the present work is when $V$ is the pull back, via the exponential map, of an integral $n$-varifold $\tilde{V}$ in a Riemannian manifold. If the first variation of $\tilde{V}$ (with respect to the area fuctional induced by the Riemannian metric) is given by $-H_{\tilde{V}}\|\tilde{V}\|$ for a function $H_{\tilde{V}} \in L^p(\|\tilde{V}\|)$ with $p>n$ (the function $H_V$ is the generalised mean curvature of $\tilde{V}$), then hypothesis (iv) in Theorem \ref{thm:SSsheeting_second} holds for $V$, as explained in Section \ref{exponentialchart}. Moreover, if $\tilde{V}$ additionally satisfies that its $C^2$ immersed oriented portions have (classical) mean curvature prescribed by an ambient function $g$ of class $C^{1,\alpha}$ on the Riemannian manifold, then hypothesis (v) in Theorem \ref{thm:SSsheeting_second} holds true for $V$ (this follows by computations as in \cite[(1.16)]{SS} and only uses the fact that $g$ is bounded in $L^\infty$). Similarly, if $V$ satisfies (I) of Definition \ref{class_of_varifolds}, then $V$ also satisfies (v) in Theorem \ref{thm:SSsheeting_second}.
\end{oss}

\begin{oss}
\label{oss:SS_1'}
With notation as in Remark \ref{oss:SS_1}, if $\tilde{V}$ satisfies assumption (b*) of Theorem \ref{thm:mainregstationarityimmersedparts}, then $V$ satisfies (i) of Theorem \ref{thm:SSsheeting_second}. Moreover, if $\tilde{V}$ satisfies assumption (b$^T$) of Theorem \ref{thm:mainregstationarityimmersedparts}, then $V$ satisfies (ii) of Theorem \ref{thm:SSsheeting_second}. Similarly, if $V$ satisfies (I) of Definition \ref{class_of_varifolds}, then $V$ also satisfies (i) and (ii) of Theorem \ref{thm:SSsheeting_second}.
\end{oss}

\begin{oss}
\label{oss:SS_2}
We recall that (with notation as in Remark \ref{oss:SS_1}) the validity of hypothesis (vi) in Theorem \ref{thm:SSsheeting_second} is implied by the assumption that $\tilde{V}$ satisfies a stability hypothesis (Morse index equal to $0$ in the sense of Definition \ref{df:index}) on its $C^2$ immersed part. We note that if $V$ satisfies (II) of Definition \ref{class_of_varifolds} then (vi) in Theorem \ref{thm:SSsheeting_second} is satisfied by virtue of the discussion in Section \ref{Schoenslemma}.
\end{oss}

\begin{oss}
As already mentioned, Theorem \ref{thm:SSsheeting_second} will be used within the proof of the (more general) sheeting result Theorem \ref{thm:sheeting}. 
By virtue of Remarks \ref{oss:SS_1}, \ref{oss:SS_1'}, \ref{oss:SS_2}, any varifold $V$ as in Theorem \ref{thm:sheeting} below, that additionally satisfies an a priori bound on the (genuine) singular set (hypotheses (iii) of Theorem \ref{thm:SSsheeting_second}),
satisfies all the assumptions (and thus the conclusion) of Theorem \ref{thm:SSsheeting_second}.
The aim of Theorem \ref{thm:sheeting} is to drop assumption (iii) of Theorem \ref{thm:SSsheeting_second} in favour of the no-classical-singularities assumption and hypothesis ({\bf T}).
\end{oss}

\begin{oss}\label{SS-proof}
The proof of the preliminary sheeting theorem \ref{thm:SSsheeting_second} is achieved by suitably adapting the arguments in \cite{SS}. In order carry out the proof it is essential to ensure the validity of \cite[Lemma 1]{SS} for any graph $M$  that appears in the partial graph decomposition produced by following \cite{SS}. We achieve this by showing (in Lemma~\ref{graph-stability} below) that for any graph $M$ of class $C^2$ that satisfies the inequality in (v) of Theorem \ref{thm:SSsheeting_second}, it is automatically true that inequality \cite[(1.17)]{SS} holds (without an explicit strong stability assumption on $M$, from which the same inequality would immediately follow). This observation is the one that allows to state condition (II) in Theorem \ref{thm:regularity_stronglystable} (see the remark that follows it) with only ambient test functions and with the ambient gradient on the right-hand-side, rather than the intrinsic one. In fact, \cite{SS} uses \cite[Lemma 1]{SS} either with an ambient test function $\varphi$ (in which case the ambient gradient suffices in their arguments) or with a non-ambient test function that is supported on a single graph of the partial graph decomposition.
\end{oss}

\begin{lem}[\textbf{stability inequality for graphs}]
\label{graph-stability}
Let $\Omega \subset  B_{1/2}^{n}(0)$ be open, $f \, : \, \Omega \to [-1/2, 1/2]$ be of class $C^{2}$ and suppose that the mean curvature $H$ of $M=\text{graph}\,f$ satisfies
$$|H| \leq c\left(\,|(x, f(x))|\,|A|\, +\,1 \,\right),$$
where $c>0$, $A$ is the second fundamental form of $M$ and both $H$ and $A$ are evaluated at $(x, f(x))$. 
Then inequality (\ref{eq:SS_Strongstab_witherrors}) (i.e.~inequality \cite[(1.17)]{SS}) holds for all $\varphi\in C^1_c(M)$ with $\frac{c^2}{2}$ in place of $c_5 \mu$.
\end{lem}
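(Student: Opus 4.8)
The plan is to exploit that, with the upward unit normal $\nu$ to $M=\mathrm{graph}\,f$, the vertical component $w:=\langle\nu,e^{n+1}\rangle=(1+|Df|^2)^{-1/2}$ is \emph{strictly positive} everywhere on $M$, and to run the classical ``positive Jacobi field implies stability'' computation (Fischer--Colbrie--Schoen) with $w$ playing the role of the positive function. In the minimal case $w$ is an exact Jacobi field and this argument gives $\int|A|^2\varphi^2\le\int|\nabla\varphi|^2$ outright; here the only new feature is a lower-order remainder produced by $H\neq 0$, which has to be absorbed into the error terms of (\ref{eq:SS_Strongstab_witherrors}) using \emph{only} the pointwise bound on $|H|$ (no control on $\nabla H$ is available).

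First I would record the relevant Jacobi-type identity. Decomposing $e^{n+1}=e^{n+1,\top}+w\,\nu$ into tangential and normal parts and using the standard formula $\Delta_M\nu=\nabla_M H-|A|^2\nu$ (equivalently, differentiating the translation invariance of mean curvature) gives
$$\Delta_M w+|A|^2 w=\langle\nabla_M H,e^{n+1}\rangle=\langle\nabla_M H,e^{n+1,\top}\rangle.$$
Since $f$ is only $C^2$ we have $w\in C^1$, so this identity is to be understood in the weak sense on $M$; this is a point requiring a little care, but it is harmless because the identity will be used only after an integration by parts that moves all derivatives onto the test function and onto $H$ paired against $\nabla_M w$.

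Next, for $\varphi\in C^1_c(M)$ put $\psi=\varphi/w$ (legitimate as $w>0$, $w\in C^1$). Testing the weak identity against $w\psi^2=\varphi^2/w$ and completing the square exactly as in the minimal case, and discarding the non-positive term $-\int_M w^2|\nabla\psi|^2$, yields
$$\int_M|A|^2\varphi^2\,d\|V\|\le\int_M|\nabla\varphi|^2\,d\|V\|+\int_M\frac{\langle\nabla_M H,e^{n+1,\top}\rangle}{w}\,\varphi^2\,d\|V\|.$$
It then remains to estimate the last integral. As only $|H|$ is controlled, I would integrate by parts to remove the derivative from $H$: setting $Z=w^{-1}\varphi^2\,e^{n+1,\top}$ (a compactly supported tangential field) one has $\int_M\langle\nabla_M H,e^{n+1,\top}\rangle w^{-1}\varphi^2=-\int_M H\,\mathrm{div}_M Z$, and a direct computation using $\mathrm{div}_M(e^{n+1,\top})=wH$ together with $\nabla_M w=-S(e^{n+1,\top})$ (whence $|\nabla_M w|\le|A|$) expresses $\mathrm{div}_M Z$ through $\varphi$, $\nabla_M\varphi$, $|A|$ and $H$ alone.

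Finally I would insert $|H|\le c\bigl(|(x,f(x))|\,|A|+1\bigr)$ and $H^2\le 2c^2\bigl(|(x,f(x))|^2|A|^2+1\bigr)$ and apply Young's inequality. The quadratic contribution $\int_M H^2\varphi^2$ and the cross terms $\int_M|H|\,\varphi|\nabla\varphi|$ and $\int_M|H||A|\varphi^2$ expand, after splitting, into exactly the six error terms appearing on the right of (\ref{eq:SS_Strongstab_witherrors}) (namely $\varphi^2$, $\varphi|\nabla\varphi|$, $\varphi|A|$, $|x||\nabla\varphi|^2$, $|x|\varphi^2|A|^2$ and $\varphi^2|x|^2|A|^2$, where $|x|$ stands for $|(x,f(x))|$), with overall constant $\tfrac{c^2}{2}$ after optimizing the Young splittings, the small absorbable multiple of $\int_M|\nabla\varphi|^2$ being moved to the left-hand side. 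I expect the main obstacle to be precisely this remainder integral: one must eliminate $\nabla H$ by integration by parts (since only $|H|$ is bounded) while simultaneously keeping the geometric weights $w^{-1}$ and $|e^{n+1,\top}|$ under control. This is where the graph hypothesis is genuinely used --- the boundedness of $w^{-1}=\sqrt{1+|Df|^2}$ (the near-flatness of the sheets produced by the partial graph decomposition in the proof of Theorem~\ref{thm:SSsheeting_second}) guarantees that these factors enter only as harmless bounded constants, which are absorbed into the structural constant without altering the leading coefficient $\tfrac{c^2}{2}$.
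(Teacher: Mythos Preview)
Your approach via the positive function $w=\langle\nu,e^{n+1}\rangle$ and the Fischer--Colbrie--Schoen trick is natural, but it does not prove the lemma as stated. After integrating by parts in the remainder term you are left with contributions carrying factors $w^{-1}$ and $w^{-2}$ (for instance $\int_M |H|\,w^{-2}|A|\,\varphi^2$ from the term $w^{-2}\varphi^2\,\nabla_M w\cdot e^{n+1,\top}$). The lemma assumes only $f\in C^2(\Omega)$ with values in $[-1/2,1/2]$; there is \emph{no} bound on $|Df|$, so $w^{-1}=\sqrt{1+|Df|^2}$ need not be bounded. Your final paragraph appeals to ``boundedness of $w^{-1}$'' coming from near-flatness in the partial graph decomposition, but that is a feature of the \emph{application}, not a hypothesis of the lemma; and even granting such a bound, the resulting constant would depend on $\sup|Df|$, contradicting your claim that the leading coefficient remains exactly $c^2/2$ independent of the graph. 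So as written there is a genuine gap.

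The paper's argument is quite different and sidesteps this issue entirely. It is a calibration: extend $\nu$ vertically to $\Omega\times\mathbb R$, set $\omega=\iota_\nu\,d\mathrm{vol}^{n+1}$, and use Stokes' theorem to compare $M$ with $N_t=\mathrm{graph}(f+t\phi)$. Since $\omega$ has comass $1$ and $d\omega=\widetilde H\,d\mathrm{vol}^{n+1}$ with $\widetilde H(x,y)=H(x,f(x))$, one obtains $\mathcal H^n(M)\le \mathcal H^n(N_t)+t\int_\Omega \phi\,H$, so the right-hand side has a minimum at $t=0$ and hence $\left.\tfrac{d^2}{dt^2}\right|_{t=0}\mathcal H^n(N_t)\ge 0$. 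The second variation of area then gives
\[
\int_M |A|^2\varphi^2 \;\le\; \int_M |\nabla^M\varphi|^2 \;+\; \int_M H^2\varphi^2
\]
directly, with no $w$-factors and no $\nabla H$ anywhere. One then simply inserts the hypothesis $H^2\le c^2(|x|\,|A|+1)^2$ to land in the form of~(\ref{eq:SS_Strongstab_witherrors}). The point is that the calibration yields the stability-type inequality with error $\int H^2\varphi^2$ \emph{for any $C^2$ graph}, regardless of gradient, whereas the Jacobi-field route inevitably produces the weight $w^{-1}$ that you then have to control.
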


\begin{proof}
Denote by $\nu$ the unit normal vector to $M$ such that $\nu \cdot e^{n+1} >0$ and let $H(x,f(x))$ denote the scalar mean curvature of $M$ with respect to $\nu$ at $(x,f(x))$.  Let $N_{t}$ be the graph of $f+t \phi$ for $\phi \in C^1_c(\Omega)$ and $t\in (-\eps, \eps)$. Let $L_{t}$ be the $(n+1)$-dimensional manifold with boundary $M-N_{t}$ defined, for $t > 0$,  by taking 
$\{(x,y):x\in \Om, \phi(x)>0, f(x)<y<f(x)+t\phi(x)\}$ with positive orientation and $\{(x,y):x\in \Om, \phi(x)<0, f(x)+t\phi(x)<y<f(x)\}$ with negative orientation (and similarly for $t<0$). Extend $\nu$ to a vector field on $\Om \times \R$ by setting $\nu(x,y) = \nu(x,f(x))$, for $x\in \Om$ and $y \in \R$ (vertically invariant extension). Define the $n$-form $\om=\iota_{\nu} d\text{vol}^{n+1}$, where $d\text{vol}^{n+1} = dx^1 \wedge \ldots \wedge dx^{n+1}$ is the standard volume form on $\R^{n+1}$ and $\iota$ denotes the inner product (contraction). Then, computing at $(x,y)$ we get $d\om = \text{div} \nu \, d\text{vol}^{n+1}= \widetilde{H}(x,y)\, d\text{vol}^{n+1}$ where $\widetilde{H}(x, y) = H(x, f(x))$ for $(x, y) \in L_{t}$. Note also that $\int_M \om = \mathcal{H}^n(M)$ and that $\om$ has comass $1$. 
Using Stokes' theorem we compute:

$$\mathcal{H}^n(M)=\int_M \om = \int_{N_{t}} \om + \int_{L_{t}} d\om = \int_{N_{t}} \om + \int_{L_{t}} \widetilde{H}(x,y) d\text{vol}^{n+1}. $$ 
Using $\int_{N_{t}} \om \leq \mathcal{H}^n(N_{t})$ we get 
\begin{eqnarray*}
 \label{eq:compareJ_g}
 \mathcal{H}^n(M)\leq \mathcal{H}^n(N_{t})+\int_{L_{t}} \widetilde{H}(x,y) d\text{vol}^{n+1}&=&\mathcal{H}^n(N_{t})+ \int_\Omega \int_{f(x)}^{f(x) + t\phi(x)} \widetilde{H}(x,y)  dy dx\nonumber\\
 &=& \mathcal{H}^n(N_{t}) + t\int_{\Omega} \phi(x)H(x, f(x)) \, dx.
 \end{eqnarray*}
This shows that the right hand side of the above inequality, as a function of $t$ near $0$,  has a minimum at $t=0$, and hence has non-negative second derivative at $t=0$. Therefore we conclude that
$$\left.\frac{d^{2}}{dt^{2}}\right|_{t=0} {\mathcal H}^{n}(N_{t})  \geq 0.$$ 
Since $\left.\frac{d^{2}}{dt^{2}}\right|_{t=0} {\mathcal H}^{n}(N_{t}) = \int_{M} |\nabla^{M} \, \phi|^{2}  - |A|^{2} \phi^{2} + H^{2}\phi^{2} \, d{\mathcal H}^{n}$ (\cite{SimonNotes}),  we conclude, in view of the assumption on $H,$ that  (\ref{eq:SS_Strongstab_witherrors}) holds with $\frac{c^2}{2}$ in place of $c_5 \mu$. 

\end{proof}

To apply Lemma \ref{graph-stability} (with $c=c_3\mu_1$) within the proof of Theorem \ref{thm:SSsheeting_second}, for any graph $M$ appearing in the partial graph decomposition, we choose $\phi=\varphi (1-(\nu \cdot \nu_0)^2)^{1/2}$ in (\ref{eq:SS_Strongstab_witherrors}) and arguing as in the proof of \cite[Lemma 1]{SS}, we then conclude the validity of
\begin{equation}
 \label{eq:Schoenslemmagraph}
\int_M |A|^2 \varphi^2 d{\Hc}^n \leq  c_1 \int_M (1-(\nu \cdot e^{n+1})^2)|\nabla_M \varphi|^2 d{\Hc}^n+ c_1 \mu_1^2 \int_M \varphi^2 d{\Hc}^n
\end{equation}
for any $\varphi \in C^1_c(M)$.

\begin{proof}[Proof of Theorem~\ref{thm:SSsheeting_second}]
The proof now proceeds as in \cite{SS}, incorporating the changes as described in the proof of \cite[Theorem 3.4]{BW} (in particular using inequality (\ref{eq:Schoenslemmagraph}) where 
the proof in \cite{SS} uses \cite[Lemma 1]{SS} on separate ``sheets'' of the partial graph decomposition). Note also that in \cite[Theorem 1]{SS} (see \cite[Remark 2]{SS}), the small excess requirement is that the 
tilt-excess $$\hat{E}_{\rho} \equiv \rho^{-n}\int_{B_{\rho}^{n}(0) \times {\mathbb R}} (1 - (\nu \cdot e^{n+1})^{2}) \, d\|V\|$$ 
is small, but this is implied by our assumption (vii) since it follows from the first variation inequality (\ref{approx-firstvar}) (analogously to \cite[Remark 2]{SS} or \cite[22.2]{SimonNotes}) that 
$\hat{E}_{\rho/2} \leq CE_{\rho}$ for a fixed constant $C = C(n).$
With these changes in place, we proceed as in \cite{SS}
to obtain the analogues of \cite[(4.33)]{SS}
and \cite[(4.36)]{SS};
this implies, by standard arguments, the claimed conclusion of the present theorem. 
\end{proof}

\section{Proof of the regularity theorems}
\label{proof_reg}

We discuss in this section the proofs\footnote{Theorem \ref{thm:mainreg_gpos} itself, i.e.~without assumption (\textbf{m}), will be discussed, in a more general formulation, in Appendix \ref{emptyinterior}.} of Theorem \ref{thm:mainreg_gpos} with (\textbf{m}) (as in Remark \ref{oss:add(m)}), Theorem \ref{thm:mainregstationarityimmersedparts} and Theorem \ref{thm:mainreg_g0_version1}. Conclusion (i) of all three theorems is local in nature; thus it suffices to prove it with $N$ replaced by a small geodesic ball of $N$. Therefore, recalling Remark \ref{oss:assume_strongstab_withambient}, we may and do assume that the Morse index is zero, i.e.\ that strong stability holds. So in view of the discussion in Sections~\ref{exponentialchart} and ~\ref{Schoenslemma}, the proof of Conclusion (i) reduces to Theorem~\ref{thm:regularity_stronglystable} (below) which concerns a class of varifolds ${\mathcal S}_{\G, \mu, \mu_{1}}$ in  a Euclidean ball that satisfies, among other things, stationarity and stability hypotheses with respect to a functional whose integrand belongs to the class of functions ${\mathcal I}(\mu, \mu_{1})$ defined below:

\begin{df}\label{integrands}
For constants $\mu$, $\mu_{1}$, let ${\mathcal I}(\mu, \mu_{1})$ denote the class of functions $\widetilde{F} \, : \, B_{\rho_{0}}^{n+1}(0) \times {\mathbb R}^{n+1} \setminus \{0\} \to {\mathbb R}$ satisfying the following requirements: 
\begin{itemize}
\item $\widetilde{F}( \cdot, p) \in C^{2, \alpha}(B_{2}^{n+1}(0))$,  $\widetilde{F}(x, \cdot) \in C^{3}({\mathbb R}^{n+1} \setminus \{0\})$.
\item for each $y \in B_{1}^{n+1}(0)$, there is a vector $\vec{c}(y) \in {\mathbb R}^{n+1}$ and $C^{2}$ diffeomorphism $\eta_{y} \, : \, B^{n+1}_{2}(0) \to B^{n+1}_{2}(0)$ with $\eta_{y}(0) = y$ and $\eta_{0} = identity$ such that 
 $\eta_{y}^{\#} \widetilde{F}_{\vec{c}(y)}$ satisfies \cite[(1.2)-(1.5)]{SS} (with the fixed $\mu$, $\mu_{1}$) taken with $\eta_{y}^{\#} \widetilde{F}_{\vec{c}(y)}$ in place of $F$ and $\rho_{0} = 2$, where $\widetilde{F}_{\vec{c}}(x, p) = \widetilde{F}(x, p) + \vec{c} \cdot p$.
 \end{itemize}
 \end{df}
\medskip

Definition \ref{integrands} is motivated, as we saw in Section \ref{exponentialchart}, by pulling back the area functional, or more generally the functional $J_g$, to an Euclidean ball by means of an exponential map. Moreover, we saw in Section \ref{exponentialchart} that (a1) of Theorem \ref{thm:mainregstationarityimmersedparts} becomes (a1$^{\prime}$) when we pull back via the exponential map. These facts justify the next definition.

\begin{df}
\label{class_of_varifolds}
Let $p >n$. Let $\Gamma, \mu, \mu_{1}, c  >0$. Denote by $\mathcal{S}_{\Gamma, \mu, \mu_{1}, c}$ the class of integral $n$-varifold $V$ in $B^{n+1}_{2}(0)$ such that:

\begin{enumerate}
  
 \item (a2), (a3) of Theorem \ref{thm:mainregstationarityimmersedparts} (taken with $N = B_{2}^{n+1}(0)$) hold, and  (a1$^{\prime}$) of Section \ref{exponentialchart} holds with $\rho_0=2$ and for a non-negative function ${\hat H}_V \in L^p(\|V\|;\R^{n+1})$ with $\int_{B_{2}^{n+1}(0)}|{\hat H}_V|^p \leq \Gamma$;

 \item For some $\widetilde{F} \in {\mathcal I}(\mu, \mu_{1})$, 
 letting  $\mathcal{F}=\int \widetilde{F}(x,\nu(x)),$ the varifold $V$ satisfies the following:
 
\noindent (I) the stationarity assumptions (b), (b$^T$) of Theorem~\ref{thm:mainregstationarityimmersedparts} (taken with $N = B_{2}^{n+1}(0)$) hold with ${\mathcal F}$ in place of $J_{g}$; 
the stationarity assumption (b*) of Theorem~\ref{thm:mainregstationarityimmersedparts} (taken with $N = B_{2}^{n+1}(0)$) holds  with ${\mathcal F}$ in place of $J_{g}$ and 
whenever $\Oc = \Omega \setminus Z$, with $\Om$ a simply connected open set and $Z$ a closed set with $\text{dim}_{\Hc}(Z)\leq n-7$;

\noindent (II) whenever $\Oc = \Om \setminus Z$ is as in (I), we have that 
\begin{equation*}
\int  |A|^2  \phi^2 d\|V\| \leq \int |\nabla \phi|^2 d\|V\| + 
\end{equation*}
$$+c \mu_1 \int (\mu_1 \phi^2 + \phi |\nabla \phi| + \phi |A| + |x| |\nabla \phi|^2 + |x|\phi^2 |A|^2 + \mu_1 \phi^2 |x|^2 |A|^2) d\|V\|$$ 
 for all $\phi \in C^1_c(\Om \setminus Z),$ 
where $A$ is the second fundamental form of the immersion $\iota_{\Oc}$.
\end{enumerate} 

\begin{oss} In the case of Theorem~\ref{thm:mainregstationarityimmersedparts}, the inequality in (II) with $c = c_{5},$ where $c_{5} = c_{5}(n)$ is as in  \cite[(1.17)]{SS}, follows from hypotheses (b*) and (c) (of Theorem~\ref{thm:mainregstationarityimmersedparts}); 
in the case of Theorem~\ref{thm:mainreg_g0_version1}, it follows (again with $c = c_{5}$) from hypothesis (c) (of Theorem~\ref{thm:mainreg_g0_version1}) and Theorem~\ref{thm:orientabilityGreg}.  
\end{oss}
\end{df}

\begin{thm}[Local regularity for stable $\mathcal{F}$-stationary hypersurfaces]
\label{thm:regularity_stronglystable}
Let $V \in \mathcal{S}_{\Gamma, \mu, \mu_{1}, c}$. Then $\spt{V}$ satisfies conclusion (i) of Theorem \ref{thm:mainregstationarityimmersedparts}. 
\end{thm}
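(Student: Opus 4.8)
The plan is to prove Theorem~\ref{thm:regularity_stronglystable} by the scheme of iterated multiplicity, exactly as in \cite{WicAnnals} and \cite[Section~3]{BW}, but carried through with the functional $\mathcal{F}$ (with integrand $\widetilde F \in \mathcal I(\mu,\mu_1)$) in place of the area functional, and with the weaker structural hypothesis $(\mathbf{T})$ in place of $(a3)$. First I would set up the induction: for a positive integer $q$, the inductive hypothesis $\mathsf{H}(q)$ asserts that conclusion (i) of Theorem~\ref{thm:mainregstationarityimmersedparts} holds, locally, for every $V \in \mathcal S_{\Gamma,\mu,\mu_1,c}$ at every point of density $< q$ (together with the sheeting statement: near such a point $V$ decomposes as $\sum_{j\le q-1}|\mathrm{graph}\,u_j|$ with $u_1\le\dots\le u_{q-1}$ of class $C^{2,\alpha}$ and ordered graphs). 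The base case $q=2$ follows from Allard's theorem applied in $B^{n+1}_{\rho_0}(0)$ via condition $(a1')$, together with the observation (Proposition~\ref{Prop:reg1}, Proposition~\ref{Prop:one-sided-max}, Proposition~\ref{Prop:loc_station_greg}, and Remark~\ref{minimal-touching}) that at a density-$1$ touching singularity the support is locally two $C^2$ graphs solving Euler--Lagrange equations with opposite-sign right-hand sides, so $(\mathbf{T})$ forces the coincidence set to lie in $\{g=0\}$ and the two-disk picture of conclusion~(i) holds.

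Next I would carry out the inductive step $\mathsf H(q)\Rightarrow\mathsf H(q+1)$. Fix a point $X_0$ (which after translation/dilation we may take to be $0$) with $\Theta(\|V\|,0)=q$. Blowing up (taking a tangent cone via the almost-monotonicity formula implied by $(a1')$ and the bounds in $\mathcal I(\mu,\mu_1)$, cf.\ \cite{SS}), one gets a stationary integral cone $C$ of density $q$ in $\mathbb R^{n+1}$ which, by the inductive hypothesis applied away from the spine together with standard dimension-reduction (Federer, as in \cite{SS},\cite{WicAnnals}), is either (a) a hyperplane of multiplicity $q$, or (b) a pair of half-hyperplanes along a common $(n-1)$-plane (the union of two multiplicity-$\le q$ pieces), or (c) has singular set of dimension $\le n-7$. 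In cases (a) and (b) the key is a \emph{sheeting/decay} argument: I would invoke Theorem~\ref{thm:SSsheeting_second} (whose hypotheses (i)--(vii) are exactly arranged to be verifiable here — (i) is condition (I) of Definition~\ref{class_of_varifolds} via the orientability results of Section~\ref{orientability}, (ii) is $(b^T)$, (iv) is $(a1')$, (v) is Lemma~\ref{graph-stability}/the computation of Remark~\ref{oss:SS_1}, (vi) is the Schoen inequality \eqref{eqSchoenlemma} coming from condition (II), and (iii) needs to be \emph{upgraded away}) to obtain a graphical decomposition into $q$ sheets $u_1\le\dots\le u_q$. The crucial point is that Theorem~\ref{thm:SSsheeting_second} presupposes an a priori small singular set (its hypothesis (iii)), which we do \emph{not} have; removing this is done by the standard device of running the argument inductively together with the estimate, i.e.\ one first establishes $\mathsf H(q+1)$ at all points where $\mathrm{dim}_{\mathcal H}$ of the genuine singular set is already controlled, and then a covering/no-gaps argument (as in \cite[Section~3]{BW}, \cite{WicAnnals}) shows the exceptional set has the asserted dimension $\le n-7$. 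The one-sided maximum principle machinery of Section~\ref{furtherconsequencesoftheassumptions} is what guarantees that the two ``sheets at the top/bottom'' either coincide on a minimal (hence by $(\mathbf T)$ removable) set or separate into the two-disk structure of conclusion~(i), which is how the $\mathrm{sing}_T$ part of the statement is handled.

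The technical heart — and the step I expect to be the main obstacle — is the fine \emph{$L^2$ graphical decay estimate} for the sheets near a density-$q$ point: one must show that if the height excess $E_\rho$ is small then the sheets $u_j$ are $C^{1,\beta}$-close to a single hyperplane with a definite decay of $E_{\theta\rho}$ relative to $E_\rho$, which is the content of the blow-up analysis of \cite{WicAnnals} adapted to the $\mathcal F$-stationarity (the adaptation requires the $\eta_y$-normalization built into $\mathcal I(\mu,\mu_1)$ so that the first and second variation operators are, to leading order, the Euclidean Laplacian with lower-order errors controlled by $\mu_1|x|$ and $\mu_1$). The extra subtlety relative to \cite{BW} is that the coincidence set of a touching singularity is only controlled by $(\mathbf T)$ (it can be large inside $\{g=0\}$), so one must separate, within a single ball, a minimal multiplicity-$2$ piece (handled by the removable-singularity/constancy arguments and the orientability theorem of Section~\ref{orientability}) from the genuinely mean-curvature-$g$ sheets; this separation, and checking it is compatible with the blow-up class being closed, is where most of the new work lies. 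Once the decay estimate is in place, conclusion~(i) — single embedded disk, or two $C^2$ embedded disks meeting tangentially with the intersection contained in an $(n-1)$-submanifold when $g(p)\ne 0$ (this last from the Hessian/implicit-function argument in the Remark after Proposition~\ref{Prop:loc_station_greg}) — follows by the now-standard combination of the sheeting theorem, the one-sided maximum principle, and dimension reduction, completing the induction and hence the proof.
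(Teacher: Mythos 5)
Your overall scheme --- induction on the multiplicity $q$, a Schoen--Simon-type preliminary sheeting theorem whose a priori small-singular-set hypothesis is then removed inductively, the one-sided maximum principle plus $(\mathbf{T})$ to produce the two-disk structure, and the hard $L^2$-excess-decay analysis imported from \cite{WicAnnals} and \cite{BW} --- is essentially the paper's. The paper organizes this as a \emph{simultaneous} induction on $q$ of three statements: a Sheeting Theorem (Theorem~\ref{thm:sheeting}), a Minimum Distance Theorem (Theorem~\ref{thm:minimum_dist}), and a Higher Regularity Theorem (Theorem~\ref{thm:higher_regularity}), combined with the dimension-reducing Proposition~\ref{Prop:elementaryconsequence}; your hypothesis $\mathsf{H}(q)$ packages the first and third of these but omits the second, and that omission is where your argument has a genuine gap.

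Concretely, the gap is in your case (b). When a tangent cone at a density-$q$ point is supported on a union of half-hyperplanes meeting along a common $(n-1)$-dimensional subspace (necessarily three or more of them, since two transverse half-hyperplanes cannot be stationary and two opposite ones form a hyperplane, i.e.\ your case (a)), you propose to ``invoke Theorem~\ref{thm:SSsheeting_second} to obtain a graphical decomposition into $q$ sheets.'' That cannot work: near such a point the varifold is not height-excess-small relative to any single hyperplane, so hypothesis (vii) of Theorem~\ref{thm:SSsheeting_second} fails and no graphical decomposition over a plane exists. The correct move --- the second pillar of the induction, steps (ii) and (iii) of the paper's completion scheme --- is the Minimum Distance Theorem: one shows that ${\rm spt}\,\|V\|$ must stay a definite Hausdorff distance away from every such cone, so this case simply does not occur; this is precisely where the no-classical-singularities hypothesis (a2) is used non-inductively, and its proof (following \cite[Sections 5, 6]{BW}) is a substantial separate argument, not a corollary of sheeting. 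Without it the dimension-reduction step yielding $\dim_{\mathcal H}\Sigma\leq n-7$ does not close. The rest of your outline --- the role of Lemma~\ref{graph-stability} in supplying the stability inequality on each graph of the partial decomposition, the inductive removal of hypothesis (iii) of Theorem~\ref{thm:SSsheeting_second}, and the exclusion of $\ell$-fold touching for $\ell\geq 3$ via the Hopf lemma (Lemma~\ref{lem:no_ell-fold}) --- matches the paper.
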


\begin{oss}
The stability assumption (II) can be replaced by the following: inequality (\ref{eqSchoenlemma}) holds, with the \emph{ambient} gradient in place of $\nabla^M$, for all $\phi \in C^1_c(\Oc)$, $\phi \geq 0$, and for any $\Oc$ as in (b*).
\end{oss}

Theorem \ref{thm:regularity_stronglystable} is deduced by proving the following three theorems by simultaneous induction on $q$.

\begin{thm}[Sheeting Theorem]
\label{thm:sheeting}
Let $q$ be a positive integer. There exists $\eps = \eps(n, p, q, \mu, \mu_{1},c) \in (0, 1)$ such that if $V \in \mathcal{S}_{\Gamma, \mu, \mu_{1}, c}$ satisfies, for $\sigma \in (0,1)$,
$$(\omega_{n}2^{n})^{-1}\|\eta_{0,\s\, \#}V\|(B_{2}^{n+1}(0)) < q + 1/2,\,\,\,\, q - 1/2 \leq \omega_{n}^{-1}\|\eta_{0,\s\,\#}V\|\left(B_{1}^{n}(0) \times {\mathbb R} \right) < q + 1/2$$
and the following flatness condition 
$$\int_{B_{1}^{n}(0) \times {\R} } |x^{n+1}|^{2} \, d\|\eta_{0,\s\,\#}V\|(X) + \frac{1}{\eps} \left(\int_{B_{1}^{n}(0) \times {\R} } |H_{\eta_{0,\s\,\#}V}|^{p} \, d\|\eta_{0,\s\,\#}V\|(X) \right)^{\frac{1}{p}} +\sigma< \eps,$$
then $$\eta_{0,\s\,\#}V \res \left(B_{1/2}^{n}(0) \times {\R} \right) = \sum_{j=1}^{q} |{\rm graph} \, u_{j}|,$$
where $u_{j} \in C^{1, \alpha} \, (B_{1/2}^{n}(0); {\mathbb R})$ and $u_{1} \leq u_{2} \leq \ldots \leq u_{q}$, with 
\begin{eqnarray*}
&&\|u_{j}\|^2_{C^{1, \alpha}(B_{1/2}^{n}(0))} \leq C \int_{B_{1}^{n}(0) \times {\mathbb R}} |x^{n+1}|^{2} \, d\|\eta_{0,\s\,\#}V\|(X)\nonumber\\ 
&&\hspace{2in}+ \frac{C}{\eps} \left(\int_{B_{1}^{n}(0) \times {\R} } |H_{\eta_{0,\s\,\#}V}|^{p} \, d\|\eta_{0,\s\,\#}V\|(X) \right)^{\frac{1}{p}}+C\s
\end{eqnarray*}
for some fixed constants $\alpha  = \alpha(n,p, q, \mu, \mu_{1}, c) \in (0, 1/2),$ $C = C(n, p, q, \mu, \mu_{1}, c) \in (0, \infty)$ and each $j=1, 2, \ldots, q$.  
\end{thm}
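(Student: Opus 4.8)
\textbf{Proof plan for Theorem~\ref{thm:sheeting}.}

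The plan is to follow the blueprint of Schoen--Simon (\cite[Theorem~1]{SS}) and its multiplicity-$q$ adaptation in \cite[Theorem~3.4]{BW}, combined with the preliminary sheeting theorem (Theorem~\ref{thm:SSsheeting_second}), while simultaneously running an induction on $q$ together with Theorem~\ref{thm:regularity_stronglystable} and the (as yet unstated) further theorem in the induction scheme. The crucial new input, relative to the classical CMC case of \cite{BW}, is that one cannot assume a priori smallness of the genuine singular set $\spt{V}\setminus\Greg{V}$: this smallness must be \emph{produced} from the induction hypothesis (Theorem~\ref{thm:sheeting} and Theorem~\ref{thm:regularity_stronglystable} for multiplicities $\leq q-1$) before Theorem~\ref{thm:SSsheeting_second} can be applied. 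So the backbone of the argument is a contradiction/compactness scheme: assuming the conclusion fails, extract a sequence $V_k \in {\mathcal S}_{\Gamma,\mu,\mu_1,c}$ with the mass ratios pinched near $q$, with $\widetilde{E}_k := \int_{B_1^n(0)\times\R}|x^{n+1}|^2\,d\|V_k\| + (\int |H_{V_k}|^p\,d\|V_k\|)^{1/p} + \sigma_k \to 0$, but for which no $C^{1,\alpha}$ graphical decomposition with the stated estimate holds.

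First I would record the standard consequences of the first-variation inequality (a1$'$): the monotonicity formula (in the form valid under (\ref{approx-firstvar}), cf.\ \cite{SS}), upper semicontinuity and near-integrality of the density, the tilt-excess bound $\widehat{E}_{\rho/2}\leq CE_\rho$ already noted after the proof of Theorem~\ref{thm:SSsheeting_second}, and Allard's theorem giving that $\Reg_1 V$ is open dense of class $C^{1,\alpha}\cap W^{2,p}$ (Section~\ref{furtherconsequencesoftheassumptions}). Then I would invoke the induction hypothesis: away from a closed set of points of density $\geq q$, the varifolds $V_k$ satisfy the multiplicity-$\leq q-1$ sheeting and regularity theorems, so on that (relatively open) part the genuine singular set has dimension $\leq n-7$ and one has the structure required in hypotheses (i)--(iii) of Theorem~\ref{thm:SSsheeting_second}; the density-$\geq q$ set is itself controlled by a dimension-reduction (Federer) argument using the homogeneity of tangent cones, exactly as in \cite[\S6]{WicAnnals} and \cite{BW}. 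Hypotheses (iv)--(vi) of Theorem~\ref{thm:SSsheeting_second} are then furnished by Remarks~\ref{oss:SS_1}, \ref{oss:SS_1'}, \ref{oss:SS_2}: (a1$'$) is (a2) of ${\mathcal S}$, condition (v) follows from the $\mathcal{F}$-stationarity since the scalar mean curvature of $\iota_{\Oc}$ is $\mathcal{F}$-prescribed and the relevant integrand bound holds (computation as in \cite[(1.16)]{SS}), and condition (vi)---the Schoen-type inequality (\ref{eqSchoenlemma})---comes from stability assumption (II) of ${\mathcal S}$ through the adaptation of Schoen's lemma carried out in Section~\ref{Schoenslemma}, using Lemma~\ref{graph-stability} on the individual sheets of any partial graph decomposition. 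With all hypotheses of Theorem~\ref{thm:SSsheeting_second} verified on the good part, one obtains a $C^{2,\alpha}$ graphical decomposition there with estimates controlled by $E_\rho$.

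The remaining work is to upgrade this \emph{local} decomposition (valid on the complement of the small density-$\geq q$ set) to the \emph{global} one on $B_{1/2}^n(0)\times\R$ claimed in the theorem, and to obtain the $C^{1,\alpha}$ estimate of the theorem with the stated constant; here I would follow \cite{SS} and \cite[Theorem~3.4]{BW} verbatim: the tilt-excess decay and the caloric/harmonic approximation give, via iteration, power-law decay of $E_\rho$ on dyadic balls, hence the sheets, a priori defined off the bad set, extend as $C^{1,\alpha}$ (indeed $W^{2,p}$, then $C^{2,\alpha}$ by Proposition~\ref{Prop:reg1}) functions across it, with the small set turning out to be empty in the graphical region. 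The ordering $u_1\leq\cdots\leq u_q$ and the no-classical-singularities hypothesis (a2) are what prevent sheets from crossing; hypothesis ({\bf T}) controls the touching set where consecutive $u_j$ coincide. The main obstacle, and the step requiring the most care, is precisely the interface between the induction hypothesis and Theorem~\ref{thm:SSsheeting_second}: one must show that the ``bad'' set (density $\geq q$, where the induction does not directly apply) is $\mathcal{H}^{n-7+\delta}$-null for every $\delta>0$ at the scale where the excess is small, so that it genuinely plays the role of the set ``$\text{sing}\,V\setminus\text{sing}_T V$'' allowed in hypothesis (iii) of Theorem~\ref{thm:SSsheeting_second}; this is the place where the interplay of the flatness hypothesis, the monotonicity formula, and Almgren's dimension-reduction must be orchestrated carefully, and it is the heart of removing the a priori singular-set smallness assumption.
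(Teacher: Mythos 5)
Your overall architecture agrees with the paper's: Theorem~\ref{thm:sheeting} is proved by simultaneous induction on $q$ with Theorems~\ref{thm:minimum_dist} and \ref{thm:higher_regularity}; the induction hypotheses enter through Proposition~\ref{Prop:elementaryconsequence}, which gives ${\rm dim}_{\mathcal H}\left({\rm sing}\,V\setminus\Greg{V}\right)\leq n-7$ on any open set avoiding $S_q=\{\Theta\geq q\}$ so that Theorem~\ref{thm:SSsheeting_second} becomes applicable there (its hypotheses (iv)--(vi) being supplied by Remarks~\ref{oss:SS_1}, \ref{oss:SS_1'}, \ref{oss:SS_2}); and the remaining work follows the excess-decay scheme of \cite[Sections 4, 5]{BW} and \cite{SS}, which uses only (a1$^{\prime}$) directly. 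Up to that point your plan is the paper's.

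The genuine gap is in your closing paragraph, where you identify as "the heart" of the argument the claim that the density-$\geq q$ set is ${\mathcal H}^{n-7+\delta}$-null. This is false and is not what the proof needs. In the small-excess regime $S_q$ is precisely the set where all $q$ sheets come together; it can have positive ${\mathcal H}^{n}$-measure and can even be all of ${\rm spt}\,\|V\|$ (a multiplicity-$q$ plane, or two graphs coinciding on a minimal portion as in Figure~\ref{fig:Touchingdoubleminimal}). Dimension reduction cannot shrink it, because the degenerate tangent cone at such a point is itself a multiplicity-$q$ hyperplane, which is $n$-dimensional; and $S_q$ is in any case not the set that hypothesis (iii) of Theorem~\ref{thm:SSsheeting_second} asks to be small, since density-$\geq q$ points may be regular or touching points. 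What the argument actually does is: apply Proposition~\ref{Prop:elementaryconsequence} and Theorem~\ref{thm:SSsheeting_second} only on balls avoiding $S_q$, where hypothesis (iii) is thereby verified; and then cross $S_q$ not by any smallness of $S_q$ but by the blow-up machinery of \cite[Section 4]{BW} --- a priori $L^{2}$ estimates showing that the height excess does not concentrate at points of density $\geq q$, convergence of the normalized heights to a harmonic blow-up, and the resulting excess decay iterated dyadically, which is what extends the sheets in $C^{1,\alpha}$ across $S_q$. Your penultimate paragraph gestures at this correct mechanism, but the final paragraph replaces it with the untenable smallness claim; as written, the plan stalls exactly where the higher-multiplicity difficulty lives.
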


\begin{thm}[Minimum Distance Theorem]
\label{thm:minimum_dist}
Let $\delta \in (0, 1/2)$ and ${\mathbf C}$ be a stationary integral varifold that is a cone in ${\mathbb R}^{n+1}$  such that ${\rm spt} \, \|{\mathbf C}\|$ consists of three or more $n$-dimensional half-hyperplanes meeting along a common $(n-1)$-dimensional subspace.  
There exists $\eps = \eps({\mathbf C}, \delta, n, p, \Gamma, \mu, \mu_{1}, c) \in (0, 1)$ such that if $V \in \mathcal{S}_{\Gamma, \mu, \mu_{1}, c}$ satisfies, for $\s\in(0,1)$,  
$(\omega_{n}{\s}^{n})^{-1}\|V\|(B_{\s}^{n+1}(0)) \leq \Theta \, (\|{\mathbf C}\|, 0) +\delta$ then 
$$\s+\frac{1}{\s}{\rm dist}_{\mathcal H} \, ({\rm spt} \, \|V\| \cap B_{\s}^{n+1}(0), {\rm spt} \, \|{\mathbf C}\| \cap B_{\s}^{n+1}(0)) > \eps.$$
\end{thm}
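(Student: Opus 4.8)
The plan is to prove Theorem~\ref{thm:minimum_dist} by a contradiction-compactness argument, following the blueprint of the minimum distance theorem in \cite{WicAnnals} (and the analogous statement in \cite{BW}), adapted to the prescribed-mean-curvature setting and the class $\mathcal{S}_{\Gamma, \mu, \mu_1, c}$. Suppose the statement fails for a given cone $\mathbf{C}$ and some $\delta \in (0, 1/2)$. Then there are sequences $V_j \in \mathcal{S}_{\Gamma, \mu, \mu_1, c}$ and $\sigma_j \in (0,1)$ with $(\omega_n \sigma_j^n)^{-1}\|V_j\|(B_{\sigma_j}^{n+1}(0)) \leq \Theta(\|\mathbf{C}\|, 0) + \delta$, while $\sigma_j + \sigma_j^{-1}\mathrm{dist}_{\mathcal H}(\spt{V_j} \cap B_{\sigma_j}^{n+1}(0), \spt{\mathbf{C}} \cap B_{\sigma_j}^{n+1}(0)) \to 0$. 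First I would rescale by $\eta_{0,\sigma_j}$ so that the ambient ball becomes $B_1^{n+1}(0)$; under this rescaling the first-variation structural condition (a1$'$) persists with $\mu_1$ replaced by $\sigma_j \mu_1 \to 0$ (as recorded in the remark in Section~\ref{exponentialchart}), the integrand bounds improve accordingly, and the mass ratio hypothesis is preserved. Thus after relabelling we reduce to: $V_j \in \mathcal{S}_{\Gamma, \mu, \mu_1^{(j)}, c}$ with $\mu_1^{(j)} \to 0$, $(\omega_n)^{-1}\|V_j\|(B_1^{n+1}(0))$ bounded above by $\Theta(\|\mathbf{C}\|, 0) + \delta$, and $\spt{V_j} \to \spt{\mathbf{C}}$ in Hausdorff distance on $B_1^{n+1}(0)$.

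The next step is to extract a varifold limit. By the first variation estimate (\ref{approx-firstvar}) together with the mass bound, Allard's compactness theorem gives a subsequence converging in the varifold topology to an integral $n$-varifold $V_\infty$ in $B_1^{n+1}(0)$ which is \emph{stationary} (the generalized mean curvature vanishes in the limit because $\mu_1^{(j)} \to 0$ kills the inhomogeneous term and the $L^p$ bound on $\hat H_{V_j}$ combined with $\mu_1^{(j)} \to 0$ forces the limiting first variation to be zero; here one uses $p > n$ and the scaling of the $L^p$ norm). Moreover, $\spt{V_\infty} = \spt{\mathbf{C}} \cap B_1^{n+1}(0)$ by Hausdorff convergence, and the density bound passes to the limit: $\Theta(\|V_\infty\|, 0) \leq \Theta(\|\mathbf{C}\|, 0) + \delta < \Theta(\|\mathbf{C}\|, 0) + 1/2$. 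Since $\spt{V_\infty}$ equals the support of the cone, by the constancy theorem $V_\infty$ is a sum of the half-hyperplanes of $\mathbf{C}$ with constant integer multiplicities on each, and the density constraint together with $\delta < 1/2$ forces all these multiplicities to equal those of $\mathbf{C}$ on the ``flat'' pieces but — crucially — forces $V_\infty$ to be \emph{exactly} a stationary integral varifold supported on a union of three or more half-hyperplanes meeting along an $(n-1)$-subspace, hence $V_\infty$ has a genuine classical singularity along that axis (in the sense of Definition~\ref{df:classicalsingularity}), because at least one pair of the constituent hyperplanes meets transversely.

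The heart of the argument — and the main obstacle — is then to derive a contradiction from the fact that the $V_j$ have no classical singularities (assumption (a2)) yet converge to something with a classical singularity along an $(n-1)$-dimensional axis. This is precisely the role played by the sheeting and the (inductive) regularity machinery: one argues that near a generic point of the singular axis, the convergence $V_j \to V_\infty$ must be realized, in view of the sheeting Theorem~\ref{thm:sheeting} applied on each half-hyperplane away from the axis (valid by the induction hypothesis on $q$, since each sheet carries multiplicity $< q$ where $q = \Theta(\|\mathbf{C}\|, 0)$ rounded up, or one invokes the version at the appropriate multiplicity), as a union of $C^{1,\alpha}$ graphs converging to the planes. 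Then, following \cite{WicAnnals} (whose argument I would mirror closely), one shows that the graphs on adjacent half-hyperplanes must join up across the axis to produce, on $V_j$ for large $j$, a structure that is either a classical singularity (contradicting (a2)) or forces a multiplicity/topology configuration incompatible with (b$^\ast$), (b$^T$) and the no-classical-singularities hypothesis; a barrier/maximum-principle argument (Proposition~\ref{Prop:one-sided-max}) rules out the remaining ``touching'' configuration because the sheets emanating from the axis in the limit are transverse, not tangential. The delicate points are (i) ensuring the hypotheses of Theorem~\ref{thm:sheeting} — especially the smallness of excess (vii) and the structural condition (ii) on how the immersion $\iota_{V_j}$ relates to the graph decomposition near touching singularities — genuinely hold for $V_j$ near the axis, which requires the localization and orientability results of Section~\ref{orientability}; and (ii) handling the simultaneous induction correctly, since Theorem~\ref{thm:minimum_dist} at level $q$ is used to prove the sheeting and regularity at level $q$, so one must be careful that in the compactness argument above the sheets that appear carry multiplicity strictly less than $\Theta(\|\mathbf{C}\|,0)$ and hence fall under the already-established cases of the induction. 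I expect the bookkeeping of this induction, together with the careful extraction of the graph structure near the axis and excluding each possible limiting configuration, to be the substantive work; the rest is routine Allard theory and compactness.
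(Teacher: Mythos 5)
Your overall strategy---argue by contradiction, rescale by $\eta_{0,\sigma_j}$, extract a stationary integral varifold limit supported on ${\rm spt}\,\|{\mathbf C}\|$ via Allard compactness, sheet over the half-hyperplanes away from the spine, and then force a classical singularity on $V_j$ itself so as to contradict (a2)---is the same route the paper takes: its proof of Theorem~\ref{thm:minimum_dist} consists of the statement that steps (ii) and (iii) of the induction follow the arguments of \cite[Section 6]{BW} (which in turn adapt \cite{WicAnnals}), and the paper explicitly records that those arguments use only (H1), (H2), (H3), the already-established Sheeting Theorem~\ref{thm:sheeting} at level $q$, and the structural hypothesis (a2). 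In particular, you should not need to invoke (b$^{\ast}$), (b$^T$) or the orientability machinery of Section~\ref{orientability} at this stage except insofar as they enter the Sheeting Theorem, which is available from step (i) of the induction before steps (ii) and (iii) begin.

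Two points, one minor and one substantive. The minor one: the limit multiplicities need not coincide with those of ${\mathbf C}$, and your assertion that each sheet carries multiplicity $<q$ is not justified as stated. What is true---and what you actually need---is that for a stationary cone consisting of three or more \emph{distinct} half-hyperplanes with multiplicities $m_i$ and $\Theta(\|{\mathbf C}\|,0)\le q+1$, the balancing condition $\sum_i m_i\nu_i=0$ (where $\nu_i$ is the unit direction of the $i$-th half-hyperplane orthogonal to the spine) gives $m_1\le\sum_{i\ge 2}m_i$, hence $m_i\le q$ for every $i$ (the equality case $m_1=q+1$ forces exactly two distinct directions, which is excluded); so the level-$q$ sheeting from step (i) together with (H1) is precisely sufficient. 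The substantive point is that the entire content of the theorem lies in the step you defer: a classical singularity of the \emph{limit} is not by itself a contradiction, since closedness of the class ${\mathcal S}_{\G,\mu,\mu_1,c}$ under varifold convergence is not known at this stage of the induction (it is a consequence of the theory, not an input). The contradiction must be produced on $V_j$ for large $j$, and in \cite[Section 6]{BW} this is done by a density and tangent-cone analysis at points of ${\rm spt}\,\|V_j\|$ near the spine---using the almost-monotonicity formula coming from (a1$'$), non-concentration of excess near the spine, the induction hypothesis (H2) for classical cones of strictly lower density, and the graphical structure from Theorem~\ref{thm:sheeting}---to exhibit a point at which three or more sheets meet transversely along a common $C^{1,\alpha}$ boundary, i.e.\ a classical singularity in the sense of Definition~\ref{df:classicalsingularity}. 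Your sketch names the destination but supplies no mechanism for reaching it, so as written the proof is incomplete at its decisive step.
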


\begin{thm}[Higher Regularity Theorem]
\label{thm:higher_regularity}
Let $q$ be a positive integer and let $V \in \mathcal{S}_{\Gamma, \mu, \mu_{1}, c}$ be  such that 
$$V \res \left(B_{1/2}^{n}(0) \times {\R} \right) = \sum_{j=1}^{q} |{\rm graph} \, u_{j}|$$
where $u_{j} \in C^{1, \alpha} \, (B_{1/2}^{n}(0); {\mathbb R})$ for some $\alpha \in (0, 1/2),$ and $u_{1} \leq u_{2} \leq \ldots \leq u_{q}$. Then 
\begin{itemize}
\item[(i)] $u_j \in C^2(B_{1/2}^{n}(0); {\mathbb R})$ and their graphs are stationary with respect to $\mathcal{F}$ (and hence by elliptic regularity $u_{j} \in C^\infty(B_{1/2}^{n}(0); {\mathbb R})$) for each $j$ if $g\in C^\infty$);
\item[(ii)] if $q \geq 2$, the graphs of $u_{j}$ touch at most in pairs, i.e.~if there exist $x  \in B_{1/2}^{n}(0)$ and $i \in \{1, 2, ... q-1\}$ such that $u_i(x) = u_{i+1}(x)$ then $Du_i(x) = Du_{i+1}(x)$ and $u_j (x) \neq u_i(x)$ for all $j \in \{1, 2, ... q\} \setminus \{i, i+1\}$.
\end{itemize}
\end{thm}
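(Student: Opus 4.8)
The plan is to prove Theorem~\ref{thm:higher_regularity} by first upgrading the regularity of each $u_j$ from $C^{1,\alpha}$ to $C^2$, and then establishing the ``touching in pairs'' structure via the one-sided maximum principle arguments already developed in Section~\ref{furtherconsequencesoftheassumptions}. The key point enabling part (i) is that, since $V \res (B_{1/2}^n(0) \times \R) = \sum_{j=1}^q |\text{graph}\, u_j|$ with $u_1 \leq \cdots \leq u_q$, we can identify $\text{reg}_1 V$ inside this cylinder: a point $(x, u_j(x))$ is in $\text{reg}_1 V$ precisely when the graphs passing through it form a single $C^1$ sheet near that point, which happens on an open dense set (by Allard together with \cite[Lemma A1]{BW}, or more concretely away from the coincidence sets $\{u_i = u_{i+1}\}$ which have empty interior by the structural assumptions). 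On this open set the stationarity hypothesis (I) (i.e.~(b) of Theorem~\ref{thm:mainregstationarityimmersedparts} transplanted to $\mathcal{F}$) applies, so each $u_j$ solves, in the weak sense, one of the two Euler--Lagrange equations
\[
\sum_{i=1}^n D_i\!\left(D_{p_i}\widetilde F((x,u_j),Du_j,-1)\right) + D_{n+1}\widetilde F((x,u_j),Du_j,-1) = 0
\]
on the open set where it is a locally unique sheet. Since $\widetilde F \in \mathcal{I}(\mu,\mu_1)$ satisfies the ellipticity conditions \cite[(1.2)--(1.5)]{SS}, standard elliptic regularity (\cite[16.8]{GT}, as in Proposition~\ref{Prop:reg1}) promotes $u_j$ to $C^{2,\alpha}$ on that open set; and if $g \in C^\infty$ (equivalently $\widetilde F$ smooth in $x$), to $C^\infty$. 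To extend $C^2$-regularity across the coincidence set, I would argue as in the Remark following Proposition~\ref{Prop:reg1}: each $u_j$ is $C^{1,\alpha}$ globally, $C^2$ off a closed set with empty interior, and because adjacent graphs agree to first order on the touching set (no classical singularities, assumption (a2)), the PDE is in fact satisfied weakly across the whole of $B_{1/2}^n(0)$ — the touching set contributes no distributional boundary term since the one-sided normal derivatives match — whence elliptic regularity gives $u_j \in C^2(B_{1/2}^n(0))$, and their graphs are $\mathcal{F}$-stationary everywhere.

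For part (ii), once we know each $u_j \in C^2$, the graphs are $C^2$ hypersurfaces and the coincidence sets are well-understood via the maximum principle machinery. Fix $x_0 \in B_{1/2}^n(0)$ and $i$ with $u_i(x_0) = u_{i+1}(x_0)$. That $Du_i(x_0) = Du_{i+1}(x_0)$ follows immediately from (a2): if the gradients differed, the two (or more) graphs through $(x_0, u_i(x_0))$ would form a classical singularity (three-or-more $C^{1,\alpha}$ sheets along a common boundary with at least one transverse pair — here the pair $\text{graph}\,u_i$, $\text{graph}\,u_{i+1}$ meeting transversely), contradicting the hypothesis. For the second assertion, that no third graph passes through the same point, suppose $u_j(x_0) = u_i(x_0)$ for some $j \notin \{i,i+1\}$; then by ordering we would have three consecutive graphs agreeing at $x_0$, hence (by the previous paragraph applied to each consecutive pair) $Du_{i-1}(x_0) = Du_i(x_0) = Du_{i+1}(x_0) = Du_{i+2}(x_0)$ as applicable, producing an $\ell$-fold touching singularity with $\ell \geq 3$ among $C^2$ ordered graphs. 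But such a configuration is ruled out: as noted in the Remark preceding Remark~\ref{minimal-touching}, for each pair of the participating graphs the difference $v = u_a - u_b$ satisfies an elliptic equation of the form \eqref{eq:differencemaxprinc}, and running the Hopf boundary point lemma argument of Proposition~\ref{Prop:one-sided-max} on all $\binom{\ell}{2}$ pairs forces at least one pair to solve Euler--Lagrange equations with the \emph{same} sign on the right-hand side, which the Hopf lemma contradicts (using that $v < 0$ on a ball with $v$ and $Dv$ vanishing at a boundary point of $\{u_a = u_b\}$). This is exactly the argument that ruled out $\ell$-fold touching singularities with $\ell \geq 3$ earlier, now applied directly to the ordered graph decomposition.

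The main obstacle I anticipate is the \emph{extension of $C^2$-regularity across the coincidence set} in part (i) — more precisely, verifying carefully that the weak Euler--Lagrange equation holds across $\{u_i = u_{i+1}\}$ and not merely on its complement. The subtlety is that on the coincidence set two distinct sheets of the varifold coalesce, so the ``correct'' PDE for $u_j$ there is not a priori clear from stationarity of $\text{reg}_1 V$ alone (which only sees the embedded part). One must use that $u_j$ is already known to be $C^{1,\alpha}$ (from the Sheeting Theorem's conclusion), that the coincidence set has empty interior, that on either side the appropriate signed PDE holds classically, and that the matching of first-order data across the set (guaranteed by (a2)) kills the jump term when integrating against a test function supported across the coincidence set. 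An alternative, perhaps cleaner route is to invoke Proposition~\ref{Prop:loc_station_greg} directly: in a neighbourhood of any touching singularity $p \in \text{sing}_T V \cap \Greg{V}$ contained in the cylinder, that Proposition already gives that the two $C^2$ disks (here consecutive graphs $\text{graph}\,u_i$, $\text{graph}\,u_{i+1}$) are \emph{separately} $\mathcal{F}$-stationary after possibly shrinking the neighbourhood, so each $u_j$ is a $C^2$ solution of its Euler--Lagrange equation on all of $B_{1/2}^n(0)$ by patching these local statements with the interior regularity off the coincidence set; elliptic theory then upgrades $C^2$ to $C^{2,\alpha}$ and, when $g \in C^\infty$, to $C^\infty$.
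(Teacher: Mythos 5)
Your part (ii) argument is essentially the paper's (Lemma~\ref{lem:no_ell-fold}: pigeonhole on the two possible signs of the Euler--Lagrange equation plus the Hopf boundary point lemma), but you have the logical order reversed, and this matters. The paper proves Theorem~\ref{thm:higher_regularity} as step (iv) of the simultaneous induction on $q$: the no-triple-touching lemma is proved \emph{first}, and the $C^2$ regularity and individual stationarity of the two selected graphs needed to run the Hopf argument on a component of $B^n_{1/2}(0)\setminus C$ (where $C$ is the projection of the density-$q$ set) come from the induction hypothesis (H3), i.e.\ from the theorem at multiplicities $q'\le q-1$, not from part (i) at level $q$. Your version runs the Hopf argument only after asserting $u_j\in C^2$ everywhere, which is exactly what part (i) must establish at the density-$q$ points; nothing in your proposal breaks this circle. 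You should restore the induction: away from the density-$q$ set the structure is known by (H3), and that is all Lemma~\ref{lem:no_ell-fold} needs.

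The more serious gap is in part (i), precisely at the point you flag as the main obstacle; neither of your two routes closes it. The first route asserts that the weak Euler--Lagrange equation holds across the coincidence set because ``the one-sided normal derivatives match,'' but before one can speak of a single weak equation for $u_j$ on all of $B^n_{1/2}(0)$ one must know that $u_j$ solves the equation with the \emph{same} sign (same orientation) on every component of the complement of the coincidence set, and one must rule out the ordered functions switching sheets across that set: two separately stationary sheets crossing with first-order tangency would make $\max(u_i,u_{i+1})$ fail to be $C^2$ while still being $C^{1,\alpha}$ with matching first-order data, so matching of first derivatives alone kills nothing. Establishing sign consistency and no-crossing is the content of the one-sided maximum principle and hypothesis (b$^T$), and the passage from there to $C^2$ regularity up to and across the coincidence set is the delicate PDE argument of \cite[Section 7]{BW}, to which the paper defers. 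The second route is circular: Proposition~\ref{Prop:loc_station_greg} assumes $\spt{V}\cap\mathcal{C}\subset\Greg{V}$, and membership in $\Greg{V}$ at a touching singularity is \emph{defined} (Definition~\ref{df:regularpoints}) by the existence of the two $C^2$ graphs --- which is what you are trying to prove. So the heart of the theorem, the $C^2$ regularity of each sheet across the coincidence set at density-$q$ points, remains open in your proposal.
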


The induction scheme for the proofs of the above theorems is as follows. Let $q \geq 2$ be an integer, and assume the following:

\noindent
{\sc induction hypotheses:}\\
\noindent
(H1) Theorem~\ref{thm:sheeting} holds with any $q^{\prime} \in \{1, \ldots, q-1\}$ in place of $q$.\\
\noindent
(H2) Theorem~\ref{thm:minimum_dist} holds whenever $\Theta \, (\|{\mathbf C}\|, 0) \in \{3/2, \ldots, q-1/2, q\}.$  \\
\noindent
(H3) Theorem~\ref{thm:higher_regularity} holds with any $q^{\prime} \in \{1, \ldots, q-1\}$ in place of $q$.\\

Completion of induction is achieved by carrying out, assuming (H1), (H2), (H3), the following four steps in the order they are listed:
\begin{itemize}
\item[(i)] prove Theorem~\ref{thm:sheeting};
\item[(ii)] prove Theorem~\ref{thm:minimum_dist} in case $\Theta \, (\|{\mathbf C}\|, 0) = q+1/2$;
\item[(iii)]  prove Theorem~\ref{thm:minimum_dist} in case $\Theta \, (\|{\mathbf C}\|, 0)  = q+1$;
\item[(iv)]  prove Theorem~\ref{thm:higher_regularity}.
 \end{itemize}

The three theorems above are combined within the induction with the following proposition (used at several places in the induction argument, both in the proof of the sheeting theorem and in the proof of the minimum distance theorem), whose proof relies on a standard tangent cone analysis. 
(see \cite[Proposition 3.1]{BW} for details).

\begin{Prop}
\label{Prop:elementaryconsequence}
Let $V$ be an integral $n$-varifold in $\Omega$, an open subset of $B_{2}^{n+1}(0)$, with $V$ satisfying the assumptions of Theorem \ref{thm:regularity_stronglystable}. For $2\leq q \in \N$ let $S_{q} = \{Z \, : \, \Theta \, (\|V\|, Z) \geq q\}$. Assume that $(H1)$, $(H2)$, $(H3)$ are satisfied and assume further that $S_{q} \cap \Omega = \emptyset$. Then $(\text{sing} V \setminus \Greg{V}) \cap \Omega = \emptyset$ if $n \leq 6,$ $(\text{sing} V \setminus \Greg{V}) \cap \Omega$ is discrete if $n=7$ and $\text{dim}_{\mathcal H} \,\left( \left(\text{sing} V \setminus \Greg{V}\right)  \cap \Omega\right)\leq n-7$ for $n \geq 8$. 

\end{Prop}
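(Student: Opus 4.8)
The plan is to prove Proposition~\ref{Prop:elementaryconsequence} by a standard dimension-reduction argument on the "genuine" singular set $\Sing V \setminus \Greg V$, using the induction hypotheses (H1)--(H3) to control the behaviour of tangent cones at such points, and then invoking Federer's dimension-reduction lemma to conclude the stated Hausdorff-dimension bound.

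First I would set up the stratification. Under the hypothesis $S_q \cap \Omega = \emptyset$, the density $\Theta(\|V\|, Z)$ is $< q$ at every point $Z \in \Omega$; since densities are integer (or half-integer at classical singularities, which are excluded by (a2)) and upper semicontinuous via the almost-monotonicity formula guaranteed by (a1$'$), every point of $\Omega$ has density $\le q - 1/2$ in the relevant sense — more precisely, every point has a tangent cone whose density at the origin is at most $q-1$ if it is a hyperplane-with-multiplicity, or at most $q - 1/2$ in general. The key point: at any $Z \in (\Sing V \setminus \Greg V) \cap \Omega$, consider a tangent varifold $\mathbf{C}$ of $V$ at $Z$. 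Such $\mathbf{C}$ is a stationary integral cone (stationarity in the limit follows from (a1$'$) since the first-variation error terms scale away), it lies in $\mathcal{S}_{\Gamma, \mu, \mu_1, c}$ in the appropriate rescaled/limiting sense (the $L^p$ mean-curvature bound and the stability inequality pass to the limit, and the structural conditions (a2), (\textbf{T}) are preserved — this is exactly the kind of compactness argument run in \cite{BW}), and $\Theta(\|\mathbf{C}\|, 0) = \Theta(\|V\|, Z) \le q - 1/2 < q$.

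Next I would run the dimension-reduction dichotomy on $\mathbf{C}$. There are two cases. If $\mathbf{C}$ is supported on a single hyperplane (with some multiplicity $\le q-1$), then since $\Theta(\|\mathbf C\|, 0)$ is an integer $\le q-1$, the induction hypothesis (H1) (Sheeting Theorem with $q' = \Theta \le q-1$) applies to $V$ at small scales around $Z$: $V$ decomposes near $Z$ as a sum of $C^{1,\alpha}$ (indeed $C^2$ by (H3)) graphs with ordered values, so $Z \in \Greg V$ — contradicting $Z \in \Sing V \setminus \Greg V$. Hence no tangent cone at a genuine singular point can be a multiplicity-$\le q-1$ hyperplane. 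If $\spt\|\mathbf C\|$ contains three or more half-hyperplanes meeting along an $(n-1)$-plane, then (H2) (Minimum Distance Theorem, applicable since $\Theta(\|\mathbf C\|,0) \in \{3/2, \dots, q-1/2\}$) shows $V$ cannot be close to $\mathbf C$ near $Z$, again a contradiction. Therefore every tangent cone $\mathbf C$ at a point of $(\Sing V \setminus \Greg V) \cap \Omega$ must have spine dimension $\le n-1$ and not be planar, so in fact its spine (the translation-invariance subspace) has dimension $\le n-1$; more carefully, iterating this — a tangent cone to $\mathbf C$ at any point of its spine is again of the same type, still with density $< q$, so the same dichotomy forbids it from being planar unless the spine drops further — one concludes by the standard argument that the spine of any such cone has dimension $\le n-7$.

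Finally I would invoke Federer's dimension-reduction lemma (in the form used in \cite{SS} and \cite[Proposition 3.1]{BW}) applied to the "bad set" $\Sing V \setminus \Greg V$: the above shows that iterated tangent cones at genuine singular points can be translation-invariant along a subspace of dimension at most $n-7$, and the lemma then yields $\dim_{\mathcal H}((\Sing V \setminus \Greg V) \cap \Omega) \le n-7$, with the set being empty when $n \le 6$ and discrete (locally finite) when $n = 7$. The main obstacle I expect is the careful verification that tangent cones of $V$ at genuine singular points (i) are themselves stationary with respect to the limiting area functional — the integrand $\widetilde F \in \mathcal I(\mu,\mu_1)$ degenerates to the Euclidean area functional under blow-up, and one must check the first-variation error terms in (a1$'$) scale away correctly — and (ii) inherit all the structural and stability hypotheses needed to apply (H1), (H2), in particular that condition (\textbf{T}) and the absence of classical singularities survive the blow-up, and that the monotonicity formula / upper-semicontinuity of density hold with the non-standard first-variation control (a1$'$). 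These points are handled exactly as in \cite[Section 3]{BW}, so in the write-up I would carry out the blow-up set-up explicitly and then cite \cite[Proposition 3.1]{BW} for the dimension-reduction machinery itself.
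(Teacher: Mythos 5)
Your proposal is correct and follows essentially the same route as the paper, which disposes of this proposition in one line by calling it ``a standard tangent cone analysis'' and citing \cite[Proposition 3.1]{BW} for the details. Your sketch --- ruling out planar tangent cones at genuine singular points via (H1)+(H3), ruling out unions of half-hyperplanes via (H2) (applicable since the density hypothesis $S_q\cap\Omega=\emptyset$ keeps all tangent-cone densities in the range $\{3/2,\dots,q-1/2\}$), and then running Federer dimension reduction down to the $n-7$ threshold --- is exactly that standard argument, and the technical points you flag (scaling away of the first-variation errors in (a1$'$), persistence of the structural and stability hypotheses under blow-up) are indeed the ones handled in \cite[Section 3]{BW}.
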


The steps (i), (ii), (iii) above follow very closely the arguments developed in \cite[Sections 4, 5, 6]{BW}, to which we refer. In fact, a key advance provided by \cite{BW} lies in the fact that the constant-mean-curvature property is not used directly in the proof of these inductive steps, but it is only used, through the inductive assumption, insofar as it allows to exploit the regularity given by (H1), (H2), (H3). The estimates and the excess decay arguments in \cite[Sections 4, 5]{BW} only require, in terms of direct use of the assumptions, the validity of hypothesis (a1$^{\prime}$): this makes the arguments robust and possibly adaptable to other variational problems or even to non-variational settings  in which we have an integral $n$-varifold with first variation summable in $L^p$ for $p>n$. Theorem~\ref{thm:abstract_sheeting_thm} below makes this observation explicit. Similarly, the arguments in  \cite[Sections 6]{BW} only require (H1), (H2), (H3), Step (i) and the structural assumption (a2) (incidentally, this is the only step in which the no-classical-singularity assumption is used non-inductively). To complete the induction scheme we need step (iv), for which we need the following initial lemma (this is the analogue of \cite[Lemma 7.1]{BW} and it is obtained using the arguments developed in Section \ref{furtherconsequencesoftheassumptions}). 

\begin{lem}
 \label{lem:no_ell-fold}
Under the assumptions of Theorem \ref{thm:higher_regularity} and assuming the validity of (H1), (H2), (H3), let $X=(x, X^{n+1}) \in \spt{V}$ be a point of density $q$ where $\spt{V}$ is not embedded. Then 
$X \in \SingT{V}$ (a ``two-fold'' touching singularity). In other words, $\SingT^\ell(V) =\emptyset$ for $\ell \geq 3$.
\end{lem}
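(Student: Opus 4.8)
\textbf{Proof plan for Lemma~\ref{lem:no_ell-fold}.}

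The plan is to argue by contradiction, assuming that $\spt{V}$ fails to be embedded at $X$ with $\Theta(\|V\|, X) = q$ but $X \notin \SingT{V}$, i.e.\ $X$ is an $\ell$-fold touching singularity (or worse) with $\ell \geq 3$. First I would invoke the induction hypotheses together with Proposition~\ref{Prop:elementaryconsequence}: since $X$ has density $q$ and we are working within the proof of Theorem~\ref{thm:higher_regularity} where $V \res (B_{1/2}^n(0) \times \R) = \sum_{j=1}^q |\text{graph}\, u_j|$, the set $S_{q+1} = \{Z : \Theta(\|V\|, Z) \geq q+1\}$ is empty in the relevant neighbourhood; moreover, since all the multiplicity is accounted for by the $q$ graphs $u_1 \leq \dots \leq u_q$, the point $X$ lies in a neighbourhood in which $\spt{V}$ consists of (at most) $q$ ordered $C^{1,\alpha}$ graphs. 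By the assumption that $X \in \Sing{V}$ and the hypothesis (a2) (no classical singularities), these graphs can only come together tangentially, so locally $\spt{V} \cap \mathcal{C}$ is the union of $\ell \geq 2$ ordered graphs $v_1 \leq v_2 \leq \dots \leq v_\ell$ of class $C^{1,\alpha}$ with $v_i(0) = v_j(0)$ and $Dv_i(0) = Dv_j(0) = 0$ for all $i,j$, in a cylinder $\mathcal{C}$ around $X$ (after passing to exponential coordinates and reducing to the Euclidean model as in Section~\ref{exponentialchart}). The key point to extract is that each $v_i$ is in fact $C^2$: this follows from the Higher Regularity Theorem at level $q' < q$ applied on the open set where the sheets separate — or, more simply in the present setup, from Proposition~\ref{Prop:reg1} and Proposition~\ref{Prop:loc_station_greg}, which give that away from the coincidence sets each graph solves one of the two elliptic PDEs \eqref{eq:PDEsfortwographs} (resp.\ \eqref{eq:E-L_Riem}) and hence is $C^{2,\alpha}$ there; combined with assumption (\textbf{T}) on the coincidence set (which forces $g=0$, hence the minimal equation, on the interior of any coincidence set) one upgrades each $v_i$ to $C^2$ on the whole cylinder.

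Once the $\ell$ graphs are known to be $C^2$, I would apply the argument of Proposition~\ref{Prop:one-sided-max} (and the accompanying Remark on $\ell$-fold touching singularities immediately following the proof of Proposition~\ref{Prop:loc_station_greg}) to \emph{each pair} $(v_i, v_j)$, $i < j$. For each such pair, the difference $w_{ij} = v_i - v_j \leq 0$ satisfies, on the open set $A_{ij}$ where $v_i \neq v_j$, a linear uniformly elliptic equation of the form \eqref{eq:differencemaxprinc}, and the structural conditions $w_{ij} = 0$, $Dw_{ij} = 0$ on $\partial A_{ij} \cap \{v_i = v_j\}$ contradict the Hopf boundary point lemma \emph{unless} $v_i$ and $v_j$ solve the two PDEs in \eqref{eq:PDEsfortwographs} (resp.\ \eqref{eq:E-L_Riem}) with \emph{opposite} signs on the right-hand side. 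But if $\ell \geq 3$ there are three graphs $v_1 \leq v_2 \leq v_3$ and each consecutive pair $(v_1, v_2)$ and $(v_2, v_3)$ would force $v_2$ to carry, simultaneously, the sign opposite to $v_1$ and the sign opposite to $v_3$; that is consistent, but then the pair $(v_1, v_3)$ must also carry opposite signs, forcing $v_1$ and $v_3$ to carry the same sign as each other while both are opposite to $v_2$ — and then applying the Hopf lemma argument to $(v_1, v_3)$ (whose right-hand sides now agree in sign) yields the contradiction. In the case $g \geq 0$ one can be even more explicit using the second half of Proposition~\ref{Prop:one-sided-max}: the bottom graph must have mean curvature $-g\nu$ and the top must have $+g\nu$, so a third intermediate graph has no admissible sign. (When $g$ changes sign, hypothesis (b$^T$) pins down the orientation of each graph and the same sign-counting contradiction applies; see Appendix~\ref{gchangessign}.)

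Finally I would collect the conclusion: since the assumption $\ell \geq 3$ leads to a contradiction, any non-embedded point of density $q$ is necessarily a two-fold touching singularity, i.e.\ $X \in \SingT{V}$, and $\SingT^\ell(V) = \emptyset$ for $\ell \geq 3$. The main obstacle I anticipate is the first part — carefully justifying that all $\ell$ local graphs are genuinely $C^2$ (not merely $C^{1,\alpha}$) on the whole cylinder, including across coincidence sets that may have complicated topology: this requires combining the interior elliptic regularity for the separated pieces with the input of hypothesis (\textbf{T}) on the coincidence sets (which ensures the pieces that do coincide are minimal, hence solve a uniform elliptic equation) and then a continuity/matching argument exactly as in the proof of Proposition~\ref{Prop:loc_station_greg}. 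Once $C^2$-regularity of the sheets is in hand, the sign-counting/Hopf-lemma contradiction is routine and short.
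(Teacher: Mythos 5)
Your core mechanism is the paper's: with $\ell\geq 3$ ordered sheets and only two admissible signs for the right-hand side of the prescribed-mean-curvature PDE, two sheets that do not coincide identically must solve the \emph{same} equation, and the Hopf boundary point lemma (exactly as in Proposition~\ref{Prop:one-sided-max}) then yields a contradiction; you are also right that hypothesis (b$^T$) (or $g\geq 0$) is what makes the sign carried by each sheet well defined, and that $v_1\equiv v_3$ would force all three sheets to coincide, so the pair you apply Hopf to is genuinely non-coincident.

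The one step that would not go through as you have written it is the claim that each $v_i$ is $C^2$ \emph{on the whole cylinder}, deduced from Propositions~\ref{Prop:reg1} and~\ref{Prop:loc_station_greg} together with (\textbf{T}). Those are two-sheet statements: they apply at points of $\SingT{V}\cap\Greg{V}$, where the support is locally the union of exactly two $C^2$ disks, and they say nothing at the $\ell$-fold point $X$ itself --- which is precisely the configuration you are trying to rule out, so invoking them there is circular. The paper sidesteps this entirely: it works over a connected component of $B^n_{1/2}(0)\setminus C$, where $C$ is the projection of the density-$q$ set, and there the induction hypothesis (H3) at multiplicity $q'<q$ (the first option you mention in passing) already gives that the ordered graphs are $C^2$ and \emph{individually stationary}, which is all the Hopf argument needs; the touching point $z$ obtained by enlarging a ball inside a component of $\{v_i\neq v_j\}$ only requires the a priori $C^{1,\alpha}$ regularity of the sheets, not $C^2$ across the coincidence set. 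So keep the pigeonhole-plus-Hopf skeleton, drop the global $C^2$ upgrade (it is both unjustified by the two-sheet propositions and unnecessary), and route the regularity and individual stationarity of the sheets through (H3) away from the density-$q$ set, as the paper does.
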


\begin{proof}
The only difference, when we compare with Section \ref{furtherconsequencesoftheassumptions}, lies in the fact that we may have more than two sheets. Denote by $\pi:B_{1/2}^n(0)\times \R \to B_{1/2}^n(0)$ the standard projection, let $C=\pi\{y\in \spt{V}: \Theta(\|V\|,y)=q\}$ (closed set). We argue by contradiction and assume $X\in \SingT^\ell(V)$ for $\ell \geq 3$. We know, by inductive assumption, that away from points of density $q$ the ordered graphs are $C^2$ and individually stationary for $\mathcal{F}$. We can therefore follow \cite[Lemma 7.1]{BW} and select two ordered graphs (when $g$ can take both positive and negative values, this uses (b$^T$)) over a connected component of $B_{1/2}^n(0) \setminus C$ such that they do not coincide identically (here we use the fact that $X$ is not an embedded point) both graphs are $C^2$ and satisfy the PDE (\ref{eq:PDEsfortwographs}) or (\ref{eq:E-L_Riem}) with the same sign on the right-hand-side (this is there we use $q\geq 3$ and the contradiction assumption), which contradicts the one-sided maximum principle (cf.\ Section \ref{furtherconsequencesoftheassumptions}).
\end{proof}

The remainder of the proof of (iv) follows the general scheme of \cite{BW}, with the due modifications of the PDE arguments developed in \cite[Section 7]{BW}.

\medskip

\textit{Conclusion (ii) of Theorems \ref{thm:mainregstationarityimmersedparts} and \ref{thm:mainreg_g0_version1}}. This follows locally in any ball by the arguments in Section \ref{orientability} once the estimate on $\Sigma$ is obtained in conclusion \textit{(i)}. The global conclusion is discussed at the end of Section \ref{orientability}.

\section{Proof of the compactness theorem}
\label{proof_comp}

We describe in this section the proof of Theorem \ref{thm:generalcompactness} (and of its special case Theorem \ref{thm:compactness_gpos}). Allard's theorem \cite{Allard} gives subsequential convergence of $V_j$ to $V_\infty$ in the varifold topology (since mass is bounded locally uniformly and $g_j \to g_\infty$ in $C_{\rm loc}^0$), with the integral $n$-varifold $V_\infty$ satisfying (a1). We will check that all other assumptions of Theorem \ref{thm:mainregstationarityimmersedparts} are preserved on $V_\infty$, from which the regularity of $V_\infty$ follows. 

\begin{oss}
\label{oss:index_and_necks}
In carrying out the proof we also obtain finer information. The (subsequential) convergence is graphical (and $C^2$, possibly with multiplicity) away from a collection $X$ of at most $s$ points (the ``neck regions'' in claim 3 below), where $s$ is the uniform bound on the Morse index: the index bound is preserved in the limit, and actually every point that gives rise to a ``neck region'' reduces the index by $1$, i.e.~the Morse index  of $V_\infty$ is $\leq s-\# X$.
\end{oss}

\medskip

The following claim allows to reduce, upon neglecting a set of isolated points and suitably localizing, to the case in which the Morse index $s$ is $0$; it is a generalization of the argument used in Section \ref{stabilitydiscussion}, however it requires more care since we have a sequence of varifolds, rather than only one. The starting assumption is that $V_j \to V_\infty$ in an open neighbourhood $U$ of the closure of an open ball $B$, with $V_j$ as in Theorem \ref{thm:generalcompactness}, in particular the Morse index  in $U$ is bounded, uniformly in $j$, by $s\in \N$. 

\noindent
\textit{Claim 1}: We can select a collection $X \subset B$ of at most $s$ points (i.e.~$0\leq \# X \leq s$) such that for any $x\in B\setminus X$ there exists a ball $B_r^{n+1}(x)$ with the property that for a subsequence ${j_k}$ the varifolds $V_{j_k} \res B_r^{n+1}(x)$ are stable, i.e.~they satisfy (c) with $s=0$ in $B_r^{n+1}(x)$.  (The subsequence might depend on the point and on the ball.)

\begin{proof}[\textit{proof of claim 1.}]
For every $r>0$ we consider the collection $\{B_x\}_x$, where $B_x:=B_r^{n+1}(x)$ and select the collection $X_r$ of centres $x\in B$ such that there is no subsequence $j_k$ for which $V_{j_k} \res B_x$ is stable (in the sense of (c) --- we know that $\spt{V_j}\setminus \Greg{V_j}$ has codimension $7$ at least, by regularity of $V_j$). If $X_r = \emptyset$ for a certain $r>0$, then we have the result with $X = \emptyset$. Otherwise, we repeat the selection for every $r$ and take $r\to 0$. The set $X_r$, non-empty for every $r>0$, will have a set of limit points $X \subset \overline{B}$, where $x\in X$ if there exists $x_q \to x$ (as $q\to \infty$) with $x_q \in X_{r_q}$ and $r_q \to 0$ as $q\to \infty$. We will next prove that $X$ is made of at most $s$ points. If we can find $s+1$ distinct points $z_1, ..., z_{s+1}$ in $X$, then consider $R>0$ small enough such that $B_R^{n+1}(z_a)$ are pairwise disjoint for all $a\in \{1, ..., s+1\}$. By definition we have, for every $a$, sequences $x^a_\ell \to z_a$ as $\ell \to \infty$, with $x^a_\ell \in X_{r_\ell}$ and $r_\ell \to 0$. In particular, for all $\ell$ large enough and for all $a\in \{1, ..., s+1\}$ we have $B_{r_\ell}^{n+1}(x^a_\ell) \subset B_R^{n+1}(z_a)$. This means that there exists $\ell$ such that the $s+1$ balls (indexed on $a$) $B_{r_\ell}^{n+1}(x^a_\ell)$ are disjoint and $V_j \res B_{r_\ell}^{n+1}(x^a_\ell)$ are unstable for all $j$ large enough (there are finitely many balls so we can take $j$ large enough independently of the ball). This implies that the Morse index  of $V_j$ in $U$ is $\geq s+1$, contradiction. So we have proved that $X$ is made of at most $s$ points. The claim now follows by showing that for every $y \in B\setminus X$ we can find a ball centred at $y$ and a subsequence $V_{j_k}$ such that $V_{j_k}$ is stable in the ball. If this were not the case, then we would have that for every $r>0$ the varifold $V_j \res B_r^{n+1}(y)$ is not stable for all $j$ large enough. This means, by definition, $y \in X_r$ for every $r>0$, so $y\in X$, contradiction.
\end{proof}

The localization provided by claim 1
will allow to check assumptions (a2), (a3), (b) for $V_\infty \res (B\setminus X)$, since these assumptions are of local nature. Before doing that, we point out the following. The existence of a classical singularity implies, by definition, the existence of an $(n-1)$-dimensional set of classical singularities: since $n\geq 2$, ensuring (a2) on $B\setminus X$ actually proves (a2) on $B$. Assumption (a3) involves proving that an $n$-dimensional measure is $0$, so here $n\geq 1$ suffices to conclude that the validity of (a3) in $B\setminus X$ implies the validity of (a3) in $B$. Using again $n\geq 2$ we also conclude that the stationarity condition (b) in $B\setminus X$ implies the validity of (b) in $B$ (point singularities are removable for the stationarity condition if $n\geq 2$).

\medskip

\textit{Claim 2: (a2), (a3), (b) are valid for for $\spt{V_\infty}$.} The proof now follows \cite[Section 8]{BW} closely, to which we refer for details; here we discribe the key ideas. As explained above, it suffices to prove (a2), (a3), (b) in $B_r^{n+1}(y)$ for $y\in B\setminus X$, where $B_r^{n+1}(y)$ is chosen so to ensure the existence of $V_{j_k}$ that are stable in it, $V_{j_k}\to V_\infty$. 

\begin{proof}[\textit{proof of claim 2}.]\footnote{We stress that Theorems \ref{thm:sheeting},\ref{thm:minimum_dist} and \ref{thm:higher_regularity} are results in their own sake, whose validity for every $q$ is guaranteed after completing the induction in Section \ref{proof_reg}. In doing so, the regularity conclusion of the sheeting theorem can be improved to $C^2$, with elliptic estimates, upon combining it with Theorem \ref{thm:higher_regularity}.}
The validity of (a2) on $V_\infty$ is proved by contradiction and is a consequence of the minimum distance theorem (Theorem \ref{thm:minimum_dist}) combined with a standard rescaling argument. 

Conditions (a3) and (b) must be checked locally around points at which $V_\infty$ has a planar tangent (possibly with multiplicity)\footnote{Since uniqueness of tangents in not known, by this we mean that there is at least a planar tangent at the chosen point.}: in this case, on which we focus next, the sheeting and higher regularity theorems (Theorems \ref{thm:sheeting} and \ref{thm:higher_regularity}) play a decisive role.

Consider any $x\in \spt{V_\infty}$ at which the tangent to $V_\infty$ is a plane with multiplicity $q \in \N$. We claim that, in a (possibly suitably dilated) ball centred at $x$ every $V_j$ in the subsequence extracted in the localization step satisfies, for all $j$ large enough, the conditions of the Sheeting Theorem. In view of the localization step, the only ones to check are the mass and flatness conditions of Theorem \ref{thm:sheeting}. Varifold convergence implies convergence of the masses; moreover convergence of $\spt{V_j}$ to $\spt{V_\infty}$ in Hausdorff distance is implied by the monotonicity formula applied to each $V_j$ (using the uniform smallness of the mean curvatures $g_j$). Analogous observations hold for a sequence of dilations of $V_\infty$ that converge to the tangent plane (counted $q$ times). We can ensure, upon suitably dilating a ball around $x$, that $V_\infty$ is as flat as we wish and that its mass is as close as we wish to $q$ in ball of radius $2$. These conditions imply the validity of the mass and flatness conditions of Theorem \ref{thm:sheeting} for $V_j$ for $j$ large enough. In particular, using the combined power of Theorems \ref{thm:sheeting} and \ref{thm:higher_regularity}, we obtain graphical convergence $V_j\to V_\infty$ with $C^2$-estimates around any such $x$ (for the subsequence $V_j$): this allows to check the validity of (b) and (a3) for $V_\infty$.
\end{proof}

The local $C^2$ convergence obtained also allows to check the validity of the remaining assumptions (b*), (b$^T$) and (c) in $B_r^{n+1}(y)$, since these conditions only concern regular parts of $V$, where the tangents are planar. In particular the local regularity of $\spt{V_\infty}$ can be established already at this stage. We skip this step, since in the next we prove directly the validity of (b*), (b$^T$) and (c) for $V_\infty$ in $B$.

\medskip

Note that, in verifying (b*) and (c) for $V_\infty$, we only need to consider open sets that are cylinders $C$ and in which the ``pure'' singular set is assumed to be of codimension $\geq 7$. Fix any such $C$. 

\textit{Claim 3: identification of at most $s$ ``neck regions''}. We can find a set $X=\{x_1, ..., x_m\}\subset \Greg{V}$ with $0\leq m\leq s$, such that for every $R>0$ there exist a subsequence $V_{j_k}$ such that $V_{j_k} \res \left(C\setminus \left(\cup_{\ell=1}^m \overline{B}_R^{n+1}(x_\ell)\right)\right)$ has Morse index  $\leq s-m$ for every $j_k$. (The subsequence may depend on $R$ but this will not affect the conclusion.) Moreover, for each $R$ (in the following we drop the dependence on $R$ for notational convenience) we can produce an $n$-manifold $S_\infty$ and an immersion $\iota_\infty:S_\infty \to C$ such that $(\iota_{\infty})_\sharp (S_{\infty}) = V_{\infty} \res \left(C \setminus \left(\cup_{\ell=1}^m \overline{B}_R^{n+1}(x_\ell)\right)\right)$ and a sequence of immersions $\iota_{j_k}:S_\infty \to C$ such that $(\iota_{j_k})_\sharp (S_{\infty}) = V_{j_k} \res \left(C\setminus \left(\cup_{\ell=1}^m \overline{B}_R^{n+1}(x_\ell)\right)\right)$ and such that $\iota_{j_k} \to \iota$ in $C^2$ and with locally graphical convergence in the image.

The validity of (b*) follows from the claim by sending $R\to 0$. For (c), it suffices to test on ambient test functions compactly supported away from $X$ and $C^2$ convergence gives that the index passes to the limit, so the index of $V_\infty$ in $C\setminus X$ is $\leq s-m$. A standard capacity argument gives that the index is the same in $C$.

\begin{proof}[\textit{proof of claim 3}.]
\textit{step 1}. This argument is similar to the one used in a previous claim but this time we are aiming for a different conclusion. Whenever $V_j$ in $U$ have Morse index  $\leq s \in \N$, we can prove that either (a) there exists $r>0$ and a cover of $\Greg{V_\infty}$ with balls $B_r(x)$ such that a subsequence $V_{j_k}$ is stable in each ball or (b) for every $r>0$ there exists a ball $B_r(x)$ in which stability fails for all $V_j$ with $j$ large enough. When we are in alternative (b), sending $r\to 0$ and taking a limit point of the associated points $x$ we obtain $y$ and $B_R(y)$ (with $R$ as small as we wish) such that stability fails in $B_R(y)$ on a subsequence $V_{j_k}$. For this $V_{j_k}$ we can therefore conclude that the Morse index  in $U\setminus \overline{B}_R(y)$ is at most $s-1$, using the argument in Section \ref{stabilitydiscussion}. Iterating (and each time extracting from the subsequence identified at the previous step) we select a collection $X$ as in the first part of the claim and such that there exists $r>0$ and a cover of $\Greg{V_\infty} \setminus \left(\cup_{\ell=1}^m \overline{B}_R^{n+1}(x_\ell)\right)$ with balls $B_r(x)$ such that a subsequence $V_{j_k}$ is stable in each ball (however the Morse index  of $V_{j_k}$ in $U \setminus \left(\cup_{\ell=1}^m \overline{B}_R^{n+1}(x_\ell)\right)$ may be $s-m>0$).

\textit{step 2}. We work in a slighly smaller cylinder whose compact closure is contained in the original $C$. By abuse of notation we still denote by $C$ the new cylinder and by $V_j$ the subsequence. By step 1 we can cover $\Greg{V_\infty} \cap \left( C \setminus  \left(\cup_{\ell=1}^m  B_R^{n+1}(x_\ell)\right)\right)$ with open cylinders of the type $B^n_r(x) \times (-\sigma, \sigma)$ with the first factor in the tangent $T_x V_\infty$ for $x\in \Greg{V_\infty}$ and ensuring the validity of the sheeting theorem in each cylinder. The cover can be made finite by compactness so that the sheeting result holds simultaneously in all cylinders from all large enough $j$. This allows to indentify, as $C^2$ manifolds, $S_j$ (the abstract manifold whose immersion gives $\Greg{V_j}$--- this exists by assumption and by the regularity of $V_j$) with $S_{j+\ell}$ for all $\ell>0$ for $j$ large enough. Once the identification is made, denoting by $S_\infty$ the manifold, we can reparametrize the immersions $\iota_j:S_j \to N$ as immersions $\iota_j^\prime:S_\infty \to N$ and show, thanks to the graphical convergence, that these converge in $C^2$ to a $C^2$-immersion $\iota_\infty: S_\infty \to N$ whose push-forward is $V_\infty$ (again, this follows from the graphical convergence as it is a local property).

Having done the previous step on a smaller cylinder, the conclusion for the original $C$ follows by exhausing the original $C$. 
\end{proof}
 
\textit{Verification of  (b$^T$)}. The verification of (b$^T$) requires to consider a cylinder $C=B\times (-\sigma, \sigma)$ in which $\spt{V_\infty}$ is the union of two (ordered) $C^{1,\alpha}$ graphs, $\text{graph}(u_1)$ and $\text{graph}(u_2)$, with gradients bounded e.g. by $1/2$. By removing a set $X$ of at most $s$ points, as done above, we may assume that the conditions that ensure the validity of the sheeting theorem are valid locally around each point $y\in \spt{V_\infty} \res (C\setminus X)$ and the graphs obtained in each small cylinder $C_y$ describe $V_j \res C_y$ and have small gradients (e.g.~bounded by $1/4$). Pasting the graphs, these bounds imply that $V_j  \res (C\setminus X)$ can be exressed as a union of graphs on $B$: the ordered graphs satisfy separately the stationarity condition (by the regularity of $V_j$). Passing to the limit the PDE we get the validity of (b$^T$) for each of the two graphs $\text{graph}(u_j) \setminus X$, and this immediately extends across $X$ since $n\geq 2$ (by improving the convergence to $C^2$ via the higher regularity theorem for $V_j$, we may conclude that $u_j$ are $C^2$ for $j=1,2$, but this is not needed).

\section{Proofs of the corollaries for Caccioppoli sets}
\label{proof_Caccioppoli}

\begin{proof}[proof of Corollary \ref{cor:Caccioppolicompactness}]
Each $E_j$ satisfies the local regularity conclusions of Theorem \ref{thm:mainreg_g0_version1} and therefore, by the results of Section \ref{orientability}, $E_j$ satifies (b*) of Theorem \ref{thm:generalcompactness} whenever $\Oc=A\setminus Z$ where $A$ is simply connected (e.g.~a fixed ambient geodesic ball) and $Z=\spt{D\chi_{E_j}} \setminus \Greg{|\p^*E_j|}$. By Theorem \ref{thm:mainregstationarityimmersedparts}, any limit $V$ of $|E_j|$ satisfies the assumptions of Theorem \ref{thm:mainregstationarityimmersedparts}. Moreover, by the proof of Theorem \ref{thm:generalcompactness}, the convergence of $|E_j|$ to $V$ is, away from a locally finite collection of points, locally $C^2$ graphical. 

All that is left to check to ensure that $V$ satisfies the assumptions of Theorem \ref{thm:non-orientable} (and therefore conclusion (i) of Theorem \ref{thm:mainreg_g0_version1}) is that (a4) of Theorem \ref{thm:mainreg_g0_version1} is valid for the limit $V$. Let $p\in \Reg{V} \cap \{g\neq 0\}$ such that in a geodesic ball around $p$, in which $g\neq 0$, the convergence of $|E_j|$ to $V$ is $C^2$ and graphical. Then by the one-sided maximum principle (Section \ref{furtherconsequencesoftheassumptions}), by the topological constraint that the graphs bound $E_j$ and by the condition that the mean curvatures on each graph do not vanish and have to point in opposite directions for any two adjacent graphs (ordering by height), we can conclude that there could only be one graph and therefore the density at $p$ is $1$. 

\medskip

The global two-sidedness conclusion will follow from the fact that any connected component of $\stackrel{\circ}{\Reg{V} \cap \{g=0\}}$ with odd multiplicity is two-sided (as in Theorem \ref{thm:non-orientable}). This is again a consequence of the sheeting result proved within the compactness theorem in Section \ref{proof_comp}. Let $M_m$ be any such connected component, with $m\in \N$ odd that denotes its multiplicity. Working away from a locally finite collection of points in $M_m$ (this collection does not disconnect $M_m$, since $n\geq 2$) we obtain locally graphical convergence of $\p E_j$ to $M_m$, with an odd number of graphs collapsing to the common limit. Since, for each $j$, the $m$ graphs bound $E_j$, we have well-defined outer normals to $E_j$ that give well-defined normals on the graphs. For topological reasons, adjacent graphs (ordered by height) must have opposite pointing normals. Therefore there exists a preferred normal on $M_m$, i.e.~the one pointing in the direction that agrees with the normal on $(m+1)/2$ of the graphs (while the remaining $(m-1)/2$ graphs have a normal pointing in the opposite direction). Since the outer normals to $E_j$ are well-defined globally in $N$, we get a well-defined normal on $M_m$ in $N$.
\end{proof}

We prove now that with analytic data $N$ and $g$ the weaker assumption (c$^{\prime}$) in Remark \ref{oss:analytic} actually implies the validity of the stability condition of Corollary \ref{cor:Caccioppoli2}.

\begin{proof}[proof of the statement in Remark \ref{oss:analytic}]
Let $D_1, D_2 \subset \Greg{|\p^*E|}$ be the two distinct embedded $C^2$ disks whose union is $\spt{D\chi_E}$ in a neighbourhood of a point in $\Greg{|\p^*E|}\setminus \Reg{|\p^*E|}$; in particular, $D_1$ and $D_2$ intersect only tangentially and their mean curvatures are given by $g \nu_j$, where the unit normal $\nu_j$ on $D_j$ is chosen, for $j\in \{1,2\}$, so that $\nu_1=-\nu_2$ at points in $D_1\cap D_2$. Then the analyticity of $g$ and standard PDE theory imply that $D_1$ and $D_2$ are analytic, therefore $D_1 \cap D_2$ is the finite stratified union of analytic submanifold of dimensions $\in\{0, 1, \dots, n-1\}$. The stationarity assumption of Corollary~\ref{cor:Caccioppoli2} rules out the possibility that $D_1\cap D_2$ is of dimension $n-1$: this can be seen by arguing as in \cite[Section 9]{BW}. Then $D_1\cap D_2$ has locally finite $n-2$-dimensional Hausdorff measure. With this size estimate, the stability inequality on $\Reg_1\,|\p^*E|$ that follows from (c$^{\prime}$) can be extended to $\Greg{|\p^*E|}$ by a standard capacity argument.
\end{proof}

\section{A byproduct: an abstract regularity theorem}
The argument used in the inductive step of the proof of Theorem~\ref{thm:sheeting} (under the induction hypotheses (H1), (H2), (H3), see Section \ref{proof_reg}) gives an abstract regularity result for codimension 1 integral varifolds that we make explicit in Theorem~\ref{thm:abstract_sheeting_thm} below. The hypotheses of this theorem are ``non-variational'' in the sense that they do not require the varifold to be stationary point of a functional. 
For a given integral varifold $V$, Theorem \ref{thm:abstract_sheeting_thm} is applicable at a point $y$ where one tangent cone is a multiplicity $q$ plane, provided there are no classical singularities nearby and a suitable ``good behaviour in $C^2$ sense'' holds in the region where density $\leq q-1$ (see hypothesis (d)); the conclusion is that $C^{1,\alpha}$ regularity (with sheeting) holds in a neighbourhood of that point.

Theorem \ref{thm:abstract_sheeting_thm} is not required for the present work. However, it plays a fundamental role in \cite{BW2}, where it is applied to varifolds obtained by taking limits of (the varifolds associated to) energy-bounded, index-bounded solutions to the inhomogeneous Allen--Cahn equation, as the Allen--Cahn parameter tends to $0$. In that setting, the varifold obtained in the limit is not necessarily a critical point of a geometric functional.

\begin{thm}
\label{thm:abstract_sheeting_thm}
Let $\mu, \mu_{1} \in (0, \infty)$. Let $q$ be a positive integer, $\beta \in (0, 1)$ and let $p >n$. Let ${\mathcal V}$ be a class of of integral $n$-varifolds $V$ on $B_{1}^{n+1}(0)$ 
with $0 \in {\rm spt} \, \|V\|$ and satisfying the following properties (a)-(c): 
\begin{itemize}
\item[(a)] for each $V \in {\mathcal V}$ there is a non-negative function ${\hat H}_{V} \in L^{p}_{\rm loc}(\|V\|)$ such that hypothesis (a1$^{\prime}$) of Section~\ref{exponentialchart}  holds with $\rho_{0} = 1.$ 
\item[(b)] if $V \in {\mathcal V}$ then $V$ has no classical singularities $Y$ with $\Theta \, (\|V\|, Y) = q$;
\item[(c)]  if $V \in {\mathcal V}$, $\sigma \in (0, 1)$, $(\omega_{n}2^{n})^{-1}\|\eta_{0, \sigma \, \#} \, V\|(B_{1}^{n+1}(0)) \leq q+ 1/2$, $q-1/2 \leq \omega_{n}^{-1}\|\eta_{0, \sigma \, \#} V\|((B_{1/2}^{n}(0) \times {\mathbb R}) \cap 
B_{1}^{n+1}(0)) \leq q + 1/2$, $\Theta \, (\|\eta_{0, \sigma \, \#} V\|, X) < q$ for each $X \in B_{1}^{n+1}(0)$ and 
\begin{eqnarray*}
&&E \equiv \int_{(B_{1/2}^{n}(0) \times {\mathbb R}) \cap B_{1}^{n+1}(0)} |x^{n+1}|^{2} \, d\|\eta_{0, \sigma \#} V\|\nonumber\\ 
&&\hspace{1in}+ \left(\int_{(B_{1/2}^{n}(0) \times {\mathbb R}) \cap B_{1}^{n+1}(0)} |\sigma {\hat H}_{V}|^{p} \, d\|\eta_{0, \sigma \, \#} V\|\right)^{1/p} + \mu_{1}\sigma < \beta,
\end{eqnarray*}
 then 
$\eta_{0, \sigma \, \#} V \res ((B_{1/2}^{n}(0) \times {\mathbb R}) \cap B_{1}^{n+1}(0))  = \sum_{j=1}^{q} |{\rm graph} \, u_{j}|$ for some functions 
$u_{j} \in C^{2}(B_{1/2}^{n}(0))$ satisfying $u_{1} \leq u_{2} \leq \ldots \leq u_{q}$ and $\|u_{j}\|_{C^{1, \alpha}(B_{1/2}(0))} \leq C \sqrt{E}$ for any $\alpha \in (0, 1)$ and some constant 
$C = C(n, p, q, \mu, \mu_{1}, \alpha).$ 
\end{itemize}
Conclusion : there exists $\epsilon = \epsilon (n, p, q, \mu, \mu_{1}, \beta, {\mathcal V}) \in (0, 1)$ such that if $V \in {\mathcal V}$, $\sigma \in (0, 1)$, 
 $(\omega_{n}2^{n})^{-1}\|\eta_{0, \sigma \, \#} \, V\|(B_{1}^{n+1}(0)) \leq q+ 1/2$, $q-1/2 \leq \omega_{n}^{-1}\|\eta_{0, \sigma \, \#} V\|((B_{1/2}^{n}(0) \times {\mathbb R}) \cap 
B_{1}^{n+1}(0)) \leq q + 1/2$ and 
\begin{eqnarray*}
&&E \equiv \int_{(B_{1/2}^{n}(0) \times {\mathbb R}) \cap B_{1}^{n+1}(0)} |x^{n+1}|^{2} \, d\|\eta_{0, \sigma \#} V\|\nonumber\\ 
&&\hspace{1.5in}+ \left(\int_{(B_{1/2}^{n}(0) \times {\mathbb R}) \cap B_{1}^{n+1}(0)} |\sigma {\hat H}_{V}|^{p} \, d\|\eta_{0, \sigma \, \#} V\|\right)^{1/p} + \mu_{1}\sigma < \epsilon,
\end{eqnarray*}
 then $\eta_{0, \sigma \, \#} V \res ((B_{1/2}^{n}(0) \times {\mathbb R}) \cap B_{1}^{n+1}(0))  = \sum_{j=1}^{q} |{\rm graph} \, u_{j}|$ for some $u_{j} \in C^{1, \alpha}(B_{1/2}^{n}(0))$, $j=1, 2, \ldots, q,$ with $u_{1} \leq u_{2} \leq \ldots \leq u_{q},$ 
where $\alpha = \frac{1}{2} \left(1 - \frac{n}{p}\right).$ Furthermore, we have that $$\|u_{j}\|_{C^{1, \alpha}(B_{1/2}(0))} \leq C \sqrt{E}$$ 
for each $j \in \{1, 2, \ldots, q\},$ where $C = C(n, p, q, \mu, \mu_{1}).$ 
\end{thm}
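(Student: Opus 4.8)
The plan is to derive Theorem~\ref{thm:abstract_sheeting_thm} from Theorem~\ref{thm:sheeting} by an abstract ``soft'' argument, observing that the entire inductive scheme of Section~\ref{proof_reg} (the proof of Theorem~\ref{thm:sheeting} assuming the induction hypotheses (H1), (H2), (H3)) uses the variational structure only through the consequences (a1$^\prime$), the no-classical-singularities hypothesis, and the sheeting/higher-regularity conclusions at lower multiplicity. The hypotheses (a)--(c) of Theorem~\ref{thm:abstract_sheeting_thm} are precisely tailored to supply exactly these inputs: hypothesis (a) is (a1$^\prime$), hypothesis (b) is the relevant instance of the no-classical-singularities condition at density $q$, and hypothesis (c) is the analogue of the ``induction hypothesis'' form of the sheeting statement at multiplicity $\leq q-1$ (phrased for the class $\mathcal V$, where the varifold is known to have density $<q$ everywhere, so that automatically one is in the lower-multiplicity regime).

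First I would isolate, from the proof of Theorem~\ref{thm:sheeting} as carried out following \cite[Sections 4, 5, 6]{BW}, the precise list of facts about $V$ that are actually invoked. As remarked in Section~\ref{proof_reg}, the excess-decay and tilt-excess estimates of \cite[Sections 4, 5]{BW} only use (a1$^\prime$); the blow-up argument needs, at the inductive stage, the Lipschitz/graphical approximation which in turn relies on the sheeting conclusion at multiplicity $\leq q-1$ together with the minimum distance theorem at the relevant densities; and the exclusion of lower-density classical singularities (the one non-inductive use of (a2) in \cite[Section 6]{BW}) is replaced here by hypothesis (b). The key point is that in the present non-variational setting the role of (H1) (lower-multiplicity sheeting) is played by hypothesis (c), the role of (H2) (minimum distance at lower densities) is vacuous because every $V\in\mathcal V$ has density $<q$ so that no tangent cone with a classical-singularity cross-section and density $\leq q$ can occur away from points where (b) applies — more precisely one runs the standard tangent-cone analysis (Proposition~\ref{Prop:elementaryconsequence}-type argument) using (b) and (c) to conclude that $\text{sing}\,V\setminus\text{sing}_T\,V$ is small, hence hypothesis (iii) of Theorem~\ref{thm:SSsheeting_second} is met — and the role of (H3) (higher regularity at lower multiplicity) is supplied by the $C^2$ regularity already built into the statement of hypothesis (c).

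Concretely the steps would be: (1) show that a $V\in\mathcal V$ with density $<q$ everywhere and the stated smallness of $E$ satisfies all hypotheses of the abstract Schoen--Simon sheeting theorem Theorem~\ref{thm:SSsheeting_second} — here one must verify hypothesis (iii), the smallness of the genuine singular set, using (b), (c) and a covering/tangent-cone argument, and hypotheses (v)--(vi), which are not assumed directly but must be bypassed: in fact Lemma~\ref{graph-stability} shows (vi) holds automatically once (v) holds, and (v) here is not available, so instead one reduces to the graphical case at lower multiplicity via (c) and propagates; (2) perform the blow-up / excess-decay iteration exactly as in \cite[Sections 4--6]{BW}, substituting (c) for (H1) and (H3) and using the smallness of the singular set from step (1) in place of (H2); (3) conclude the $C^{1,\alpha}$ graphical decomposition with the stated estimate $\|u_j\|_{C^{1,\alpha}}\leq C\sqrt E$, with $\alpha=\tfrac12(1-n/p)$ coming from the $L^p$ mean-curvature bound as in the standard estimates.

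The main obstacle I anticipate is step (1): carefully checking that hypotheses (b) and (c), which are assumed only at the single critical multiplicity $q$ (for (b)) and for the strictly-lower-multiplicity regime (for (c)), are genuinely sufficient to feed the inductive machinery, since the usual proof of Theorem~\ref{thm:sheeting} presupposes the \emph{full} induction hypotheses (H1), (H2), (H3) at all lower levels and uses the variational hypotheses (e.g.\ through (v)) to handle the individual sheets of a partial graph decomposition. One must verify that when blowing up a $V\in\mathcal V$ one never leaves the class $\mathcal V$ in a way that breaks (c) — i.e.\ that the relevant rescalings and restrictions of $V$, as long as they have density $<q$, still satisfy (a)--(c) — and that the exclusion of $\ell$-fold ($\ell\geq 3$) touching behaviour and of density-$q$ classical singularities can be obtained purely from (b) plus the no-worse-than-$q-1$ structure supplied by (c), without recourse to the one-sided maximum principle of Section~\ref{furtherconsequencesoftheassumptions} (which is unavailable here as no stationarity is assumed). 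The resolution is that one does not need to exclude such behaviour a priori: one works at a point where a tangent cone is a multiplicity-$q$ plane, applies the rescaled smallness hypothesis, and the decomposition at scale $1/2$ follows from Theorem~\ref{thm:SSsheeting_second} once its hypothesis (iii) is verified, for which (b) and (c) together with the tangent-cone stratification argument of Proposition~\ref{Prop:elementaryconsequence} suffice.
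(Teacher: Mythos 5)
Your proposal is correct and follows essentially the same route as the paper: the paper's proof of Theorem~\ref{thm:abstract_sheeting_thm} consists precisely of rerunning the inductive step of Theorem~\ref{thm:sheeting}, with hypothesis (a) supplying (a1$^{\prime}$), hypothesis (b) supplying the no-classical-singularities input at density $q$, and hypothesis (c) standing in for the lower-multiplicity induction hypotheses (H1)/(H3), exactly as you describe. Your worries in step (1) about hypotheses (v)--(vi) of Theorem~\ref{thm:SSsheeting_second} are resolved the way you suspect: the excess-decay machinery of \cite[Sections 4, 5]{BW} uses only (a1$^{\prime}$) directly, and the sheeting input at lower multiplicity is taken wholesale from (c) rather than re-derived from the variational hypotheses.
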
 

\begin{oss}
 It is important to recall that assumption (a) holds true when $V$ is obtained by pulling back via the exponential map an integral $n$-varifold on a Riemannian manifold whose first variation (with respect to the Riemannian area functional) is in $L^p$ for $p>n$, as described in Section \ref{exponentialchart}.
\end{oss}

\begin{proof} The argument is precisely as in the inductive step of Theorem~\ref{thm:sheeting}.
\end{proof}

\appendix

\section{Other instances of the regularity theorems}
\label{emptyinterior}
 
Consider the case in which $g \in C^{1, \alpha}(N)$ and $\{g=0\} \cap \Reg_1\,V$ has empty interior in $\Reg_1\,V$. In this situation, which holds for example when ${\Hc}^n\left(\{g=0\}\right)=0$ (or if $g$ is analytic and generic in the sense of Theorem \ref{thm:analytic} below), hypothesis (a4) in Theorem \ref{thm:mainreg_g0_version1} can be removed and we obtain a theorem that is the immediate generalization of Theorem \ref{thm:mainreg_gpos}. The fact that $\Greg{V}$ is the image of an an orientable immersion follows in this case directly from Lemma~\ref{lem:immersion1}, Lemma~\ref{lem:orientability} and Step 1 of Proposition~\ref{Prop:orientabilityimmersion} because 
both $M_{1}$ and $M_{2}$ (notation as in Section~\ref{orientability}) are empty sets. (Thus, under this extra condition on the nodal set,  the issue that appeared in Figure \ref{fig:immersion_high_mult} is removed.) $\Greg{V}$ is stationary with respect to $J_g$ by hypothesis (b) (neither (a4) of Theorem \ref{thm:mainreg_g0_version1} nor (b*) of Theorem \ref{thm:mainregstationarityimmersedparts} is needed). Hence it follows from Theorem~\ref{thm:mainregstationarityimmersedparts} that the same statement as Theorem~\ref{thm:mainreg_gpos} holds true with the additional assumption $(b^T)$ unless $g\geq 0$. So we have:

\begin{thm}[Regularity theorem when $\{g=0\} \cap \Reg_1\,V$ has empty interior]
\label{thm:mainreg_emptyinterior}
For $n \geq 2$ let $N$ be a Riemannian manifold of dimension $n+1$ and $g:N \to \R$ be a $C^{1,\alpha}$ function. Let $V$ be an integral $n$-varifold in $N$ such that $\{g=0\} \cap \Reg_1\,V$ has empty interior in $\Reg_1\,V$. Assume that:

\begin{enumerate}
\item[(a1)] the first variation of $V$ in $L^{p}_{\rm loc} \, (\|V\|)$ for some $p >n$;
\item[(a2)] no point of ${\rm spt} \, \|V\|$ is a  classical singularity of ${\rm spt} \, \|V\|$; 
\item[(a3)] $V$ satisfies (\textbf{T}).
\end{enumerate}
Suppose moreover that: 
\begin{enumerate}
\item[(b)]  the (embedded) $C^1$ hypersurface $S={\rm reg}_{1} \, V$ is stationary with respect to $J_{g}$ in the sense of Definition~\ref{stationary-def} 
taken with $\iota \, : \, S \to N$ equal to the inclusion map;

\item[(b$^T$)] \emph{(redundant if $g\geq 0$)} each touching singularity $p \in {\rm sing}_{T} \, V$ has a neighborhood $\Oc$ such that writing 
$\exp_{p}^{-1} (\spt{V}\cap \Oc) = {\rm graph} \, u_{1} \cup {\rm graph} \, u_{2}$ for $C^{1,\alpha}$ functions $u_1 \leq u_2,$\footnote{Such $u_{1}, u_{2}$ always exist by the definition of touching singularity; see remark~\ref{oss:touchingsinggraphs}.} we have that for $j = 1,2$, the stationarity of 
$S_{j} = \Reg_1 \,V \cap \exp_{p} \, \text{graph} (u_j)$ (which follows from (b)) holds for the orientation that agrees with one of the two possible orientations of $\exp_{p} \, \text{graph} (u_j)$;
\item[(b*)] \emph{(redundant if $g> 0$)} for each orientable open set ${\Oc} \subset N \setminus ({\rm spt} \, \|V\| \setminus {\Greg} \, V)$ there exist an oriented $n$-manifold $S_{\Oc}$ and a proper $C^2$ immersion $\iota_{\Oc}:S_{\Oc} \to \Oc$ with $V\res \Oc = (\iota_{\Oc})_\#(|S_{\Oc}|)$ such that $\iota_{\Oc}$ is stationary with respect to $J_{g}$; 

\item[(c)] for each orientable open set ${\Oc} \subset N \setminus ({\rm spt} \, \|V\| \setminus {\Greg} \, V)$ such that $\Greg{V} \cap \Oc$ is the image of a proper $C^2$ orientable immersion $\iota_{\Oc} \, : \, S_{\Oc} \to \Oc$ that is stationary with respect to $J_{g},$ 
$\iota_{\Oc}$ has finite Morse index relative to ambient functions (Definition~\ref{df:index}). 
\end{enumerate}
Then there is a closed set 
 $\Sigma \subset {\rm spt} \, \|V\|$ with $\Sigma = \emptyset$ if $n \leq 6$, $\Sigma$ discrete if $n=7$ and ${\rm dim}_{\mathcal H} \, (\Sigma) \leq n-7$ if $n \geq 8$ such that: 
 \begin{enumerate}
\item [(i)] locally near each point  $p \in {\rm spt} \, \|V\| \setminus \Sigma$, either ${\rm spt} \, \|V\|$ is a single $C^2$ embedded disk or ${\rm spt} \, \|V\|$ is precisely two $C^2$ embedded disks with only tangential intersection; if we are in the second alternative and if $g(p)\neq 0$, then locally around $p$ the intersection of the two disks is contained in an $(n-1)$-dimensional submanifold;
\item[(ii)] ${\rm spt} \, \|V\| \setminus \Sigma$ is the image of a proper $C^2$ oriented immersion $\iota:S_V \to N$ and there is a continuous choice of unit normal $\nu$ on $S_V$ so that the mean curvature $H_{V}(x)$ of $S_V$ at any $x \in S_{V}$ is given by $H_{V}(x) =  g(\iota(x))  \nu(x)$;
\item[(iii)] if additionally hypothesis ({\bf m}) (see Remark~\ref{oss:add(m)}) holds, then $S_{V}$ can be chosen such that  $V = \iota_\#(|S_V|).$ 
 \end{enumerate}
\end{thm}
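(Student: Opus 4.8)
The plan is to reduce Theorem~\ref{thm:mainreg_emptyinterior} to Theorem~\ref{thm:mainregstationarityimmersedparts}, so that only the verification of the hypotheses of the latter is needed, together with the strengthening (iii) in the presence of (\textbf{m}). First I would observe that the structural assumptions (a1), (a2), (a3) are literally identical in both theorems, so nothing is to be done there. The only genuine work is in (b$^\ast$) and (c): in Theorem~\ref{thm:mainregstationarityimmersedparts} these are stated with a \emph{specified} oriented immersion $\iota_\Oc$ realising $V\res\Oc$, whereas here (b$^\ast$) is phrased in terms of \emph{existence} of such an immersion and (c) is about any $\iota_\Oc$ realising $\Greg V\cap\Oc$. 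The key point is that under the standing hypothesis that $\{g=0\}\cap\Reg_1 V$ has empty interior in $\Reg_1 V$, this existence is automatic and moreover the immersion is essentially unique, so the two formulations coincide. Concretely, I would invoke Lemma~\ref{lem:immersion1}, Lemma~\ref{lem:orientability} and Step~1 of Proposition~\ref{Prop:orientabilityimmersion}: since $M_1=M_2=\emptyset$ when $\{g=0\}\cap\Reg_1 V$ has empty interior (because $M_j\subset\stackrel{\circ}{(\{g=0\}\cap\Reg_1 V)}$ by definition), the set $\Oc_2 = (B\setminus\overline{M_2})\setminus(\spt V\setminus\Greg V)$ equals $B\setminus(\spt V\setminus\Greg V)$ and $\iota_{\Oc_2}$ is a proper $C^2$ immersion with image all of $\Greg V$. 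Lemma~\ref{lem:orientability} then shows this immersion is oriented by $\nu=\vec H/g$, which extends continuously. Thus $\Greg V\cap\Oc$ \emph{is} the image of an oriented proper $C^2$ immersion stationary with respect to $J_g$ (stationarity coming from hypothesis (b) via Proposition~\ref{Prop:loc_station_greg} locally, and globally from the fact that a codimension-$\geq 7$ set is removable for stationarity, as in Remark~\ref{discard}); this establishes (b$^\ast$), and then (c) of Theorem~\ref{thm:mainreg_emptyinterior} is precisely (c) of Theorem~\ref{thm:mainregstationarityimmersedparts} for that immersion. Hypothesis (b$^T$) is identical in both theorems.

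Having matched all hypotheses, conclusions (i) and (ii) follow: (i) is verbatim conclusion (i) of Theorem~\ref{thm:mainregstationarityimmersedparts}, and for (ii) I would note that since $\Greg V$ is \emph{globally} the image of an oriented immersion $\iota:S_V\to N$ with unit normal $\nu=\vec H/g$ (by the orientability argument above applied on all of $N\setminus(\spt V\setminus\Greg V)$, using that $\dim_{\mathcal H}\Sigma\leq n-7$ is now known from (i), and invoking \cite{Samelson} exactly as in Step~4 of Proposition~\ref{Prop:orientabilityimmersion} — but here with $M_1$ empty the argument is the shorter one flagged in Remark~\ref{oss:O_not_simply_conn}), the mean curvature identity $H_V(x)=g(\iota(x))\nu(x)$ holds on all of $S_V$ since it holds on the dense open set $\iota^{-1}(\Reg_1 V)$ and both sides are continuous. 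The point is that conclusion (ii) here asserts an \emph{oriented} immersion with a continuous normal everywhere, which is exactly what is obtained in Theorem~\ref{thm:mainregstationarityimmersedparts}(ii) once $N$ is orientable; in the non-orientable case one gets a continuous normal to $\iota$ (as in Remark~\ref{rem:non-orientable}), but the ``empty interior'' condition removes the minimal portions that caused the distinction, so the stated conclusion (ii) holds regardless.

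For conclusion (iii), under the additional hypothesis (\textbf{m}), I would argue as in Remark~\ref{oss:add(m)} and Remark~\ref{oss:orient_gpos}: the local lift of $\Greg V$ to the unit sphere bundle via the orientation $\vec H/g$ can be performed with multiplicity, so the density function, which is locally constant on each connected component of $\Greg V\setminus\SingT V$ by (\textbf{m}), dictates how many copies of each component to include when assembling the abstract manifold $S_V$. Concretely one builds $S_V$ by taking, for each connected component $W$ of the immersed hypersurface, exactly $\Theta(\|V\|,\cdot)|_W$ copies; the gluing along touching singularities (where $\Greg V\setminus\Reg_1 V$ is covered twice by the lift, since $\vec H$ points oppositely on the two disks by Proposition~\ref{Prop:loc_station_greg}) is unaffected. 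The resulting $\iota_\#(|S_V|)$ then agrees with $V\res(N\setminus\Sigma)$ on each $C^2$-embedded piece by construction, hence $V=\iota_\#(|S_V|)$ since $\Sigma$ has codimension $\geq 7$ and carries no mass.

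The main obstacle is the global orientability claim underlying conclusion (ii): establishing that $\Greg V$, as opposed to small balls within it, is the image of a \emph{single} oriented immersion. Locally this is immediate from $\nu=\vec H/g$, but patching these local orientations globally requires the topological/transversality argument of Step~4 of Proposition~\ref{Prop:orientabilityimmersion} (using \cite{Samelson} and intersection theory mod $2$, together with $\dim_{\mathcal H}\Sigma\leq n-7$). The ``empty interior'' hypothesis is exactly what makes this tractable — it forces $M_1$ and $M_2$ empty, so the delicate extension of the orientation across $\stackrel{\circ}{(\{g=0\}\cap\Reg_1 V)}$ (Steps~1–4 of Proposition~\ref{Prop:orientabilityimmersion} in full generality) is simply not needed, and one is back in the situation of Theorem~\ref{thm:mainreg_gpos}/Remark~\ref{oss:orient_gpos}. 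So I expect the proof to be short: cite the relevant lemmas to verify (b$^\ast$) and (c), apply Theorem~\ref{thm:mainregstationarityimmersedparts}, and then upgrade to the oriented formulation using the simplified orientability argument plus (\textbf{m}) for (iii).
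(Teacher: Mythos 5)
Your proposal is correct and follows essentially the same route as the paper: the paper likewise observes that the empty-interior hypothesis forces $M_1=M_2=\emptyset$, invokes Lemma~\ref{lem:immersion1}, Lemma~\ref{lem:orientability} and Step~1 of Proposition~\ref{Prop:orientabilityimmersion} to realise $\Greg{V}$ as an oriented $J_g$-stationary immersion (so that the delicate orientation extension across $\stackrel{\circ}{(\{g=0\}\cap\Reg_1 V)}$ is not needed), and then concludes by applying Theorem~\ref{thm:mainregstationarityimmersedparts}. Your treatment of conclusion (iii) via the multiplicity lift of Remarks~\ref{oss:add(m)} and \ref{oss:orient_gpos} likewise matches the paper's intended argument.
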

 
Moreover, the following compactness theorem is now a direct consequence of Theorem~\ref{thm:generalcompactness}. 

\begin{thm}[\textbf{compactness}]
\label{thm:compactness-smallnodalset}
Let $n \geq 2$ and $N$ be a Riemannian manifold of dimension $n+1$, let $g_j:N\to \R$ and $g_\infty:N\to \R$ be $C^{1,\alpha}$ functions such that ${\mathcal H}^{n} \, (\{g_{j} = 0\}) = 0$ for each $j$ and $g_j \to g_\infty$ in $C_{\rm loc}^{1,\alpha}$. Let $V_j$ be a sequence of integral $n$-varifolds such that hypotheses $(a1), (a2), (a3), (b), (b^{T})$ and $(c)$ of Theorem~\ref{thm:mainreg_emptyinterior}  hold with $V_{j}$, $g_{j}$ in place of $V$, $g.$ Suppose further that $\limsup_{j \to \infty} \, \|V_{j}\| (N \cap K) < \infty$ for each compact $K \subset N$, and that $\limsup_{j \to \infty} \, \text{Morse Index}\, (\iota_{\Oc_{j}}) < \infty$ for each $\widetilde{\Oc} \subset\subset N$,  where $\Oc_{j} = \widetilde{\Oc} \setminus ({\rm spt} \, \|V_{j}\| \setminus \Greg{V_{j}})$ and $\iota_{\Oc_{j}}$ is as in hypothesis (c) of Theorem~\ref{thm:mainreg_emptyinterior}. Then there is a subsequence of $(V_{j})$ that converges in the varifold topology to a varifold $V$ that satisfies the conclusions (i) and (ii) of Theorem \ref{thm:mainreg_emptyinterior} with $g_\infty$ in place of $g$. Furthermore, if $V_{j}$ satisfies hypothesis $({\bf m})$ (see Remark~\ref{oss:add(m)}) for each $j$, then so does $V$, and $V$ satisfies conclusion (iii) of Theorem~\ref{thm:mainreg_emptyinterior}.
\end{thm}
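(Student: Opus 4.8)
\textbf{Proof proposal for Theorem~\ref{thm:compactness-smallnodalset}.} The plan is to reduce this entirely to Theorem~\ref{thm:generalcompactness} applied to the sequence $(V_j)$ with prescribing functions $(g_j)$. The key observation is that the hypotheses of Theorem~\ref{thm:mainreg_emptyinterior} imply those of Theorem~\ref{thm:mainregstationarityimmersedparts} whenever $\{g=0\}\cap\Reg_1\,V$ has empty interior in $\Reg_1\,V$; this is precisely the content of the discussion preceding Theorem~\ref{thm:mainreg_emptyinterior}, where it was shown that in that case $M_1=M_2=\emptyset$ (notation as in Section~\ref{orientability}), so that Lemma~\ref{lem:immersion1}, Lemma~\ref{lem:orientability} and Step~1 of Proposition~\ref{Prop:orientabilityimmersion} directly produce an oriented immersion with image $\Greg{V}$, and hypothesis (b*) of Theorem~\ref{thm:mainregstationarityimmersedparts} is supplied by the theorem's own (b*). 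Since ${\mathcal H}^n(\{g_j=0\})=0$ certainly forces $\{g_j=0\}\cap\Reg_1\,V_j$ to have empty interior (an open subset of the $n$-dimensional manifold $\Reg_1\,V_j$ has positive ${\mathcal H}^n$-measure), each $V_j$ satisfies all the hypotheses of Theorem~\ref{thm:mainregstationarityimmersedparts} with $g_j$ in place of $g$. Here the Morse-index bound and the mass bound are exactly the additional quantitative hypotheses required by Theorem~\ref{thm:generalcompactness}.

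First I would record that the hypothesis ${\mathcal H}^n(\{g_j=0\})=0$ for each $j$ guarantees $(a1),(a2),(a3),(b),(b^T),(b^\ast),(c)$ of Theorem~\ref{thm:mainregstationarityimmersedparts} for $V_j,g_j$: all of these except $(b^\ast)$ are verbatim the hypotheses assumed in the statement, and $(b^\ast)$ follows from the emptiness of the interior of the nodal set in $\Reg_1\,V_j$ as just explained (alternatively, it is hypothesis $(b^\ast)$ of Theorem~\ref{thm:mainreg_emptyinterior} itself). Then Theorem~\ref{thm:generalcompactness} applies: passing to a subsequence, $V_j\to V$ in the varifold topology, and $V$ satisfies all the hypotheses and conclusions of Theorem~\ref{thm:mainregstationarityimmersedparts} with $g_\infty$ in place of $g$. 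In particular conclusion (i) of Theorem~\ref{thm:mainreg_emptyinterior}, which coincides with conclusion (i) of Theorem~\ref{thm:mainregstationarityimmersedparts}, holds for $V$.

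Next I would upgrade this to the full conclusions (i) and (ii) of Theorem~\ref{thm:mainreg_emptyinterior} for the limit $V$. The point to verify is that $\{g_\infty=0\}\cap\Reg_1\,V$ has empty interior in $\Reg_1\,V$ — this is \emph{not} automatic from $g_j\to g_\infty$ in $C^{1,\alpha}_{\rm loc}$, since ${\mathcal H}^n(\{g_\infty=0\})$ could be positive (the limit could even be $g_\infty\equiv 0$). However, this is not actually needed: the orientability/immersion conclusion (ii) of Theorem~\ref{thm:mainreg_emptyinterior} for $V$ can be obtained directly from conclusion (ii) of Theorem~\ref{thm:mainregstationarityimmersedparts} applied to $V$ with $g_\infty$ — the latter already gives, when $N$ is orientable, an oriented $C^2$ immersion $\iota:S_V\to N$ with $V=\iota_\#(|S_V|)$, $\spt{V}\setminus\Sigma=\iota(S_V)$ and mean curvature $g_\infty\nu$; when $N$ is not orientable one applies it on $V\res(N\setminus\{g_\infty=0\})$ and handles $\{g_\infty=0\}$ separately. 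Actually the cleanest route is to note that the subsequential limit $V$ \emph{does} still satisfy $(b^\ast)$ with $g_\infty$ (it is among the conclusions of Theorem~\ref{thm:mainregstationarityimmersedparts}), and then invoke the orientability machinery of Section~\ref{orientability}, which under $(b^\ast)$ gives the global oriented immersion directly. I would present it this way, citing Remark~\ref{oss:orient_gpos} and Theorem~\ref{thm:orientabilityGreg}/Proposition~\ref{Prop:orientabilityimmersion} as appropriate, so that conclusion (ii) for $V$ does not depend on whether $\{g_\infty=0\}$ is small.

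Finally, for the statement about hypothesis $({\bf m})$: if each $V_j$ satisfies $({\bf m})$, then the graphical $C^2$-convergence away from a locally finite set of points obtained within the proof of Theorem~\ref{thm:generalcompactness} (see Remark~\ref{oss:index_and_necks}) shows that the density is separately constant on each of the two limiting sheets at a touching singularity of $V$, hence $V$ satisfies $({\bf m})$; this is the same argument as in part (b) of Theorem~\ref{thm:compactness_gpos} and the verification of $({\bf m})$ in Section~\ref{proof_comp}. Conclusion (iii) of Theorem~\ref{thm:mainreg_emptyinterior} for $V$ then follows from conclusion (iii) of that theorem applied to $V$ (whose hypotheses are now all verified) — or directly from Remark~\ref{oss:add(m)}. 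The main obstacle I anticipate is purely bookkeeping: making sure that passing to the possibly $g_\infty\equiv 0$ limit does not break the reduction, which is why I would route conclusion (ii) through $(b^\ast)$-on-the-limit and the Section~\ref{orientability} results rather than through the empty-interior hypothesis on $g_\infty$; everything else is a direct citation of Theorems~\ref{thm:mainregstationarityimmersedparts} and \ref{thm:generalcompactness}.
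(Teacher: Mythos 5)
Your proposal is correct and follows essentially the same route as the paper: the paper's entire proof of Theorem~\ref{thm:compactness-smallnodalset} is the observation that it is a direct consequence of Theorem~\ref{thm:generalcompactness}, with hypothesis (b$^{\ast}$) for each $V_j$ supplied by the fact that ${\mathcal H}^n(\{g_j=0\})=0$ forces $\{g_j=0\}\cap{\rm reg}_1\,V_j$ to have empty interior (so that $M_1=M_2=\emptyset$ and the Section~\ref{orientability} machinery applies), and with the conclusions for the limit — including the case $g_\infty\equiv 0$ — and the preservation of $({\bf m})$ read off from the conclusions of Theorem~\ref{thm:generalcompactness} and the graphical $C^2$ convergence, exactly as you argue. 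The only quibble is your parenthetical "alternatively, it is hypothesis (b$^{\ast}$) of Theorem~\ref{thm:mainreg_emptyinterior} itself": the compactness statement only assumes $(a1),(a2),(a3),(b),(b^{T}),(c)$ and not (b$^{\ast}$), so that alternative is not available — but your primary derivation of (b$^{\ast}$) from the null nodal set is the intended and correct route, so this does not affect the proof.
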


\medskip
Here of course it need not be the case that $\{g = 0\} \cap \Reg_{1} \, V$ has empty interior. 
\medskip

Let us consider the case in which we have analytic data $N$, $g:N\to \R$, $g$ not identically $0$. Then the nodal set $\{g=0\}$ is the stratified union of analytic submanifolds of dimension $\in \{0, 1, \ldots,n\}$. Recall that if $\Reg_1\,V$ has an open set in common with an $n$-dimensional submanifold $\mathcal{N}_n$ in the top stratum of $\{g=0\}$, then $V$ is minimal in that open set, therefore $\mathcal{N}_n$ contains a minimal portion with non-empty interior. However, $\mathcal{N}_n$ itself is analytic and its mean curvature is an analytic function on $\mathcal{N}_n$, so its vanishing in an open set forces its vanishing everywhere on $\mathcal{N}_n$. We therefore conclude that

\begin{thm}[Regularity theorem for analytic functions]
\label{thm:analytic}
Let $N$ and $g:N\to \R$ be analytic, with $g$ not identically $0$, and assume that the no irreducible component of the top stratum of the nodal set $\{g=0\}$ is a minimal hypersurface in $N$. Let $V$ be an integral $n$-varifold in $N$ such that hypotheses (a1), (a2), (a3), (b), (b$^T$), (c) of Theorem \ref{thm:mainreg_emptyinterior} hold (if $g\geq 0$ then (b$^T$) is not needed). Then conclusions (i), (ii), (iii) of Theorem \ref{thm:mainreg_emptyinterior} hold.
\end{thm}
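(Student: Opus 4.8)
The plan is to reduce the statement to Theorem~\ref{thm:mainreg_emptyinterior}. The hypotheses (a1), (a2), (a3), (b), (b$^{T}$), (c) assumed here are verbatim those of Theorem~\ref{thm:mainreg_emptyinterior}, and hypothesis (b$^{\ast}$) of that theorem is, as noted in the discussion preceding it, redundant once the nodal-set condition is known; hence it suffices to verify the single remaining structural input of Theorem~\ref{thm:mainreg_emptyinterior}, namely that $\Reg_1\,V \cap \{g=0\}$ has empty interior in $\Reg_1\,V$. Granting this, conclusions (i), (ii), (iii) follow immediately from Theorem~\ref{thm:mainreg_emptyinterior}.

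To verify the nodal-set condition I would argue by contradiction. Suppose $W$ is a non-empty open subset of $\Reg_1\,V$ on which $g \equiv 0$. By hypothesis (a1), Allard's theorem and Proposition~\ref{Prop:reg1}, $W$ is an embedded $C^{2}$ hypersurface; by hypothesis (b), its classical mean curvature on $W$ equals $\pm g \equiv 0$, so $W$ is a minimal hypersurface. Thus $\{g=0\}$ contains the $n$-dimensional submanifold $W$, so $\{g=0\}$ is $n$-dimensional, and by the local structure theory of real-analytic sets its top stratum --- the (relatively open) set of points of $\{g=0\}$ near which $\{g=0\}$ is a smooth analytic $n$-submanifold of $N$ --- has complement in $\{g=0\}$ of dimension at most $n-1$. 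Since $W$ is an $n$-manifold and $W$ minus the top stratum is contained in that lower-dimensional set, $W$ meets the top stratum; fix $x_{0} \in W$ at which $\{g=0\}$ is, locally, a smooth analytic hypersurface belonging to an irreducible component $\mathcal{N}_{n}$ of the top stratum of $\{g=0\}$. As $W \subset \{g=0\}$ is $n$-dimensional and passes through $x_{0}$, $W$ coincides with $\mathcal{N}_{n}$ on a neighbourhood of $x_{0}$ in $\mathcal{N}_{n}$; in particular the scalar mean curvature of $\mathcal{N}_{n}$ --- a real-analytic function, since $\mathcal{N}_{n}$ is an analytic submanifold of the analytic manifold $N$ --- vanishes on a non-empty open subset of $\mathcal{N}_{n}$ and hence, by analytic unique continuation, on all of $\mathcal{N}_{n}$. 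So $\mathcal{N}_{n}$ is a minimal hypersurface, contradicting the hypothesis that no irreducible component of the top stratum of $\{g=0\}$ is minimal. Therefore $\Reg_1\,V \cap \{g=0\}$ has empty interior in $\Reg_1\,V$, and Theorem~\ref{thm:mainreg_emptyinterior} applies.

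The step I expect to be the main obstacle is the identification of $W$ with an open piece of the top stratum $\mathcal{N}_{n}$: one must exclude the degenerate possibility that $W$ meets $\{g=0\}$ only along its lower-dimensional strata --- a priori conceivable because $\nabla g$ may vanish identically along $\{g=0\}$ --- and this is exactly where the structure theory of real-analytic varieties is used, specifically the fact that the set of points of $\{g=0\}$ at which $\{g=0\}$ fails to be a smooth $n$-dimensional analytic submanifold is itself an analytic set of dimension at most $n-1$. The remaining ingredients --- that a $C^{2}$ hypersurface with vanishing mean curvature is a classical minimal hypersurface, that the second fundamental form and hence the mean curvature of an analytic submanifold of $N$ depend analytically on the point, and analytic unique continuation --- are standard and need only be invoked rather than developed.
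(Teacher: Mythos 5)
Your proof is correct and follows essentially the same route as the paper's own argument (the paragraph preceding Theorem~\ref{thm:analytic}): reduce to Theorem~\ref{thm:mainreg_emptyinterior} by showing that an open minimal piece of $\Reg_1\,V$ inside $\{g=0\}$ would force an irreducible component of the top stratum of the nodal set to be minimal, via analytic unique continuation of the mean curvature. Your explicit justification that $W$ must actually meet the top stratum (rather than lying entirely in the lower-dimensional strata) is a point the paper leaves implicit, and is a worthwhile addition.
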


\begin{oss}
For an arbitrary analytic function $g$ on $N$, if the hypotheses of Theorem \ref{thm:mainreg_g0_version1} or of Theorem \ref{thm:mainregstationarityimmersedparts} are satisfied, then the only way in which $\{g=0\} \cap \Reg_1\,V$ may have non-empty interior in $\Reg_1\,V$ is when a whole $n$-manifold of the top stratum of the nodal set is contained in $\spt{V}$. Indeed, the regularity results Theorem \ref{thm:mainreg_g0_version1} or of Theorem \ref{thm:mainregstationarityimmersedparts} guarantee that $V$ is, away from a codimension-$7$ ``pure'' singular set, a smooth immersed oriented $g$-hypersurface, and therefore analytic by elliptic PDE theory. Then the fact that the immersion has an open set on which it coincides with the analytic $n$-manifold $\mathcal{N}_n$ forces a whole connected component of the immersion to agree with $\mathcal{N}_n$.
\end{oss}

\section{On the hypothesis (b$^T$)}
\label{gchangessign} 

Hypothesis  (b$^T$) in Theorem \ref{thm:mainregstationarityimmersedparts} is fulfilled in two instances where the theory can be applied: to characterze the class of integral varifolds obtained by taking limits of embedded stable $J_{g}$-stationary hypersurfaces, as discussed in Remark \ref{oss:compactnessforembedded}, and in the Allen--Cahn approximation schemes as in \cite{BW2}, \cite{RogTon}, as discussed in Section \ref{generalizationII}. By enforcing  (b$^T$) we rule out a varifold as the one depicted in Figure \ref{fig:g_changing_sign}, which satisfies the remaining assumptions of Theorem \ref{thm:mainregstationarityimmersedparts}: although it is a $C^2$-immersion, stationary and stable with respect to $J_g$, this varifold is not a limit of embedded 
$J_{g}$-stationary hypersurfaces.

\begin{figure}[h]
\centering
 \includegraphics[width=5.5cm]{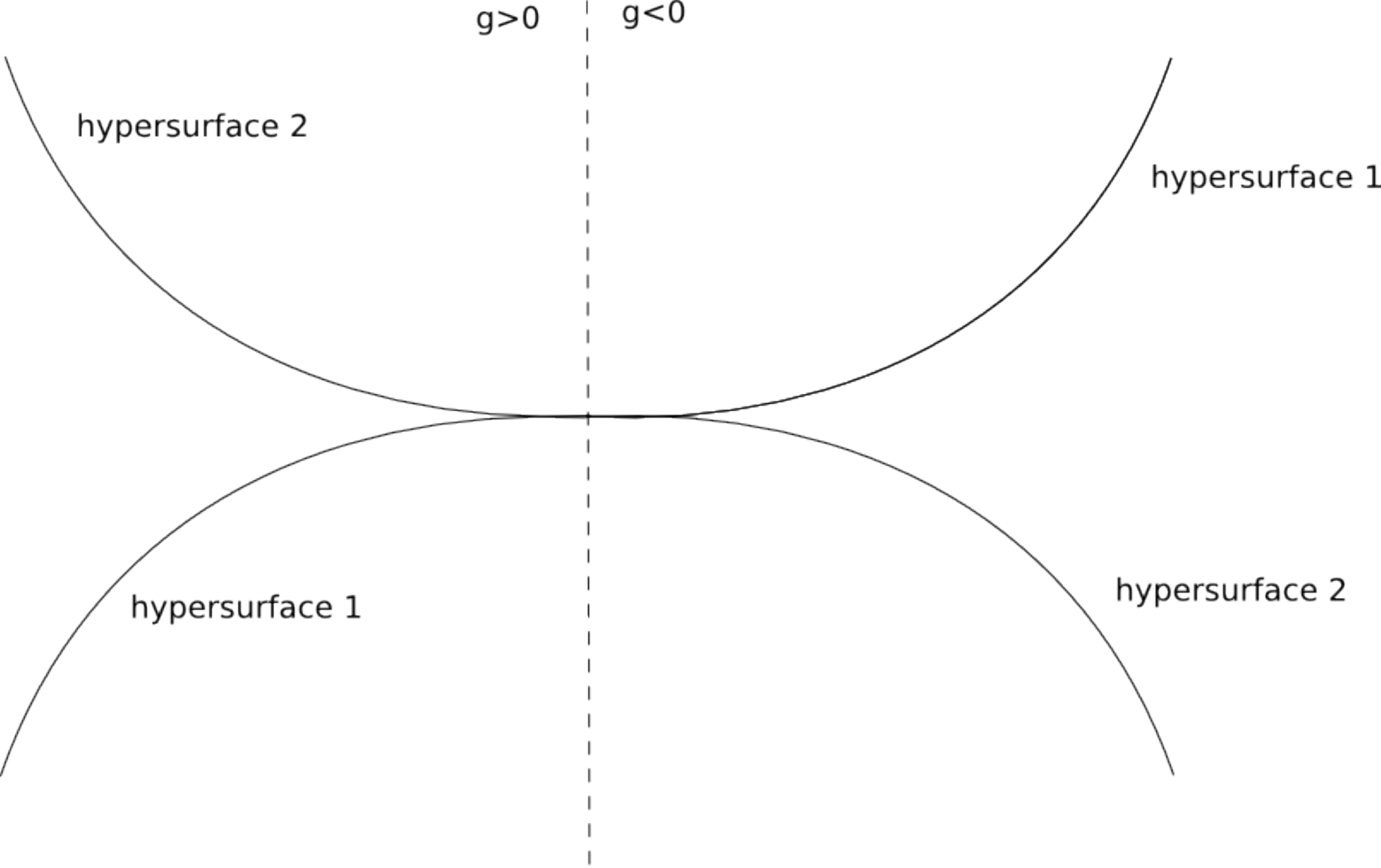} 
\caption{\small Hypersurface $1$ is oriented with normal pointing downwards and hypersurface $2$ is oriented with normal pointing upwards. The two hypersurfaces are separately stationary and stable with respect to $J_g$ (being graphs), therefore so is the immersion.}
 \label{fig:g_changing_sign}
\end{figure}

One could ask whether Theorem \ref{thm:mainregstationarityimmersedparts} holds without hypothesis (b$^T$). We wish to point out that in the absence of (b$^{T}$), the natural geometric class that includes examples as the one in Figure \ref{fig:g_changing_sign} is
a class of more general immersions than those allowed in the conclusions of Theorem~\ref{thm:mainregstationarityimmersedparts}, which are in fact ``quasi-embedded,'' i.e.\ locally near each non-embedded point, $\Greg{V}$ is the union of two \emph{separately $J_{g}$-stationary} ordered $C^{2}$ graphs. The immersions for which assumption  (b$^T)$ fails but all other hypotheses hold (such as that depicted in the example above) are not of this type, nor are they weak limits of embedded $J_g$-stationary hypersurfaces.
If we give up hypothesis (b$^{T}$), then it seems more natural to allow general smooth immersions of $J_{g}$-stationary hypersurfaces: i.e.\ (i) to also allow classical singularities but only those where the support of the varifold is immersed, and (ii) define $\Greg{V}$ to be the $C^{2}$ immersed part. In that generality however, the non-immersed points of the varifold may contain branch point singularities (even in the case $g=0$), and their analysis is more subtle. We do not pursue this direction in the present work.

\end{document}